\newcommand{\refsymbolA}{{\ensuremath(A)}}
\newcommand{\refsymbolB}{{\ensuremath(B)}}
\newcommand{\mylabel}[2]{#2\def\@currentlabel{#2}\label{#1}}
 \definecolor{pAlgae}{RGB}{87,115,135}
\definecolor{airforceblue}{rgb}{0.36, 0.54, 0.66}
	\definecolor{bondiblue}{rgb}{0.0, 0.58, 0.71}
\definecolor{britishracinggreen}{rgb}{0.0, 0.26, 0.15}
\definecolor{camouflagegreen}{rgb}{0.47, 0.53, 0.42}
\definecolor{darkcyan}{rgb}{0.0, 0.55, 0.55}
\DeclareMathSymbol{\invques}{\mathord}{operators}{`>}
\DeclareRobustCommand{\tmquestiondown}{%
  \ifmmode\invques\else\textquestiondown\fi
}
\DeclareRobustCommand\iff{\;\Longleftrightarrow\;}
\newcommand{\pushright}[1]{\ifmeasuring@#1\else\omit\hfill$\displaystyle{#1}$\fi\ignorespaces}
\newcommand{\pushleft}[1]{\ifmeasuring@#1\else\omit$\displaystyle{#1}$\hfill\fi\ignorespaces}
\theoremstyle{plain}
\newtheorem{theorem}{Theorem}[section]
\newtheorem{corollary}[theorem]{Corollary}
\newtheorem{lemma}[theorem]{Lemma}
\newtheorem{proposition}[theorem]{Proposition}
\newtheorem{conj}[theorem]{Conjecture}
\theoremstyle{definition}
\newtheorem{definition}[theorem]{Definition}
\newtheorem{examplewr}[theorem]{Example}
\newtheorem{ass}[theorem]{Assumption}
\theoremstyle{remark}
\newtheorem{obswr}[theorem]{Observation}
\newtheorem{remarkwr}[theorem]{Remark}
\newenvironment{remark}{\begin{remarkwr}\begin{upshape}}{\end{upshape}\end{remarkwr}}
\newenvironment{smatrix}{\left(\begin{smallmatrix}}{\end{smallmatrix}\right)}
\DeclareMathOperator{\ad}{ad}
\DeclareMathOperator{\BF}{BF}
\DeclareMathOperator{\Char}{Char}
\DeclareMathOperator{\cl}{cl}
\DeclareMathOperator{\coker}{coker}
\DeclareMathOperator{\cone}{cone}
\DeclareMathOperator{\cris}{cris}
\DeclareMathOperator{\cyc}{cyc}
\DeclareMathOperator{\ES}{ES}
\DeclareMathOperator{\fin}{f}
\DeclareMathOperator{\Frac}{Frac}
\DeclareMathOperator{\fs}{fs}
\DeclareMathOperator{\Gr}{Gr}
\DeclareMathOperator{\gr}{\mathscr Gr}
\DeclareMathOperator{\GSp}{GSp}
\DeclareMathOperator{\im}{im}
\DeclareMathOperator{\Iw}{Iw}
\DeclareMathOperator{\Kl}{Kl}
\DeclareMathOperator{\Log}{Log}
\DeclareMathOperator{\Si}{Si}
\DeclareMathOperator{\Spf}{Spf}
\DeclareMathOperator{\wt}{wt}
\newcommand{\Q}{\mathbb{Q}}
\newcommand{\Z}{\mathbb{Z}}
\newcommand{\GL}{\mathrm{GL}}
\newcommand{\Fil}{\mathrm{Fil}}
\newcommand{\rank}{\mathrm{rank}}
\newcommand{\Fr}{\mathrm{Fr}}
\newcommand{\diag}{\mathrm{diag}}
\newfont{\gotip}{eufb10 at 12pt}
\newcommand{\cO}{{\mathcal O}}
\newcommand{\cR}{{\mathcal{R}}}
\newcommand{\GG}{{\mathcal G}}
\newcommand{\ra}{\rightarrow}
\newcommand{\lra}{\longrightarrow}
\newcommand{\tr}{{\mathrm{tr}}}
\newcommand{\cX}{\mathcal X}
\newcommand{\hk}{{\mathbf{k}}}
\newcommand{\hr}{{\mathbf r}}
\newcommand{\DD}{{\mathbf D}}
\newcommand{\uf}{{\underline{\mathbf f}}}
\newcommand{\ug}{{\underline{\mathbf g}}}
\newcommand{\uh}{{\underline{\mathbf h}}}
\newcommand{\upi}{{\underline{\Pi}}}
\DeclareMathOperator{\Hom}{Hom}
\newcommand{\QQ}{\mathbb{Q}}
\newcommand{\Qp}{{\mathbb{Q}_p}}
\newcommand{\ZZ}{\mathbb{Z}}
\newcommand{\CC}{\mathbb C}
\newcommand{\res}{\mathrm{res}}
\newcommand{\fp}{\mathfrak{p}}
\newcommand{\fq}{\mathfrak{q}}
\numberwithin{equation}{section}
\begin{document}
\include{thebibliography}

\title{W\lowercase{all-crossing and $p$-adic} A\lowercase{rtin formalism for} $\GSp_4 \times \GL_2 \times \GL_2$}

\author{K\^azim B\"uy\"ukboduk, \'Oscar Rivero, Ryotaro Sakamoto}

\begin{abstract}
The goal of this article is to develop a $p$-adic Artin formalism in the context of $p$-adic families of automorphic forms on $\GSp_4 \times \GL_2 \times \GL_2$. Our treatment is guided by the (double) \emph{wall-crossing principle}, emphasising an interplay between arithmetic GGP and $p$-adic explicit GGP formulae. Although the picture we present remains largely conjectural, we provide evidence in favour of our conjectures (a) in terms of algebraic $p$-adic $L$-functions, and (b) in endoscopic scenarios.
\end{abstract}

\date{\today}

\address{K. B.: UCD School of Mathematics and Statistics, University College Dublin, Ireland}
\email{kazim.buyukboduk@ucd.ie}

\address{R.S.: Department of Mathematics\\University of Tsukuba\\1-1-1 Tennodai\\Tsukuba\\Ibaraki 305-8571\\Japan}
\email{rsakamoto@math.tsukuba.ac.jp}

\address{O. R.: CITMAga and Departamento de Matem\'aticas (Universidade de Santiago de Compostela), Santiago de Compostela, Spain}
\email{oscar.rivero@usc.es}

\subjclass[2010]{11R23; 11F85, 14G35}

\maketitle

\setcounter{tocdepth}{1}
\tableofcontents

\section{Introduction}
\label{sec_Intro}
Let $\uf$ and $\ug$ be primitive Hida families of cuspidal eigenforms. In their seminal work~\cite{BDP}, Bertolini--Darmon--Prasanna construct a $p$-adic $L$-function $L_p^{\rm BDP}(\ug\times \uf)$ that interpolates the central critical Rankin--Selberg $L$-values $L(g\times f, c)$ (suitably normalised by a period) when $\ug$ is a CM family and the specialisation $g$ of $\ug$ has higher weight than the speecialisation $f$ of $\uf$. The construction of this $p$-adic $L$-function relies on another fundamental result, which is an extension of the celebrated work of Waldspurger, that expresses the indicated $L$-value as the square of a toric period (which is non-zero only when $\varepsilon_v(g\times f,c)=+1$ for all local epsilon factors), which can be reduced to a finite sum involving CM points. This allowed them to prove their celebrated ``BDP'' ($p$-adic Waldspurger) formula\footnote{This has been generalised in \cite{LZZ2018} to cover the case of forms on non-split quaternion algebras.}, which roughly reads
$$L_p^{\rm BDP}(\ug\times\uf)_{\vert_{g\times f}}\,=\,\lim_{\substack{(\uf_\kappa,\ug_\lambda)\to (f,g)\\
\wt(\kappa)<\wt(\lambda)}} \mathcal{P}^{\rm tor}(\uf_\kappa\times \ug_\lambda)\, \dot{=} \, \log_{\rm BK}({\rm GHC}_{f\times g})\,\qquad \wt(f)>\wt(g)\,.$$ 
This formula expresses the value of the BDP $p$-adic $L$-function $L_p^{\rm BDP}(\ug\times\uf)_{\vert_{g\times f}}$ at a pair $(g,f)$ with $\wt(g)<\wt(f)$ (that lies outside its range of interpolation), which can be thought of as the $p$-adic limit of toric periods $\mathcal{P}^{\rm tor}(\uf_\kappa\times \ug_\lambda)$ in the $\ug$-dominant range, in terms of the Bloch--Kato logarithm of the generalised Heegner cycle ${\rm GHC}_{f\times g}$ of Bertolini--Darmon--Prasanna. This is the first instance of what we call (following S. Lai) \emph{wall-crossing principle}, and it is summarised in Figure~\ref{Figure_2025_08_18_1114} below.

\begin{center}
          \resizebox{0.4\linewidth}{!}{
 \tikzset{every picture/.style={line width=0.75pt}} 
 \begin{tikzpicture}[x=0.75pt,y=0.75pt,yscale=-1,xscale=1]

 \draw    (100,230) -- (100,42) ;
 \draw [shift={(100,40)}, rotate = 450] [color={rgb, 255:red, 0; green, 0; blue, 0 }  ][line width=0.75]    (10.93,-3.29) .. controls (6.95,-1.4) and (3.31,-0.3) .. (0,0) .. controls (3.31,0.3) and (6.95,1.4) .. (10.93,3.29)   ;
 \draw    (100,230) -- (420,230) ;
 \draw [shift={(425,230)}, rotate = 180] [color={rgb, 255:red, 0; green, 0; blue, 0 }  ][line width=0.75]    (10.93,-3.29) .. controls (6.95,-1.4) and (3.31,-0.3) .. (0,0) .. controls (3.31,0.3) and (6.95,1.4) .. (10.93,3.29)   ;
   \fill[gray!20] (100,230) -- (280,50)  -- (100,48) -- cycle;
       \fill[lightgray!20]  (100,230)  -- (280,50)  --  (400,230)  -- cycle;
  \draw    (100,230) -- (280,50) ;

 \draw (72,32.4) node [anchor=north west][inner sep=-5pt]    {${\rm wt}(f)$};
 \draw (400,240.4) node [anchor=north west][inner sep=0.75pt]   {${\rm wt}(g)$};
\draw (190,78.4) node [anchor=north west][inner sep=0.75pt]    {$\varepsilon=-1$ };

 \draw (290,138.4) node [anchor=north west][inner sep=0.75pt]    {$\varepsilon=+1$ };
  \draw (221,158.4) node [anchor=north west][inner sep=0.75pt]    {$ L_p^{\rm BDP}$  };
  \draw (221,208.4) node [anchor=north west][inner sep=0.75pt]    {{${\rm wt}(g) > {\rm wt}(f)$}};
   \draw  (120,57.4) node [anchor=north west][inner sep=0.75pt]    {${\rm wt}(f)>{\rm wt}(g)$};

\draw plot [domain=0:1, samples=50, variable=\t] 
    ({187.88 - \t*(187.88 - 152.12)},  
     {167.88 - \t*(167.88 - 132.12) + 7.5*sin(22.5*\t r)});  
\draw [shift={(146.12, 126.12)}, rotate=45] 
    [fill={rgb, 255:red, 0; green, 0; blue, 0}] 
    [line width=0.08] 
    [draw opacity=0] 
    (11.72,-6.15) -- (0,0) -- (11.72,6.15) -- (8.12,0) -- cycle;
 \end{tikzpicture}   
 }
      \captionsetup{width=0.75\linewidth, font=scriptsize}
      
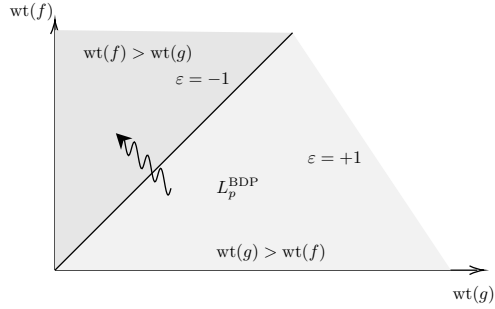
\captionof{figure}{Wall-crossing for $\GL2\times \GL2$}
       \label{Figure_2025_08_18_1114}
    \end{center}

A second example of wall-crossing has been studied in \cite{DR3, BSV} in the context of triple products, which is depicted in Figure~\ref{figure_2025_08_18_1113}: We let $\uh$ denote a third primitive Hida family (and we no longer assume that $\ug$ has CM). The local epsilon factors at the central critical point $\varepsilon_v(f\times g\times h)$ at a non-archimedean prime $v$ is constant as the classical specialisations $f\times g\times h$ of $\uf\times\ug\times\uh$ vary, and let us assume that they are equal to $+1$ for all such $v$. It turns out that, when the triple $(f,g,h)$ is balanced (namely, we have $\wt(x)+\wt(y)>\wt(z)$ for all permutations $(x,y,z)$ of $(f,g,h)$), the archimedean sign $\varepsilon_\infty(f\times g\times h)$ equals $-1$, and therefore also the global epsilon factor: $\varepsilon^{\rm bal}(f\times g\times h)=-1$. As a result, one expects cycles to explain the leading term of the associated $L$-series. These are the diagonal cycles of Gross--Kudla--Schoen, which Darmon--Rotger in \cite{DR3} and Bertolini--Seveso--Venerucci in \cite{BSV} interpolate as $f\times g\times h$ vary among the balanced classical specialisations of $\uf\times\ug\times\uh$. On the other hand, we have 
$$\varepsilon_\infty(f\times g\times h)=+1=\varepsilon^{(\ug)}(f\times g\times h)\,, \quad \hbox{ when } \wt(g)\geq \wt(f)+\wt(h)$$
in the $\ug$-dominant scenario. In this case, Hsieh in \cite{Hsieh2018} has constructed a $p$-adic $L$-function $L_p^{(\ug)}(\uf\times\ug\times\uh)$ whose square interpolates the central critical $L$-values $L(f\times g \times h, c)$ (suitably normalised by a period). This construction relies on an explicit Gan--Gross--Prasad (GGP) formula\footnote{We note that this is a generalisation of the work of Waldspurger and its extension by Bertolini--Darmon--Prasanna to the setting of the present paragraph.}, expressing the indicated $L$-value as a global trilinear period, and is one of the key ingredients of the reciprocity law of \cite{DR3, BSV}. This reciprocity law roughly reads
$$L_p^{(\ug)}(\uf\times\ug\times\uh)_{\vert_{f\times g\times h}}\,=\,\lim_{\substack{(\uf_\kappa,\ug_\lambda,\uh_\mu)\to (f,g,h)\\
\wt(\lambda)\geq \wt(\kappa)+\wt(\mu)}} \mathcal{P}_\Delta(\uf_\kappa, \ug_\lambda,\uh_\mu)\, \dot{=} \, \log_{\rm BK}({\rm \Delta}_{f\times g \times h})\,,\qquad (f,g,h) \hbox{ is balanced}\,.$$
This \emph{$p$-adic GGP limit formula} expresses the value of Hsieh's $p$-adic $L$-function $L_p^{(\ug)}(\uf\times\ug\times\uh)_{\vert_{f\times g\times h}}$, which can be thought of as the $p$-adic limit of trilinear periods $\mathcal{P}_\Delta(\uf_\kappa, \ug_\lambda,\uh_\mu)$ in the $\ug$-dominant range, at a balanced triple $(f,g,h)$ (that lies outside its range of interpolation) in terms of the Bloch--Kato logarithm of the diagonal cycle ${\rm \Delta}_{f\times g \times h}$.
\begin{center}
 \resizebox{0.45\linewidth}{!}{
 \tikzset{every picture/.style={line width=0.75pt}} 
 \begin{tikzpicture}[x=0.75pt,y=0.75pt,yscale=-1,xscale=1]

 \draw    (100,230) -- (100,42) ;
 \draw [shift={(100,40)}, rotate = 450] [color={rgb, 255:red, 0; green, 0; blue, 0 }  ][line width=0.75]    (10.93,-3.29) .. controls (6.95,-1.4) and (3.31,-0.3) .. (0,0) .. controls (3.31,0.3) and (6.95,1.4) .. (10.93,3.29)   ;
 \draw    (100,230) -- (420,230) ;
 \draw [shift={(425,230)}, rotate = 180] [color={rgb, 255:red, 0; green, 0; blue, 0 }  ][line width=0.75]    (10.93,-3.29) .. controls (6.95,-1.4) and (3.31,-0.3) .. (0,0) .. controls (3.31,0.3) and (6.95,1.4) .. (10.93,3.29)   ;
 \draw    (100,150) -- (180,230) ;
   \fill[gray!20] (100,150) -- (180,230)  -- (290,120) -- (210,40) -- cycle;
       \fill[lightgray!20]  (180,230)  -- (290,120)  --  (400,230)  -- cycle;
 \draw    (100,150) -- (210,40) ;
 \draw    (290,120) -- (180,230) ;
\draw plot [domain=0:1, samples=50, variable=\t] 
    ({227.88 - \t*(227.88 - 192.12)},  
     {207.88 - \t*(207.88 - 172.12) + 7.5*sin(22.5*\t r)});  

\draw [shift={(186.12, 166.12)}, rotate=45] 
    [fill={rgb, 255:red, 0; green, 0; blue, 0}] 
    [line width=0.08] 
    [draw opacity=0] 
    (11.72,-6.15) -- (0,0) -- (11.72,6.15) -- (8.12,0) -- cycle;
 \draw (72,32.4) node [anchor=north west][inner sep=-5pt]    {${\rm wt}(h)$};
 \draw (400,240.4) node [anchor=north west][inner sep=0.75pt]   {${\rm wt}(g)$};
 \draw  (165,142.4) node [anchor=north west][inner sep=0.75pt]    {${\rm Balanced}$};
 \draw (115,62.4) node [anchor=north west][inner sep=0.75pt]    {$L_p^{(\uh)}$};
 \draw (106,178.4) node [anchor=north west][inner sep=0.75pt]    {${L}^{(\underline{\mathbf f})}_{p}$};
  \draw (106,208.4) node [anchor=north west][inner sep=0.75pt]    {$\varepsilon=+1$};
 \draw (236,202.4) node [anchor=north west][inner sep=0.75pt]    {${L}^{(\underline{\mathbf g})}_{p}$};
  \draw (200,78.4) node [anchor=north west][inner sep=0.75pt]    {$\varepsilon=-1$ };

 \draw (280,158.4) node [anchor=north west][inner sep=0.75pt]    {$\varepsilon=+1$ };
  \draw (281,218.4) node [anchor=north west][inner sep=0.75pt]    {\tiny{${\rm wt}(g) \geq {\rm wt}(f)+{\rm wt}(h)$}};
 \end{tikzpicture}
 }
  \captionsetup{width=\linewidth, font=scriptsize}
      
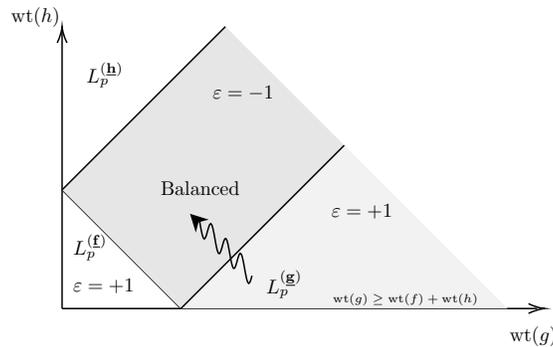
\captionof{figure}{Wall-crossing from $\ug$-dominant region to balanced region}
       \label{figure_2025_08_18_1113}
\end{center}

\subsection{Applications to Artin formalism} 
\label{subsec_2025_08_18_1530}
The key insight of this paper, as well as its precursors \cite{BS_triple_factor, bcpv, BC}, is that the Artin formalism of $p$-adic $L$-functions (that come about interpolating GGP periods) at points away from their range of interpolation is governed by the wall-crossing principle. In op. cit., the authors consider the case when $\uh=\ug^c$ is the family dual to $\ug$, so that we have a decomposition 
\begin{equation}
    \label{eqn_2025_09_23_0641}
    \uf \times \ug \times \ug^c \,\dot{=}\, \uf \times \ad^0(\ug) \boxplus \uf
\end{equation}
of families of automorphic representations, and formulate a conjecture that factors the $\ug$-dominant $p$-adic $L$-function $L_p^{(\ug)}(\uf\times\ug\times\ug^c)$ to reflect the decomposition \eqref{eqn_2025_09_23_0641}. We remark that such a factorisation is \emph{not} a soft consequence of Artin formalism, as the range of interpolation of $L_p^{(\ug)}(\uf\times\ug\times\ug^c)$ does not intersect the locus range where the decomposition \eqref{eqn_2025_09_23_0641} is valid. In \cite{bcpv}, the factorisation conjecture is reduced to the proof of arithmetic GGP conjectures\footnote{These are extensions of Gross--Kudla conjectures to allow non-semistable cases.} in the balanced range. As a result, one may treat this factorisation problem as a recipe to encode the interplay between $p$-adic GGP formulae (of Bertolini--Seveso--Venerucci and Darmon--Rotger) and arithmetic GGP formulae (cf. \cite{YZZ, HangXue}) in this degenerate scenario (where $\uh=\ug^c$).

We remark that the factorisation conjecture of \cite{BS_triple_factor} has been proved unconditionally in \cite{BC} in the special case when $\uh=\ug^c$ has CM by an imaginary quadratic field $K$, whose non-trivial character is denoted by $\epsilon_K$. This result relies on the factorisation of the BDP $p$-adic $L$-function $L_p^{\rm BDP}(\ug_K\times\uf)_{\vert_{g_{\rm Eis}}}$ when the canonical Hida family $\ug_K$ (as in \cite{BDV}, \S4.2) is specialised to $g_{\rm Eis}$ (a weight-one Eisenstein series), which reflects the decomposition $g_{\rm Eis}\times \uf=\uf \boxplus \uf\otimes\epsilon_K$. The proof of the factorisation of $L_p^{\rm BDP}(\ug_K\times\uf)_{\vert_{g_{\rm Eis}}}$ is deduced from the BDP formula in this setting, paralleling the approach outlined in the previous paragraph.

\subsection{This paper} Our goal in the present work is to develop a $p$-adic Artin formalism in the context of $p$-adic families of automorphic forms on $\GSp_4 \times \GL_2 \times \GL_2$, extending the work of the first and third named authors outlined in \S\ref{subsec_2025_08_18_1530}. 

To make this precise, let us fix an automorphic representation  $\Pi$ of $\GSp_4$, and assume that the $\GL_2 \times \GL_2$-factor is the anti-symmetric product $\sigma\times \sigma^c$. In this scenario, the degree--16 $L$-series 
$$L(s,\Pi\times \sigma\times  \sigma^c)=L(s,\Pi\times {\rm ad}^0\,\sigma)\,L(s,\Pi)$$ 
factors into a product of a degree-12 and degree-4 $L$-series.
As $\Pi$ and $\sigma$ vary in $p$-adic families, one expects (``$p$-adic Artin formalism'') a corresponding factorisation of the degree--16 $p$-adic $L$-functions. Our primary focus will be those scenarios where the sought-after factorisation is \emph{not} a direct consequence of the interpolation formulae characterising the $p$-adic $L$-functions. 

The first objective of this paper is to formulate a precise factorisation conjecture for $p$-adic $L$-functions attached to families of forms on $\GSp_4\times \GL_2\times\GL_2$, and the second is to prove its variant for algebraic $p$-adic $L$-functions. As in \cite{BS_triple_factor}, our guide is the wall-crossing principle that ties together the ($p$-adic limits of) explicit GGP formulae and arithmetic GGP formulae (both of which appear to be out of reach for the time being), together with the Equivariant Tamagawa Number Conjecture (ETNC).

Before we move ahead to describe our work in detail, we remark a fundamental difference between the consideration here concerning the $p$-adic $L$-function whose interpolation range is ``Region (a)'' (in the terminology of \cite{LZ20}), and that in \cite{BS_triple_factor} outlined above: One needs to wall-cross twice. This point is detailed in \S\ref{subsubsec_2025_08_18_1601} below.

\subsection{The setting of this paper}
We fix a family of cuspidal automorphic representations $\upi$ of $\GSp_4$, together with a family of cuspidal automorphic representations $\underline{\sigma}$ of $\GL_2$. We denote by $\underline{\sigma}^c$ the conjugate Hida family, namely the twist of $\underline{\sigma}$ by the inverse of its central character. The weights of a classical specialisation of $\upi$ will be denoted by a pair $(k_1,k_2)$, where $k_1 \geq k_2 \geq 3$, and we write $c_1$ and $c_2$ for the weights of classical specialisations of $\underline{\sigma}$ and $\underline{\sigma}^c$, respectively. In \cite[\S2.3]{LZ20}, the authors explicate the ``GGP regions'', determined in terms of the interlacing relations of the quadruple of weights $(k_1,k_2,c_1,c_2)$; cf. \S\ref{subsubsec_2025_08_19_0801}. 

\subsubsection{} One expects a $p$-adic $L$-function in each of these regions, characterised by an interpolation property involving the square roots of the central critical values. When the global epsilon factor equals $-1$ in a region $(\diamond)$ (as is the case when $\diamond\in \{b, e\}$), then the said $L$-values vanish, and as a result, so does the corresponding $p$-adic $L$-function. We call these regions ``geometric'', as one expects the existence of cycles to explain this systematic order of vanishing (\`a la arithmetic GGP conjectures). In \S\ref{subsubsec_2025_08_26} we review the conjectural descriptions of these $p$-adic $L$-functions, which we denote by $L_p^{(\diamond)}(\underline{\Pi} \times \underline{\sigma} \times \underline{\sigma}^c)$, where $\diamond \in \{a,c,d,f\}$ (those GGP regions where the global sign equals $+1$). 

\subsubsection{}  The main objective of this article is to study a $p$-adic Artin formalism for these degree--16 $p$-adic $L$-functions (cf. our Factorisation Conjecture~\ref{conj_2025_08_21_1552} and Conjecture~\ref{conj_2025_08_22_0031}), with an emphasis on regions (a) and (d) where the intersection of the range of interpolation of $L_p^{(\diamond)}(\underline{\Pi} \times \underline{\sigma} \times \underline{\sigma}^c)$ does not intersect the locus where Artin formalism is relevant (i.e., the locus determined by the diagonal of the family $\sigma\times \sigma^c$, over which have a decomposition $\upi\times \ad\underline\sigma=\upi\times \ad^0\underline\sigma \boxplus \upi$ of families of automorphic representations). As a shorthand for the restriction of $L_p^{(\diamond)}(\underline{\Pi} \times \underline{\sigma} \times \underline{\sigma}^c)$ to this locus, we sometimes use the notation $L_p(16 \diamond)$ (where $16$ is the degree of the $L$-functions whose central values are being interpolated).

\subsubsection{}  Our Factorisation Conjecture~\ref{conj_2025_08_21_1552} for $L_p(16d)$ involves another (also conjectural) degree--12 $p$-adic $L$-function $L_p(12e)$, reflecting the decomposition of the underlying degree-16 motives into a direct sum of degree-12 and degree-4 motives alluded to above.

\subsubsection{} As notation for later use, we write $\mathcal R_{\underline{\Pi}}$ the Hecke algebra of $\upi$ (see \S\ref{subsubsec_2025_08_26_1220} for its precise definition), and we define $\mathcal R_{\underline{\sigma}}$ in a similar way (cf. \S\ref{subsec_2025_08_25_1050}). We set
\[
  \cR_4 := \cR_{\upi} \,\widehat \otimes_{\mathbb Z_p}\, \cR_{\underline{\sigma}}  \,\widehat \otimes_{\mathbb Z_p}\, \cR_{\underline{\sigma}},
  \qquad
  \cR_3 := \cR_{\upi}  \,\widehat \otimes_{\mathbb Z_p}\, \cR_{\underline{\sigma}}.
\]

\subsection{An overview} We briefly outline the main results of the paper.

\subsubsection{Conjectural factorisation formulae}
As we have remarked above, the main purpose of this work is to puzzle out the form of Artin formalism for the $p$-adic $L$-functions $L_p(16d)$ and $L_p(16a)$: these are formulated as Conjecture~\ref{conj_2025_08_21_1552} and Conjecture~\ref{conj_2025_08_22_0031} below. We refer the reader to \S\ref{remark_intro_BDP_principle}, where we discuss the motivation behind these conjectures, from the perspective offered by the (double) \emph{wall-crossing} principle, which rests upon conjectural \emph{$p$-adic GGP formulae} in this context. 

The factorisation problem for $L_p(16d)$ seems to parallel the analogous problem studied in \cite{BS_triple_factor} in the context of forms on $\GL_2\times\GL_2\times\GL_2$. As a matter of fact, in \S\ref{subsec_2025_08_06_1504_bis} (see especially Theorem~\ref{thm_2025_08_21_1108}), we explain that a degenerate case of  Conjecture~\ref{conj_2025_08_21_1552} is equivalent to the factorisation problem considered in op. cit.

On the other hand, Factorisation Conjecture~\ref{conj_2025_08_22_0031} concerning $L_p(16a)$ seems far more subtle, requiring a novel input. We discuss the underlying key principles in~\S\ref{subsubsec_2025_08_18_1601}.

Note that even the $p$-adic $L$-functions $L_p(16d)$ and $L_p(16a)$ are currently conjectural (as well as some of the factors that appear in their conjectural factorisation formulae) in the level generality they are introduced in our paper. Our evidence in support of our Factorisation Conjecture~\ref{conj_2025_08_21_1552} and Conjecture~\ref{conj_2025_08_22_0031} are in two forms: Firstly, we formulate and prove their algebraic counterparts; cf. \S\ref{subsubsec_2025_08_25_1156} for an outline of our results in this vein.  Secondly, we obtain unconditional results in endoscopic cases; cf. \S\ref{subsubsec_2025_08_25_1157} for an overview of our work in this direction.

\subsubsection{Algebraic counterpart}
\label{subsubsec_2025_08_25_1156}
Sections~\ref{sec_Selmer}--\ref{sec_factorisation_main} are dedicated to prove algebraic counterparts of our factorisation conjectures. The algebraic counterparts of the conjectural $p$-adic $L$-functions $L_p(16\diamond)$ and $L_p(12e)$ are characteristic ideals of appropriately defined Selmer groups (which arise as the cohomology of a Selmer complex in degree $2$; cf. \S\ref{subsec_2025_08_25_1436}), which one calls ``algebraic $p$-adic $L$-functions''. In this introduction, we shall denote them by $L_p^{\rm alg}(16\diamond)$ and $L_p^{\rm alg}(12e)$.

However, as the propagations of the $p$-refinements of the degree-16 family of motives to degree-12 and degree-4 summands no longer satisfy the Panchiskin condition, the factorisation of the algebraic $p$-adic $L$-functions does not simply involve (as in the case of \cite{palvannan18}) algebraic $p$-adic $L$-functions associated to the degree-12 and degree-4 summands, but rather modules of leading terms that we introduce and systematically study in \S\ref{sec_Koly_Sys}.

Our Theorem~\ref{thm_5_1_2025_02_14_1424} indeed factors $L_p^{\rm alg}(16d)$ as a product of $L_p^{\rm alg}(12e)$ and the trivialisation of the module of leading terms associated to the Galois representation $T_\upi^\dagger$ of rank $4$. The proof of this result closely follows the arguments of \cite{BS_triple_factor}; we give a second proof of this theorem in Remark~\ref{remark_2025_03_10_1606}.

The factorisation of $L_p^{\rm alg}(16a)$ is far more subtle, as based on our double wall-crossing principle (cf. \S\ref{subsubsec_2025_08_18_1601}), it is expected to be a $p$-adic avatar of second-order central derivatives of $L$-functions. We study this problem in \S\ref{subsec_5_2_2025_02_11}. Our main result, Theorem~\ref{thm_main_double_wall_crossing}, in this subsection factors $L_p^{\rm alg}(16a)$ into a product of trivialisations of $2$ modules of leading terms (associated with each family of rank-$12$ and rank-$4$ sub-representations).

Our algebraic factorisation results in \S\ref{sec_factorisation_main} are supplemented in \S\ref{subsec_2025_08_25_1428}, where we treat the case of Yoshida lifts in the region (a). Our main result in this subsection is Corollary~\ref{cor_2025_08_14_1551}. We also refer the reader to Corollaries~\ref{cor_2025_08_06_1325}--\ref{cor_2025_08_07_0919} and Remark~\ref{rem:yoshida_log}, where we explain why the modification in \S\ref{subsec_2025_08_25_1428} is required (in view of the properties of Eichler--Shimura isomorphisms in this context).

\subsubsection{Endoscopic scenarios} 
\label{subsubsec_2025_08_25_1157}
When $\underline{\Pi}$ is a family of Yoshida lifts, one may give an ad hoc (but unconditional) definition of $L_p(16a)$ and $L_p(16d)$ in terms of Hsieh's triple product $p$-adic $L$-functions (cf. \S\ref{subsubsec_2025_08_25_1205} and \S\ref{subsec_2025_08_06_1504_bis}), and our Factorisation Conjecture~\ref{conj_2025_08_21_1552} for $L_p(16d)$ can be reduced to one considered in \cite{BS_triple_factor,bcpv} (cf. Theorem \ref{thm_2025_08_21_1108}).

The Factorisation Conjecture~\ref{conj_2025_08_22_0031} concerning $L_p(16a)$ can be formulated as Conjecture~\ref{conj_2025_08_06_1401}. As we note at the start of \S\ref{sec_2025_02_06_0708}, a reformulation is indeed required due to the interpolative properties of Eichler--Shimura isomorphisms (cf. \S\ref{subsec_2025_02_04_1413}). The proof of Conjecture~\ref{conj_2025_08_06_1401} is then reduced to a pair factorisation problems for families on $\GL_2\times \GL_2\times\GL_2$, one of which is the one considered in \cite{BS_triple_factor,bcpv}, and the other (cf. Equation~\eqref{eqn_2025_08_25_1222} below), that supplements these works, is work in progress by the first and third named authors with A. Cauchi.

Finally, in \S\ref{subsec_2025_08_25_1211}, we consider the case when $\upi$ is a (one-parameter) family of Saito--Kurokawa lifts of a Hida family $\uf$ of ordinary modular forms, and $T_\upi^\dagger=T_{\underline{\mathbf f}}^\dagger \oplus \mathbb Z_p \oplus \mathbb Z_p(1)$. In this case, the factorisation problem once again reduces to a combination of results established in \cite{BS_triple_factor} and \cite{Das}, and this is summarised as Proposition \ref{prop_2025_08_14_1443}.

\subsection{Wall-crossing principle}
\label{remark_intro_BDP_principle}
We briefly discuss the (BDP) ``wall-crossing'' principle\footnote{This nomenclature is due to Shilin Lai.}, which is an extension of the BDP principle in \cite[\S2.2.5]{BS_triple_factor}, that governs our factorisation conjectures. This will also highlight the differences with the scenario considered in \cite{BS_triple_factor} in the scenario ``(a) to (c)'' (cf. \S\ref{subsec_5_2_2025_02_11}) which requires ``double wall-crossing'': first from (a) to (b), then from (b) to (c). 

To facilitate this rather philosophical discussion, for $D\in \{12,16\}$, and $?\in \{a,b,c,d,e\}$, let us denote by $L_p(D?)$ the restriction of the (conjectural) $p$-adic $L$-function interpolating the central critical values of the degree-D $L$-series $L(s, \Pi\times \sigma \times \sigma^c)$ when the weights belong to (?), to $\cX:= {\rm im}({\rm Spec}\,\cR_3 \xrightarrow{\iota_{3,4}} {\rm Spec}\,\cR_4)$. Let us denote the classical specialisations in $\cX$ by $\cX^{\rm cl}$ (that correspond to classical forms on ${\rm GSp}_4\times \GL_2\times\GL_2$), and by $\cX^{\rm cl}_?$ those classical specialisations with weights in the region (?). 

Throughout this discussion, we retain our running assumption that the global root number $\varepsilon(\upi)$ of all members of the Hida family $\upi$ at their central critical points is $-1$.

\subsubsection{Crossing from (d) to (e)} 
\label{subsubsec_2025_02_11_1659}
Notice that the interpolation range of the conjectural $p$-adic $L$-function $L_p(16d)$ is $\cX^{\rm cl}_d$, which is empty. On the other hand, $\cX^{\rm cl}_e$ is dense in $\cX$. Recall also that, by assumption, the global root number of $\upi_\kappa\times {\rm ad}^0 \underline{\sigma}_\lambda$ at its central critical point equals $-1$ for all $(\kappa,\lambda)\in \cX_e^{\rm cl}$. 

Furthermore,  whenever $(\kappa,\lambda)\in \cX^{\rm cl}_e$, $s=\frac{1}{2}$ is critical\footnote{See \S\ref{subsec_2025_09_24_1234} for a further discussion on this point.} (in the sense of Deligne) for both degree-12 $L$-series $L(s,\upi_\kappa\times {\rm ad}^0 \underline{\sigma}_\lambda)$ and the degree--4 $L$-series $L(s,\upi_\kappa)$. Note that we have 
$$\varepsilon(\upi_\kappa)=-1\,, \quad \varepsilon(\upi_\kappa\times {\rm ad}^0 \underline{\sigma}_\lambda)=+1,\qquad\qquad   \forall\,(\kappa,\lambda) \in \cX^{\rm cl}_e\,,$$
and the Artin formalism for such $(\kappa,\lambda)$ yields the factorisation
\begin{equation}
\label{eqn_2025_02_11_1627}
    L'(\tfrac{1}{2},\upi_\kappa\times {\rm ad}\, \underline{\sigma}_\lambda)\,=\,L(\tfrac{1}{2},\upi_\kappa\times {\rm ad}^0 \underline{\sigma}_\lambda)\,\cdot\, L'(\tfrac{1}{2},\upi_\kappa)\,.
\end{equation}
Inspired by the fundamental results in \cite{BDP}, we adopt the guiding principle that 
\begin{itemize}
    \item[\mylabel{item_BDP1}{($\mathbf{BDP}_{e}^{d}$)}] the $p$-adic $L$-function $L_p(16d)$ should be thought of as a $p$-adic avatar of the family of derivatives 
    $$\left\{L'(\tfrac{1}{2},\upi_\kappa\times {\rm ad}\, \underline{\sigma}_\lambda)\,:(\kappa,\lambda)\in \cX^{\rm cl}_e\right\}\,.$$ 
\end{itemize}
This, combined with \eqref{eqn_2025_02_11_1627}, in turn suggests that 
$$L_p (16d)_{\vert_{(e)}}\, \dot{=} \, L_p(12e)\,\cdot ``\hbox{a $p$-adic avatar of }\left\{L'(\tfrac{1}{2},\upi_\kappa)\,:\kappa\in {\rm Spec}(\cR_\upi)^{\rm cl}\right\}"\,.$$
This is the form of our factorisation conjecture for the $p$-adic $L$-function $L_p(16d)$, as well as its algebraic variant concerning modules of leading terms. We invite the reader to compare this discussion to \cite[\S2.2.5]{BS_triple_factor}, where the similarity of the problem at hand to that considered in op. cit. will be evident.

\begin{remark}
    The wall-crossing principle can be recast as follows. The $p$-adic limits of the explicit GGP formulae on the region (d) approximating the points in the region (e) (which give rise to $L_p (16d)_{\vert_{(e)}}$) are $p$-adic avatars of the arithmetic GGP formulae in the region (e). This is an extension of the principles underlying \cite{BDP}: Indeed, in the setting of op. cit., the BDP $p$-adic $L$-function is constructed building on the Waldspurger formula, and the arithmetic GGP formulae boils down to Gross--Zagier formulae for generalised Heegner cycles. \hfill$\blacksquare$
\end{remark}

\subsubsection{Crossing from (a) to (c) through (b)}
\label{subsubsec_2025_08_18_1601}
We now discuss the case that concerns the factorisation of the $p$-adic $L$-function $L_p(16a)$, which involves a new phenomenon (that we call ``double wall-crossing'').

As in the previous case, $\cX^{\rm cl}_a$ is empty, but so is $\cX^{\rm cl}_b$.  This is the key difference with \S\ref{subsubsec_2025_02_11_1659}, and that is why \ref{item_BDP1} will not apply. That also means, the search for an arithmetic meaning of $L_p(16a)$ leads us to the region (c) that region (b) ``neighbours'' (cf. \cite{LZ20}, Figure 2). Note that $\cX^{\rm cl}_c$ is dense in $\cX$, and by assumption, the global root number of $\upi_\kappa\times {\rm ad}^0 \underline{\sigma}_\lambda$ at its central critical point $c(\kappa)$ equals $+1$ for all $(\kappa,\lambda)\in \cX_c^{\rm cl}$.

The value $s=\frac{1}{2}$ is critical (in the sense of Deligne) for both $L$-series $L(s,\upi_\kappa\times {\rm ad}^0 \underline{\sigma}_\lambda)$ and $L(s,\upi_\kappa)$ whenever $(\kappa,\lambda)\in \cX^{\rm cl}_c$. Furthermore, we have 
$$\varepsilon(\upi_\kappa)=-1=\varepsilon(\upi_\kappa\times {\rm ad}^0 \underline{\sigma}_\lambda),\qquad\qquad  \forall\, (\kappa,\lambda) \in \cX^{\rm cl}_c\,,$$
and the Artin formalism for such $(\kappa,\lambda)$ yields the factorisation
\begin{equation}
\label{eqn_2025_02_11_1710}
    L''(\tfrac{1}{2},\upi_\kappa\times {\rm ad}\, \underline{\sigma}_\lambda)\,=\,L'(\tfrac{1}{2},\upi_\kappa\times {\rm ad}^0 \underline{\sigma}_\lambda)\,\cdot\, L'(\tfrac{1}{2},\upi_\kappa)\,.
\end{equation}
of second-order derivatives. The key insight that led us to our factorisation conjecture is the following:
\begin{itemize}
    \item[\mylabel{item_BDP2}{($\mathbf{BDP}_{c}^{a}$)}] the $p$-adic $L$-function $L_p(16a)$ should be thought of as a $p$-adic avatar of the family of derivatives 
    $$\left\{L''(\tfrac{1}{2},\upi_\kappa\times {\rm ad}\, \underline{\sigma}_\lambda)\,:(\kappa,\lambda)\in \cX^{\rm cl}_c\right\}\,,$$
\end{itemize}
which, combined with \eqref{eqn_2025_02_11_1710}, suggests\footnote{This insight can be made explicit in the context of endoscopic families; cf. \S\ref{sec_2025_02_06_0708}.} 
\begin{align*}
L_p (16a)\, &\dot{=} \, ``\hbox{a $p$-adic avatar of }\left\{L'(\tfrac{1}{2},\upi_\kappa\times {\rm ad}^0 \underline{\sigma}_\lambda)\,:(\kappa,\lambda)\in \cX^{\rm cl}_c\right\}"\\
&\qquad \times ``\hbox{a $p$-adic avatar of }\left\{L'(\tfrac{1}{2},\upi_\kappa)\,:\,\kappa\in {\rm Spec}(\cR_\upi)^{\rm cl}\right\}"\,.
\end{align*}
This is the form of our factorisation conjecture for the $p$-adic $L$-function $L_p(16a)$, as well as its algebraic variant concerning modules of leading terms.

\begin{remark}
    The double-wall-crossing principle can be thought of as follows. The $p$-adic limits of the explicit GGP formulae on the region (a) approximating the points in the region (c) (which give rise to $L_p (16a)_{\vert_{(c)}}$) are $p$-adic avatars of degree--2 arithmetic GGP formulae\footnote{This presently remains a fantasy, even in a conjectural form. However, in the degenerate picture we have placed ourselves in, one may cast this as a pair of arithmetic GGP formulae. Our Theorem~\ref{prop:delta_a-formula2} is the algebraic counterpart of a BDP (``$p$-adic GGP'') formula of degree $2$ for $\upi\times \underline{\sigma}\times\underline{\sigma}^c$.} in the region (c). 
    \hfill$\blacksquare$
\end{remark}

\subsection{Acknowledgements}
KB’s research in this publication was conducted with the financial support of Taighde \'{E}ireann -- Research Ireland under Grant number IRCLA/2023/849 (HighCritical). OR was supported by PID2023-148398NA-I00, funded by MCIU/AEI/10.13039/501100011033/FEDER, UE. RS was supported by JSPS KAKENHI Grant Number 24K16886.

\section{Hida families and $p$-adic $L$-functions}

In this section, we recall the notations and properties we will be using around Hida families, both in the $\GL_2$ and in the Siegel cases, and also around $p$-adic $L$-functions, where most of the results are still conjectural. We also fix the general assumptions that will be imposed along the work.

\subsection{Hida families}
\label{subsec_2025_08_25_1050}
Let $p$ be an odd prime and let $\mathcal O$ be the ring of integers of a finite extension $E$ of $\mathbb Q_p$. Let us put $[ \,\cdot\, ] \colon \mathbb Z_p^{\times} \hookrightarrow \Lambda(\mathbb Z_p^{\times})^{\times}$ to denote the natural injection. The universal weight character $\chi$ is defined as the composite map $$  \chi \colon G_{\mathbb Q} \xrightarrow{\chi_{\cyc}} \mathbb Z_p^{\times} \hookrightarrow \Lambda(\mathbb Z_p^{\times})^{\times},  $$  where $\chi_{\cyc}$ is the $p$-adic cyclotomic character. We take our normalisations in such a way that the Hodge--Tate weight of the cyclotomic character is $-1$.

\subsubsection{}A ring homomorphism $\nu \colon \Lambda_{\wt} := \Lambda(\mathbb Z_p^{\times}) \rightarrow \mathcal O  $ is called an arithmetic specialisation of weight $k \in \mathbb Z$ if the compositum $G_{\mathbb Q} \xrightarrow{\chi} \Lambda_{\wt}^{\times} \xrightarrow{\nu} \mathcal O  $  agrees with $\chi_{\cyc}^k$ on an open subgroup of $G_{\mathbb Q}$. We also regard integers as elements of the weight space via 
$$  \mathbb Z \rightarrow \Hom_{\mathcal O}(\Lambda_{\wt}, \mathcal O)\,, \qquad n \mapsto (\nu_n \colon [x] \mapsto x^n).  $$  
For an integer $k$, we define 
$\Lambda(\mathbb Z_p^{\times})^{(k)} \simeq \Lambda(1+p\mathbb Z_p)$ as the component determined by the weight $k$.

\subsubsection{} We let $\underline{\sigma} = \sum_{n=1}^{\infty} a_n(\underline{\sigma})q^n \in \mathcal R_{\underline{\sigma}}[[q]]$ denote the branch of the primitive Hida family of tame conductor $N_{\underline{\sigma}}$ and tame nebentype $\varepsilon_{\underline{\sigma}}$, which admits a crystalline specialisation $f_{\circ}$ of weight $k$. The universal weight character $\chi$ give rise to a character $$  \chi_{\underline{\sigma}} \colon G_{\mathbb Q} \xrightarrow{\chi} \Lambda_{\wt}^{\times} \twoheadrightarrow \Lambda(\mathbb Z_p^{\times})^{(k), \times} \rightarrow \mathcal R_{\underline{\sigma}}^{\times}.  $$ 

\subsubsection{} By the fundamental work of Hida, there is a big Galois representation $$  \rho_{\underline{\sigma}} \colon G_{\mathbb Q, \Sigma} \rightarrow \GL_2(\Frac(\mathcal R_{\underline{\sigma}}))  $$  attached to $\underline{\sigma}$, where $\Sigma$ is a finite set of primes containing all those dividing $pN_{\underline{\sigma}} \infty$ and $\Frac(\mathcal R_{\underline{\sigma}})$ stands for the field of fractions of $R_{\underline{\sigma}}$. We denote by $T_{\underline{\sigma}} \subset \text{Frac}(\mathcal R_{\underline{\sigma}})^{\oplus 2}$ the Ohta lattice, where $T_{\underline{\sigma}}$ corresponds to $M_{\underline{\sigma}}^*$ in the notations of \cite{KLZ}, which realize the Galois representation $\rho_{\underline{\sigma}}$ in the \'etale cohomology groups of a tower of modular curves. 

\subsubsection{} We make the following assumptions.
\begin{ass}\label{ass-gl2}
The Galois representation $T_{\underline{\sigma}}$ satisfies the following conditions:
\begin{enumerate}
\item[(a)] The residual Galois representation of $T_{\underline{\sigma}}$ is irreducible.
\item[(b)] The residual Galois representation is $p$-distinguished (that is, its semisimplification is not scalar).
\end{enumerate}
\end{ass}
We remark that Condition (a) assures that any $G_{\mathbb Q}$-stable lattice in the field of fractions is homothetic to $T_{\underline{\sigma}}$, whereas Condition (b) supplies one with an integral $p$-ordinary filtration. We assume, unless explicitly stated otherwise, that these assumptions hold for all Hida families on $\GL_2$ that appear in this work. We also assume the analogous conditions for all modular forms (i.e. not only for the families).

\subsection{Preliminaries on Siegel modular forms}\label{subsec:Siegel}

We denote by $G$ the group scheme $\GSp_4$ (over $\mathbb Z$), defined with respect to the anti-diagonal matrix $J = \begin{smatrix}&&&1\\ &&1 \\ &-1 \\ -1 \end{smatrix}$; and we let $\nu$ be the multiplier map $G \to \GG_m$, where $\GG_m$ is the multiplicative group. We define $H = \GL_2 \times_{\GL_1} \GL_2$, which we embed into $G$ via 
$$  \iota \, : \, \Big[
   \begin{pmatrix}a&b\\c&d\end{pmatrix}, \begin{pmatrix}a'&b'\\c'&d'\end{pmatrix} \Big]
   \longmapsto \begin{pmatrix}a&&&b\\&a'&b'\\&c'&d'\\c&&&d\end{pmatrix}.
 $$ 
Let $B_G$ for the upper-triangular Borel subgroup of $G$, and $P_{\Si}$ and $P_{\Kl}$ for the standard Siegel and Klingen parabolics containing $B_G$, so that
$$  P_{\Si} =
   \begin{smatrix}\star & \star & \star & \star \\ \star & \star & \star & \star\\
   && \star & \star\\ && \star & \star \end{smatrix}, \quad
   P_{\Kl} =
   \begin{smatrix}\star & \star & \star & \star \\  & \star & \star & \star\\
   &\star & \star & \star\\ &&& \star \end{smatrix}.
 $$ 
We write $B_H = \iota^{-1}(B_G) = \iota^{-1}(P_{\Si})$ for the upper-triangular Borel of $H$. 

\subsubsection{}
Let $\Pi$ be a cuspidal automorphic representation of $G(\mathbb A_{\fin})$, with $\Pi_{\infty}$ of weights $(k_1,k_2)$, with $k_1 \geq k_2 \geq 3$. Let us put $ r_i := k_i - 3$, for $i=1,2$. Let $\{\alpha,\beta,\gamma,\delta\}$ be the Hecke parameters of $\Pi_p$ in the sense of \cite[\S5.1.5]{Pil20} or \cite[Theorem 10.1.3]{LSZ22}, where we borrow our conventions. More precisely, they are the reciprocal of the roots of a precise Hecke polynomial at $p$, $P_p(X)$, defined in Theorem 10.1.3(3) of loc.\,cit.
This means that we can normalize $(\alpha,\beta,\gamma,\delta)$ in such a way that all four are algebraic integers of complex absolute value $p^{(k_1+k_2-3)/2}$, and ordered such that $\alpha \delta = \beta \gamma = p^{(k_1+k_2-3)} \chi_{\Pi}(p)$ and $0 \leq v_p(\alpha) \leq \ldots \leq v_p(\delta)$. 
Here $\chi_\Pi$ denotes the Dirichlet character associated with the central character of $\Pi$.

\subsubsection{}
Let $\Iw_G(p)$ denote the Iwahori subgroup of $G$ associated with $B_G$. We consider the following operators in the Hecke algebra of level $\Iw_G(p)$, acting on the cohomology of any of the sheaves introduced above:

\begin{itemize}
\item The Siegel operator $\mathcal U_{\Si} = [\diag(p,p,1,1)]$, as well as its dual $\mathcal U_{\Si}' = [\diag(1,1,p,p)]$.
\item The Klingen operator $\mathcal U_{\Kl} = p^{-r_2} \cdot [\diag(p^2,p,p,1)]$, as well as its dual $\mathcal U_{\Kl}' = p^{-r_2} \cdot [\diag(1,p,p,p^2)]$.
\item The Borel operator $\mathcal U_B = \mathcal U_{\Si} \cdot \mathcal U_{\Kl}$, as well as its dual $\mathcal U_B' = \mathcal U_{\Si}' \cdot \mathcal U_{\Kl}'$.
\end{itemize}
A Hecke eigenform is called \emph{Siegel} (resp.~\emph{Klingen}) ordinary if the corresponding eigenvalue is a $p$-adic unit. It is called \emph{Borel} ordinary if it is both Siegel and Klingen ordinary.

Throughout this article, we assume that $\Pi_p$ is Borel-ordinary, which implies that it is both Klingen-ordinary and Siegel-ordinary. In this case, we may and will assume that $(\alpha,\beta,\gamma,\delta)$ have $p$-adic valuations $(0,\,r_2+1,\,r_1+2,\,r_1+r_2+3)$, respectively.

\subsection{Families of (Borel-ordinary) Siegel modular forms}\label{subsec:families}

We recall the definition of a Hida family $\underline{\Pi}$, following the notation of Loeffler--Zerbes \cite{LZ20-bk} and \cite[\S10.4]{LZ21-erl}. Note that in the former reference, the authors consider only one parameter (what they call a Siegel-type Hida family), allowing parallel variation of the weights. Moreover, since we are concerned here solely with Borel-ordinary families, we may and shall work in the setting of \cite{TU99}.  

\subsubsection{}
We begin by a discussion of the notion of a \emph{family of Siegel automorphic forms}, following the conventions of Tilouine--Urban \cite{TU99} and Loeffler--Zerbes \cite{LZ20-bk}, \cite{LZ21-erl}. We also refer the reader to \cite{AIP15} for a more detailed account of the geometry of Siegel eigenvarieties.

\subsubsection{} Let $K$ be a finite extension of $\mathbb Q_p$, and let $\mathcal O$ denote its ring of integers. Following \cite{TU99}, let us set $\mathcal R = \mathcal O[[T_1,T_2]]$. Let $\Omega = \Spf(\mathcal R)(\mathcal O)$ denote the weight space. For any pair $(a,b)$ with $a > b > 0$, define the arithmetic prime $\mathcal P_{a,b}$ of $\Lambda$ as the kernel of the homomorphism $\Lambda \to \mathcal O$ given by $T_1 \mapsto (1+p)^a - 1$ and $T_2 \mapsto (1+p)^b - 1$. Such a morphism is called a \emph{classical point} of $\Omega$.

\subsubsection{} 
\label{subsubsec_2025_08_26_1220}
Following \cite{boxerpilloni20} and, in particular, \cite[\S10.4]{LZ21-erl}, there exist graded coherent sheaves $H^k(\mathcal M_{\text{cusps},w_j}^{\bullet,-,\text{fs}})$ on $\Omega_{\Pi}$ for $0 \leq j,k \leq 3$, whose pushforward to $\Omega$ is the corresponding $H_{w_j,\mathrm{an}}^k(K^p, \nu_U, \text{cusp})^{(-,\text{fs})}$. Here we adopt the conventions of \cite[\S9.5]{LZ21-erl} for cuspidal, locally analytic, overconvergent cohomology; in particular, $w_j$ denotes the Kostant representative defined in \S2.1 of loc.\,cit., $K^p$ denotes the level structure, and $\nu_U$ is a suitable character of the torus. 

\subsubsection{} 
Given any finite flat extension $\widetilde {\mathcal R}$ of $\mathcal R$, let $\widetilde \Omega := \Spf(\tilde{\mathcal R})(\mathcal O)$. This space is equipped with its natural $p$-adic topology and with a natural projection ${\rm wt}\, \colon\, \widetilde \Omega \to \Omega$ (the weight map) induced by the inclusion of $\mathcal O$-algebras $\mathcal R \subset \widetilde {\mathcal R}$. A point $x \in \widetilde \Omega$ such that $\wt(x)$ is a classical point of $\Omega$ is called a \emph{classical point} of $\widetilde \Omega$, and the set of all such points is denoted by $\widetilde \Omega_{\cl}$. Recall that we denote by $\chi$ the cyclotomic character, and by $\hr_1$,  $\hr_2$ the universal characters associated to the two factors (associated with the coordinates $T_1$ and $T_2$) of the weight space.

\subsubsection{}
We are now in a position to introduce the notion of a \emph{family of automorphic representations}, which will be used in our subsequent applications.

\begin{definition}
A {\it family of automorphic representations} $\underline{\Pi}$ of tame level $N_{\upi}$ and tame central character $\chi_{\upi}$ is the data of a finite flat extension $\mathcal R_{\underline{\Pi}}$ of $\mathcal R$ satisfying the following two conditions:
\begin{enumerate}
\item[(a)] The restriction of $H^k(\mathcal M_{\text{cusp},w_j}^{\bullet,-,\fs})$ to $\Omega_{\upi}$ is zero if $j+k \neq 3$, and the sheaves $S^k(\upi) = H^k(\mathcal M_{\text{cusp},w_{3-k}}^{\bullet,-,\fs})$ are either free over $\mathcal O(\Omega_{\upi})$ of rank 1 for all $k$, or free of rank 1 for $k=1,2$ and zero for $k=0,3$; 
\item[(b)] For any classical specialisation $\Pi$ of $\upi$ of weights $(r_1+3,r_2+3)$, the center of $G(\mathbb A_{\text{fin}}^p)$ acts on the specialisation $S^k(\Pi)$ of $S_k(\upi)$ by $|\cdot|^{-(r_1+r_2)} \chi_{\upi}$.  \hfill $\blacksquare$
\end{enumerate}
\end{definition}
We assume until the end of our paper that $\upi$ is Borel ordinary. This means that all its classical specialisations are (equivalently, some specialisation of regular weight is, cf. \cite[Theorem 2.5 (3)]{LO14}) so.

\subsubsection{}
According to \cite[\S7]{TU99}, there exists a big Galois representation 
$$ \rho_{\upi} \colon G_{\mathbb Q, \Sigma} \rightarrow \GL_4(\Frac(\mathcal R_{\upi})) $$  
attached to $\upi$, where $\Sigma$ is a finite set of primes containing all those dividing $pN_{\upi} \infty$ and $\Frac(\mathcal R_{\upi})$ stands for the field of fractions of $R_{\upi}$. 

\subsubsection{}
\label{subsubsec_2025_08_25_1049}
Thanks to our running Borel-ordinarity assumption, there exist unramified characters $\widetilde \alpha:=\alpha_{\upi}$, $\widetilde \beta:=\beta_{\upi} \chi^{\hr_2+1}$, $\widetilde \gamma:=\gamma_{\upi} \chi^{\hr_1+2}$, and $\widetilde \delta:=\delta_{\upi} \chi^{\hr_1+\hr_2+3}$ of $G_{\Qp}$ such that
$$
(\widetilde \alpha({\rm Frob}_p),\widetilde \beta({\rm Frob}_p),\widetilde \gamma({\rm Frob}_p),\widetilde \delta({\rm Frob}_p))_{\vert_\Pi} = (\alpha', p^{r_2+1}\beta', p^{r_1+1}\gamma', p^{r_1+r_2+3}\delta')
$$
for any specialisation $\Pi$ of $\upi$, where $(\alpha', \beta', \gamma', \delta')$ are the Satake parameters of $\Pi$ at $p$.

\subsubsection{Assumptions}
\label{ass-gsp4}
We shall work under the following assumptions (a)--(c) on the Galois representation $\rho_{\underline{\Pi}}$.

\begin{enumerate}
\item[(a)] The residual representation (given as in \cite[Definition 2.3]{LO14}) of $\rho_{\upi}$ is irreducible.
\end{enumerate}
As explained in \cite[Theorem 2.5]{LO14}, this condition implies (thanks to \cite{TU99,Urban2005}) that there exists a Galois stable lattice $T_{\upi} \subset \text{Frac}(\mathcal R_{\upi})^{\oplus 4}$, that we fix once and for all.
\begin{enumerate}
\item[(b)] The characters $\widetilde \alpha$, $\widetilde \beta$, $\widetilde \gamma$ and $\widetilde \delta$ are non-trivial modulo the maximal ideal of $\cR_{\upi}$ (cf. the hypothesis \eqref{item_non_anom} below) and pairwise distinct. 
\end{enumerate}
Conditions (a), (b), our Borel-ordinarity condition applied with Nakayama's lemma imply that there exists a basis of $T_\upi$ with respect to which the action of $G_{\mathbb Q_p}$ on the Galois representation is upper triangular.
\begin{enumerate}
\item[(c)] The local ring $\cR_{\upi}$ is regular.
\end{enumerate}

\subsubsection{} 
The condition that $\cR_{\upi}$ be regular can be achieved on passing to a sufficiently small wide-open neighbourhood of about a classical point with sufficiently regular weights. In what follows, we shall do so without further pointers to this arrangement.

We assume, unless we explicitly state otherwise, that these assumptions hold true for all Borel-ordinary families that appear in this article. We also assume the conditions analogous to (a)--(c) for all the automorphic forms (and not just for the families) that are relevant to our discussion.

\subsubsection{} For $\cR_{\upi}$ and $\cR_{\underline{\sigma}}$ as above, let us put $\cR_4 := \cR_{\upi} \widehat \otimes_{\mathbb Z_p} \cR_{\underline{\sigma}}  \widehat \otimes_{\mathbb Z_p} \cR_{\underline{\sigma}}$ and $\cR_3 = \cR_{\upi}  \widehat \otimes_{\mathbb Z_p} \cR_{\underline{\sigma}}$.

\subsection{Degree--16 triple products}
\label{subsec_2025_09_24_1234}
Let $\Pi \times \sigma_1 \times \sigma_2$ be an automorphic representation of $\GSp_4 \times \GL_2 \times \GL_2$. We have a degree--16 $L$-function $L(s,\Pi \times \sigma_1 \times \sigma_2)$
associated with the tensor product of the degree-4 (spin) and degree-4 (standard) representations of the $L$-groups of $\GSp_4$ and $\GL_2 \times \GL_2$. Our main results will concern products of the form $\Pi \times \sigma \times \sigma^c$, where $\sigma^c$ denotes the contragradient automorphic representation.

If $\Pi$, $\sigma_1$, and $\sigma_2$ are algebraic, then this $L$-function is expected to be motivic, and in particular, we can ask whether it has critical values. As in the previous section, suppose that the $L$-packet of $\Pi$ corresponds to a holomorphic Siegel modular form of weight $(k_1, k_2)$, with $k_1 \ge k_2 \ge 3$, and that $\sigma_1$ (resp.~$\sigma_2$) corresponds to a modular form of weight $c_1$ (resp.~$c_2$). Then we expect that there exist motives $M(\Pi)$ (of motivic weight $k_1 + k_2 - 3$), $M(\sigma_1)$ (of motivic weight $c_1 -  1$), and $M(\sigma_2)$ (of motivic weight $c_2 - 1$) such that
\[
  L(s,\Pi \times \sigma_1 \times \sigma_2) = L\left( s + w - \tfrac{1}{2}, M(\Pi) \otimes M(\sigma_1) \otimes M(\sigma_2) \right), \qquad w = \dfrac{k_1 + k_2 + c_1 + c_2}{2}-2\,.
\]

\subsection{$p$-adic $L$-functions}
Let $\underline{\Pi}$ (resp. $\underline{\Sigma}:=\underline{\sigma}_1 \otimes \underline{\sigma}_2$) be a cuspidal Borel-ordinary family of forms on $\GSp_4$ (resp. on $\GL_2 \times \GL_2$). We shall denote by $(P,Q_1,Q_2)$, where $P$ corresponds to a specialisation of $\underline{\Pi}$ and $Q_i$ to a specialisation of $\underline{\sigma}_i$, specialisations of $\cR_4$. We denote its weights by $(k_1,k_2,c_1,c_2)$ whenever $P$ is of weight $(k_1,k_2)$, and $\underline{\sigma}_i$ is of weight $c_i$. 
\subsubsection{Regions for $\GSp_4 \times \GL_2 \times \GL_2$}
\label{subsubsec_2025_08_19_0801}
In \cite[\S2.3]{LZ20}, the authors explicate the ``GGP regions'', determined in terms of the interlacing relations of the quadruple of weights $(k_1,k_2,c_1,c_2)$, given by the following inequalities.

\begin{itemize}
\item {\bf Region (a).} $c_1,c_2 \geq 1$, \,$k_1+k_2-2 \leq c_2-c_1$.
\item {\bf Region (b).} $c_1,c_2 \geq 1$,\, $k_1-k_2+2 \leq c_2-c_1 \leq k_1+k_2-4$, \,$k_1+k_2 \leq c_1+c_2$.
\item {\bf Region (c).} $c_1,c_2 \geq 1$,\,  $|c_2-c_1| \leq k_1-k_2$,\, $k_1+k_2 \leq c_1+c_2$.

\item {\bf Region (d).} $c_1,c_2 \geq 1$,\, $k_1-k_2+2 \leq c_2-c_1$\,, 
$c_1+c_2 \leq k_1+k_2-2$.
\item {\bf Region (e).} $c_1,c_2 \geq 1$,\, $|c_2-c_1| \leq k_1-k_2$,\, $k_1-k_2+4 \leq c_1+c_2 \leq k_1+k_2-2$.
\item {\bf Region (f).} $c_1,c_2 \geq 2$,\, $c_1+c_2 \leq k_1-k_2+2$.
\end{itemize}

One also has the GGP regions (a'), (b'), and (d'), in which the roles of $c_1$ and $c_2$ are reversed.

\subsubsection{Conjectures for $\GSp_4 \times \GL_2 \times \GL_2$: $p$-adic $L$-functions}
\label{subsubsec_2025_08_26}
With the notations and conventions above, Loeffler and Zerbes in \cite{LZ20} conjecture the existence of $p$-adic $L$-functions (one for each GGP region). In this subsection, we recall their formulation, with a slight strengthening that allows variation in the $\GSp_4$-factor. 
We recall from \cite[\S10]{LZ21-erl} the notion of {\it good points} (cf. Definition 10.4.2 in op. cit.).

\begin{conj}\label{conj:l-function}
For each $\diamond \in \{a,d\}$, there exists an element
$$ L_p^{(\diamond)}(\underline{\Pi} \times \underline{\Sigma}) \in \Frac(\mathbb I)  $$  
with the following interpolation property: 
$$ L_p^{(\diamond)}(\underline{\Pi} \times \underline{\Sigma})^2(P,Q_1,Q_2) = Z_S^{\diamond}(P,Q_1,Q_2) \cdot \mathcal E_p^{\diamond}(\underline{\Pi} \otimes \underline{\Sigma})(P,Q_1,Q_2) \cdot \frac{\Lambda({\tfrac{1}{2}}, \underline{\Pi}_{P} \otimes \underline{\Sigma}_{Q_1,Q_2})}{\mathcal C_1^{(\diamond)}(P) \cdot \Omega_{\infty}^{\diamond}(\underline{\Pi} \times \underline{\Sigma}(P,Q_1,Q_2))}, $$  
for all good points $(P,Q_1,Q_2)$ whose weights $(k_1,k_2,c_1,c_2)$ belong to the region $\diamond$. Here, 
\begin{itemize}
    \item $Z_S^{\diamond}(P,Q_1,Q_2)$ is a factor depending on the choice of tame data (that will not be made precise);
    \item $\mathcal E_p^{\diamond}(\underline{\Pi} \times \underline{\Sigma})(P,Q_1,Q_2))$ is an Euler-like factor at $p$ (given according to the recipe of \cite{CPR89}, see also \cite{LZ20}, \S4.3);
    \item $\Omega_{\infty}^{\diamond}(-)$ is an archimedean period that is described according to the recipe of Coates and Perrin-Riou \cite{CPR89} and $\mathcal C_1^{(\diamond)}(P)$ is a $p$-adic period $($cf. Remark~\ref{rem_2025_08_26_1505}$)$;
    \item $\Lambda(-)$ is the completed $L$-function\,.
\end{itemize}
\end{conj}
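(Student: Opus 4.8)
\emph{The statement is a conjecture} (in essence that of \cite{LZ20}, here strengthened to allow $\underline\Pi$ itself to vary), so what follows is a proposal for how one would \emph{construct} the elements $L_p^{(\diamond)}(\underline\Pi\times\underline\Sigma)$, $\diamond\in\{a,d\}$, rather than a complete argument. The plan is to $p$-adically interpolate the GGP period integrals that compute the central values $\Lambda(\tfrac12,\underline\Pi_P\otimes\underline\Sigma_{Q_1,Q_2})$, following the templates of Hsieh \cite{Hsieh2018} for triple products and of Loeffler--Zerbes for $\GSp_4\times\GL_2$. For each region one fixes the branching datum attached to the embedding $\iota\colon H=\GL_2\times_{\GL_1}\GL_2\hookrightarrow\GSp_4$ of \S\ref{subsec:Siegel}: the explicit GGP (Ichino--Ikeda-type) formula should express $\Lambda(\tfrac12,\Pi\otimes\sigma_1\otimes\sigma_2)$ as the absolute square of a global period $\mathcal P(\varphi_\Pi,\varphi_{\sigma_1},\varphi_{\sigma_2})=\int_{[H]}(\varphi_\Pi\circ\iota)\cdot(\varphi_{\sigma_1}\boxtimes\varphi_{\sigma_2})$, which factors as a product $\prod_v\mathcal P_v$ over places, and the six GGP regions are distinguished by which archimedean branching component is non-zero, hence by which overconvergent/Hida-theoretic cohomology group $S^k(\underline\Pi)$ carries the period.

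The key steps would be: (1) choose good test vectors --- unramified away from $pN$, a prescribed datum at the bad primes (producing the tame factor $Z_S^{\diamond}$), and the Borel-ordinary $\Up$-eigenvector stabilisation at $p$ --- and evaluate the local zeta integral at $p$ to obtain the Euler-like factor $\mathcal E_p^{\diamond}(\underline\Pi\otimes\underline\Sigma)$ via the recipe of Coates--Perrin-Riou \cite{CPR89}; (2) evaluate the archimedean integral against the minimal-$K_\infty$-type holomorphic vectors available in the region, producing the period $\Omega_\infty^{\diamond}$; (3) realise $\mathcal P$ as a cup-product/branching pairing in the coherent cohomology of the relevant Siegel and modular Shimura varieties, so that, as $(P,Q_1,Q_2)$ vary among the specialisations of $\cR_4$, the period is the specialisation of a single class in a rank-one $\cR_4$-module --- here one invokes Borel-ordinarity together with the rank-one freeness built into the definition of a family of automorphic representations (and the analogous $\GL_2$ hypotheses, Assumption~\ref{ass-gl2}) to see that this module is free of rank one, up to the $p$-adic period normalisation $\mathcal C_1^{(\diamond)}(P)$; (4) conclude that $\mathcal P$ interpolates to an element of $\Frac(\mathbb I)$, taken to be $L_p^{(\diamond)}(\underline\Pi\times\underline\Sigma)$; the displayed identity is then stated for its \emph{square} precisely because $\Lambda(\tfrac12,-)=|\mathcal P|^2$ and only $\mathcal P$ --- not a square root of it over the eigenvariety --- varies $p$-adic analytically.

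The hard part is step (2), together with the very existence of the explicit GGP formula in the required generality: the Ichino--Ikeda/GGP identity for $\GSp_4\times\GL_2\times\GL_2$ is not available unconditionally in the non-tempered, small-weight ranges ($k_2\geq 3$, $c_i\geq 1$) relevant to regions (a) and (d), and the archimedean local zeta integral --- hence $\Omega_\infty^{\diamond}$ --- has not been computed there. A secondary difficulty is controlling the $p$-adic period $\mathcal C_1^{(\diamond)}(P)$ across the $\GSp_4$-family and the compatibility of the Eichler--Shimura/Hida comparison isomorphisms with specialisation, which is the same subtlety that forces the reformulations of \S\ref{sec_2025_02_06_0708} in the endoscopic setting. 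These are why the statement is recorded only as a conjecture; the support offered in the present paper is instead the \emph{algebraic} $p$-adic $L$-functions --- characteristic ideals of the Selmer complexes of \S\ref{sec_Selmer} --- for which the analogous factorisations are proved unconditionally.
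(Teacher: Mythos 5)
Your reading is correct: Conjecture~\ref{conj:l-function} is recorded as a conjecture in the paper (the authors describe it as ``recalling'' and ``slightly strengthening'' the formulation of Loeffler--Zerbes, not proving it), so there is no proof of it in the paper to compare against. Your roadmap --- $p$-adically interpolating the GGP periods following Hsieh's triple-product template and the coherent-cohomology framework of Loeffler--Zerbes, with the square on the left accounted for by the $\Lambda(\tfrac12,-)=|\mathcal P|^2$ identity --- is exactly the construction the authors have in mind (compare Remark~\ref{rem_2025_08_26_1505} and the remark immediately after it, where the role of GGP periods and the provenance of $\mathcal C_1^{(\diamond)}$ are discussed). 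Your identification of the obstructions (absence of the explicit GGP/Ichino--Ikeda formula in the required generality, the uncomputed archimedean zeta integral, and control of the $p$-adic period $\mathcal C_1^{(\diamond)}$ via Eichler--Shimura in families) also lines up with what the paper offers in lieu of a proof: the algebraic factorisations of \S\ref{sec_factorisation_main} and the endoscopic verifications of \S\ref{sec_2025_02_06_0708}. One small refinement worth noting: the paper's normalisation allows the archimedean period to be taken as an automorphic period (Petersson norms, per Remark~\ref{rem_2025_08_26_1505}(ii)), in which case $\mathcal C_1^{(\diamond)}=1$, so the $p$-adic period ambiguity you flag as a secondary difficulty is expected to be removable by a suitable choice of period normalisation in regions (a) and (d) --- it re-enters only when one insists on Deligne's canonical period. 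Otherwise your treatment is faithful to the paper's intent.
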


\begin{remark}
\label{rem_2025_08_26_1505}
\item[i)] The $p$-adic period $\mathcal C_1^{(\diamond)}$ arises to account for the ambiguity in the choice of Deligne's canonical period. 
\item[ii)] One may utilise an automorphic archimedean period $\Omega_{\infty}^{\circ}(-)$ in place of Deligne's canonical period (as in \cite{CPR89}), and this period can be described explicitly. When $\diamond=a$, one expects the periods to be given by the square of the Petersson norm of the corresponding $\GL_2$-factor (suitably normalised as in \cite{Hsieh2018}); cf. \cite[\S1.3]{LR-algebraicity}. Partial evidence for this expectation was obtained by B\"ocherer and Heim~\cite{BH06} in the Saito--Kurokawa case. When we work with these periods, one can take $\mathcal C_1^{(a)}=1$. 

When $\diamond=d$, the recent work of Liu \cite{liu2} in the $\GSp_4 \times \GL_2$ case expresses the period as a product of the Petersson norm attached to one of the $\GL_2$-factors and a modified Petersson norm of the $\GSp_4$-factor; see \S3.3.3 of loc.\,cit.  We expect the same also in the setting of Conjecture~\ref{conj:l-function} with $\diamond=d$. With this choice, one can take $\mathcal C_1^{(d)}=1$. 
\hfill$\blacksquare$
\end{remark}

\begin{remark}
Conjecture~\ref{conj:l-function} can be stated, as in \cite{LZ20}, in terms of the squares of Gan--Gross--Prasad periods (and the appearance of a square on the left is due to that). This is especially relevant to our study, as our point of view rests crucially on the (conjectural) interplay between arithmetic GGP and GGP for $p$-adic families (cf. our discussion in the introduction, where this relationship has been emphasised in scenarios where it is unconditionally established).  
\hfill $\blacksquare$
\end{remark}

\begin{remark}
In GGP regions (b) and (e), the Bloch--Kato--Tate conjecture predicts the existence of families of cycles classes, and the $p$-adic GGP philosophy envisages a link to the $p$-adic $L$-functions of ``adjacent regions''. The case of region (e) is the subject of by Hsu, Jin, and the third-named author, but region (b) seems to be currently out of reach. \hfill$\blacksquare$
\end{remark}

\subsubsection{A conjectural $p$-adic $L$-function for $\GSp_4 \times \GL_3$}
As in the previous sections, we shall notate by $(k_1,k_2)$ the weights of the Siegel modular form and let $c$ denote the weight of an elliptic modular form, where in the present section, we shall consider Gelbart--Jacquet lifts of this form to $\GL_3$. 

\begin{conj}\label{conj:l-function_bis_bis}
There exists an element 
$$ L_p(12e)= L_p^{(e)}(\underline{\Pi} \times \ad^0(\underline{\sigma})) \in \Frac(\mathbb I)  $$  
that is characterised by the following interpolation property: For all good points $(P,Q)$ with weights $(k_1,k_2,c,c)$ in the region (e), we have
$$L_p^{(e)}(\underline{\Pi} \times \ad^0(\underline{\sigma}))(P,Q)=Z_S^{(e)}(\upi \times \ad^0(\underline{\sigma}))(P,Q) \cdot \mathcal E_p^{(e)}(\underline{\Pi} \otimes \ad^0(\underline{\sigma}))(P,Q) \cdot \frac{\Lambda({\tfrac{1}{2}},\underline{\Pi}_{P} \otimes \ad^0(\underline{\sigma})_{Q})}{\mathcal C_2^{(e)}(P) \cdot \Omega_{\infty}^{(e)}(\underline{\Pi} \times \ad^0(\underline{\sigma})(P,Q))}.  $$ 
Here: 
\begin{itemize}
    \item $Z_S^{(e)}(\upi \times \ad^0(\underline{\sigma}))(P,Q)$ is a factor that depends only on the tame data (which we shall not make explicit here);
    \item $\mathcal E_p^{(e)}(\underline{\Pi} \times \ad^0(\underline{\sigma}))(P,Q)$ is an Euler-like factor at $p$ (given according to the recipe of \cite{CPR89}); 
    \item $\Omega_{\infty}^{(e)}( - )$ is the Coates--Perrin-Riou archimedean period and $\mathcal C_2^{(e)}(P)$ is a $p$-adic period $($cf. Remark~\ref{rem_2025_08_26_1505}\textup{(i)}$)$;
\end{itemize}
\end{conj}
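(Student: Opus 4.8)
\emph{Characterisation.} Since $L_p(12e)$ is required to lie in $\Frac(\mathbb I)$ and to satisfy the displayed formula at every good classical point with weights $(k_1,k_2,c,c)$ in region (e), and since such points are dense (as recalled in \S\ref{subsubsec_2025_02_11_1659}, $\cX^{\rm cl}_e$ is dense in $\cX$), the characterisation half of the statement is automatic once existence is known: two elements of $\Frac(\mathbb I)$ with the listed values agree. The content is therefore the \emph{construction} of $L_p(12e)$, which I would not attempt unconditionally — in the stated generality it rests on a refined Gan--Gross--Prasad/Rankin--Selberg formula for $\GSp_4\times\GL_3$ that is not yet available — but rather approach in two stages: first isolate the analytic mechanism that produces such a function and dictates the shape of its interpolation factors, then realise it rigorously in the endoscopic regime, which simultaneously fixes all normalisations and supplies the required evidence.

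\emph{The mechanism.} The twelve-dimensional motive $M(\Pi)\otimes\ad^0 M(\sigma)$ is the non-trivial summand of $M(\Pi)\otimes M(\sigma)\otimes M(\sigma)^{\vee}$ under $\sigma\otimes\sigma^c\dot{=}\ad^0(\sigma)\boxplus\mathbf 1$, and $L(s,\Pi\times\ad^0\sigma)$ is the degree-$12$ Rankin--Selberg $L$-function of $\Pi$ against the Gelbart--Jacquet lift $\ad^0\sigma$ of $\sigma$ to $\GL_3$ (accessible via the twisted-doubling integral). One would construct $L_p(12e)$ by pulling the families $\upi$ and $\underline{\sigma}$ back to the diagonal locus $\cX\subset{\rm Spec}\,\cR_4$ (so $c_1=c_2=c$), building a $p$-adic family of Siegel-- or Klingen--Eisenstein sections on the relevant larger doubling group that interpolates the local data entering the global integral, and pairing it against the Borel-ordinary eigenvector of $\upi$ at $p$ (furnished by \S\ref{subsubsec_2025_08_25_1049}) and the ordinary filtration of $\ad^0\underline{\sigma}$ induced by that of $\underline{\sigma}$. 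Evaluating the resulting $\Frac(\mathbb I)$-valued function at a good point of region (e) and invoking the explicit period formula there identifies its value with $\Lambda(\tfrac12,\underline{\Pi}_P\otimes\ad^0(\underline{\sigma})_Q)$ up to exactly the advertised factors: $\mathcal E_p^{(e)}$ is the normalised local zeta integral at $p$ on the ordinary test vector, matching the Coates--Perrin-Riou recipe (\cite{CPR89}) applied to $M(\Pi)\otimes\ad^0 M(\sigma)$ equipped with the $p$-ordinary filtration coming from Borel-ordinarity of $\upi$ together with the ordinary line of $\sigma$; $Z_S^{(e)}$ packages the local integrals at the ramified primes; and the archimedean integral contributes $\Omega_\infty^{(e)}$, with the $p$-adic period $\mathcal C_2^{(e)}(P)$ inserted to reconcile the automorphic (Petersson-type) normalisation with Deligne's.

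\emph{The endoscopic realisation.} Take $\upi$ a Yoshida family attached to a pair of $\GL_2$ Hida families $\mathbf f_1,\mathbf f_2$. Then $L(s,\Pi\otimes\ad^0\sigma)=\prod_{i=1,2}L(s,f_i\otimes\ad^0\sigma)$, and each degree-six factor is itself the non-Eisenstein factor of a triple-product $L$-function via $L(s,f_i\times\sigma\times\sigma^c)=L(s,f_i\otimes\ad^0\sigma)\cdot L(s,f_i)$. Hence one may \emph{define} $L_p(12e)$ unconditionally in this case as $\prod_{i=1,2}L_p(\mathbf f_i\times\ad^0\underline{\sigma})$, where each $\GL_2\times\GL_3$ factor $L_p(\mathbf f_i\times\ad^0\underline{\sigma})$ is available — e.g.\ as the relevant factor of Hsieh's triple-product $p$-adic $L$-function $L_p^{(\underline{\sigma})}(\mathbf f_i\times\underline{\sigma}\times\underline{\sigma}^c)$ \cite{Hsieh2018}, following \cite{BS_triple_factor,bcpv} — and then read off the interpolation formula of the statement as the product of the two known ones. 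This is precisely the computation that pins down $Z_S^{(e)}$, $\mathcal E_p^{(e)}$, $\Omega_\infty^{(e)}$ and $\mathcal C_2^{(e)}$, it constitutes the evidence in the endoscopic direction (cf.\ \S\ref{subsubsec_2025_08_25_1157}), and it connects the present statement to the $\GL_2\times\GL_2\times\GL_2$ factorisation problems of \cite{BS_triple_factor,bcpv}.

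\emph{Main obstacle.} The hard part is the non-endoscopic case of the mechanism: at present there is neither a proved refined GGP/twisted-doubling period formula for $\GSp_4\times\GL_3$ with explicit local constants at $p$ and $\infty$, nor a construction of the $p$-adic interpolation of the global period, so Conjecture~\ref{conj:l-function_bis_bis} genuinely remains open in general. What can be established rigorously, and what the rest of the paper does, is its algebraic avatar, a characteristic ideal of a Selmer complex (the ``algebraic $p$-adic $L$-function'' $L_p^{\rm alg}(12e)$), together with the factorisation theorems that are conditional on the analytic inputs above.
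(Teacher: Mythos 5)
You are right that this is a \emph{conjecture}, not a theorem: the paper offers no proof of Conjecture~\ref{conj:l-function_bis_bis}, and your framing (uniqueness from density of $\cX^{\rm cl}_e$, construction reduced to an unavailable refined GGP/twisted-doubling period formula, evidence via the endoscopic and algebraic avatars) is consistent with how the paper presents it. That part is fine.

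The one substantive imprecision is in your endoscopic ``realisation''. You claim that for a Yoshida family one may define $L_p(12e)$ \emph{unconditionally} as $\prod_i L_p(\mathbf f_i\times\ad^0\underline\sigma)$ with both degree--6 factors ``available''. By the paper's own region bookkeeping in \S\ref{subsec_2025_08_06_1504_bis}, when $\upi$ is a Yoshida lift of $(\mathbf f_1,\mathbf f_2)$ with $t_1=k_1+k_2-2$ and $t_2=k_1-k_2+2$, a weight in region~(e) restricts to the $\mathbf f_1$-\emph{dominant} region for $M_{3,1}^\dagger$ but to the \emph{balanced} region for $M_{3,2}^\dagger$. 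The first factor, $L_p^{(\uf_1)}(M_{3,1}^\dagger)$, can indeed be extracted from Hsieh's $\uf_1$-dominant triple product $p$-adic $L$-function by soft Artin formalism (the diagonal $Q_1=Q_2$ meets the interpolation range there). The second factor, $L_p^{\rm bal}(M_{3,2}^\dagger)$, is the balanced $\GL_2\times\GL_3$ $p$-adic $L$-function, which the paper itself treats as a \emph{conjectural} object (cf.\ Conjecture~2.4 of \cite{BC}, cited in Conjecture~\ref{conj_2025_08_06_1401}); it is not a ``relevant factor of Hsieh's $L_p^{(\underline\sigma)}$''. So even in the Yoshida case the definition of $L_p(12e)$ is only partly unconditional, and you should not present it as pinning down all the normalisation factors. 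Also note that the relevant unbalanced Hsieh $p$-adic $L$-function here is the $\uf_1$-dominant one, not the $\underline\sigma$-dominant $L_p^{(\underline\sigma)}$ you write; in region (e) on the diagonal one has $2c\le t_1$, so the $\GL_2$-factor of weight $t_1$ dominates, not $\underline\sigma$.
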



\begin{remark} We briefly review the restriction of GGP regions to the locus $c_1=c_2$, to clarify why we consider the conjectural description of $L_p(12\diamond)$ only when $\diamond=e$. 

In regions (a), (b), and (d), there are no classical specialisations with $c_1=c_2$ in the non-endoscopic cases. Moreover, when $\varepsilon(\upi)=-1$ (as we assume in our treatment), the central critical $L$-values identically vanish in the region (c). One is therefore left with the region (e), where the weights $(k_1,k_2,c)$ satisfy
     \[ 
     k_1 = k_2\,, \quad 2 \leq c \leq k_1 - 1\,, \quad k_1-k_2+4 \leq 2c \leq k_1+k_2-2\,.
     \]

 \end{remark}

\subsection{Factorisation conjectures}
We are now ready to state our factorisation conjectures. 
\subsubsection{} In what follows, we shall consider the scenario where $\underline{\Sigma}=\underline{\sigma}\times\underline{\sigma}^c$ in Conjecture~\ref{conj:l-function}. Our aim in this section is to discuss, in light of Artin formalism, the expected factorisation formulae for the $p$-adic $L$-functions
\[
   L_p^{(\diamond)}(\underline{\Pi} \times {\rm ad} \,\underline{\sigma})^2:= L_p^{(\diamond)}(\underline{\Pi} \times \underline{\Sigma})^2_{\vert_{Q_1=Q_2}}
  \qquad\diamond\in\{a,d\}\,,
\]
which is the main purpose of study of this paper.


\subsubsection{}

Based on the wall-crossing principle \ref{item_BDP1} from the introduction, we formulate the following factorisation conjecture. 

\begin{conj}
\label{conj_2025_08_21_1552}
Assume that $\varepsilon(\upi_\kappa)=-1$ for some (equivalently, all) specialisations $\upi_\kappa$ of $\upi$. Let {$\Log_c$} denote the Perrin-Riou exponential map of \S\ref{subsubsec_2025_09_23_0739}. 
Then the family\footnote{The construction of this class is not complete, but a key step in doing so is \cite[Theorem 9.6.4]{LSZ22}.} $\mathrm{LF}_{\underline{\Pi}}^{\dagger}$ of Lemma--Flach class $($cf. \cite{LSZ22}, Theorem 9.6.4$)$ is contained in the Selmer group 
$\mathbf{R}^1\Gamma_{\fin}(G_{\mathbb Q,\Sigma}, T_{\upi}^{\dag},  \Delta_{\mathscr{F}^2})$ (see \S\ref{sec_Selmer}) and we have the factorisation
\[
   L_p^{(d)}(\underline{\Pi} \times {\rm ad}\,\underline{\sigma})^2(P,Q,Q) 
  = \mathcal C_d(P) \cdot 
    L_p^{(e)}(\underline{\Pi} \times {\rm ad}^0\,\underline{\sigma}) (P,Q) \cdot 
    \Log_c \bigl(\mathrm{LF}_{\underline{\Pi}}^{\dagger}\bigr),
\]
where $\mathcal C_d$ is an element in $\cR_\upi[1/p]$. 
\end{conj}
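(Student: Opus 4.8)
\textbf{Proof strategy for Conjecture~\ref{conj_2025_08_21_1552}.}
Since this is a \emph{conjecture}, a complete proof is out of reach; however, the honest way to present it — and what I expect the paper does in the subsequent section — is to articulate precisely what is being asserted, reduce it to a combination of (i) the (conjectural) $p$-adic GGP limit formula expressing $L_p^{(d)}(\underline{\Pi}\times\mathrm{ad}\,\underline{\sigma})^2$ restricted to region (e) as a $p$-adic avatar of the derivatives $L'(\tfrac12,\upi_\kappa\times\mathrm{ad}\,\underline{\sigma}_\lambda)$, and (ii) the classical Artin factorisation \eqref{eqn_2025_02_11_1627}. The plan is therefore to organize the argument as an \emph{equivalence} of the factorisation with these inputs, mirroring \cite[\S2.2.5]{BS_triple_factor}, rather than an unconditional deduction. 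First I would set up the Galois-theoretic framework: the self-dual twist $T_\upi^\dagger$ of rank $4$, the decomposition $T_\upi^\dagger\boxplus(T_\upi^\dagger\otimes\mathrm{ad}^0\underline{\sigma})$ of the degree-$16$ representation along the diagonal locus $Q_1=Q_2$, and the induced block-triangular structure on the local conditions $\Delta_{\mathscr{F}^2}$; this is where the Selmer complex $\mathbf{R}\Gamma_{\fin}(G_{\mathbb Q,\Sigma},T_\upi^\dagger,\Delta_{\mathscr{F}^2})$ of \S\ref{sec_Selmer} enters and where one must check that the Lemma--Flach class $\mathrm{LF}_{\underline{\Pi}}^\dagger$ (built from \cite[Theorem 9.6.4]{LSZ22}) satisfies the prescribed local conditions.

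The core of the argument has three steps. \emph{Step 1: containment of the class.} I would verify that $\mathrm{LF}_{\underline{\Pi}}^\dagger \in \mathbf{R}^1\Gamma_{\fin}(G_{\mathbb Q,\Sigma},T_\upi^\dagger,\Delta_{\mathscr{F}^2})$ by checking the local conditions at $p$ — the class should land in the submodule cut out by the first step $\mathscr{F}^2$ of the Panchishkin-type filtration, which follows from the construction of the Euler system class together with the Borel-ordinarity of $\upi$ and the non-anomalous hypothesis in Assumption~\ref{ass-gsp4}(b) — and triviality (or the prescribed behaviour) at the ramified primes away from $p$, which is the usual local Tamagawa-factor computation. \emph{Step 2: the $p$-adic GGP limit formula.} Here one invokes the guiding principle \ref{item_BDP1}: the value $L_p^{(d)}(\underline{\Pi}\times\mathrm{ad}\,\underline{\sigma})^2(P,Q,Q)$, which lies outside the (empty) interpolation range and is obtained as a $p$-adic limit of explicit GGP periods from region (d) approaching region (e), should equal $\Log_c$ applied to the image of the diagonal/Lemma--Flach class under the Perrin-Riou big logarithm of \S\ref{subsubsec_2025_09_23_0739}, up to the $p$-adic period $\mathcal C_d(P)\in\cR_\upi[1/p]$ accounting for the ambiguity of Deligne's period. \emph{Step 3: matching with the degree-$12$ factor.} Finally one identifies the remaining factor: because $\varepsilon(\upi\times\mathrm{ad}^0\underline{\sigma})=+1$ in region (e), the degree-$12$ value genuinely interpolates and equals $L_p^{(e)}(\underline{\Pi}\times\mathrm{ad}^0\underline{\sigma})(P,Q)$; combining with the factorisation \eqref{eqn_2025_02_11_1627} of $L'(\tfrac12,\upi_\kappa\times\mathrm{ad}\,\underline{\sigma}_\lambda)$ as $L(\tfrac12,\upi_\kappa\times\mathrm{ad}^0\underline{\sigma}_\lambda)\cdot L'(\tfrac12,\upi_\kappa)$ and reading off that the "$p$-adic avatar of $L'(\tfrac12,\upi_\kappa)$" is precisely $\Log_c(\mathrm{LF}_{\underline{\Pi}}^\dagger)$ (via a Bloch--Kato/Gross--Zagier-type formula for the rank-$4$ motive) yields the claimed identity.

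\textbf{Main obstacle.} The genuinely hard part is Step 2 — the $p$-adic GGP limit formula, i.e. the wall-crossing identity from region (d) to region (e) — since, as the introduction emphasizes, both the explicit GGP formula for forms on $\GSp_4\times\GL_2\times\GL_2$ and the arithmetic GGP (Gross--Kudla--Schoen-type) formula it must be compared against are, in this generality, presently out of reach. What is actually provable now is the \emph{algebraic} counterpart (Theorem~\ref{thm_5_1_2025_02_14_1424}), where $L_p^{(d)}$, $L_p^{(e)}$ and $\Log_c(\mathrm{LF}_{\underline{\Pi}}^\dagger)$ are replaced by characteristic ideals of Selmer complexes and trivialisations of modules of leading terms; the factorisation there is forced by the block-triangular structure of the Selmer complex together with the ETNC/ETNC-compatible descent formalism, following the template of \cite{BS_triple_factor}. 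Thus the realistic "proof" of the stated conjecture is: (a) Step 1, which is unconditional; (b) a reduction showing the displayed identity is \emph{equivalent} to the conjunction of the explicit $p$-adic GGP formula and a Gross--Zagier formula for $T_\upi^\dagger$; and (c) the remark that this equivalence, when one passes to algebraic $p$-adic $L$-functions, becomes the unconditional Theorem~\ref{thm_5_1_2025_02_14_1424}. A secondary subtlety, flagged in the paper itself, is that the period $\mathcal C_d(P)$ genuinely depends on the (non-canonical) choice of automorphic versus motivic periods — one should either fix the normalisation as in Remark~\ref{rem_2025_08_26_1505}(ii) (making $\mathcal C_1^{(d)}=1$) or carry $\mathcal C_d$ as an unspecified unit in $\cR_\upi[1/p]$, which is what the statement does.
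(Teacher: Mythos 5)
Since the statement in question is a \emph{conjecture}, the paper contains no proof of it; what it does offer is (i) a heuristic derivation via the wall-crossing principle \ref{item_BDP1} in \S\ref{subsubsec_2025_02_11_1659}, (ii) an unconditional algebraic counterpart, Theorem~\ref{thm_5_1_2025_02_14_1424}, proved in \S\ref{subsec_5_1_2025_02_11}, and (iii) a near-unconditional verification in the endoscopic (Yoshida) case, Theorem~\ref{thm_2025_08_21_1108}. Your reading of this structure is accurate, and your Steps 2 and 3 correctly locate the hard, presently-conjectural input (the $p$-adic GGP limit formula from region (d) to region (e)) and the role of the classical Artin factorisation \eqref{eqn_2025_02_11_1627} of first central derivatives.

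One substantive correction: you assert that Step 1 (containment of $\mathrm{LF}_{\underline{\Pi}}^{\dagger}$ in $\mathbf{R}^1\Gamma_{\fin}(G_{\mathbb Q,\Sigma}, T_{\upi}^{\dag}, \Delta_{\mathscr{F}^2})$) is ``unconditional'' and amounts to a routine local-conditions check. This is not quite right: as the paper's own footnote to the conjecture flags, the construction of the family $\mathrm{LF}_{\underline{\Pi}}^{\dagger}$ is itself incomplete (only a key step is supplied by \cite[Theorem 9.6.4]{LSZ22}), so the existence of the class in families, let alone its membership in the Greenberg Selmer complex cut out by $\Delta_{\mathscr{F}^2}$, is also part of what the conjecture asserts. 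The containment is not a lemma awaiting verification but a genuine conjectural claim.

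On the algebraic side, your description (``block-triangular structure of the Selmer complex together with ETNC-compatible descent'') is in the right spirit but vaguer than what the paper actually does: Theorem~\ref{thm_5_1_2025_02_14_1424} rests not on ETNC but on the exact triangle
\[
\mathbf{R}\Gamma_{\fin}(G_{\mathbb Q, \Sigma}, T_{\upi}^\dagger, \Delta_{\mathscr{F}^3}) \otimes_{\varpi_{3,2}^*} \cR_3 \longrightarrow \mathbf{R}\Gamma_{\fin}(G_{\mathbb Q, \Sigma}, T_3^\dagger, \Delta_{d}) \longrightarrow \mathbf{R}\Gamma_{\fin}(G_{\mathbb Q, \Sigma}, M_3^\dagger, \mathrm{tr}^*\Delta_{d}) \xrightarrow{+1}
\]
of Lemma~\ref{lemma_5_2_2025_02_14_1214}, combined with the module-of-leading-terms formalism of \S\ref{sec_Koly_Sys} (in particular Theorems~\ref{thm_2023_03_10} and~\ref{thm_2025_02_19_2217}), and the key arithmetic input that $\delta(T_\upi^\dagger,\Delta_{\mathscr{F}^2})=0$ (which is the parity-conjecture assumption reflecting $\varepsilon(\upi)=-1$) forces $\mathbf{R}^1\Gamma_{\fin}(G_{\mathbb Q, \Sigma}, T_{\upi}^\dagger, \Delta_{\mathscr{F}^1}) = \mathbf{R}^1\Gamma_{\fin}(G_{\mathbb Q, \Sigma}, T_{\upi}^\dagger, \Delta_{\mathscr{F}^2})$ (Lemma~\ref{lemma_5_5_2025_02_14_1331}). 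If you want to flesh out your sketch into the paper's actual argument, that collapse of the two Selmer groups for $T_\upi^\dagger$ is the pivot of the whole calculation and should be made explicit.
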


\subsubsection{} We now formulate the second factorisation conjecture, which is based on the double-wall crossing principle \ref{item_BDP2}.

Note that in this case, it follows from our running assumptions that 
$$\varepsilon(\upi_\kappa)=-1=\varepsilon(\upi_\kappa\times {\rm ad}^0 \underline{\sigma}_\lambda),\qquad\qquad   \forall\,(\kappa,\lambda) \in \cX^{\rm cl}_c\,.$$ 
Hence, the Bloch--Kato--Tate conjecture predicts the existence of a supply of cycles (associated to both families of degree-4 and degree-12 motives) to account for the systematic vanishing of the central $L$-values.

\begin{conj}
\label{conj_2025_08_22_0031}
Assume that $\varepsilon(\upi_\kappa)=-1$ for some (equivalently, all) specialisations of $\upi$, and that the family $\upi$ does not admit endoscopic specialisations. Let {$\Log_c$} denote the Perrin-Riou exponential map of \S\ref{subsubsec_2025_09_23_0739}. Then the family $\mathrm{LF}_{\underline{\Pi}}^{\dagger}$ of Lemma--Flach class  is contained in  
$\mathbf{R}^1\Gamma_{\fin}(G_{\mathbb Q,\Sigma}, T_{\upi}^{\dag},  \Delta_{\mathscr{F}^2})$ and 
there exists an element 
$\Delta_{\underline{\Pi} \otimes {\rm ad}\,\underline{\sigma}}^{\dagger}\in \mathbf{R}^1\Gamma_{\fin}(G_{\mathbb Q,\Sigma}, M_3^{\dag}, \tr^* \Delta_c)$ (see Definition \ref{def:T_and_M_3_selmer}), ``a family of twisted diagonal cycles'' associated to $T_{\upi} \otimes {\rm ad}^0\, T_{\underline{\sigma}}$, such that 
\begin{align*}
     L_p^{(a)}(\underline{\Pi} \times {\rm ad}\,\underline{\sigma})^2(P,Q,Q) =S(M,\upi)\cdot \mathcal C_a(P) \cdot 
    {\Log_c}  \bigl(\Delta_{\underline{\Pi} \otimes {\rm ad}\,\underline{\sigma}}^{\dagger}\bigr)^2 \cdot 
    {\Log_c}  \bigl(\mathrm{LF}_{\underline{\Pi}}^{\dagger}\bigr)\,,
\end{align*}
where $\mathcal C_a$ is a non-zero element in $\cR_\upi[1/p]$ that interpolates explicit algebraic fudge factors\,, and $S(M,\upi)$ is the factor\footnote{We record here for the convenience of the reader that $S(M,\upi)$ is a $p$-adic period that measures the $p$-local relative positions of the conjectural cycle $\Delta_{\underline{\Pi} \otimes {\rm ad}\,\underline{\sigma}}^{\dagger}$ and $\mathrm{LF}_{\underline{\Pi}}^{\dagger}$ relative to the splittings determined by ${\rm tr}$ vs ${\rm tr}^*$. We refer the reader to \S\ref{subsubsec_2025_07_18_1307} for a slightly more detailed discussion.} given as in \S\ref{subsubsec_2025_07_18_1307}.
\end{conj}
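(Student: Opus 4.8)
The statement is a conjecture, so rather than a proof I will outline the strategy by which one expects to prove it (and by which its algebraic counterpart, Theorem~\ref{thm_main_double_wall_crossing}, is actually established in \S\ref{subsec_5_2_2025_02_11}). The plan is to reduce the factorisation of $L_p^{(a)}(\underline{\Pi} \times {\rm ad}\,\underline{\sigma})^2$ to the arithmetic of two Selmer complexes by a double application of the wall-crossing principle. First I would use the (conjectural) $p$-adic GGP formula in region (a) to express $L_p^{(a)}(\underline{\Pi} \times {\rm ad}\,\underline{\sigma})^2$, restricted to the locus $Q_1 = Q_2$ and specialised near a classical point of $\cX_c^{\rm cl}$, as a $p$-adic limit of explicit GGP periods $\mathcal P_\Delta(\upi_\kappa, \underline{\sigma}_\lambda, \underline{\sigma}_\mu^c)$ approaching that point from region (a). By the principle \ref{item_BDP2}, this limit should compute a $p$-adic avatar of $L''(\tfrac12, \upi_\kappa \times {\rm ad}\,\underline{\sigma}_\lambda)$, which by the Artin factorisation \eqref{eqn_2025_02_11_1710} equals $L'(\tfrac12, \upi_\kappa \times {\rm ad}^0\,\underline{\sigma}_\lambda) \cdot L'(\tfrac12, \upi_\kappa)$. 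The heart of the matter is then to identify, via an explicit reciprocity law (the anticipated degree--$2$ BDP formula referred to in the footnote to \ref{item_BDP2}), each of these two first-order derivatives with the Bloch--Kato logarithm of a canonical Selmer class: $L'(\tfrac12, \upi_\kappa \times {\rm ad}^0\,\underline{\sigma}_\lambda)$ with ${\Log_c}(\Delta_{\underline{\Pi} \otimes {\rm ad}\,\underline{\sigma}}^{\dagger})$ interpolating twisted diagonal cycles in $\mathbf{R}^1\Gamma_{\fin}(G_{\mathbb Q,\Sigma}, M_3^{\dag}, \tr^* \Delta_c)$, and $L'(\tfrac12, \upi_\kappa)$ with ${\Log_c}(\mathrm{LF}_{\underline{\Pi}}^{\dagger})$, the logarithm of the Lemma--Flach family of \cite{LSZ22}.

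The key steps, in order: (i) establish that $\mathrm{LF}_{\underline{\Pi}}^{\dagger}$ lands in the Selmer group $\mathbf{R}^1\Gamma_{\fin}(G_{\mathbb Q,\Sigma}, T_{\upi}^{\dag}, \Delta_{\mathscr{F}^2})$ — this is a local condition check at $p$ and at the bad primes, building on the norm-compatibility and the construction step \cite[Theorem 9.6.4]{LSZ22}, together with the Panchishkin-type filtration $\Delta_{\mathscr{F}^2}$ coming from Borel-ordinarity; (ii) construct the twisted diagonal cycle class $\Delta_{\underline{\Pi} \otimes {\rm ad}\,\underline{\sigma}}^{\dagger}$ and verify it satisfies the Selmer condition $\tr^* \Delta_c$ — this is the analogue for $\GSp_4 \times \GL_2 \times \GL_2$ of the diagonal-cycle constructions of \cite{DR3, BSV}, restricted to the adjoint locus; (iii) prove the relevant explicit reciprocity laws, relating the $p$-adic $L$-functions $L_p^{(e)}$ (or its square-root) and $L_p^{(a)}$ to the images of these classes under $\Log_c$; (iv) assemble the pieces using the Artin factorisation \eqref{eqn_2025_02_11_1710} valid pointwise on the dense set $\cX_c^{\rm cl}$, and invoke a Zariski-density / Weierstrass-preparation argument to upgrade the pointwise identity to an identity in $\Frac(\mathbb I)$, absorbing the discrepancy between the automorphic and Deligne periods into the fudge factor $\mathcal C_a(P) \in \cR_\upi[1/p]$ and the endoscopic-multiplicity constant $S(M,\upi)$; (v) pin down $S(M,\upi)$ as the $p$-adic period measuring the relative position of the two splittings $\tr$ versus $\tr^*$ at $p$, as in \S\ref{subsubsec_2025_07_18_1307}.

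The main obstacle is step (iii), the degree--$2$ explicit GGP formula: even formulating it precisely is, as the authors themselves note, ``presently a fantasy'', because wall-crossing twice (from (a) to (b) to (c)) means one is probing a second-order phenomenon with no classical interpolation points in either region (a) or (b) to anchor the construction — $\cX_a^{\rm cl} = \cX_b^{\rm cl} = \varnothing$. Consequently the unconditional content of the paper replaces this analytic input by its algebraic shadow: one works instead with the module of leading terms of the two Selmer complexes (those of $M_3^\dagger$ and of $T_\upi^\dagger$, neither of which is Panchishkin after propagation) and proves the factorisation of the \emph{algebraic} $p$-adic $L$-function $L_p^{\rm alg}(16a)$ — this is Theorem~\ref{thm_main_double_wall_crossing} — using the machinery of \S\ref{sec_Koly_Sys}, descent from the big Selmer complex over $\cR_4$ to the diagonal locus $\cX \subset \Spec \cR_4$, and the ETNC-type comparison of determinants. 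The conjectural statement above is then the expected consequence of combining this algebraic factorisation with the (still unavailable) full package of $p$-adic and arithmetic GGP formulae, exactly as in the $\GL_2 \times \GL_2 \times \GL_2$ precedent of \cite{BS_triple_factor, bcpv, BC}.
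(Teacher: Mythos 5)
You correctly identify that the statement is a conjecture, not a theorem, and your strategy sketch faithfully reproduces the paper's own heuristics (the double wall-crossing principle \ref{item_BDP2}, the Artin factorisation \eqref{eqn_2025_02_11_1710} of second-order derivatives, and the replacement of the analytic input by its algebraic shadow in Theorem~\ref{thm_main_double_wall_crossing} and the endoscopic cases of \S\ref{sec_2025_02_06_0708}). One small slip: in step (iii) you invoke $L_p^{(e)}$, but that $p$-adic $L$-function is a factor in Conjecture~\ref{conj_2025_08_21_1552} (the (d)$\to$(e) case), not in Conjecture~\ref{conj_2025_08_22_0031}; the relevant reciprocity laws here should directly relate the restriction of $L_p^{(a)}$ to $\Log_c$ of the twisted diagonal cycle and of the Lemma--Flach class, with no intermediate degree-$12$ $p$-adic $L$-function appearing.
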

The conjectured factorisation statement therefore takes the form of a degree-2 $p$-adic GGP formula that we have alluded to in \S\ref{subsubsec_2025_08_18_1601} (in the degenerate scenario we have placed, where the underlying family of motives decompose), and it should be thought of as a higher BDP / $p$-adic Waldspurger formula that we have discussed in \S\ref{sec_Intro}. We refer the reader to \S\ref{sec_2025_02_06_0708} where we present evidence for the existence of the family of cycles $\Delta_{\underline{\Pi} \otimes {\rm ad}\,\underline{\sigma}}^{\dagger}$ as well as to variant of this conjecture in endoscopic cases\footnote{The reason why these cases are omitted from Conjecture~\ref{conj_2025_08_22_0031}, and why the analogues conjecture assumes a different shape in the endoscopic scenario, is explained at the start of \S\ref{sec_2025_02_06_0708}; see also \S\ref{subsubsec_2025_07_22_1459}.}. We also refer the reader to Theorem~\ref{thm_main_double_wall_crossing} and Corollary~\ref{cor_2025_08_14_1551}, where an algebraic form of this conjecture is established.

\section{Selmer complexes, Perrin-Riou logarithm, and Eichler--Shimura morphisms}
\label{sec_Selmer}
\subsection{Generalities on Selmer complexes}
Suppose that $T$ is a representation of $G_\QQ$ over $R$ unramified outside a finite set of places $\Sigma$. Suppose that the restriction of $T$ to $G_p:=G_{\QQ_p}$ fits in a short exact sequence 
$$  0 \longrightarrow \mathscr{F}^+ T \longrightarrow T \longrightarrow T/{\mathscr{F}}^+T \longrightarrow 0 $$  
of $G_{p}$-modules. 

\subsubsection{Greenberg local conditions} 
\label{subsubsec_2025_02_10_1518}
For $v \in \Sigma$, let $I_v \subset G_v$ be the inertia subgroup and let $\Fr_v \in G_v/I_v$ denote the geometric Frobenius element. 
We consider the following Greenberg local conditions (in the sense of Nekov\'a\v{r}) on $T$ on the level of continuous cochains:
$$  U_v^+(T) = \begin{cases} C^{\bullet}(G_p, {\mathscr F}^+T) & \text{ if } v = p, \\ C^{\bullet}(G_v/I_v, T^{I_v}) & \text{ if } v \in \Sigma \backslash \{p\}. \end{cases}  $$ 
These are equipped with a morphism of complexes 
$$  \iota_v^+ : U_v^+(T) \longrightarrow C^{\bullet}(G_v,T), \quad \text{ for all } v \in \Sigma.  $$  
We note that when $v \neq p$, the complex $U_v^+(T)$ is quasi-isomorphic to the complex $\Big( T^{I_v} \xrightarrow{\Fr_v-1} T^{I_v} \Big)$ concentrated in degrees 0 and 1.

In this paper, we assume that the following \emph{Tamagawa number triviality} hypothesis: 
\begin{itemize}
    \item[\mylabel{item_Tam}{\bf Tam})] $H^1(I_v, T)$ is a free $R$-module for any prime $v \in \Sigma$. 
\end{itemize}
Under this assumption, $U_v^+(T)$ is perfect for each prime $v \in \Sigma$ (see \cite[\S4.3]{BS_triple_factor}), and hence the Selmer complexes that appear later are also perfect.

\subsubsection{Selmer complexes associated to Greenberg local conditions}
\label{sec:greenberg_selmer}
Let $\Delta$ be a Greenberg local condition. We define the Selmer complex associated to $(T,\Sigma,\Delta)$ on setting 
$$  \widetilde C_{\fin}^{\bullet}(G_{\mathbb Q, \Sigma},T,\Delta) := \cone \Big( C^{\bullet}(G_{\mathbb Q, \Sigma},T) \oplus U_{\Sigma}^+(T) \xrightarrow{\res_{\Sigma}-\iota_{\Sigma}^+} C_{\Sigma}^{\bullet}(T) \Big) [-1].  $$  We denote the corresponding object in the derived category by $\mathbf{R}\Gamma_{\fin}(G_{\mathbb Q, \Sigma},T,\Delta)$ and its cohomology by $\mathbf{R}^\bullet\Gamma_{\fin}(G_{\mathbb Q,\Sigma},T,\Delta)$. Recall that we may compute the Euler--Poincar\'e characteristic of the complex by $$  \chi(\mathbf{R}\Gamma_{\fin}(G_{\mathbb Q, \Sigma},T,\Delta)) = \rank(T^{c=1}) - \rank({\mathscr F}^+T).  $$ 

\subsection{The cases of $\GL_2$ and $\GL_3$}\label{subsubsec_2025_09_14_1444}

Let $g$ be a normalized cuspidal eigenform of weight $\ell \geq 2$ which is new away from $p$, and let $V_p(g)$ denote its associated Galois representation over a sufficiently large local field $E/\mathbb{Q}_p$ containing the image of the Hecke field of $g$ under the fixed isomorphism $\iota_p \colon \mathbb{C}\to \CC_p$.
We assume henceforth that $V_p(g)$ is $p$-ordinary and crystalline. 
 We let $\mathfrak{a}$ denote the unique root of the Hecke polynomial of $g$ at $p$ which is a $p$-adic unit\footnote{More precisely, the $p$-adic valuation of the image of $\mathfrak{a}$ under the chain of fixed embedding $\overline{Q}\xrightarrow{\jmath}\CC \xrightarrow{\iota_p}\CC_p$ is $0$.}. By slight abuse of notation, we denote the unramified character of $G_p$ mapping the geometric Frobenius to $\mathfrak{a}$ also by $\mathfrak{a}$.

\subsubsection{} The representation $V_p(g)$ admits a $G_p$-stable filtration 
$${\mathscr F}^\bullet V_p(g)\,: V_p(g)={\mathscr F}^0 V_p(g)\supsetneq {\mathscr F}^1 V_p(g)\supsetneq{\mathscr F}^2 V_p(g)=\{0\}\,,$$
where $G_p$ acts on the graded piece ${\mathscr Gr}^0 V_p(g)$ via $\mathfrak{a}$ and via $\chi^{1-\ell}\mathfrak{a}^{-1}$ on ${\mathscr Gr}^1 V_p(g)$.

Moreover, ${\mathscr F}^i V_p(g)$ is the exact the annihilator of ${\mathscr F}^{2-i}V_p(g^c)$ with respect to the Poincar\'e duality pairing, where $g^c(z)=\overline{g(\overline{z})}$ is the conjugate form. We remark that we have a natural isomorphism $V_p(g)^*\simeq V_p(g^c)(\ell-1)$ induced from Poincar\'e duality. We put 
$${\mathscr F}^iV_p(g)^*:={\rm Hom}(V_p(g)/{\mathscr F}^{2-i}V_p(g),E)\simeq {\mathscr F}^iV_p(g^c)(\ell-1)\,.$$

\subsubsection{} 
\label{subsubsec_2025_02_10_1533}
Similarly, the adjoint Galois representation $V_p(\ad^0(g))=\ker\left(V_p(g)\otimes V_p(g)^*\xrightarrow{\rm ev} E\right)$ admits the following filtration of $G_p$-modules:
\[
V_p(\ad^0(g)) = {\mathscr F}^0 V_p(\ad^0(g)) \supsetneq {\mathscr F}^1 V_p(\ad^0(g)) \supsetneq {\mathscr F}^2 V_p(\ad^0(g)) \supsetneq {\mathscr F}^3 V_p(\ad^0(g)) = \{0\}.  
\]
Here 
\begin{align*}
    {\mathscr F}^1 V_p(\ad^0(g)) &:= \ker(\ad^0(g) \longrightarrow \Hom( {\mathscr F}^1 V_p(g),  {\mathscr G}r^1 V_p(g))), 
    \\
    {\mathscr F}^2 V_p(\ad^0(g)) &:= {\mathscr F}^1 V_p(g) \otimes {\mathscr F}^1 V_p(g)^*. 
\end{align*}
Note that the $G_p$-action on each of the one-dimensional graded pieces 
$${\mathscr Gr}^0 V_p(\ad^0(g)):= V_p(\ad^0(g)) /{\mathscr F}^1 V_p(\ad^0(g)), \quad {\mathscr Gr}^1 V_p(\ad^0(g)):={\mathscr F}^1 V_p(\ad^0(g))/{\mathscr F}^2 V_p(\ad^0(g))\,,$$ 
$${\mathscr Gr}^2 V_p(\ad^0(g)):={\mathscr F}^2 V_p(\ad^0(g))$$ 
is given by $\mathfrak a\chi^{\ell-1}$, $\mathds{1}$, and $\mathfrak a ^{-1}\chi^{1-\ell}$, respectively. 

\subsubsection{} 

The analogous definitions are available also when $g$ is replaced by our Hida family $\underline{\sigma}$.

\subsection{Siegel modular forms}\label{subsec:SMF}
Let $\Pi$ be a cuspidal automorphic representation of ${\rm GSp}_4{}_{/\QQ}$ of weight $(k_1,k_2)$ with trivial central character. We assume that $k_1\geq k_2\geq 3$ and write $r_1 = k_1-3$ and $r_2 = k_2-3$. As in \S\ref{subsec:Siegel}, let $(\alpha,\beta,\gamma,\delta)$ stand for the Hecke parameters of the Galois representation $V_p(\Pi)$, with $v_p(\alpha) \leq v_p(\beta) \leq v_p(\gamma) \leq v_p(\delta)$. We also assume that $\Pi$ is not a Saito--Kurokawa lift.

We recall that one says that $\Pi_p$ is Klingen-ordinary if $\frac{\alpha \beta}{p^{k_2-2}}$ is a $p$-adic unit and that it is Siegel-ordinary if $\alpha$ is a $p$-adic unit. We say that it is Borel-ordinary (or simply, ordinary) if it is both Klingen and Siegel ordinary. In that case, $v_p(\alpha) = 0$, $v_p(\beta) = k_2-2$, $v_p(\gamma) = k_1-1$, and $v_p(\delta) = k_1+k_2-3$.

\subsubsection{Duality}
We henceforth assume that $\Pi_p$ is Borel-ordinary. Under the assumptions of \S\ref{ass-gsp4}, the $G_{\QQ_p}$ representation $V_p(\Pi)$ has a complete flag of $G_p$-stable subspaces ${\mathscr F}^\bullet V_p(\Pi)$, where ${\mathscr F}^i V_p(\Pi)$ is of codimension $i$ (where $0\leq i\leq 4$). Further, following our previous discussion the Hodge--Tate weights of $V_p(\Pi)$ are $0$, $k_2-2$, $k_1-1$, and $k_1+k_2-3$ (cf. \cite[Theorem 10.1.3.6]{LSZ22}). Then, as noted in \cite[\S6.3]{LZ} and since the central character $\chi_\Pi$ of $\Pi$ is trivial by assumption, we have a symplectic pairing 
\[
V_p(\Pi) \otimes V_p(\Pi) \longrightarrow E(-k_1-k_2+3)
\] 
induced by Poincar\'e duality. Under this pairing, ${\mathscr F}^i  V_p(\Pi)$ is the exact annihilator of ${\mathscr F}^{4-i} V_p(\Pi)$.

Let us put 
\[
V_p^\dagger(\Pi):=V_p(\Pi)\left(\tfrac{k_1+k_2}{2}-1\right)\,,
\]
so that the pairing above gives rise to the symplectic pairing 
$V_p^{\dagger}(\Pi) \otimes V_p^{\dagger}(\Pi) \to E(1)$.

\subsection{Graded pieces}


\subsubsection{}
Let us consider the filtration
$$  
V_p(\Pi) = {\mathscr F}^0 V_p(\Pi) \supsetneq {\mathscr F}^1 V_p(\Pi) \supsetneq {\mathscr F}^2 V_p(\Pi) \supsetneq {\mathscr F}^3 V_p(\Pi) \supsetneq {\mathscr F}^4 V_p(\Pi) = \{0\},  $$  with graded pieces 
$$  
\begin{aligned} 
\gr^0 V_p(\Pi) = {\mathscr F}^0 V_p(\Pi) /{\mathscr F}^1 V_p(\Pi), \quad & \gr^1 V_p(\Pi) = {\mathscr F}^1 V_p(\Pi)/{\mathscr F}^2 V_p(\Pi), 
\\
\gr^2 V_p(\Pi) = {\mathscr F}^2 V_p(\Pi)/{\mathscr F}^3 V_p(\Pi), \quad & \gr^3 V_p(\Pi) = {\mathscr F}^3 V_p(\Pi)/{\mathscr F}^4 V_p(\Pi)\,. 
\end{aligned}  
$$
The crystalline Frobenius $\varphi$ acts on $\DD_{\rm cris}(\gr^0 V_p(\Pi)), \ldots, \DD_{\rm cris}(\gr^3 V_p(\Pi))$ by $\alpha$, $\beta$, $\gamma$, and $\delta$, respectively.

\subsubsection{Families of Siegel modular forms}
Recall our conventions for (two-variable) families of ordinary Siegel modular forms fixed in \ref{subsec:families}. The analogous definitions are available also when $\Pi$ is replaced by a Hida family $\upi$, satisfying both the Klingen and Siegel ordinary conditions, as discussed in \cite[\S23]{LZ20-bk} and \cite[\S10]{LZ21-erl}. In particular, we may still consider a filtration
$$ T_\upi = {\mathscr F}^0T_\upi \supsetneq {\mathscr F}^1T_\upi \supsetneq {\mathscr F}^2T_\upi \supsetneq {\mathscr F}^3T_\upi \supsetneq {\mathscr F}^4T_\upi = \{0\},  $$  as well as the corresponding graded pieces $$  \begin{aligned} 
\gr^0T_\upi = {\mathscr F}^0T_\upi /{\mathscr F}^1T_\upi, \quad & \gr^1T_\upi = {\mathscr F}^1T_\upi/{\mathscr F}^2T_\upi, \\ \gr^2T_\upi = {\mathscr F}^2T_\upi/{\mathscr F}^3T_\upi, \quad & \gr^3T_\upi = {\mathscr F}^3T_\upi/{\mathscr F}^4T_\upi\,. \end{aligned}  
$$ 
As before, let $(\hk_1, \, \hk_2)$ be the universal characters associated to the 2-dimensional weight space $\Omega$. We set
\[ 
T^{\dagger}_\upi := T_\upi \left(\tfrac{\hk_1+\hk_2}{2} - 1 \right).  
\] 
Then there is a skew-symmetric pairing
$$  
 T_\upi  \otimes T_\upi \longrightarrow \cR_{\upi}(\hk_1+\hk_2-3),  
$$  
with respect to which ${\mathscr F}^i T_\upi$ is the exact annihilator of ${\mathscr F}^{4-i}T_\upi$; equivalently, there is a skew-symmetric pairing
$$  
T^{\dagger}_\upi \otimes T^{\dagger}_\upi \longrightarrow \cR_{\upi}(1).   
$$ 

\subsection{Triple products}
\label{subsec_2025_08_25_1436}
With the assumptions of \ref{subsubsec_2025_09_14_1444} and \ref{subsec:SMF}, let $g$ be a modular forms of weight $\ell$ and $\Pi$ a cuspidal representation of weight $(k_1,k_2)$ with trivial central characters and $k_1 \geq k_2 \geq 3$. Let us put $c:=\frac{k_1+k_2}{2}+\ell-2$. 
Let
$$  
V := V_p(\Pi) \otimes V_p(g) \otimes V_p(g^c)(c)\simeq  V_p^\dagger(\Pi) \otimes V_p(g) \otimes V_p(g)^*.  
$$   
We also define a Galois stable $\cO_E$-lattice $T\subset V$, which comes associated with the choice of lattices $T_? \subset V_p(?)$ (which we henceforth fix). We put
$$  
M := T_\Pi^\dagger \otimes \ad^0 T_g,\,\quad M[1/p] := V_p^\dagger(\Pi) \otimes \ad^0V_p(g).  
$$ 

\subsubsection{} We shall describe the corresponding Greenberg local conditions associated with the regions of weights described in \cite[\S4]{LZ20}.  These will be given in terms of $G_p$-stable subspaces of $V$ of dimension $8$, as described in the definition below. To highlight their relevance to the corresponding region, we shall notate them by $F_?^+X$ with $?=a,b,c,d,e$.

\begin{definition}\label{def:T_and_M_3_selmer}
We put
\begin{align*}
    F_a^+V&:= V_p^\dagger(\Pi) \otimes {\mathscr F}^1 V_p(g) \otimes V_p(g)^*\,,\\
    F_b^+V&:= {\mathscr F}^1 V_p^\dagger(\Pi) \otimes {\mathscr F}^1 V_p(g) \otimes V_p(g)^* + {\mathscr F}^3 V_p^\dagger(\Pi) \otimes V_p(g) \otimes  {\mathscr F}^1 V_p(g)^* \\
    &\hspace{6cm}+ V_p^\dagger(\Pi) \otimes {\mathscr F}^1 V_p(g) \otimes {\mathscr F}^1 V_p(g)^*\,,\\
    F_c^+V&:=V_p^\dagger(\Pi) \otimes {\mathscr F}^1 V_p(g) \otimes {\mathscr F}^1 V_p(g)^* + {\mathscr F}^2 V_p^\dagger(\Pi) \otimes {\mathscr F}^1 V_p(g) \otimes V_p(g)^* \\
    &\hspace{6cm} + {\mathscr F}^2 V_p^\dagger(\Pi) \otimes V_p(g) \otimes {\mathscr F}^1 V_p(g)^*\,,\\
    F_d^+V&:= {\mathscr F}^1 V_p^\dagger(\Pi) \otimes {\mathscr F}^1 V_p(g) \otimes V_p(g)^* + {\mathscr F}^3 V_p^\dagger(\Pi) \otimes V_p(g) \otimes V_p(g)^*\,,\\
    F_e^+V&:={\mathscr F}^3 V_p^\dagger(\Pi) \otimes V_p(g) \otimes V_p(g)^* + {\mathscr F}^1 V_p^\dagger(\Pi) \otimes {\mathscr F}^1 V_p(g) \otimes {\mathscr F}^1 V_p(g)^* \\ & \qquad + {\mathscr F}^2 V_p^\dagger(\Pi) \otimes {\mathscr F}^1 V_p(g) \otimes V_p(g)^* 
    + {\mathscr F}^2 V_p^\dagger(\Pi) \otimes V_p(g) \otimes {\mathscr F}^1 V_p(g)^*\,,\\\\
    &\hspace{-3.1cm}\hbox{We analogously define $F_?^+T$, and put}\\ 
    F^+_?M&:= \im(F_{?}^+ T \hookrightarrow T \xrightarrow{\rm tr^*} M)\,,\qquad ?=a,b,c,d,e\,,
\end{align*}
where $\rm tr^*$ is the map induced from the dual of the map ${\rm ad}^0T_g=\ker({\rm tr})\hookrightarrow {\rm End}(T_g)\simeq T_g \otimes T_g^*$\,.
\end{definition}

\begin{lemma}
    \label{lemma_2025_02_09_1953}
    We have the following isomorphisms of $G_p$-representations.
\begin{align*}
F_a^+T/(F_a^+T \cap F_b^+T) &\simeq {\mathscr Gr}^0 T^\dagger_{\Pi} \otimes{\mathscr F}^1 T_g \otimes  {\mathscr Gr}^0 T_g^* \simeq {\mathscr Gr}^0 T^\dagger_{\Pi}\,,
\\
F_b^+T/(F_c^+T\cap F_b^+T) &\simeq {\mathscr Gr}^1 T^\dagger_{\Pi} \otimes {\mathscr F}^1 T_g \otimes  {\mathscr Gr}^0 T_g^* \simeq {\mathscr Gr}^1 T^\dagger_{\Pi}
\end{align*}
\end{lemma}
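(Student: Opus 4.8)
The plan is to compute both sides as explicit tensor products of the $G_p$-stable graded pieces appearing in Definition~\ref{def:T_and_M_3_selmer}, and then to read off the claimed identifications. For the first isomorphism, I would first write $F_a^+T = T_\Pi^\dagger \otimes {\mathscr F}^1 T_g \otimes T_g^*$ and observe that, in the sum defining $F_b^+T$, only the third summand $T_\Pi^\dagger \otimes {\mathscr F}^1 T_g \otimes {\mathscr F}^1 T_g^*$ is contained in $F_a^+T$ (the first summand has ${\mathscr F}^1 T_\Pi^\dagger$ in the first slot and the second has ${\mathscr F}^3 T_\Pi^\dagger$ together with ${\mathscr F}^1 T_g$ in the \emph{second} slot, so neither sits inside $T_\Pi^\dagger\otimes{\mathscr F}^1 T_g\otimes T_g^*$ once we note that ${\mathscr F}^1 T_g\not\subset$ the span of the others). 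Hence $F_a^+T\cap F_b^+T = T_\Pi^\dagger \otimes {\mathscr F}^1 T_g \otimes {\mathscr F}^1 T_g^*$, and the quotient is
\[
F_a^+T/(F_a^+T\cap F_b^+T)\;\simeq\; T_\Pi^\dagger \otimes {\mathscr F}^1 T_g \otimes \bigl(T_g^*/{\mathscr F}^1 T_g^*\bigr)\;=\; T_\Pi^\dagger \otimes {\mathscr F}^1 T_g \otimes {\mathscr Gr}^0 T_g^*.
\]
To reach the stated form I would then restrict attention to the first graded piece of $T_\Pi^\dagger$: since $F_b^+T$ also contains ${\mathscr F}^1 T_\Pi^\dagger \otimes {\mathscr F}^1 T_g \otimes T_g^*$, the full intersection of $F_b^+T$ with $T_\Pi^\dagger\otimes{\mathscr F}^1 T_g\otimes T_g^*$ is ${\mathscr F}^1 T_\Pi^\dagger\otimes{\mathscr F}^1 T_g\otimes T_g^* + T_\Pi^\dagger\otimes{\mathscr F}^1 T_g\otimes{\mathscr F}^1 T_g^*$, so the quotient is ${\mathscr Gr}^0 T_\Pi^\dagger \otimes {\mathscr F}^1 T_g \otimes {\mathscr Gr}^0 T_g^*$. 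The rank-one pieces ${\mathscr F}^1 T_g$ and ${\mathscr Gr}^0 T_g^*$ carry $G_p$-actions by $\chi^{1-\ell}\mathfrak a^{-1}$ and $\mathfrak a$ respectively (from \S\ref{subsubsec_2025_02_10_1533} and the duality ${\mathscr F}^i T_g^*\simeq{\mathscr F}^i T_{g^c}(\ell-1)$), and these characters multiply to $\chi^{1-\ell}$; twisting ${\mathscr Gr}^0 T_\Pi^\dagger$ by this and using the self-duality normalisation $T_\Pi^\dagger$ that already absorbs the relevant cyclotomic twist, one gets ${\mathscr Gr}^0 T_\Pi^\dagger$. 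I would carry out the bookkeeping with the characters $\widetilde\alpha,\dots,\widetilde\delta$ of \S\ref{subsubsec_2025_08_25_1049} and $\mathfrak a,\chi^{1-\ell}\mathfrak a^{-1}$ of \S\ref{subsubsec_2025_09_14_1444} to confirm the central character cancels.

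For the second isomorphism the strategy is identical but one extra step is needed because $F_b^+T$ is itself a nontrivial sum of three terms. I would first identify, inside $F_b^+T$, the subspace $F_c^+T\cap F_b^+T$. Comparing the two definitions term by term — $F_c^+T$ has summands $V^\dagger_\Pi\otimes{\mathscr F}^1\otimes{\mathscr F}^1$, ${\mathscr F}^2 V^\dagger_\Pi\otimes{\mathscr F}^1\otimes V$, ${\mathscr F}^2 V^\dagger_\Pi\otimes V\otimes{\mathscr F}^1$, while $F_b^+T$ has ${\mathscr F}^1 V^\dagger_\Pi\otimes{\mathscr F}^1\otimes V$, ${\mathscr F}^3 V^\dagger_\Pi\otimes V\otimes{\mathscr F}^1$, $V^\dagger_\Pi\otimes{\mathscr F}^1\otimes{\mathscr F}^1$ — the common part is ${\mathscr F}^2 V^\dagger_\Pi\otimes{\mathscr F}^1\otimes V + {\mathscr F}^3 V^\dagger_\Pi\otimes V\otimes{\mathscr F}^1 + V^\dagger_\Pi\otimes{\mathscr F}^1\otimes{\mathscr F}^1$. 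Quotienting $F_b^+T$ by this kills the first slot down to ${\mathscr F}^1/{\mathscr F}^2 = {\mathscr Gr}^1 T_\Pi^\dagger$ and the second and third slots to ${\mathscr F}^1 T_g \otimes {\mathscr Gr}^0 T_g^*$, giving ${\mathscr Gr}^1 T_\Pi^\dagger \otimes {\mathscr F}^1 T_g \otimes {\mathscr Gr}^0 T_g^* \simeq {\mathscr Gr}^1 T_\Pi^\dagger$ by the same character computation as before.

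The main obstacle I anticipate is not conceptual but combinatorial: correctly intersecting these sums of tensor subspaces. Because each $F_?^+T$ is written as a sum of three or four tensor products of pieces of the flags on $T_\Pi^\dagger$, $T_g$, $T_g^*$, one must be careful that intersections of such sums behave additively — this is \emph{not} automatic for subspaces in general, and the cleanest route is to pass to a basis adapted to all three flags simultaneously (legitimate since all the filtrations are by $G_p$-stable subspaces in a space with a $G_p$-stable complete flag by Borel-ordinarity, cf. \S\ref{ass-gsp4}) and then match up one-dimensional graded coordinates by their Frobenius eigencharacters. Once one works in such a coordinate system, both isomorphisms are immediate and the only remaining check is that the graded characters on the $T_g\otimes T_g^*$ factor multiply to the cyclotomic twist that the $\dagger$-normalisation of $T_\Pi$ is designed to absorb, so that the final $G_p$-equivariant identification with ${\mathscr Gr}^0 T_\Pi^\dagger$ (resp. ${\mathscr Gr}^1 T_\Pi^\dagger$) holds on the nose. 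I would present the argument by first stating the adapted-basis reduction, then doing the two intersection computations in parallel, and finally the one-line character check.
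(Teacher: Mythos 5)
Your overall strategy — compute the two intersections as explicit sums of tensor subspaces, pass to the quotients, and identify the resulting rank-one $G_p$-module with the first (resp.\ second) graded piece of $T_\Pi^\dagger$ — is exactly the route taken in the paper, and you do arrive at the correct intersections. But two steps in the write-up contain genuine errors that should be cleaned up before this could stand as a proof.

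First, the opening claim about $F_a^+T\cap F_b^+T$ is internally inconsistent. You assert that, among the three summands of $F_b^+T$, \emph{only} the third, $T_\Pi^\dagger\otimes{\mathscr F}^1 T_g\otimes{\mathscr F}^1 T_g^*$, is contained in $F_a^+T=T_\Pi^\dagger\otimes{\mathscr F}^1 T_g\otimes T_g^*$, and deduce $F_a^+T\cap F_b^+T = T_\Pi^\dagger\otimes{\mathscr F}^1 T_g\otimes{\mathscr F}^1 T_g^*$. That is false: the first summand ${\mathscr F}^1 T_\Pi^\dagger\otimes{\mathscr F}^1 T_g\otimes T_g^*$ also sits inside $F_a^+T$, since ${\mathscr F}^1 T_\Pi^\dagger\subset T_\Pi^\dagger$ (your parenthetical ``the first summand has ${\mathscr F}^1T_\Pi^\dagger$ in the first slot, so it does not sit inside'' gets this backwards — a smaller first slot only makes containment easier). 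You then \emph{re-derive} the intersection correctly one sentence later, writing $F_a^+T\cap F_b^+T = {\mathscr F}^1 T_\Pi^\dagger\otimes{\mathscr F}^1 T_g\otimes T_g^* + T_\Pi^\dagger\otimes{\mathscr F}^1 T_g\otimes{\mathscr F}^1 T_g^*$, which is the answer the paper records. So the final quotient ${\mathscr Gr}^0T_\Pi^\dagger\otimes{\mathscr F}^1T_g\otimes{\mathscr Gr}^0T_g^*$ is right, but it contradicts your first computation rather than refining it; the exposition should simply delete the wrong first pass. (Your corresponding computation of $F_b^+T\cap F_c^+T$ for the second isomorphism is correct as written.)

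Second, and more substantively, the closing character check is wrong. You assign ${\mathscr Gr}^0T_g^*$ the character $\mathfrak a$, but from ${\mathscr F}^iV_p(g)^*\simeq{\mathscr F}^iV_p(g^c)(\ell-1)$ one has ${\mathscr Gr}^0 V_p(g)^*\simeq({\mathscr F}^1V_p(g))^*$, whose character is the \emph{inverse} of $\chi^{1-\ell}\mathfrak a^{-1}$, i.e.\ $\mathfrak a\chi^{\ell-1}$. Consequently the product of the characters on ${\mathscr F}^1 T_g\otimes{\mathscr Gr}^0T_g^*$ is $\chi^{1-\ell}\mathfrak a^{-1}\cdot\mathfrak a\chi^{\ell-1}=\mathds{1}$, \emph{not} $\chi^{1-\ell}$. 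This is precisely why the second isomorphism ${\mathscr Gr}^0T_\Pi^\dagger\otimes{\mathscr F}^1T_g\otimes{\mathscr Gr}^0T_g^*\simeq{\mathscr Gr}^0T_\Pi^\dagger$ holds on the nose. Your suggestion that a leftover $\chi^{1-\ell}$ is ``absorbed by the $\dagger$-normalisation'' does not work: the $\dagger$ twist has already been built into $T_\Pi^\dagger$ throughout, so there is nothing left to absorb, and ${\mathscr Gr}^0T_\Pi^\dagger(1-\ell)\not\simeq{\mathscr Gr}^0T_\Pi^\dagger$ in general. Once the character of ${\mathscr Gr}^0T_g^*$ is corrected, the check becomes a one-line triviality and matches what the paper leaves implicit in ``a direct calculation.'' Your remark about passing to a basis adapted to all three flags (to justify that intersections of sums of tensor subspaces distribute) is a sensible point that the paper passes over in silence.
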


\begin{proof}
    Observing that 
    \[
    F_a^+T \cap F_b^+T = {\mathscr F}^1 T^\dagger_{\Pi} \otimes {\mathscr F}^1 T_g \otimes  T_g^* + T^\dagger_{\Pi} \otimes {\mathscr F}^1 T_g \otimes {\mathscr F}^1 T_g^*
    \]
    and
    \[
    F_c^+ T \cap F_b^+T = T^\dagger_{\Pi} \otimes {\mathscr F}^1 T_g \otimes {\mathscr F}^1 T_g^*+{\mathscr F}^2T^\dagger_{\Pi} \otimes {\mathscr F}^1 T_g \otimes  T_g^*+ {\mathscr F}^3T^\dagger_{\Pi} \otimes T_g \otimes {\mathscr F}^1 T_g^* \,,
    \]
 the asserted isomorphisms follow from a direct calculation.
\end{proof}

\subsubsection{} The analogous constructions are available when $\Pi$ is replaced by a Hida family $\upi$ of Siegel modular forms, and $g$ replaced by a primitive Hida family $\underline{\sigma}$ of elliptic modular forms. We note that the conjugate family $\underline{\sigma}^c$ is defined as $\underline{\sigma} \otimes {\chi}_{\underline{\sigma}}^{-1}$, where ${\chi}_{\underline{\sigma}}$ is the tame nebentype of the family $\underline{\sigma}$.

\subsubsection{Greenberg local conditions for families}
\begin{definition}
For $? \in \{a,b,c,d,e\}$, let $$  F_?^+ T_3^{\dag} := F_?^+ T_4^{\dag} \otimes_{\iota_{3,4}^*} \mathcal R_3.  $$ 
We also define the Greenberg-local condition $\tr^* \Delta_{?} = \Delta(F_{?}^+ M_3^{\dag})$ by $$  F_{?}^+ M_3^{\dag} := \im(F_{?}^+ T_3^{\dag} \hookrightarrow T_3^{\dag} \rightarrow M_3^{\dag}).  $$ 
\end{definition}

\begin{lemma}\label{lemma:ranks}
\item[(a)] We have 
$$  
\rank_{\mathcal R_{\underline{\sigma}}} F_a^+ M_3^{\dag} = 8, \quad \rank_{\mathcal R_{\underline{\sigma}}} F_b^+ M_3^{\dag} = 7, \quad \rank_{\mathcal R_{\underline{\sigma}}} F_c^+ M_3^{\dag} = 6, 
 $$ 
$$  \rank_{\mathcal R_{\underline{\sigma}}} F_d^+ M_3^{\dag} = 7, \quad \rank_{\mathcal R_{\underline{\sigma}}} F_e^+ M_3^{\dag} = 6.  $$ 
\item[(b)] We have the following natural short exact sequences 
$$  0 \longrightarrow F_b^+ M_3^{\dag} \longrightarrow F_a^+ M_3^{\dag} \longrightarrow \gr^0 T^\dagger_{\underline{\Pi}} \otimes \gr^1 \ad^0(T_{\underline{\sigma}}) \longrightarrow 0,  $$  
$$  0 \longrightarrow F_c^+ M_3^{\dag} \longrightarrow F_b^+ M_3^{\dag} \longrightarrow \gr^1 T^\dagger_{\underline{\Pi}} \otimes \gr^1 \ad^0(T_{\underline{\sigma}}) \longrightarrow 0,  $$ 
and 
$$  0 \longrightarrow F_e^+ M_3^{\dag} \longrightarrow F_d^+ M_3^{\dag} \longrightarrow \gr^1 T^\dagger_{\underline{\Pi}} \otimes  \gr^1 \ad^0(T_{\underline{\sigma}}) \longrightarrow 0.  $$ 
\end{lemma}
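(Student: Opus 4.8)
The plan is to reduce the lemma to explicit linear algebra with the $G_p$-filtrations, using that $M_3^\dagger$ is a direct-summand quotient of $T_3^\dagger$. First I would make $\tr^*$ transparent: under the identifications built into Definition~\ref{def:T_and_M_3_selmer} (extended to the families, as in the definition of $F_?^+T_3^\dagger$ and $F_?^+M_3^\dagger$ immediately preceding the lemma), one has $T_3^\dagger\cong T_\upi^\dagger\otimes\End(T_{\underline\sigma})=T_\upi^\dagger\otimes T_{\underline\sigma}\otimes T_{\underline\sigma}^*$ as $G_p$-modules, and under this identification $\tr^*\colon T_3^\dagger\to M_3^\dagger=T_\upi^\dagger\otimes\ad^0(T_{\underline\sigma})$ is, up to a unit, $\mathrm{id}_{T_\upi^\dagger}\otimes\pi$, where $\pi\colon\End(T_{\underline\sigma})\to\ad^0(T_{\underline\sigma})$ is the $G_p$-equivariant projection along the scalars (explicitly $\phi\mapsto\phi-\tfrac12\tr(\phi)\cdot\mathrm{id}$; it is a split surjection of finite free modules because $p$ is odd, with kernel $T_\upi^\dagger\otimes\langle\mathrm{id}\rangle$). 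In particular each $F_?^+M_3^\dagger=\tr^*(F_?^+T_3^\dagger)=T_\upi^\dagger\otimes\pi(\,\cdot\,)$ can be computed one tensor slot at a time and is again finite free.

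Next I would establish the handful of identities for $\pi$ that drive everything, working in a basis $(e_0,e_1)$ of $T_{\underline\sigma}$ adapted to the filtration (so $\mathscr F^1T_{\underline\sigma}=\langle e_1\rangle$ and $\mathscr F^1T_{\underline\sigma}^*=\langle e_0^*\rangle$) and recalling that $\mathscr F^1\ad^0(T_{\underline\sigma})$ is the space of trace-zero endomorphisms preserving $\mathscr F^1T_{\underline\sigma}$, while $\mathscr F^2\ad^0(T_{\underline\sigma})=\mathscr F^1T_{\underline\sigma}\otimes\mathscr F^1T_{\underline\sigma}^*$ is that of those with image inside $\mathscr F^1T_{\underline\sigma}$. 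A direct matrix computation then gives $\pi(\mathscr F^1T_{\underline\sigma}\otimes T_{\underline\sigma}^*)=\pi(T_{\underline\sigma}\otimes\mathscr F^1T_{\underline\sigma}^*)=\mathscr F^1\ad^0(T_{\underline\sigma})$, $\pi(\mathscr F^1T_{\underline\sigma}\otimes\mathscr F^1T_{\underline\sigma}^*)=\mathscr F^2\ad^0(T_{\underline\sigma})$, and $\pi(T_{\underline\sigma}\otimes T_{\underline\sigma}^*)=\ad^0(T_{\underline\sigma})$. Feeding these into the (family) definitions of the $F_?^+T_3^\dagger$ and collecting terms, I would obtain the presentations
\[
\begin{aligned}
F_a^+M_3^\dagger&=T_\upi^\dagger\otimes\mathscr F^1\ad^0(T_{\underline\sigma}),\\
F_b^+M_3^\dagger&=\mathscr F^1T_\upi^\dagger\otimes\mathscr F^1\ad^0(T_{\underline\sigma})+T_\upi^\dagger\otimes\mathscr F^2\ad^0(T_{\underline\sigma}),\\
F_c^+M_3^\dagger&=\mathscr F^2T_\upi^\dagger\otimes\mathscr F^1\ad^0(T_{\underline\sigma})+T_\upi^\dagger\otimes\mathscr F^2\ad^0(T_{\underline\sigma}),\\
F_d^+M_3^\dagger&=\mathscr F^1T_\upi^\dagger\otimes\mathscr F^1\ad^0(T_{\underline\sigma})+\mathscr F^3T_\upi^\dagger\otimes\ad^0(T_{\underline\sigma}),\\
F_e^+M_3^\dagger&=\mathscr F^3T_\upi^\dagger\otimes\ad^0(T_{\underline\sigma})+\mathscr F^2T_\upi^\dagger\otimes\mathscr F^1\ad^0(T_{\underline\sigma})+\mathscr F^1T_\upi^\dagger\otimes\mathscr F^2\ad^0(T_{\underline\sigma}).
\end{aligned}
\]
Since $0\subsetneq\mathscr F^3T_\upi^\dagger\subsetneq\mathscr F^2T_\upi^\dagger\subsetneq\mathscr F^1T_\upi^\dagger\subsetneq T_\upi^\dagger$ and $0\subsetneq\mathscr F^2\ad^0(T_{\underline\sigma})\subsetneq\mathscr F^1\ad^0(T_{\underline\sigma})\subsetneq\ad^0(T_{\underline\sigma})$ are chains of free modules with free graded quotients, the rank of any sum $\sum_i\mathscr F^{a_i}T_\upi^\dagger\otimes\mathscr F^{b_i}\ad^0(T_{\underline\sigma})$ is computed by recording, for each of the three one-dimensional graded pieces $\gr^j\ad^0(T_{\underline\sigma})$ ($j=0,1,2$), the coefficient module $\sum_{b_i\le j}\mathscr F^{a_i}T_\upi^\dagger$ (automatically one of the $\mathscr F^aT_\upi^\dagger$) and adding up its rank; this bookkeeping yields the ranks $8,7,6,7,6$ of part~(a).

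For part~(b) the containments $F_b^+M_3^\dagger\subseteq F_a^+M_3^\dagger$, $F_c^+M_3^\dagger\subseteq F_b^+M_3^\dagger$, $F_e^+M_3^\dagger\subseteq F_d^+M_3^\dagger$ are immediate from the displayed presentations together with the chain relations among the $\mathscr F^\bullet$, and the three cokernels are then computed using the natural (hence $G_p$-equivariant) isomorphism $(A\otimes B)/(A'\otimes B+A\otimes B')\xrightarrow{\ \sim\ }(A/A')\otimes(B/B')$ for free modules $A'\subseteq A$, $B'\subseteq B$, after first dividing each pair by an evident common summand. Concretely: $F_a^+M_3^\dagger/F_b^+M_3^\dagger\cong(T_\upi^\dagger/\mathscr F^1T_\upi^\dagger)\otimes(\mathscr F^1\ad^0(T_{\underline\sigma})/\mathscr F^2\ad^0(T_{\underline\sigma}))=\gr^0T_\upi^\dagger\otimes\gr^1\ad^0(T_{\underline\sigma})$; dividing $F_b^+M_3^\dagger$ and $F_c^+M_3^\dagger$ by the common summand $T_\upi^\dagger\otimes\mathscr F^2\ad^0(T_{\underline\sigma})$ presents them as $\mathscr F^1T_\upi^\dagger\otimes\gr^1\ad^0(T_{\underline\sigma})$ and $\mathscr F^2T_\upi^\dagger\otimes\gr^1\ad^0(T_{\underline\sigma})$, so $F_b^+M_3^\dagger/F_c^+M_3^\dagger\cong\gr^1T_\upi^\dagger\otimes\gr^1\ad^0(T_{\underline\sigma})$; and dividing $F_d^+M_3^\dagger$, $F_e^+M_3^\dagger$ by $\mathscr F^3T_\upi^\dagger\otimes\ad^0(T_{\underline\sigma})$ and then invoking the isomorphism with $A=\mathscr F^1T_\upi^\dagger/\mathscr F^3T_\upi^\dagger$, $A'=\mathscr F^2T_\upi^\dagger/\mathscr F^3T_\upi^\dagger$, $B=\mathscr F^1\ad^0(T_{\underline\sigma})$, $B'=\mathscr F^2\ad^0(T_{\underline\sigma})$ gives $F_d^+M_3^\dagger/F_e^+M_3^\dagger\cong\gr^1T_\upi^\dagger\otimes\gr^1\ad^0(T_{\underline\sigma})$. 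Left-exactness of the three sequences is automatic since all modules are finite free (indeed the inclusions split). This parallels Lemma~\ref{lemma_2025_02_09_1953} at the level of the degree-$16$ modules, the new ingredient being the transport across $\tr^*$.

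I expect the only genuine obstacle to be the bookkeeping behind the four $\pi$-identities and the resulting presentations of the $F_?^+M_3^\dagger$, since the defining expressions for $F_b^+$, $F_c^+$, $F_e^+$ are sums of three or four non-pure tensors. Two observations keep this under control, and I would isolate them at the outset: (i) the scalar line $\langle\mathrm{id}\rangle$ meets none of $\mathscr F^1T_{\underline\sigma}\otimes T_{\underline\sigma}^*$, $T_{\underline\sigma}\otimes\mathscr F^1T_{\underline\sigma}^*$, $\mathscr F^1T_{\underline\sigma}\otimes\mathscr F^1T_{\underline\sigma}^*$, so $\pi$ is injective on each of them; and (ii) $\mathscr F^2\ad^0(T_{\underline\sigma})$, being already trace-zero, is fixed by $\pi$. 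One should also keep in mind two mild hypotheses used implicitly: $2\in\cR^\times$ (true since $p$ is odd), which makes $\pi$ split and underlies the self-duality identifications in the definition of $\tr^*$, and the freeness of the graded pieces of the $G_p$-filtrations of $T_\upi$ and $T_{\underline\sigma}$, guaranteed by the running Borel-ordinarity assumptions and by Assumption~\ref{ass-gl2}, which together with hypothesis~\ref{item_Tam} is what makes the ranks well defined and the ambient Selmer complexes perfect.
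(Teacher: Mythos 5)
Your proof is correct: the $\pi$-identities and the resulting explicit presentations of $F_?^+M_3^\dagger$, the graded rank bookkeeping, and the three quotient computations all check out, and the approach is precisely the elaboration of the paper's one-line proof ``this follows from the definitions'' (compare the direct calculation proving Lemma~\ref{lemma_2025_02_09_1953}). No new ideas beyond what the paper takes for granted are introduced, and the one implicit hypothesis you flag, $2\in\cR^\times$, is indeed needed for the splitting $\End(T_{\underline\sigma})=\ad^0(T_{\underline\sigma})\oplus\langle\mathrm{id}\rangle$ underlying the definition of $\tr^*$.
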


\begin{proof}
This follows from the definitions. 
\end{proof}

\subsection{Selmer complexes (bis)}
For $?=a,b,c,d,e$, let us denote by $\Delta_a$ the Greenberg local conditions (in the sense of \S\ref{subsubsec_2025_02_10_1518}) on $T_3^\dagger$ given by $\iota_p^+: F_?^+T_3^\dagger\hookrightarrow T_3^\dagger$. We denote by ${\rm tr}^*\Delta_?$ the Greenberg local condition on $M_3^\dagger$ that is given by $\iota_p^+: F_?^+M_3^\dagger\hookrightarrow M_3^\dagger$.

\begin{definition}
Let us denote by $\res_{a/b}$ the composition 
\begin{align*}
    \mathbf{R}^1\Gamma_{\fin}(G_{\mathbb Q,\Sigma}, M_3^{\dag}, \tr^* \Delta_a) \xrightarrow{\res_p} H_{\fin}^1(G_p, F_a^+M_3^{\dag}) \lra H^1(G_p, F_a^+ M_3^{\dag}/F_b^+ M_3^{\dag}) \simeq & \,H^1(G_p,{\mathscr Gr}^0 T^\dagger_{\underline{\Pi}}\,\widehat \otimes \gr^1 \ad^0(T_{\underline{\sigma}}))  \\
    &\quad \simeq H^1(G_p,{\mathscr Gr}^0 T^\dagger_{\underline{\Pi}})  \otimes_{\cR_{\upi}} \cR_3\,,
\end{align*} 
where the first map is given by
\begin{align*}
    \mathbf{R}^1\Gamma_{\fin}(G_{\mathbb Q,\Sigma}, M_3^{\dag}, \tr^* \Delta_a)\ni [c,(c_v),(\lambda_v)] \longmapsto \res_p([c])=\iota_p^+([c_p]) \in H^1(G_p,F_a^+ M_3^\dagger)\,,
\end{align*}
the third arises from Lemma \ref{lemma:ranks}(b), and the final isomorphism follows from the fact that $\gr^1 \ad^0(T_{\underline{\sigma}}) \simeq R_{\underline{\sigma}}$ as $G_p$-modules, as explained in \S\ref{subsubsec_2025_02_10_1533}. Similarly, we define the maps 
$$  \res_{b/c}\,:\,\mathbf{R}^1\Gamma_{\fin}(G_{\mathbb Q,\Sigma}, M_3^{\dag}, \tr^* \Delta_b) \lra  H^1(G_p, F_b^+ M_3^{\dag}/F_c^+ M_3^{\dag})\simeq  H^1(G_p,{\mathscr Gr}^1 T^\dagger_{\underline{\Pi}}) \widehat \otimes_{\cR_{\upi}} \cR_3\,,  $$  
$$  \res_{d/e}\,:\,\mathbf{R}^1\Gamma_{\fin}(G_{\mathbb Q,\Sigma}, M_3^{\dag}, \tr^* \Delta_d) \lra  H^1(G_p, F_d^+ M_3^{\dag}/F_e^+ M_3^{\dag})\simeq  H^1(G_p,{\mathscr Gr}^1 T^\dagger_{\underline{\Pi}}) \widehat \otimes_{\cR_{\upi}} \cR_3\,.  $$  
\end{definition}

\subsection{Eichler--Shimura isomorphisms for families of Siegel modular forms}
\label{subsec_2025_02_04_1413}
In this subsection, we discuss how to naturally trivialise the $p$-local cohomology of  the graded pieces of the Galois representations associated with families of Siegel modular forms, relying on the Eichler--Shimura isomorphisms proved in \cite[\S11]{LZ21-erl}. As usual, let $L$ be a $p$-adic field. We borrow the following definition from Definition 10.4.1. of op.\,cit.

\begin{definition}
For a fixed family $\upi$, let $S^i( \upi)$ be the $\upi$-eigenspace in the degree-$i$ coherent cohomology of the corresponding Siegel threefold, and with values in the $p$-adic field $L$.
    \hfill$\blacksquare$
\end{definition}

\begin{remark}
\label{remark_2025_07_21_1028}
    These spaces are 1-dimensional for each $i\in \{0,1,2,3\}$ for ``generic'' families of Siegel forms, whereas for those of Yoshida type, they are 1-dimensional for $i=1,2$, and 0-dimensional when $i=0,3$.
    \hfill$\blacksquare$
\end{remark}

\subsubsection{}  Let us fix a crystalline specialisation $\Pi$ of $\upi$ of weight $(r_1+3,r_2+3)$, with $r_1 \geq r_2$. Then $\DD_{\rm cris}(V_p(\Pi))$ is endowed with a decreasing Hodge filtration 
$$\{0\}=\Fil_{\text{Hdg}}^{r_1+r_2+4} V_p(\Pi)\subset\cdots\subset \Fil_{\text{Hdg}}^i V_p(\Pi) \subset \Fil_{\text{Hdg}}^{i-1} V_p(\Pi)\subset \cdots\subset \Fil_{\text{Hdg}}^0 V_p(\Pi):= \DD_{\rm cris}(V_p(\Pi))\,,$$
with $\Gr_{\text{Hdg}}^i V_p(\Pi):=\Fil_{\text{Hdg}}^{i} V_p(\Pi)/\Fil_{\text{Hdg}}^{i+1} V_p(\Pi)$ for $i=0,r_2+1,r_1+2, r_1+r_2+3$ (which are the Hodge--Tate weights of this Galois representation) the non-trivial graded pieces. 

\subsubsection{}\label{subsubsec_2025_07_22_1459}
Until \S\ref{sec_2025_02_06_0708} (where we treat endoscopic cases), we shall work under the following assumption. As we shall explain, it is especially crucial for the statement of Eichler--Shimura isomorphism (Proposition~\ref{prop:ES} below).

\begin{ass}\label{ass:non-crit}
The family\footnote{or rather, its $p$-refinement $(\upi,\mathscr{F}=\{\mathscr{F}^i\})$.} $\upi$ does not admit any critical specialisations, in the sense of Bella\"iche--Chenevier. This means, for any crystalline specialisation $\Pi$ of $\upi$ of weight $(r_1+3,r_2+3)$ with $r_1 \geq r_2$, the following holds true:
\begin{equation}
\label{eqn_2025_07_22_1212}
    \Fil_{\text{Hdg}}^{r_i+3-i} V_p(\Pi) \cap \mathscr{F}^{i+1}V_p(\Pi) =\{0\}=\Fil_{\text{Hdg}}^{r_1+r_2+3} V_p(\Pi) \cap \mathscr F^1 V_p(\Pi)\,, \qquad i=1,2\,.
\end{equation}
\hfill$\blacksquare$
\end{ass}

\subsubsection{}
\label{subsubsec_2025_07_22_1547}
We briefly review Eichler--Shimura morphisms at a fixed weight. To that end, let us fix a crystalline specialisation $\Pi$ of $\upi$ as in \S\ref{subsubsec_2025_07_22_1459}, and let us assume that its $p$-refinement given by $\{\mathscr{F}^iV_p(\Pi)\}$ verifies \eqref{eqn_2025_07_22_1212} (i.e., this refined pair is non-critical). As usual, we let $\varphi$ denote the crystalline Frobenius acting on $V_p(\Pi)$, and we let $\alpha, \ldots, \delta$ be the Hecke parameters of $\Pi$ at $p$, given as in Section \ref{subsec:SMF} (which coincide with the eigenvalues of $\varphi$). We then have the isomorphisms
\begin{align}
    \label{eqn_2025_07_22_1539}
    \begin{aligned}
            S^0(\Pi,L)&\simeq {\rm Gr}^{r_1+r_2+3}_{\rm Hdg}\, V_p(\Pi)\,,\qquad S^3(\Pi,L)\simeq {\rm Gr}^{0}_{\rm Hdg}\, V_p(\Pi)\,,\\
            S^i(\Pi,L)&\simeq {\rm Gr}^{r_i+3-i}_{\rm Hdg}\, V_p(\Pi)\,,\qquad i=1,2\,,
    \end{aligned}
\end{align}
induced from the comparison isomorphisms of Faltings and Tsuji (cf. \cite{LZ21-erl}, \S11.4). Thanks to our running non-criticality assumption \eqref{eqn_2025_07_22_1212}, we have the following isomorphisms:
\begin{align}
    \label{eqn_2025_07_22_1548}
    \begin{aligned}
    {\rm Gr}^{r_1+r_2+3}_{\rm Hdg} V_p(\Pi)= {\rm Fil}^{r_1+r_2+3}_{\rm Hdg}\, V_p(\Pi)\xrightarrow{\,\sim\,}  \DD_{\rm cris}(V_p(\Pi))/\DD_{\rm cris}(\mathscr{F}^1 V_p(\Pi))=\DD_{\rm cris}(\gr^{0} V_p(\Pi))\,,\\
          \DD_{\rm cris}(\gr^3 V_p(\Pi))= \DD_{\rm cris}(\mathscr{F}^3 V_p(\Pi))  \xrightarrow{\,\sim\,} \DD_{\rm cris}(V_p(\Pi))/{\rm Fil}^{r_1+2}_{\rm Hdg} V_p(\Pi)={\rm Gr}^{0}_{\rm Hdg}V_p(\Pi)\,,\\\\
          {\rm Fil}^{r_i+3-i}_{\rm Hdg} V_p(\Pi)\cap \DD_{\rm cris}(\mathscr{F}^i V_p(\Pi)) \xrightarrow{\sim} {\rm Fil}^{r_i+3-i}_{\rm Hdg} V_p(\Pi)/{\rm Fil}^{r_1+r_2-r_i+i}_{\rm Hdg} V_p(\Pi)={\rm Gr}^{r_i+3-i}_{\rm Hdg}V_p(\Pi) \\
           \xrightarrow{\sim} \DD_{\rm cris}(\mathscr{F}^i V_p(\Pi))/ \DD_{\rm cris}(\mathscr{F}^{i+1} V_p(\Pi))=\DD_{\rm cris}(\gr^i V_p(\Pi)) \,,\,\quad i=1,2\,.
    \end{aligned}
\end{align}
Combining \eqref{eqn_2025_07_22_1539} with \eqref{eqn_2025_07_22_1548}, we arrive at the following (punctual) Eichler--Shimura isomorphisms:
\begin{equation}
    \label{eqn_2025_07_22_1457} 
    \begin{aligned}
            S^i(\Pi,L)&\simeq \DD_{\rm cris}(\gr^{i} V_p(\Pi)),\qquad i=0,1,2,3\,.
    \end{aligned}
\end{equation}
\subsubsection{} We record below the following result due to Loeffler and Zerbes (extending the work of Diao--Rosso--Wu in \cite{DRW21}; see also \cite{DRW25}), on the interpolation of the Eichler--Shimura morphisms \eqref{eqn_2025_07_22_1457} in families of Siegel modular forms.

Let $R$ be a domain and let $X$ be a free $R$-module of rank one, which is equipped with a continuous $G_p$-action, such that the action of $G_p$ on $X\otimes \eta^{-1}$ is unramified for some character $\eta$ of $\Gamma_{\cyc}$. In this scenario, we put $X^\circ:=X\otimes \eta^{-1}$ and define (rather abusively) $ \DD_{\cris}(X):=\DD_{\cris}(X^{\circ})$. We remark that $X={\mathscr Gr}^{i}T_\upi$ have this property.
\begin{proposition}[Loeffler--Zerbes]
\label{prop:ES}
Under Assumption~\ref{ass:non-crit}, we have isomorphisms 
$$  \ES^i \colon S^i( \upi) \cong \DD_{\cris}\left({\mathscr Gr}^{i}T_\upi\right)\,,\qquad \forall \,i\in \{0,1,2,3\}\,.$$
\end{proposition}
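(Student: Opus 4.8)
The plan is to deduce Proposition~\ref{prop:ES} from the punctual Eichler--Shimura isomorphisms \eqref{eqn_2025_07_22_1457} by a density/interpolation argument, following \cite[\S11]{LZ21-erl} (and its precursors \cite{DRW21, DRW25}). First I would recall the two basic inputs. On the automorphic side, each $S^i(\upi)$ is a free $\mathcal R_{\upi}$-module of rank one (or zero, but the zero case is vacuous), by Remark~\ref{remark_2025_07_21_1028} together with the very definition of a family of automorphic representations (condition (a) in the Definition of \S\ref{subsubsec_2025_08_26_1220}, where the sheaves $S^k(\upi)$ are declared free of rank one). On the Galois side, $\mathscr{Gr}^i T_\upi$ is a free $\mathcal R_{\upi}$-module of rank one carrying a $G_p$-action that becomes unramified after an explicit cyclotomic twist (this is the content of \S\ref{subsubsec_2025_08_25_1049}, via the characters $\widetilde\alpha,\widetilde\beta,\widetilde\gamma,\widetilde\delta$), so $\DD_{\cris}(\mathscr{Gr}^i T_\upi)$ is defined as in the paragraph preceding the statement and is again free of rank one over $\mathcal R_{\upi}$ (after inverting $p$ if needed).

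Next I would construct the family-level map $\ES^i$. The Eichler--Shimura comparison of Faltings--Tsuji upgrades, in $p$-adic families, to a morphism of coherent sheaves over (a wide open in) the Siegel eigenvariety; concretely, Diao--Rosso--Wu and Loeffler--Zerbes produce a canonical $\mathcal R_{\upi}$-linear homomorphism $\ES^i\colon S^i(\upi)\to \DD_{\cris}(\mathscr{Gr}^i T_\upi)$ whose specialisation at every crystalline classical point $\Pi$ of sufficiently regular weight recovers \eqref{eqn_2025_07_22_1457}. Here is where Assumption~\ref{ass:non-crit} enters decisively: the isomorphisms \eqref{eqn_2025_07_22_1548} that glue the Hodge-filtration description \eqref{eqn_2025_07_22_1539} to the $\DD_{\cris}$ of the graded pieces are only isomorphisms — rather than merely maps — because of the non-criticality relations \eqref{eqn_2025_07_22_1212}; without them the intersections ${\rm Fil}^{r_i+3-i}_{\rm Hdg}\cap\DD_{\cris}(\mathscr{F}^iV_p(\Pi))$ could fail to have the expected dimension, and the punctual $\ES^i$ would degenerate at those points. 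So non-criticality guarantees that $\ES^i$ is an isomorphism on a Zariski-dense set of classical points.

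The final step is to promote ``isomorphism on a dense set of points'' to ``isomorphism of $\mathcal R_{\upi}$-modules''. Since both source and target are free $\mathcal R_{\upi}$-modules of rank one, the map $\ES^i$ is multiplication by an element $\delta_i\in\mathcal R_{\upi}$ (after fixing bases); its vanishing locus $V(\delta_i)$ is a proper closed subscheme unless $\delta_i=0$. By the previous paragraph $V(\delta_i)$ misses a dense set of classical points, so $\delta_i$ is a unit on the regular local ring $\mathcal R_{\upi}$ (using the regularity assumption \S\ref{ass-gsp4}(c), or at worst after inverting $p$ and shrinking the neighbourhood), whence $\ES^i$ is an isomorphism. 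I expect the main obstacle to be a bookkeeping one rather than a conceptual one: namely verifying that the family-level Eichler--Shimura morphism of \cite{LZ21-erl} is literally defined on the objects $S^i(\upi)$ and $\DD_{\cris}(\mathscr{Gr}^iT_\upi)$ as normalised in this paper (the Kostant-representative indexing $w_{3-k}$, the self-dual twist $T_\upi^\dagger$ versus $T_\upi$, and the cyclotomic twist conventions of \S\ref{subsubsec_2025_08_25_1049} all have to be matched up), and in checking that enough non-critical crystalline classical points of regular weight accumulate at the points of interest — which is exactly what Assumption~\ref{ass:non-crit} is tailored to provide.
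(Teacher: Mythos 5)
Your overall architecture is right, and it matches the paper's strategy of deferring to the interpolation results of Loeffler--Zerbes (and Diao--Rosso--Wu): set up both sides as rank-one $\mathcal R_{\upi}$-modules, invoke the family-level Eichler--Shimura morphism of \cite[\S11]{LZ21-erl}, and use non-criticality to identify the punctual specialisations with \eqref{eqn_2025_07_22_1457}. The paper's own proof is a one-line citation of \cite[Remark 11.4.1 and Theorem 11.6.3]{LZ21-erl}, so you are reconstructing their argument rather than giving a different route; that is fine, but the reconstruction has a genuine gap in the final step.

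The gap: you write that, after fixing bases, $\ES^i$ is multiplication by some $\delta_i\in\mathcal R_{\upi}$, and conclude that since $V(\delta_i)$ misses a Zariski-dense set of classical points, $\delta_i$ must be a unit. This implication is false for the (at least two-dimensional) local ring $\mathcal R_{\upi}$: your first sentence already records that ``misses a dense set of classical points'' is equivalent to $\delta_i\neq 0$, but a nonzero element of a regular local ring of dimension $\geq 1$ is not automatically a unit. The zero locus of a nonzero non-unit $\delta_i$ is a height-one closed subscheme, which is perfectly capable of avoiding all classical points (e.g.\ a curve transverse to the weight fibres that never hits a classical weight), and neither inverting $p$ nor shrinking the wide open repairs this unless you already know $\delta_i$ is nonvanishing at the chosen centre — which is essentially what you are trying to prove. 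What actually closes the argument is that \cite[Theorem 11.6.3]{LZ21-erl} does not merely produce a morphism of coherent sheaves; it produces an isomorphism over the good locus. So once you have matched the normalisations (the $w_{3-k}$ indexing, the $\dagger$-twist, the unramified twists of \S\ref{subsubsec_2025_08_25_1049}) and observed that the non-criticality Assumption~\ref{ass:non-crit} is exactly the condition that lets you recast the punctual isomorphism of op.\ cit.\ in the form \eqref{eqn_2025_07_22_1457}, the isomorphism in families is already given to you; no density-of-nonvanishing argument is needed or available. I would therefore replace the last paragraph by a direct appeal to the isomorphism statement of the cited theorem, together with the bookkeeping you already flagged.
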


\begin{proof}
This follows from the discussion of \cite[Remark 11.4.1]{LZ21-erl} (see also \cite[Theorem 11.6.3]{LZ21-erl}). 
\end{proof}

\begin{remark}
The condition~\ref{eqn_2025_07_22_1212} is not explicitly assumed in \cite{LZ21-erl}, but it is rather implied by the underlying assumptions in op. cit. More precisely, the authors explain that the Eichler--Shimura isomorphisms \eqref{eqn_2025_07_22_1457} can be interpolated over the locus of {\it good points}, cf. \cite[Definition 10.4.2]{LZ21-erl}. Then a good Borel-ordinary point of regular weight as in op. cit. is necessarily of numerically-non-critical (in the sense of \cite{BC09}, Remark 2.4.6), and therefore non-critical (i.e. it verifies Assumption~\ref{ass:non-crit}). Our emphasis on this assumption is due to our interest in formulating Eichler--Shimura isomorphisms in the form of \eqref{eqn_2025_07_22_1457}.     \hfill $\blacksquare$
\end{remark}

\subsubsection{}   Note that we have a pairing 
\begin{equation}
\label{eqn_2025_07_22_1451}
    \begin{aligned} \langle \,\cdot\,,\, \cdot\, \rangle\, \colon\, & \DD_{\cris}({\mathscr Gr}^iT_\upi(\hr_1+\hr_2+3)) \times \DD_{\cris}({\mathscr Gr}^{3-i}T_\upi) \longrightarrow \mathcal O(U) \,,\qquad i=0,1,2,3\,, 
    \end{aligned}  
\end{equation} 
that comes about interpolating Poincar\'e duality pairings (cf. \cite{LZ20}, \S6.3).

\begin{definition}
For any $\eta \in S^i( \upi)$, we denote by
$$ \ES_{\upi}^i(\eta)\, \colon\, \DD_{\cris}({\mathscr Gr}^{3-i}T_\upi) \longrightarrow \mathcal O(U)$$
the homomorphism induced from the pairing \eqref{eqn_2025_07_22_1451}. \hfill$\blacksquare$
\end{definition}

We henceforth fix a choice of a $\cR_\upi$-basis $\{\eta_i\}$ of $S^i(\upi,L)$.


\section{Trivialisations and modules of leading terms}
We recall in this section the notion of the module of leading terms of algebraic $p$-adic $L$-functions introduced in \cite[\S5]{BS_triple_factor}, as well its key properties. We will review the main general constructions in op. cit. \S\ref{sec_Koly_Sys} at a great level of generality. We will then apply these in \S\ref{sec_factorisation_main} to the factorisation problem at hand. 

We also refer the reader to \cite[Remark 5.14]{BS_triple_factor} for the connection of the module of leading terms to $p$-adic $L$-functions.

\subsection{The setup}
For any commutative ring $S$ and any finitely generated $S$-module $M$, we put 
$$ 
M^* := \Hom_{S}(M, S) \,\,\, \textrm{ and } \,\,\, {\bigcap}^t_{S}M := \left({\bigwedge}^t_{S}M^*\right)^*\,, 
 $$ 
and call ${\bigcap}^t_{S}M$ the $t$-th exterior bi-dual of $M$. 

\begin{remark}
    When $G$ is a finte abelian group and  $S = \mathbb{Z}[G]$, the $t$-th exterior bi-dual  ${\bigcap}^t_{S}M$ of $M$ is canonically isomorphic to  the Rubin's lattice $\wedge^t_0M$ of $M$ defined in \cite{Rubin96}. 
    The exterior bi-dual is a natural generalisation of the Rubin's lattice. 
    \hfill$\blacksquare$
\end{remark}

Let $R$ be a  complete Noetherian local ring with finite residue field of characteristic $p\geq3$ and $T$ be a free $R$-module of finite rank with a continuous $G_{\Q, \Sigma}$-action. 

Throughout \S\ref{sec_Koly_Sys}, we assume that 
we have an $R[G_p]$-submodule ${\mathscr F}^+T$ of $T$ such that the quotient $T/{\mathscr F}^+T$ is free as an $R$-module. 
Then we have the Greenberg local conditions $\Delta := \Delta_{\mathscr F^+}$; cf. \S\ref{subsubsec_2025_02_10_1518}. 
We also assume throughout \S\ref{sec_Koly_Sys} that the condition (\ref{item_Tam}) holds. For simplicity, let us put 
\[ 
r=r(T,\Delta) := - \chi(\mathbf{R}\Gamma_{\rm f}(G_{\Q, \Sigma}, T, \Delta)) = \mathrm{rank}_R(T^{c=-1}) - \mathrm{rank}_R(T/{\mathscr F}^+T) \in \Z. 
\]

\subsection{Modules of leading terms}
\label{sec_Koly_Sys}

\begin{definition}
We let 
$$\delta(T,\Delta) \subset {\bigcap}^r_R \mathbf{R}^1\Gamma_{\fin}(G_{\mathbb Q,\Sigma},T,\Delta)$$ 
denote the module of leading terms introduced in \cite[\S5.2]{BS_triple_factor}. ${}$ \hfill$\blacksquare$
\end{definition}

The first key properties of the module $\delta(T,\Delta)$ are recorded below.
\begin{theorem}
\label{thm_2023_03_10}
\item[i)] The $R$-module $\mathbf{R}^2\Gamma_{\fin}(G_{\Q, \Sigma}, T, \Delta)$  is torsion if and only if $\delta(T, \Delta)$ is generated by an $R$-regular element of ${\bigcap}^{r}_{R}\mathbf{R}^1\Gamma_{\fin}(G_{\Q, \Sigma}, T, \Delta)$. 
\item[ii)] Suppose in addition that $R$ is normal. 
If $\delta(T, \Delta) \neq 0$, then the $R$-module $\mathbf{R}^2\Gamma_{\fin}(G_{\Q, \Sigma}, T, \Delta)$  is torsion and 
$$ 
\mathrm{char}_R\left( \left. 
{\bigcap}^r_R \mathbf{R}^1\Gamma_{\fin}(G_{\Q, \Sigma}, T, \Delta)   \middle/  \delta(T, \Delta)  \right. \right) 
= 
\mathrm{char}_R\left( \mathbf{R}^2\Gamma_{\fin}(G_{\Q, \Sigma}, T, \Delta) \right). 
 $$ 
\item[iii)] For any flat ring homomorphism $R \ra S$, we have $\delta(T, \Delta) \otimes_R S = \delta(T \otimes_R S , \Delta)$. 
\end{theorem}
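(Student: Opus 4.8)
The plan is to deduce all three assertions from the chain-level description of $\delta(T,\Delta)$ recalled from \cite[\S5.2]{BS_triple_factor}. Under hypothesis (\ref{item_Tam}) the Selmer complex $C^{\bullet}:=\mathbf{R}\Gamma_{\fin}(G_{\Q,\Sigma},T,\Delta)$ is perfect, and under our standing assumptions its cohomology is concentrated in degrees $1$ and $2$ (the vanishing of $\mathbf{R}^0$ follows from irreducibility of the residual representation, and that of $\mathbf{R}^3$ from its Poitou--Tate dual); hence $C^{\bullet}$ is represented by a two-term complex $P^{\bullet}=[P^1\xrightarrow{d}P^2]$ of finite free $R$-modules with $\rank_R P^1-\rank_R P^2=r$. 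One then recalls that, up to the unit ambiguity coming from trivialisations of $\det P^1$ and $\det P^2$, the module $\delta(T,\Delta)$ is the image of $\bigwedge^{\rank P^2}d^{*}\colon \bigwedge^{\rank P^2}(P^2)^{*}\to \bigwedge^{\rank P^2}(P^1)^{*}$, read inside ${\bigcap}^r_R\mathbf{R}^1\Gamma_{\fin}(G_{\Q,\Sigma},T,\Delta)$ via the canonical isomorphism $\bigwedge^{\rank P^2}(P^1)^{*}\cong \bigwedge^r P^1\otimes_R(\det P^1)^{*}$ together with the inclusion ${\bigcap}^r_R\mathbf{R}^1\Gamma_{\fin}(G_{\Q,\Sigma},T,\Delta)\hookrightarrow \bigwedge^r P^1$ induced by $\mathbf{R}^1\Gamma_{\fin}=\ker d\hookrightarrow P^1$; independence of the choice of $P^{\bullet}$ is the well-definedness lemma of op.\,cit.\ (two such representatives differ by elementary acyclic complexes, which leave $\ker d$ unchanged and alter the construction only by a unit).

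\emph{Part (i).} The module $\delta(T,\Delta)$ is generated by the single element $\xi$ obtained by applying $\bigwedge^{\rank P^2}d^{*}$ to a basis vector; in chosen bases, $\xi$ is, up to a unit, the sum of the $\rank P^2$-minors of the matrix of $d$. Assume first that $R$ is a domain (the case relevant to our applications, where $R$ is regular local). Then $\xi$ is $R$-regular if and only if $\xi\neq 0$, i.e.\ $\rank(d\otimes_R\Frac R)=\rank P^2$, i.e.\ $\mathbf{R}^2\Gamma_{\fin}=\coker d$ is $R$-torsion; this gives (i), and since over a domain $\delta(T,\Delta)\neq 0$ is equivalent to $\delta(T,\Delta)$ being generated by a regular element, also the first assertion of (ii). For general $R$ one runs the same argument after localising at the finitely many associated primes of $R$, replacing $\Frac R$ by the total ring of fractions.

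\emph{Part (ii).} Once $\mathbf{R}^2\Gamma_{\fin}$ is $R$-torsion we have $\rank_R\mathbf{R}^1\Gamma_{\fin}=r$, so ${\bigcap}^r_R\mathbf{R}^1\Gamma_{\fin}$ is a reflexive $R$-module of rank one and the quotient by $\delta(T,\Delta)=R\xi$ is $R$-torsion. Since $R$ is a normal domain, its characteristic ideal is detected at the height-one primes $\fp$, where $R_{\fp}$ is a discrete valuation ring; here one invokes part (iii) with the flat map $R\to R_{\fp}$ to commute both ${\bigcap}^r$ and $\delta$ with localisation. Over $R_{\fp}$ one puts $d$ in Smith normal form $\diag(\pi^{e_1},\dots,\pi^{e_{\rank P^2}})$, padded by $r$ zero columns: then $\mathbf{R}^1\Gamma_{\fin,\fp}$ is free of rank $r$ spanned by the ``extra'' basis vectors, $\length_{R_{\fp}}\mathbf{R}^2\Gamma_{\fin,\fp}=\sum_i e_i$, and $\xi$ becomes $\pi^{\sum_i e_i}$ times a generator of $\bigwedge^r\mathbf{R}^1\Gamma_{\fin,\fp}={\bigcap}^r_{R_{\fp}}\mathbf{R}^1\Gamma_{\fin,\fp}$. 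Hence both characteristic ideals have $\fp$-valuation $\sum_i e_i$, and running over all $\fp$ yields the asserted identity.

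\emph{Part (iii).} For $T$ finite free over $R$ and $R\to S$ flat, the Selmer complex satisfies $C^{\bullet}\otimes^{\mathbf{L}}_R S\simeq \mathbf{R}\Gamma_{\fin}(G_{\Q,\Sigma},T\otimes_R S,\Delta)$, and by flatness $P^{\bullet}\otimes_R S$ is a two-term free representative of the right-hand side with the same invariant $r$. For finite free modules the operations $(-)^{*}$, $\bigwedge^{\bullet}$ and ``image of a map'' all commute with $\otimes_R S$ when $S$ is flat, and ${\bigcap}^r$ (an iterated $\Hom(-,R)$ of finitely presented modules, composed with $\bigwedge^r$) commutes with flat base change over Noetherian $R$; hence the chain-level description of $\delta$ is compatible with $\otimes_R S$, matching the unit ambiguities, so $\delta(T,\Delta)\otimes_R S=\delta(T\otimes_R S,\Delta)$. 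I expect the only delicate point to be the one already settled in \cite[\S5.2]{BS_triple_factor}: identifying, inside $\bigwedge^r P^1$, the image of $\bigwedge^{\rank P^2}d^{*}$ with a submodule of the exterior bi-dual ${\bigcap}^r_R\mathbf{R}^1\Gamma_{\fin}$ in a way independent of the two-term representative --- over a merely normal (non-regular) base this forces one to track reflexivity and to work with ${\bigcap}^r$ rather than the naive $\bigwedge^r$. In the write-up these points would be imported from op.\,cit.\ rather than reproved.
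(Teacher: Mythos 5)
The paper does not actually prove this theorem: its ``proof'' is the single sentence ``This is Proposition~5.5 and Theorem~5.6 of \cite{BS_triple_factor}.'' So there is no argument in the present paper against which to compare line by line; any comparison has to be with the cited source, which you are (by your own account) reconstructing rather than replacing.

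With that caveat, your reconstruction follows the standard route for modules of leading terms à la Burns--Sakamoto--Sano: represent the perfect Selmer complex by a two-term free resolution $[P^1 \xrightarrow{d} P^2]$, realise $\delta(T,\Delta)$ as (the image of) $\bigwedge^{\rank P^2} d^{*}$ after trivialising determinants, reduce part~(i) to full rank of $d$ over the fraction field, and prove part~(ii) by localising at height-one primes of the normal ring $R$ and putting $d$ in Smith normal form over the resulting discrete valuation ring; part~(iii) is flat base change applied at the chain level. The Smith-normal-form computation in part~(ii) is correct: over $R_{\fp}$ the matrix of $d$ becomes $\bigl(\diag(\pi^{e_1},\dots,\pi^{e_{\rank P^2}}) \mid 0\bigr)$, so $\ker d$ is free of rank $r$ on the complementary basis vectors, $\xi$ is $\pi^{\sum e_i}$ times a generator of $\bigwedge^r \ker d$, and $\length \mathbf{R}^2\Gamma_{\fin,\fp} = \sum_i e_i$, as you assert.

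The places you gloss over are exactly the technical points you flag: (a) that the natural map ${\bigcap}^r_R\mathbf{R}^1\Gamma_{\fin} \to \bigwedge^r P^1$ (obtained by dualising $(P^1)^{*}\to(\mathbf{R}^1\Gamma_{\fin})^{*}$) is injective, and (b) that the element $\xi$ produced by $\bigwedge^{\rank P^2} d^{*}$ actually lands inside ${\bigcap}^r_R\mathbf{R}^1\Gamma_{\fin}$ and not merely in $\bigwedge^r P^1$. These are genuine issues when $R$ is only normal --- they rest on reflexivity considerations and on the independence of the two-term representative --- and they are precisely the content of the well-definedness results in the cited source. Since you explicitly say ``these points would be imported from op.\,cit.\ rather than reproved,'' and the paper itself imports the entire statement, this is a fair allocation of effort. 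One small remark for a clean write-up: in part~(i) you should note that for the applications in this paper $R$ is assumed regular local (Assumption~\ref{ass-gsp4}(c)), hence a domain, so the detour through the total ring of fractions for non-domains is not needed.
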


\begin{proof}
This is Proposition 5.5 and Theorem 5.6 of \cite{BS_triple_factor}.
\end{proof}

\subsubsection{}
\label{subsubsec_2025_02_11_1127}
We assume that we have an $R[G_p]$-submodule ${\mathscr F}T$ of ${\mathscr F}^+T$ such that 
\begin{itemize}
    \item the quotient ${\mathscr F}^+T/{\mathscr F}T$ is free as an $R$-module and
    \item $H^0(G_p, {\mathscr F}^+\overline{T}/{\mathscr F}\overline{T}) = H^2(G_p, {\mathscr F}^+\overline{T}/{\mathscr F}\overline{T}) = 0$, where $\overline{T}$ denotes the residual representation of $T$. 
\end{itemize} 
Now we have two Greenberg local conditions $\Delta_{\mathscr F^+}$ and $\Delta_{\mathscr F}$. 
For simplicity, let us put 
$$ 
r :=r(T,\Delta_{{\mathscr F}^+}) \,\,\, \textrm{ and } \,\,\, s := r(T,\Delta_{\mathscr F}). 
 $$ 
We also suppose that 
\begin{itemize}
\item $s \geq 0$. 
\end{itemize}
Since $H^0(G_p, \mathscr{F}^+\overline{T}/\mathscr{F}\overline{T}) = H^2(G_p, \mathscr{F}^+\overline{T}/\mathscr{F}\overline{T}) = 0$ by assumption, the $R$-module $H^1(G_p, {\mathscr F}^+T/{\mathscr F}T)$ is free  of rank $r-s$. 
Fix an $R$-isomorphism 
$$ 
\varphi := \bigoplus_{1 \leq i \leq r-s}\varphi_i \colon H^1(G_p, {\mathscr F}^+T/{\mathscr F}T) \stackrel{\sim}{\longrightarrow} R^{r-s},  
 $$ 
where $\varphi_i \in \Hom_R( H^1(G_p, {\mathscr F}^+T/{\mathscr F}T), R)$. 
Then the homomorphisms $\varphi_1, \ldots, \varphi_{r-s}$ induce an $R$-homomorphism 
\begin{align*}
    \Phi \colon {\bigcap}_R^r \mathbf{R}^1\Gamma_{\fin}(G_{\mathbb Q,\Sigma},T,\Delta_{{\mathscr F}^+}) \longrightarrow {\bigcap}_R^s  \mathbf{R}^1\Gamma_{\fin}(G_{\mathbb Q,\Sigma},T,\Delta_{\mathscr F}) 
\end{align*}
given by 
$$ 
\Phi(x) := x(- \wedge (\varphi_1 \circ \res_p) \wedge \cdots \wedge (\varphi_{r-s} \circ \res_p) ). 
 $$ 

\begin{lemma}\label{lemma_delta_F^+=delta_F_when_R=PID}
    When $R$ is a discrete valuation ring, we have 
    $$ 
    \Phi(\delta(T, \Delta_{\mathscr F^+})) = \delta(T, \Delta_{\mathscr F}). 
     $$ 
\end{lemma}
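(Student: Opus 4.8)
The plan is to reduce the identity to a comparison of $R$-module lengths, combining the change-of-local-condition exact sequence with Theorem~\ref{thm_2023_03_10}. Write $\mathbf R^i_\bullet := \mathbf R^i\Gamma_{\fin}(G_{\mathbb Q,\Sigma},T,\Delta_\bullet)$ for $\bullet\in\{\mathscr F,\mathscr F^+\}$. First recall that, since the DVR $R$ is normal, Theorem~\ref{thm_2023_03_10}(i)--(ii) gives, for each of the two Greenberg conditions $\Delta$, the equivalence $\delta(T,\Delta)\neq 0\iff\mathbf R^2\Gamma_{\fin}(G_{\mathbb Q,\Sigma},T,\Delta)$ is $R$-torsion; and when this holds, $\delta(T,\Delta)$ is free of rank one over $R$ (generated by a regular element, by (i)), so that the bidual $\bigcap^{r(T,\Delta)}_R\mathbf R^1\Gamma_{\fin}(G_{\mathbb Q,\Sigma},T,\Delta)$ — being torsion-free over the DVR and containing $\delta(T,\Delta)$ with torsion quotient — is also free of rank one, and
\[
\length_R\Bigl({\bigcap}^{r(T,\Delta)}_R\mathbf R^1\Gamma_{\fin}(G_{\mathbb Q,\Sigma},T,\Delta)\big/\delta(T,\Delta)\Bigr)=\length_R\mathbf R^2\Gamma_{\fin}(G_{\mathbb Q,\Sigma},T,\Delta)<\infty.
\]
Next, since $\Delta_{\mathscr F}$ and $\Delta_{\mathscr F^+}$ differ only at $p$, via $\mathscr F T\hookrightarrow\mathscr F^+T$, the usual mapping-cone argument produces an exact triangle $\mathbf R\Gamma_{\fin}(G_{\mathbb Q,\Sigma},T,\Delta_{\mathscr F})\to\mathbf R\Gamma_{\fin}(G_{\mathbb Q,\Sigma},T,\Delta_{\mathscr F^+})\to\mathbf R\Gamma(G_p,\mathscr F^+T/\mathscr F T)$; by the hypotheses of \S\ref{subsubsec_2025_02_11_1127} the last term has cohomology concentrated in degree $1$ and free of rank $r-s$, so the triangle yields the exact sequence
\[
0\to\mathbf R^1_{\mathscr F}\to\mathbf R^1_{\mathscr F^+}\xrightarrow{\rho}H^1(G_p,\mathscr F^+T/\mathscr F T)\to\mathbf R^2_{\mathscr F}\to\mathbf R^2_{\mathscr F^+}\to 0,
\]
where $\rho$ is exactly the composite $\res_p$ used to define $\Phi$; in particular $\ker\rho\cong\mathbf R^1_{\mathscr F}$ and $\coker\rho\hookrightarrow\mathbf R^2_{\mathscr F}$.

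I would then dispose of the degenerate cases. If $\delta(T,\Delta_{\mathscr F^+})=0$, then $\mathbf R^2_{\mathscr F^+}$ is not $R$-torsion; being a quotient of $\mathbf R^2_{\mathscr F}$, neither is $\mathbf R^2_{\mathscr F}$, so $\delta(T,\Delta_{\mathscr F})=0=\Phi(0)$ and we are done. If $\delta(T,\Delta_{\mathscr F^+})\neq 0$ but $\mathbf R^2_{\mathscr F}$ is not $R$-torsion, then $\coker\rho$ is not $R$-torsion, so $\rho\otimes_R\Frac(R)$ is not surjective; hence the $r-s$ functionals $\varphi_i\circ\res_p\in\Hom_R(\mathbf R^1_{\mathscr F^+},R)$ span a $\Frac(R)$-subspace of dimension $<r-s$, are therefore $\Frac(R)$-linearly dependent, and so $(\varphi_1\circ\res_p)\wedge\cdots\wedge(\varphi_{r-s}\circ\res_p)$ vanishes already in the torsion-free module $\bigwedge^{r-s}_R\Hom_R(\mathbf R^1_{\mathscr F^+},R)$. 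Thus $\Phi\equiv 0$ and, since $\delta(T,\Delta_{\mathscr F})=0$ in this case too, again $\Phi(\delta(T,\Delta_{\mathscr F^+}))=0=\delta(T,\Delta_{\mathscr F})$.

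There remains the main case: both $\mathbf R^2_{\mathscr F^+}$ and $\mathbf R^2_{\mathscr F}$ are $R$-torsion, equivalently $\delta(T,\Delta_{\mathscr F^+})\neq 0\neq\delta(T,\Delta_{\mathscr F})$. Then $\rho$ has full rank $r-s$, and $\bigcap^r_R\mathbf R^1_{\mathscr F^+}$, $\bigcap^s_R\mathbf R^1_{\mathscr F}$ are both free of rank one. The heart of the argument is to show, by putting $\rho$ in Smith normal form over the DVR $R$ and unwinding the definition of $\Phi$ from \S\ref{subsubsec_2025_02_11_1127} (using $\ker\rho\cong\mathbf R^1_{\mathscr F}$ and that the biduals $\bigcap^r$, $\bigcap^s$ depend only on the free parts), that $\Phi$ is an injective $R$-linear map between these rank-one free modules with $\length_R(\coker\Phi)=\length_R(\coker\rho)$. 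Granting this,
\[
\length_R\Bigl({\bigcap}^s_R\mathbf R^1_{\mathscr F}\big/\Phi(\delta(T,\Delta_{\mathscr F^+}))\Bigr)=\length_R(\coker\Phi)+\length_R\Bigl({\bigcap}^r_R\mathbf R^1_{\mathscr F^+}\big/\delta(T,\Delta_{\mathscr F^+})\Bigr),
\]
which by the first paragraph equals $\length_R(\coker\rho)+\length_R\mathbf R^2_{\mathscr F^+}=\length_R\mathbf R^2_{\mathscr F}$, the last equality coming from the exact sequence above. Hence $\Phi(\delta(T,\Delta_{\mathscr F^+}))$ is a nonzero submodule of the rank-one free module $\bigcap^s_R\mathbf R^1_{\mathscr F}$ with the same finite colength, namely $\length_R\mathbf R^2_{\mathscr F}$, as $\delta(T,\Delta_{\mathscr F})$. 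Since any two submodules of a rank-one free module over a DVR with equal finite colength coincide, $\Phi(\delta(T,\Delta_{\mathscr F^+}))=\delta(T,\Delta_{\mathscr F})$, as claimed.

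The main obstacle is precisely the length identity $\length_R(\coker\Phi)=\length_R(\coker\rho)$ in the last paragraph: this is the one step that requires matching the exterior-algebra recipe defining $\Phi$ with the connecting homomorphism $\rho$ of the Selmer-complex exact sequence, and checking that the auxiliary trivialisation $\varphi=\bigoplus_i\varphi_i$ changes $\Phi$ only by an $R$-unit, so that $\length_R(\coker\Phi)$ is intrinsic. Everything else is length bookkeeping together with the two inputs of Theorem~\ref{thm_2023_03_10}, and the essential use of the DVR hypothesis is that there char ideals are just lengths and torsion-free modules are free.
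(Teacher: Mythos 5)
Your proposal is correct and follows essentially the same route as the paper: dispose of the degenerate cases via Theorem~\ref{thm_2023_03_10}(i), then in the generic case reduce to comparing colengths in rank-one free modules over the DVR. The one step you flag as the ``main obstacle'' is exactly the computation the paper carries out: after choosing a Smith-normal-form basis $\{e_1,\ldots,e_r\}$ of $\mathbf R^1\Gamma_{\fin}(G_{\mathbb Q,\Sigma},T,\Delta_{\mathscr F^+})\cong R^r$ with $\varphi_i\circ\res_p(e_j)=\pi^{a_i}\delta_{i+s,j}$, one evaluates $\Phi(x)(e_1^*\wedge\cdots\wedge e_s^*)=\pi^{\sum a_i}$ directly and observes $\sum a_i=\ell_{\mathscr F}-\ell_{\mathscr F^+}$, which is your identity $\length_R(\coker\Phi)=\length_R(\coker\rho)$.
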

\begin{proof}
When 
\[
\mathrm{rank}_R \im(\varphi \circ \res_p \colon \mathbf{R}^1\Gamma_{\fin}(G_{\mathbb Q,\Sigma},T,\Delta_{{\mathscr F}^+}) \longrightarrow R^{r-s}) < r-s, 
\]
we have $(\varphi_1 \circ \res_p) \wedge \cdots \wedge (\varphi_{r-s} \circ \res_p) = 0$, and hence 
$$ 
 \Phi(\delta(T, \Delta_{\mathscr F^+})) = 0. 
 $$ 
Since $\mathrm{rank}_R \mathbf{R}^2\Gamma_{\fin}(G_{\mathbb Q,\Sigma},T,\Delta_{\mathscr F}) \geq \mathrm{rank}_R \coker(\varphi \circ \res_p) \geq 1$, the $R$-module $\mathbf{R}^2\Gamma_{\fin}(G_{\mathbb Q,\Sigma},T,\Delta_{\mathscr F})$ is not torsion. Hence Theorem \ref{thm_2023_03_10}(i) shows $\delta(T, \Delta_{\mathscr F}) = 0$. 
Therefore, we obtain $ \Phi(\delta(T, \Delta_{\mathscr F^+})) = 0 = \delta(T, \Delta_{\mathscr F})$. 

Suppose that $\mathrm{rank}_R \im(\varphi \circ \res_p) = r-s$. 
We then have 
$$ 
\mathrm{rank}_R \mathbf{R}^2\Gamma_{\fin}(G_{\mathbb Q,\Sigma},T,\Delta_{\mathscr F}) = \mathrm{rank}_R \mathbf{R}^2\Gamma_{\fin}(G_{\mathbb Q,\Sigma},T,\Delta_{\mathscr F^+}).
 $$ 
If $\mathrm{rank}_R \mathbf{R}^2\Gamma_{\fin}(G_{\mathbb Q,\Sigma},T,\Delta_{\mathscr F^+}) > 0$, then Theorem \ref{thm_2023_03_10}(i) shows $\Phi(\delta(T, \Delta_{\mathscr F^+})) = 0 = \delta(T, \Delta_{\mathscr F})$. 
Hence we may assume that both $\mathbf{R}^2\Gamma_{\fin}(G_{\mathbb Q,\Sigma},T,\Delta_{\mathscr F})$ and $ \mathbf{R}^2\Gamma_{\fin}(G_{\mathbb Q,\Sigma},T,\Delta_{\mathscr F^+})$ are torsion. 
Put 
\begin{align*}
    \ell_{\mathscr F} &:= \mathrm{length}_R\mathbf{R}^2\Gamma_{\fin}(G_{\mathbb{Q},\Sigma},T,\Delta_{\mathscr F}) < +\infty, 
    \\
    \ell_{{\mathscr F}^+} &:= \mathrm{length}_R\mathbf{R}^2\Gamma_{\fin}(G_{\mathbb{Q},\Sigma},T,\Delta_{\mathscr F^+}) < +\infty. 
\end{align*}
Since $R$ is a discrete valuation ring, we have isomorphisms
$$ 
\mathbf{R}^1\Gamma_{\fin}(G_{\mathbb Q,\Sigma},T,\Delta_{\mathscr F}) \cong  R^s \,\,\, \textrm{ and } \,\,\, \mathbf{R}^1\Gamma_{\fin}(G_{\mathbb Q,\Sigma},T,\Delta_{\mathscr F^+}) \cong R^r. 
 $$ 
Moreover, by replacing $\varphi$ with other isomorphism if necessary, one can take a basis $\{e_1, \ldots, e_r\}$ of $\mathbf{R}^1\Gamma_{\fin}(G_{\mathbb Q,\Sigma},T,\Delta_{\mathscr F^+})$ such that 
\begin{align*}
    \varphi_i \circ \res_p(e_j) = \pi^{a_i}\delta_{i+s, j}. 
\end{align*}
Here $\delta_{i+s, j}$ denotes the Kronecker's delta, $\pi$ is a unifomizer of $R$, and $a_i$ is a non-negative integer. Note that  $a_{1} + \cdots + a_{r-s} = \ell_F - \ell_{F^+}$. 
Let $\{e_1^*, \ldots, e_r^*\}$ denote the dual basis of $\{e_1, \ldots, e_r\}$. 
Then the element $x \in {\bigcap}^r_R\mathbf{R}^1\Gamma_{\fin}(G_{\mathbb Q,\Sigma},T,\Delta_{\mathscr F^+})$ defined by $x(e_1^* \wedge \cdots \wedge e_r^*) = 1$ is a basis and  the element $y \in {\bigcap}^s_R\mathbf{R}^1\Gamma_{\fin}(G_{\mathbb Q,\Sigma},T,\Delta_{\mathscr F})$ defined by $y(e_1^* \wedge \cdots \wedge e_s^*) = 1$ is  a basis. 
Then 
$$ 
\Phi(x)(e_1^* \wedge \cdots \wedge e_s^*) = x(e_1^* \wedge \cdots \wedge e_s^* \wedge (\varphi_1 \circ \res_p) \wedge \cdots \wedge (\varphi_{r-s} \circ \res_p)) = \pi^{\sum a_i} = \pi^{\ell_{F}-\ell_{F^+}}. 
 $$ 
Hence $\Phi(x) = \pi^{\ell_{F}-\ell_{F^+}}y$ and Theorem \ref{thm_2023_03_10}(ii) shows that 
$$ 
\Phi(\delta(T, \Delta_{\mathscr F^+})) = \Phi(R\pi^{\ell_{F^+}}x) = R\pi^{\ell_{F}}y = \delta(T, \Delta_{\mathscr F}). 
 $$ 
\end{proof}

\begin{lemma}\label{lemma_cyclic_module_is_determined_by_height_one_primes}
Suppose that $R$ is normal. 
Let $M$ be a torsion-free $R$-module and let $f, g \in M$.
Assume that for every height-one prime $\fp$ of $R$, there exists a unit $u_\fp \in R_\fp^{\times}$ such that $f = u_\fp g$ in $M_\fp$.
Then there exists a unit $u \in R^{\times}$ such that $f = ug$.
\end{lemma}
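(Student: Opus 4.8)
The plan is to compare everything inside the $K$-vector space $M_K:=M\otimes_R K$, where $K=\Frac(R)$. Since $M$ is torsion-free over the domain $R$, the localisation map $M\to M_K$ is injective; likewise each $M_\fp$ — a torsion-free module over the domain $R_\fp$, whose fraction field is again $K$ — injects into $M_K$. Thus the given local identities $f=u_\fp g$ in $M_\fp$, as well as the desired identity in $M$, can all be read off from identities in $M_K$.

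First I would dispose of the case $g=0$: then, choosing any height-one prime $\fp$, the hypothesis gives $f=u_\fp g=0$ in $M_\fp$, hence $f=0$ in $M_K$, and therefore $f=0$ in $M$ by injectivity, so $u=1$ works. Assume now $g\neq 0$ in $M$, hence $g\neq 0$ in $M_K$. For each height-one prime $\fp$, pushing $f=u_\fp g$ forward along $M_\fp\hookrightarrow M_K$ gives $f=u_\fp g$ in $M_K$ with $u_\fp\in R_\fp^\times\subset K^\times$. Because $M_K$ is a $K$-vector space and $g\neq 0$ there, the $K$-scalar carrying $g$ to $f$ is unique; hence there is a single $c\in K$ with $f=cg$ in $M_K$, and necessarily $c=u_\fp$ for every height-one prime $\fp$. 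In particular $c$ lies in $R_\fp^\times$ for every height-one $\fp$ (note $c\neq 0$: since $u_\fp$ is a unit and $g\neq 0$ in $M_\fp$, we get $f\neq 0$, so $c\neq 0$).

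The decisive input is normality: a Noetherian normal domain is a Krull domain, so $R=\bigcap_\fp R_\fp$ inside $K$, the intersection running over all height-one primes $\fp$. Applying this to $c$ and to $c^{-1}$ shows $c,c^{-1}\in R$, i.e.\ $u:=c\in R^\times$. Finally, $f$ and $ug$ are elements of $M$ with the same image $cg=f$ in $M_K$, so $f=ug$ in $M$ by the injectivity $M\hookrightarrow M_K$. I anticipate no serious obstacle here; the only points needing care are the reductions to $M_K$ via torsion-freeness and the precise normality statement $R=\bigcap_{\operatorname{ht}\fp=1}R_\fp$ (equivalently $R^\times=\bigcap_{\operatorname{ht}\fp=1}R_\fp^\times$).
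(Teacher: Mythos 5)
Your proof is correct and follows essentially the same route as the paper's: both rest on the Krull-domain equality $R = \bigcap_{\operatorname{ht}\fp=1} R_\fp$ inside $K=\Frac(R)$. The only cosmetic difference is that the paper first reduces to $M=R$ via an embedding of the rank-$\le 1$ module $Rf+Rg$ into $R$ and then argues with $f/g$ and $g/f$, whereas you pass directly to $M_K$, isolate the unique scalar $c\in K^\times$ with $f=cg$ there, and apply the same normality fact to $c$ and $c^{-1}$; this is an equivalent and slightly tidier packaging of the identical idea.
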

\begin{proof}
Since there is an injective homomorphism from $Rf + Rg$ into $R$, we may assume that $M = R$.
If we consider $R$ and each $R_\fp$ as subrings of the field $\mathrm{Frac}(R)$, the assumption implies that
\[
\frac{f}{g}, \, \frac{g}{f} \in \bigcap_{\mathrm{ht}(\fp) = 1} R_{\fp}\,.
\]
As $R$ is normal, we have $\bigcap_{\mathrm{ht}(\fp) = 1} R_{\fp} = R$, which shows that $f/g \in R^\times$.
\end{proof}

\begin{theorem}
\label{thm_2025_02_19_2217}
    Suppose that $R$ is normal. 
    Then 
        $$ 
    \Phi(\delta(T, \Delta_{\mathscr F^+})) = \delta(T, \Delta_{\mathscr F}). 
     $$ 
\end{theorem}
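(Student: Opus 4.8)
The plan is to deduce the normal case from the discrete valuation ring case already treated in Lemma~\ref{lemma_delta_F^+=delta_F_when_R=PID}, by localising at the height-one primes of $R$ and then gluing the resulting local equalities by means of Lemma~\ref{lemma_cyclic_module_is_determined_by_height_one_primes}.

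First I would record that, since $R$ is normal (and hence a domain), each of the two modules of leading terms is cyclic. Indeed, if $\delta(T,\Delta)\neq 0$ then Theorem~\ref{thm_2023_03_10}(ii) forces $\mathbf{R}^2\Gamma_{\fin}(G_{\mathbb Q,\Sigma},T,\Delta)$ to be torsion, whence by Theorem~\ref{thm_2023_03_10}(i) the module $\delta(T,\Delta)$ is generated by a single ($R$-regular) element; and $\delta(T,\Delta)=0$ is trivially cyclic. Accordingly, write $\delta(T,\Delta_{\mathscr F^+}) = R\cdot x$ with $x\in {\bigcap}^r_R \mathbf{R}^1\Gamma_{\fin}(G_{\mathbb Q,\Sigma},T,\Delta_{\mathscr F^+})$ and $\delta(T,\Delta_{\mathscr F}) = R\cdot y$ with $y\in M := {\bigcap}^s_R \mathbf{R}^1\Gamma_{\fin}(G_{\mathbb Q,\Sigma},T,\Delta_{\mathscr F})$. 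As $\Phi$ is $R$-linear, $\Phi(\delta(T,\Delta_{\mathscr F^+})) = R\cdot\Phi(x)$, so it suffices to prove $R\cdot\Phi(x) = R\cdot y$ inside $M$; note that $M$ is torsion-free, being an $R$-module of the form $\Hom_R(-,R)$ over the domain $R$.

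Next I would fix a height-one prime $\fp$ of $R$, so that $R_\fp$ is a discrete valuation ring and $R\to R_\fp$ is flat. Writing $T_\fp := T\otimes_R R_\fp$, the flat base change $R\to R_\fp$ carries $\mathbf{R}\Gamma_{\fin}$, the local cohomology module $H^1(G_p,{\mathscr F}^+T/{\mathscr F}T)$ together with its chosen splitting $\varphi$, the restriction $\res_p$, and the exterior bi-duals to their analogues over $R_\fp$, and preserves the standing hypotheses of \S\ref{subsubsec_2025_02_11_1127}; in particular (this is the content of Theorem~\ref{thm_2023_03_10}(iii) together with the standard base-change behaviour of perfect complexes) the map $\Phi$ base-changes to the corresponding map $\Phi_\fp$ for $T_\fp$, and one has $\delta(T_\fp,\Delta_{\mathscr F^+}) = R_\fp\cdot x_\fp$, $\delta(T_\fp,\Delta_{\mathscr F}) = R_\fp\cdot y_\fp$, where $x_\fp$, $y_\fp$ denote the images of $x$, $y$. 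Applying Lemma~\ref{lemma_delta_F^+=delta_F_when_R=PID} over the discrete valuation ring $R_\fp$ then gives $R_\fp\cdot\Phi(x)_\fp = R_\fp\cdot y_\fp$ inside $M_\fp := M\otimes_R R_\fp$. Since $M_\fp$ is torsion-free over the discrete valuation ring $R_\fp$, hence free, two elements generating the same $R_\fp$-submodule differ by a unit; thus there is $u_\fp\in R_\fp^\times$ with $\Phi(x)_\fp = u_\fp\, y_\fp$ (one may take $u_\fp=1$ when both sides vanish).

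Finally, Lemma~\ref{lemma_cyclic_module_is_determined_by_height_one_primes}, applied to the torsion-free module $M$ and the elements $f=\Phi(x)$, $g=y$, supplies a genuine unit $u\in R^\times$ with $\Phi(x)=u\,y$; hence $\Phi(\delta(T,\Delta_{\mathscr F^+})) = R\cdot\Phi(x) = R\cdot y = \delta(T,\Delta_{\mathscr F})$, completing the proof. The step I expect to require the most care is the compatibility claim in the third paragraph: one must check that every object entering the definition of $\Phi$ — the Selmer complex $\mathbf{R}\Gamma_{\fin}$, the isomorphism $\varphi\colon H^1(G_p,{\mathscr F}^+T/{\mathscr F}T)\xrightarrow{\sim}R^{r-s}$, the map $\res_p$, and the formation of exterior bi-duals — genuinely commutes with the flat localisation $R\to R_\fp$, so that Lemma~\ref{lemma_delta_F^+=delta_F_when_R=PID} really does compute $\Phi_\fp$ applied to $\delta(T_\fp,\Delta_{\mathscr F^+})$. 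Granting this, the remainder of the argument is purely formal.
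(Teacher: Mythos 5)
Your proof is correct and follows essentially the same route as the paper's: localise at each height-one prime, invoke the DVR case (Lemma~\ref{lemma_delta_F^+=delta_F_when_R=PID}) together with the base-change property of $\delta$ (Theorem~\ref{thm_2023_03_10}(iii)), and glue via Lemma~\ref{lemma_cyclic_module_is_determined_by_height_one_primes}. You spell out the compatibility of $\Phi$ with flat localisation and the cyclicity of the modules of leading terms, which the paper takes for granted, but the underlying argument is identical.
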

\begin{proof}
By Lemma \ref{lemma_delta_F^+=delta_F_when_R=PID} and Theorem \ref{thm_2023_03_10}(ii), we have
\[
\Phi(\delta(T, \Delta_{\mathscr F^+}))R_\fp = \delta(T, \Delta_{\mathscr F})R_\fp
\]
for every height-one prime $\fp$ of $R$.
Since both $\Phi(\delta(T, \Delta_{\mathscr F^+}))$ and $\delta(T, \Delta_{\mathscr F})$ are cyclic $R$-modules, we may apply Lemma \ref{lemma_cyclic_module_is_determined_by_height_one_primes} with
\[
M = \bigcap\nolimits_R^s \mathbf{R}^1\Gamma_{\fin}(G_{\mathbb Q,\Sigma},T,\Delta_{\mathscr F}),
\]
taking $f$ and $g$ to be generators of $\Phi(\delta(T, \Delta_{\mathscr F^+}))$ and $\delta(T, \Delta_{\mathscr F})$, respectively.
It then follows that
\[
\Phi(\delta(T, \Delta_{\mathscr F^+})) = \delta(T, \Delta_{\mathscr F}).
\]
\end{proof}

\subsubsection{Perrin-Riou's large exponential map}
\label{subsec_2025_02_04_1412}
Following closely the discussion in \cite{BS_triple_factor}, we introduce Perrin-Riou's large logarithm maps. These, together with the input from \S\ref{subsec_2025_02_04_1413}, will be used to formulate our factorisation conjecture (and also to trivialise naturally various local cohomology groups, e.g. in place of $\varphi_i$ in \S\ref{subsubsec_2025_02_11_1127} above). 

As in previous sections, let $R$ be a complete local Noetherian domain and let $X$ be a free $R$-module of rank one, which is equipped with a continuous $G_p$-action. Let us assume that $X=X^\circ\otimes \eta$ where the $G_p$-representation $X^\circ$ is unramified and $\eta$ is a character of $\Gamma_{\cyc}$. We consider the following condition on $X$:
\begin{itemize}   
\item[\mylabel{item_non_anom}{\bf NA})] $H^2(G_p,X)=0$\,.
\end{itemize}
We remark that, by local duality, this condition is equivalent to the requirement that $H^0(G_p,\overline{X}^*(1))=0$, where $\overline{X}$ is the residual representation. 

The theory of Coleman maps gives rise to an isomorphism (cf. \cite{KLZ2}, \S8.2)
\begin{equation}
\label{eqn_2022_09_13_0843}    
{\rm LOG}_{X^\circ,0}\,:\,H^1(G_p,X^\circ\widehat{\otimes}_{\ZZ_p} \ZZ_p[[\Gamma_{\cyc}]])\stackrel{\sim}{\lra} \DD_{\rm cris}(X^\circ)\,\widehat{\otimes}_{\ZZ_p}\, \Lambda(\Gamma_{\cyc}) \,,
\end{equation}
where $\DD_{\rm cris}(X^\circ)=(X^\circ\, \widehat\otimes\, \widehat{\ZZ_p^{\mathrm ur}})^{G_{p}}$. 
Let us define the morphism 
\begin{equation}
\label{eqn_2022_09_13_0900}    
{\rm Log}_{X}\,:\, H^1(G_p,X)\lra \DD_{\rm cris}(X^\circ) 
\end{equation}
on tensoring the isomorphism \eqref{eqn_2022_09_13_0843} of 
$R[[\Gamma_{\cyc}]]$-modules by $R[[\Gamma_{\cyc}]]\big{/}(\gamma-1)$ (where $\gamma\in \Gamma_{\cyc}$ is a topological generator) and relying on the fact that the natural injection 
$$ 
H^1(G_p,X\widehat{\otimes}_{R}R[[\Gamma_{\cyc}]])\Big{/}(\gamma-1)\lra H^1(G_p,X)
 $$ 
is an isomorphism whenever \eqref{item_non_anom} holds, in which case the map ${\rm Log}_{X}$ is an isomorphism.


\subsection{Leading terms: Arithmetic examples}\label{subsec:leading_terms}
We apply the general formalism of the preceding subsection in the context of Galois representations attached to forms on ${\rm GSp}_4\times {\rm GL}_2\times {\rm GL}_2$, and their subquotients.

\subsubsection{Large exponential maps (bis)} 
\label{subsubsec_2025_09_23_0739}
Using the construction in \S\ref{subsec_2025_02_04_1412} with the $\cR_{\upi}$-module ${\mathscr Gr}^iT^\dagger_{\underline{\Pi}}$ for $i=0,1,2$, we obtain trivialisations\footnote{We remark that the hypothesis \eqref{item_non_anom} follows from our running Assumption~\ref{ass-gsp4}(b).}
\begin{align}
    \label{eqn_2025_02_09_0946} 
    \begin{aligned}
        {\rm Log}_{\eta_i}\,&:\,H^1(G_p,{\mathscr Gr}^iT^\dagger_{\underline{\Pi}}) \xrightarrow[{\rm Log}_{{\mathscr Gr}^iT^\dagger_{\underline{\Pi}}}]{\sim} \DD_{\rm cris}({\mathscr Gr}^iT^{\dagger}_{\underline{\Pi}}{}^\circ) \xrightarrow[\,\eta_i\,]{\sim} \cR_{\upi}\,,\\
    \end{aligned}
\end{align}
which depends on the choice of the isomorphism $\eta_i$. We henceforth fix a choice\footnote{A natural choice arises from the discussion in \S\ref{subsec_2025_02_04_1413}, after passing to the field of fractions of respective universal Hecke algebra. We will not need this input for our results concerning the factorisation of modules of leading terms.} of such $\eta_i$ and set 
\begin{align}
    \label{eqn_2025_02_09_1632} 
    \begin{aligned}
        {\rm Log}_{a/b}\,&:\,H^1(G_p,{\mathscr Gr}^0 T^\dagger_{\underline{\Pi}}) \xrightarrow[\sim]{{\rm Log}_{\eta_0}}\cR_{\upi}\,,\\
        {\rm Log}_{b/c}\,&:\,H^1(G_p,{\mathscr Gr}^1T^\dagger_{\underline{\Pi}}) \xrightarrow[\sim]{{\rm Log}_{\eta_1}}\cR_{\upi}\,,\\
        {\rm Log}_{c}\,&:\,H^1(G_p,{\mathscr Gr}^2T^\dagger_{\underline{\Pi}}) \xrightarrow[\sim]{{\rm Log}_{\eta_2}}\cR_{\upi}\,.
    \end{aligned}
\end{align}

Depending on the context (which is determined by which exact sequence in Lemma~\ref{lemma:ranks}(b) is relevant to our discussion), we will also write ${\rm Log}_{d/e}$ in place of ${\rm Log}_{b/c}$. Also, with a slight abuse of notation, we will use the same symbols to denote both the maps $H^1(G_p,{\mathscr Gr}^iT^\dagger_{\underline{\Pi}})\otimes_{\cR_{\upi}}\cR_3 \to \cR_3$
obtained from \eqref{eqn_2025_02_09_1632} via base change to $\cR_3$, and the composition of the restriction map at $p$ from a relevant Selmer group with this map. For example, we denote both of the following homomorphisms by the same notation $\Log_c$: 
\begin{align*}
    &\Log_c \colon \mathbf{R}^1\Gamma_{\fin}(G_{\mathbb Q,\Sigma}, T_{\upi}^{\dag},  \Delta_{\mathscr{F}^2})  \longrightarrow  H^1(G_p,{\mathscr Gr}^2T^\dagger_{\underline{\Pi}}) \longrightarrow \cR_{\upi}, 
\\
    &\Log_c \colon \mathbf{R}^1\Gamma_{\fin}(G_{\mathbb Q,\Sigma}, M_3^{\dag}, \tr^* \Delta_c) \longrightarrow H^1(G_p,{\mathscr Gr}^2T^\dagger_{\underline{\Pi}})\otimes_{\cR_{\upi}}\cR_3 \longrightarrow \cR_3. 
\end{align*}


\section{Factorisation of algebraic $p$-adic $L$-functions}
\label{sec_factorisation_main}$\,$
In this section, we establish the factorisations of modules of leading terms. We will carry this out for regions (d) and (a) separately, as these two cases exhibit fundamental differences (cf. \S\ref{remark_intro_BDP_principle}). 

We assume throughout this section, without loss of generality\footnote{i.e. on base-changing to its normal closure.}, that $\cR_3$ is normal.
\subsection{Wall-crossing: (d) to (e)}
\label{subsec_5_1_2025_02_11}
Unlike the case ``(a) to (b)'' (which involves new phenomena), this case can be handled following the ideas of \cite{BS_triple_factor}.

We note that our main results in this subsection are guided by the (BDP) ``wall-crossing principle'' (see \S\ref{remark_intro_BDP_principle}): The restriction of the (conjectural) $p$-adic $L$-function for the region (d) to ${\rm im}({\rm Spec}\,\cR_3\xrightarrow{\iota_{3,4}}{\rm Spec}\,\cR_4)$ has empty interpolation range. Moreover, ${\rm im}(\iota_{3,4})$ contains a dense set of classical points that fall within the region (e) (where the global root number equals $-1$), and the values of the (conjectural) $p$-adic $L$-function for the region (d) at such points in the region (e) should be a $p$-adic avatar for the first derivative of the degree--16 $L$-function at the central critical point.  

\begin{theorem}
\label{thm_5_1_2025_02_14_1424}
Suppose that $\delta(T_3^{\dag},\Delta_d) \neq 0 =\delta(T^\dagger_{\underline{\Pi}}, \Delta_{\mathscr{F}^2})$. Then,
$$  
\delta(T_3^{\dag}, \Delta_d) = \Log_{d/e}(\delta(M_3^{\dag}, \tr^* \Delta_d)) \cdot \varpi_{3,2}^* \Log_{c}(\delta(T^\dagger_{\underline{\Pi}}, \Delta_{\mathscr{F}^1})).
 $$  
In particular, $\delta(M_3^{\dag}, \Delta_e)=\Log_{d/e}(\delta(M_3^{\dag}, \tr^* \Delta_d)) \neq 0$.
\end{theorem}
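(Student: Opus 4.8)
The plan is to localise at the height-one primes of $\cR_3$, where the identity reduces to a comparison of lengths of $\mathbf{R}^2$-modules attached to an exact triangle of Selmer complexes. First I would record that both sides are cyclic $\cR_3$-modules: the Euler characteristic formula of \S\ref{sec:greenberg_selmer} gives $r(T_3^{\dagger},\Delta_d)=0$, so $\delta(T_3^{\dagger},\Delta_d)\subseteq{\bigcap}^{0}_{\cR_3}\mathbf{R}^1\Gamma_{\fin}(G_{\mathbb Q,\Sigma},T_3^{\dagger},\Delta_d)=\cR_3$, and it is nonzero by hypothesis, hence principal with $\cR_3$-regular generator by Theorem~\ref{thm_2023_03_10}(i), while the two right-hand factors are images of cyclic modules under $\cR_3$-linear maps. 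Since $\cR_3$ is normal, Lemma~\ref{lemma_cyclic_module_is_determined_by_height_one_primes} reduces the identity to its validity over each $R:=(\cR_3)_{\fp}$ with $\fp$ a height-one prime; here Theorem~\ref{thm_2023_03_10}(iii) carries $\delta(T_{\upi}^{\dagger},\Delta_{\mathscr{F}^1})$ along the flat map $\cR_{\upi}\to R$, and the hypothesis $\delta(T_{\upi}^{\dagger},\Delta_{\mathscr{F}^2})=0$ — equivalently, by Theorem~\ref{thm_2023_03_10}(i), $\mathbf{R}^2\Gamma_{\fin}(G_{\mathbb Q,\Sigma},T_{\upi}^{\dagger},\Delta_{\mathscr{F}^2})$ is not torsion — passes to $R$.

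Over $R$, since $p$ is odd the trace splitting $\End(T_{\underline{\sigma}})=\ad^0 T_{\underline{\sigma}}\oplus\mathds{1}$ is $G_{\mathbb Q,\Sigma}$-equivariant, so $T_3^{\dagger}=M_3^{\dagger}\oplus N$ with $N:=\varpi_{3,2}^{*}T_{\upi}^{\dagger}$; the global cochains and the local conditions away from $p$ split along this, but $F_d^{+}T_3^{\dagger}$ does not. A direct computation with Definition~\ref{def:T_and_M_3_selmer} shows $F_d^{+}T_3^{\dagger}\subseteq F_d^{+}M_3^{\dagger}\oplus{\mathscr F}^1 N$, with $F_d^{+}T_3^{\dagger}\cap N$ the saturated rank-one (by Lemma~\ref{lemma:ranks}(a)) $G_p$-submodule of $N$, which by Assumption~\ref{ass-gsp4}(b) is ${\mathscr F}^3 N$; the cokernel is thus $Q\cong{\mathscr F}^1 T_{\upi}^{\dagger}/{\mathscr F}^3 T_{\upi}^{\dagger}$. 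Again by Assumption~\ref{ass-gsp4}(b) one has $H^0(G_p,Q)=H^2(G_p,Q)=0$, so $\mathbf{R}\Gamma(G_p,Q)=H^1(G_p,Q)$ is $R$-free of rank $2$, and $0\to{\mathscr Gr}^2 T_{\upi}^{\dagger}\to Q\to{\mathscr Gr}^1 T_{\upi}^{\dagger}\to0$ exhibits the two graded pieces trivialised respectively by $\Log_c$ and by $\Log_{d/e}=\Log_{b/c}$. Feeding $F_d^{+}T_3^{\dagger}\hookrightarrow F_d^{+}M_3^{\dagger}\oplus{\mathscr F}^1 N$ into the cone defining the Selmer complex, and using the splitting of the middle term, I obtain an exact triangle
\[
\mathbf{R}\Gamma_{\fin}(G_{\mathbb Q,\Sigma},T_3^{\dagger},\Delta_d)\longrightarrow\mathbf{R}\Gamma_{\fin}(G_{\mathbb Q,\Sigma},M_3^{\dagger},\tr^{*}\Delta_d)\oplus\mathbf{R}\Gamma_{\fin}(G_{\mathbb Q,\Sigma},N,\Delta_{\mathscr{F}^1})\xrightarrow{\ \rho\ }H^1(G_p,Q)\xrightarrow{\ +1\ }.
\]

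Now for the length bookkeeping over $R$. Theorem~\ref{thm_2023_03_10}(ii) gives $\delta(T_3^{\dagger},\Delta_d)=\mathrm{char}_R\mathbf{R}^2\Gamma_{\fin}(G_{\mathbb Q,\Sigma},T_3^{\dagger},\Delta_d)$; a short rank count using $\delta(T_3^{\dagger},\Delta_d)\neq0$ and the triangle forces $\delta(M_3^{\dagger},\tr^{*}\Delta_d)\neq0\neq\delta(T_{\upi}^{\dagger},\Delta_{\mathscr{F}^1})$, so all three $\mathbf{R}^2$-terms have finite length and the cohomology sequence of the triangle gives
\[
\length_R\mathbf{R}^2\Gamma_{\fin}(G_{\mathbb Q,\Sigma},T_3^{\dagger},\Delta_d)=\length_R\coker\rho+\length_R\mathbf{R}^2\Gamma_{\fin}(G_{\mathbb Q,\Sigma},M_3^{\dagger},\tr^{*}\Delta_d)+\length_R\mathbf{R}^2\Gamma_{\fin}(G_{\mathbb Q,\Sigma},N,\Delta_{\mathscr{F}^1}).
\]
To compute $\coker\rho$ I would decompose $\rho$ along the filtration of $H^1(G_p,Q)$: its $M_3^{\dagger}$-component composed with $H^1(G_p,Q)\twoheadrightarrow H^1(G_p,{\mathscr Gr}^1 T_{\upi}^{\dagger})$ is $\res_{d/e}$, while its $N$-component lands in the submodule $H^1(G_p,{\mathscr Gr}^2 T_{\upi}^{\dagger})$ — here the hypothesis is used, since $\delta(T_{\upi}^{\dagger},\Delta_{\mathscr{F}^2})=0$ forces $\mathbf{R}^1\Gamma_{\fin}(G_{\mathbb Q,\Sigma},N,\Delta_{\mathscr{F}^1})=\mathbf{R}^1\Gamma_{\fin}(G_{\mathbb Q,\Sigma},N,\Delta_{\mathscr{F}^2})$, hence $\res_{b/c}$ vanishes on it — where it is measured by $\Log_c$. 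The two images therefore lie in internal direct sum in $H^1(G_p,Q)$, so $\length_R\coker\rho=c_1+c_2$, with $c_1$ and $c_2$ the lengths of the cokernels of these two components. Finally, Lemma~\ref{lemma_delta_F^+=delta_F_when_R=PID} applied to $(M_3^{\dagger},F_d^{+}M_3^{\dagger},F_e^{+}M_3^{\dagger})$ with transfer map $\Phi=\Log_{d/e}$ (legitimate by Lemma~\ref{lemma:ranks}(b) and Assumption~\ref{ass-gsp4}(b)) identifies $\Log_{d/e}\,\delta(M_3^{\dagger},\tr^{*}\Delta_d)$ with $\delta(M_3^{\dagger},\tr^{*}\Delta_e)$, whence $v_{\fp}(\Log_{d/e}\,\delta(M_3^{\dagger},\tr^{*}\Delta_d))=\length_R\mathbf{R}^2\Gamma_{\fin}(M_3^{\dagger},\tr^{*}\Delta_d)+c_1$; and Theorem~\ref{thm_2023_03_10}(ii) together with the definition of $\Log_c$ gives $v_{\fp}(\varpi_{3,2}^{*}\Log_c\,\delta(T_{\upi}^{\dagger},\Delta_{\mathscr{F}^1}))=\length_R\mathbf{R}^2\Gamma_{\fin}(N,\Delta_{\mathscr{F}^1})+c_2$. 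Summing matches the $\fp$-valuations of the two sides, and Lemma~\ref{lemma_cyclic_module_is_determined_by_height_one_primes} concludes. The ``in particular'' is then immediate: $\delta(M_3^{\dagger},\Delta_e)=\Log_{d/e}(\delta(M_3^{\dagger},\tr^{*}\Delta_d))$ is Theorem~\ref{thm_2025_02_19_2217} applied to $(M_3^{\dagger},F_d^{+}M_3^{\dagger},F_e^{+}M_3^{\dagger})$ (whose transfer map is $\Log_{d/e}$), and nonvanishing follows from the displayed identity and $\delta(T_3^{\dagger},\Delta_d)\neq0$.

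The step I expect to be the main obstacle is the penultimate one: carrying the exterior-bidual (Rubin-lattice) formalism behind $\delta(-)$ faithfully through the long exact sequence of the triangle, and proving that the defect contributed by $Q$ is \emph{exactly} $\Log_{d/e}(\delta(M_3^{\dagger},\tr^{*}\Delta_d))\cdot\varpi_{3,2}^{*}\Log_c(\delta(T_{\upi}^{\dagger},\Delta_{\mathscr{F}^1}))$ — with no spurious extra factor. This hinges on the precise shape of the two graded pieces of $Q$ and on the compatibility of the trivialisations $\Log_{\bullet}$ of \S\ref{subsubsec_2025_09_23_0739} with the restriction maps $\res_{d/e}$ and $\res_{b/c}$, and it is exactly the point at which the argument runs parallel to, and slightly extends, the one in \cite{BS_triple_factor}.
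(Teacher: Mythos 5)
Your proposal is correct, and it is genuinely different in route from the paper's primary proof, which instead builds the diagram of Lemma~\ref{lemma_5_2_2025_02_14_1214} from the ``kernel'' triangle $\mathbf{R}\Gamma_{\fin}(T_{\upi}^\dagger,\Delta_{\mathscr{F}^3})\otimes\cR_3\to\mathbf{R}\Gamma_{\fin}(T_3^\dagger,\Delta_d)\to\mathbf{R}\Gamma_{\fin}(M_3^\dagger,\tr^*\Delta_d)\xrightarrow{+1}$ and proceeds through a chain of characteristic-ideal comparisons over $\cR_3$ (Lemmas~\ref{lemma_5_3_2025_02_14_1213}--\ref{lemma_5_6_2025_02_14_1507}), working globally rather than at localisations. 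Your argument is, however, close in spirit to the paper's own alternative proof given in Remark~\ref{remark_2025_03_10_1606}: both of you exploit the ``pushforward'' triangle $\mathbf{R}\Gamma_{\fin}(T_3^\dagger,\Delta_d)\to\mathbf{R}\Gamma_{\fin}(M_3^\dagger,\tr^*\Delta_d)\oplus\mathbf{R}\Gamma_{\fin}(N,\Delta_{\mathscr{F}^1})\to\mathbf{R}\Gamma(G_p,Q)\xrightarrow{+1}$ (encoded in the paper via $\mathrm{Log}^{(2)}_{\mathscr{F}^1/\mathscr{F}^3}$) together with the crucial fact that $\res_{b/c}$ kills the global classes coming from $\mathbf{R}^1\Gamma_{\fin}(T_{\upi}^\dagger,\Delta_{\mathscr{F}^1})$, which is Lemma~\ref{lemma_5_5_2025_02_14_1331} (requiring \emph{both} hypotheses $\delta(T_3^{\dag},\Delta_d)\neq0$ and $\delta(T^\dagger_{\underline{\Pi}},\Delta_{\mathscr{F}^2})=0$, not only the second as your sentence suggests). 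Your approach trades the global exterior-bidual bookkeeping (where the paper invokes Theorem~\ref{thm_2025_02_19_2217} once and splits the wedge $\Log_{d/e}\wedge\Log_c$) for a localise-first strategy over DVRs, patched back via Lemma~\ref{lemma_cyclic_module_is_determined_by_height_one_primes}: this buys elementary length computations but requires checking compatibility of $\delta(-)$ with localisation, which is exactly Theorem~\ref{thm_2023_03_10}(iii).

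One small imprecision in your write-up: the statement that ``the two images lie in internal direct sum in $H^1(G_p,Q)$'' is not literally accurate, since nothing a priori prevents $\rho\bigl(\mathbf{R}^1\Gamma_{\fin}(M_3^\dagger,\tr^*\Delta_d)\bigr)$ from meeting $H^1(G_p,{\mathscr Gr}^2T_{\upi}^\dagger)$ nontrivially. The correct formulation uses the filtration $0\to H^1(G_p,{\mathscr Gr}^2)\to H^1(G_p,Q)\to H^1(G_p,{\mathscr Gr}^1)\to 0$: one has $\length_R\coker\rho=\length_R\bigl(H^1({\mathscr Gr}^2)/L_2\bigr)+c_1$ with $L_2:=\im(\rho)\cap H^1({\mathscr Gr}^2)$, and the equality $L_2=\rho\bigl(\mathbf{R}^1\Gamma_{\fin}(N,\Delta_{\mathscr{F}^1})\bigr)$ (equivalently, $\length_R\coker\rho=c_1+c_2$) holds because $c_1<\infty$ (forced by the finiteness of $\coker\rho$, itself a consequence of $\delta(T_3^{\dag},\Delta_d)\neq0$) makes the projection of the $M_3^\dagger$-image to $H^1({\mathscr Gr}^1)$ injective, hence its intersection with $H^1({\mathscr Gr}^2)$ trivial. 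Once this is spelled out, the ``penultimate step'' you flag as the main obstacle does go through exactly as you describe, and the proof is complete.
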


We will complete the proof of this theorem after some preparatory steps. Before we proceed, let us briefly comment on the hypotheses of this theorem. If we had\footnote{Folklore conjectures imply that this case should never arise.} $\delta(T_3^{\dag},\Delta_d) =0$, then it is easy to see that the claimed factorisation in Theorem~\ref{thm_5_1_2025_02_14_1424} reduces to the uninteresting assertion that $0=0$. The required vanishing $\delta(T^\dagger_{\underline{\Pi}}, \Delta_{\mathscr{F}^2})=0$ under our running assumption on global $\varepsilon$-factors is a consequence of the parity conjecture for ${\rm GSp}_4$.

\begin{lemma}
\label{lemma_5_2_2025_02_14_1214}
We have the following commutative diagram with exact rows:
\footnotesize
\begin{align*}
\xymatrix{
\mathbf{R}\Gamma_{\fin}(G_{\mathbb Q, \Sigma}, T_{\upi}^\dagger, \Delta_{\mathscr{F}^3}) \otimes_{\varpi_{3,2}^*} \cR_3 \ar[r]\ar[d]^-{=}
&
\mathbf{R}\Gamma_{\fin}(G_{\mathbb Q, \Sigma}, T_3^\dagger, \Delta_{d}) \ar[r]^{{\rm tr}^*}\ar[d]^{\rm tr}
&
\mathbf{R}\Gamma_{\fin}(G_{\mathbb Q, \Sigma}, M_3^\dagger, \mathrm{tr}^*\Delta_{d}) \ar[r]^-{+1} \ar[d]^{\res_p}
&
\\
   \mathbf{R}\Gamma_{\fin}(G_{\mathbb Q, \Sigma}, T_{\upi}^\dagger, \Delta_{\mathscr{F}^3})\otimes_{\varpi_{3,2}^*} \cR_3  \ar[r]
    &
   \mathbf{R}\Gamma_{\fin}(G_{\mathbb Q, \Sigma}, T_{\upi}^\dagger, \Delta_{\mathscr{F}^1}) \otimes_{\varpi_{3,2}^*} \cR_3  \ar[r]
    &
    \mathbf{R}\Gamma\left(G_{p}, {\mathscr{F}^1}/{\mathscr{F}^3}\right) \otimes_{\varpi_{3,2}^*} \cR_3  \ar[r]^-{+1}
    &\,,
    }
\end{align*}
\normalsize
Here, we have use the shorthand ${\mathscr{F}^i}/{\mathscr{F}^j}:={\mathscr{F}^iT_{\upi}^\dagger}\,/\,{\mathscr{F}^jT_{\upi}^\dagger}$ for any $i<j$.
\normalsize
\end{lemma}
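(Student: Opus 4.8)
The plan is to produce the diagram as a morphism of distinguished triangles, obtained by applying the octahedral axiom (or rather, the standard ``snake'' between two nested Greenberg Selmer complexes) to the relevant filtrations, and then to identify the terms. First I would recall that for a fixed $G_p$-stable filtration the Selmer complex $\mathbf{R}\Gamma_{\fin}(G_{\mathbb Q,\Sigma}, -, \Delta_{\mathscr F})$ is functorial in the local condition: if $\mathscr F^j T \subseteq \mathscr F^i T$ are two choices of $\mathscr F^+$, then there is a natural map $\mathbf{R}\Gamma_{\fin}(\cdots,\Delta_{\mathscr F^j}) \to \mathbf{R}\Gamma_{\fin}(\cdots,\Delta_{\mathscr F^i})$ whose cone is computed by the local cohomology of the quotient $\mathscr F^i/\mathscr F^j$ placed at the prime $p$ (this is the content of the ``change of local conditions'' exact triangle, cf. the discussion around \S\ref{sec:greenberg_selmer} and \cite[\S4.3]{BS_triple_factor}). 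Applying this with $j=3$, $i=1$ to $T_{\upi}^\dagger$ and base-changing along $\varpi_{3,2}^*$ gives the bottom row, with third term $\mathbf{R}\Gamma(G_p,\mathscr F^1/\mathscr F^3)\otimes_{\varpi_{3,2}^*}\cR_3$.

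Next I would construct the top row. Here the point is that, via the trace map $\mathrm{tr}\colon T_3^\dagger \to T_{\upi}^\dagger$ (coming from the splitting $T_g\otimes T_g^* \simeq \ad^0 T_g \oplus \mathbb Z_p$ together with the definition of $M_3^\dagger$ as the image of $T_3^\dagger$ in $M_3^\dagger$), one gets a short exact sequence of $G_{\mathbb Q,\Sigma}$-modules relating $T_3^\dagger$, a copy of $T_{\upi}^\dagger$, and $M_3^\dagger$; concretely $0 \to T_{\upi}^\dagger \to T_3^\dagger \xrightarrow{\mathrm{tr}^*} M_3^\dagger \to 0$ after the appropriate twist. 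The Greenberg local condition $\Delta_d$ on $T_3^\dagger$ restricts, on the sub $T_{\upi}^\dagger$, to $\Delta_{\mathscr F^3}$ (because $F_d^+ T_3^\dagger \cap T_{\upi}^\dagger = {\mathscr F}^3 T_{\upi}^\dagger \otimes (\text{unit part})$, by Definition~\ref{def:T_and_M_3_selmer} and the computation in the proof of Lemma~\ref{lemma_2025_02_09_1953}), and it pushes forward on the quotient $M_3^\dagger$ to $\mathrm{tr}^*\Delta_d$ by definition. Functoriality of $\mathbf{R}\Gamma_{\fin}$ in short exact sequences of coefficients with compatible local conditions then yields the exact top row. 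The vertical map on the left is the identity (both are $\mathbf{R}\Gamma_{\fin}(\cdots, T_{\upi}^\dagger,\Delta_{\mathscr F^3})\otimes_{\varpi_{3,2}^*}\cR_3$), the middle vertical $\mathrm{tr}$ is induced by $\mathrm{tr}\colon T_3^\dagger \to T_{\upi}^\dagger$ together with the compatibility $F_d^+ T_3^\dagger \to {\mathscr F}^1 T_{\upi}^\dagger$ of local conditions (this is where one checks $\mathrm{tr}(F_d^+ T_3^\dagger) \subseteq {\mathscr F}^1 T_{\upi}^\dagger \otimes \cR_3$), and the right vertical $\res_p$ is the composite of restriction at $p$ with the projection $F_d^+ M_3^\dagger \to F_d^+M_3^\dagger/F_e^+M_3^\dagger \simeq \gr^1 T_{\upi}^\dagger \otimes \gr^1\ad^0(T_{\underline\sigma})$ of Lemma~\ref{lemma:ranks}(b), followed by the identification $\gr^1\ad^0(T_{\underline\sigma})\simeq \mathcal R_{\underline\sigma}$, landing in $\mathbf{R}\Gamma(G_p,\mathscr F^1/\mathscr F^3)\otimes_{\varpi_{3,2}^*}\cR_3$. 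Commutativity of the two squares is then a diagram chase at the level of cochains: the left square commutes by construction, and the right square commutes because both routes compute the image of a Selmer cocycle on $M_3^\dagger$ in the local cohomology of $\gr^1 T_{\upi}^\dagger$, once one unwinds that $\mathrm{tr}^*$ followed by $\res_p$ agrees with $\mathrm{tr}$ followed by the change-of-local-condition connecting map.

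The main obstacle I anticipate is bookkeeping the local conditions precisely: one must verify that, under $\mathrm{tr}\colon T_3^\dagger \twoheadrightarrow T_{\upi}^\dagger$ and $\mathrm{tr}^*\colon T_3^\dagger \twoheadrightarrow M_3^\dagger$, the subspace $F_d^+ T_3^\dagger$ maps exactly to ${\mathscr F}^1 T_{\upi}^\dagger$ and to $F_d^+ M_3^\dagger$ respectively, and that these fit into a short exact sequence of local conditions compatible with $0\to {\mathscr F}^3 T_{\upi}^\dagger \to F_d^+ T_3^\dagger \to F_d^+ M_3^\dagger \to 0$; the quotient $F_d^+ M_3^\dagger / (\text{image of } {\mathscr F}^1/{\mathscr F}^3)$ must be identified with $F_e^+ M_3^\dagger$, which is exactly the last exact sequence of Lemma~\ref{lemma:ranks}(b). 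Granting these identifications — all of which are ``direct calculations'' in the style of Lemma~\ref{lemma_2025_02_09_1953} using Definition~\ref{def:T_and_M_3_selmer} — the morphism of triangles drops out formally from the functoriality of cones. A mild subtlety worth flagging is exactness of $\otimes_{\varpi_{3,2}^*}\cR_3$, i.e. flatness of the specialisation map; this is ensured by the perfectness of all complexes in play under hypothesis \eqref{item_Tam} (see the remark following \eqref{item_Tam}), so the base change commutes with the formation of the distinguished triangles.
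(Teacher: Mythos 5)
Your overall strategy is the right one: produce the diagram as a morphism of distinguished triangles, with the top row coming from the short exact sequence $0 \to T_\upi^\dagger \otimes_{\varpi_{3,2}^*} \cR_3 \to T_3^\dagger \xrightarrow{\mathrm{tr}^*} M_3^\dagger \to 0$ (equipped with compatible Greenberg local conditions), and the bottom row from the change-of-local-conditions triangle for $\mathscr{F}^3 T_\upi^\dagger \subset \mathscr{F}^1 T_\upi^\dagger$. Your verifications that $F_d^+ T_3^\dagger \cap (T_\upi^\dagger \otimes \cR_3) = \mathscr{F}^3 T_\upi^\dagger \otimes \cR_3$ and that $\mathrm{tr}(F_d^+ T_3^\dagger) \subseteq \mathscr{F}^1 T_\upi^\dagger \otimes \cR_3$ are both correct (direct computations from Definition~\ref{def:T_and_M_3_selmer}), as is the observation that perfectness under \eqref{item_Tam} ensures compatibility with the base change.

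However, your description of the right vertical map $\res_p$ is wrong, and this is the essential content of the lemma. You assert that $\res_p$ is ``the composite of restriction at $p$ with the projection $F_d^+ M_3^\dagger \to F_d^+M_3^\dagger/F_e^+M_3^\dagger \simeq \gr^1 T_{\upi}^\dagger$'' of Lemma~\ref{lemma:ranks}(b). That composite lands in $\mathbf{R}\Gamma(G_p,\gr^1 T_\upi^\dagger \otimes \cR_3)$, a rank-one object, whereas the target in the lemma is $\mathbf{R}\Gamma(G_p,\mathscr{F}^1/\mathscr{F}^3)\otimes\cR_3$, a rank-two object; and $\gr^1 = \mathscr{F}^1/\mathscr{F}^2$ is a quotient of $\mathscr{F}^1/\mathscr{F}^3$, not a submodule, so there is no ``identification'' turning your map into one with the right codomain. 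You have conflated $\res_p$ of this lemma with the map $\res_{d/e}$ that the paper defines separately just after Lemma~\ref{lemma:ranks}: $\res_{d/e}$ really does project onto $\gr^1$, but it is not the third vertical here. The correct third vertical is the map induced on cones by $\mathrm{tr}$, which at the level of local conditions is $F_d^+ M_3^\dagger \to \bigl(\mathscr{F}^1 T_\upi^\dagger/\mathscr{F}^3 T_\upi^\dagger\bigr)\otimes\cR_3$, $m \mapsto \mathrm{tr}(\widetilde m) \bmod \mathscr{F}^3$, where $\widetilde m \in F_d^+ T_3^\dagger$ is any $\mathrm{tr}^*$-lift; this is well-defined precisely because two lifts differ by an element of $\mathscr{F}^3 T_\upi^\dagger \otimes \cR_3$ (after absorbing a benign factor $2 = \mathrm{rank}\,T_{\underline\sigma}$, since $T_\upi^\dagger \hookrightarrow T_3^\dagger \xrightarrow{\mathrm{tr}} T_\upi^\dagger$ is multiplication by $2$, so the left vertical is the identity only after renormalising the embedding or the trace). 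Relatedly, your claim that the kernel of the third vertical on local conditions is $F_e^+ M_3^\dagger$ cannot be right: the map $F_d^+ M_3^\dagger \to \mathscr{F}^1/\mathscr{F}^3 \otimes \cR_3$ is surjective with kernel of rank $7-2=5$, whereas $F_e^+ M_3^\dagger$ has rank $6$. So as written your diagram does not commute; the map you describe must be replaced by the one induced by $\mathrm{tr}$.
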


\begin{lemma}
\label{lemma_5_3_2025_02_14_1213}
   Suppose that $\delta(T_3^{\dag},\Delta_d) \neq 0$.  
   \item[i)] $\mathbf{R}^1\Gamma_{\fin}(G_{\mathbb Q, \Sigma}, T^\dagger_{\underline{\Pi}}, \Delta_{\mathscr{F}^3}) = 0 = \mathbf{R}^1\Gamma_{\fin}(G_{\mathbb Q, \Sigma}, T_3^\dagger, \Delta_{d})$. 
   \item[ii)] 
   We have 
       $$ 
       \xymatrix{
      \mathbf{R}^1\Gamma_{\fin}(G_{\mathbb Q, \Sigma}, M_3^\dagger, \mathrm{tr}^*\Delta_{d}) \ar@{^{(}->}[r]^-{\delta^1} \ar@{^{(}->}[d]_-{\res_p} & 
      \mathbf{R}^2\Gamma_{\fin}(G_{\mathbb Q, \Sigma}, T_{\upi}^\dagger, \Delta_{\mathscr{F}^3}) \otimes_{\varpi_{3,2}^*} \cR_3. 
      \\
     \dfrac{{H}^1\left(G_{p}, {\mathscr{F}^1}/{\mathscr{F}^3}\right)}{\res_p({\mathbf{R}^1\Gamma_{\fin}(G_{\mathbb Q, \Sigma}, T_{\upi}^\dagger, \Delta_{\mathscr{F}^1}))}}\otimes_{\varpi_{3,2}^*} \cR_3 \ar@{^{(}->}[ru]_-{\partial_{\underline{\Pi}}} 
    }
     $$ 
    All the $\cR_3$-modules that appear in this diagram are of rank one.
    \item[iii)]   $\delta(T_3^{\dag},\Delta_d) = \Char_{\cR_3}\left( \dfrac{\mathbf{R}^2\Gamma_{\fin}(G_{\mathbb Q, \Sigma}, T_{\upi}^\dagger, \Delta_{\mathscr{F}^3}) \otimes_{\varpi_{3,2}^*} \cR_3}{\delta^1(\delta(M_3^{\dag}, \mathrm{tr}^*\Delta_d))} \right)$. 
\end{lemma}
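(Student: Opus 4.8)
The plan is to run the argument of \cite[\S5]{BS_triple_factor} essentially verbatim, the only new ingredient being the dictionary of $\GSp_4$-local conditions recorded in Definition~\ref{def:T_and_M_3_selmer}, Lemma~\ref{lemma:ranks} and the map of exact triangles of Lemma~\ref{lemma_5_2_2025_02_14_1214}. Throughout, abbreviate by $A$, $B$, $C$ the three Selmer complexes $\mathbf{R}\Gamma_{\fin}(G_{\mathbb Q,\Sigma},T_{\upi}^{\dag},\Delta_{\mathscr{F}^3})$, $\mathbf{R}\Gamma_{\fin}(G_{\mathbb Q,\Sigma},T_3^{\dag},\Delta_d)$, $\mathbf{R}\Gamma_{\fin}(G_{\mathbb Q,\Sigma},M_3^{\dag},\tr^*\Delta_d)$. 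I shall use freely that $\mathbf{R}^0$ and $\mathbf{R}^3$ of each of these complexes vanish (this follows from the running irreducibility hypotheses of \S\ref{ass-gsp4}, which force the residual representations to have no $G_{\mathbb Q}$-invariants, together with Poitou--Tate duality), and the Euler characteristic formula of \S\ref{sec:greenberg_selmer}. For statement (i): since $\cR_3$ is normal and $\delta(T_3^{\dag},\Delta_d)\neq 0$, Theorem~\ref{thm_2023_03_10}(ii) shows $\mathbf{R}^2 B$ is $\cR_3$-torsion; a dimension count from Definition~\ref{def:T_and_M_3_selmer} gives $\rank F_d^+T_3^{\dag}=8=\rank (T_3^{\dag})^{c=-1}$, so $r(T_3^{\dag},\Delta_d)=0$, and together with $\mathbf{R}^0B=\mathbf{R}^3B=0$ the Euler characteristic formula forces $\mathbf{R}^1 B$ to be torsion as well; as $\mathbf{R}^1 B$ is torsion-free (it injects into $H^1(G_{\mathbb Q,\Sigma},T_3^{\dag})$, which carries no torsion, exactly as in \cite{BS_triple_factor}) we get $\mathbf{R}^1 B=0$. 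Feeding this into the long exact cohomology sequence of the top row of Lemma~\ref{lemma_5_2_2025_02_14_1214} and using $\mathbf{R}^0 C=0$ squeezes $\mathbf{R}^1 A\otimes_{\varpi_{3,2}^*}\cR_3$ to zero; faithful flatness of $\varpi_{3,2}^*\colon \cR_{\upi}\to\cR_3$ then gives $\mathbf{R}^1 A=0$.

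For statement (ii): with $\mathbf{R}^1A=\mathbf{R}^1B=0$ and all the $\mathbf{R}^0$'s and $\mathbf{R}^3$'s vanishing, the long exact sequence of the top row of Lemma~\ref{lemma_5_2_2025_02_14_1214} collapses to
\[
0\longrightarrow \mathbf{R}^1 C \xrightarrow{\ \delta^1\ }\mathbf{R}^2\Gamma_{\fin}(G_{\mathbb Q,\Sigma},T_{\upi}^{\dag},\Delta_{\mathscr{F}^3})\otimes_{\varpi_{3,2}^*}\cR_3\longrightarrow \mathbf{R}^2 B\longrightarrow \mathbf{R}^2 C\longrightarrow 0,
\]
which gives the top horizontal injection $\delta^1$. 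Likewise the long exact sequence of the bottom row, again using $\mathbf{R}^1 A=0$, identifies $\mathbf{R}^1\Gamma_{\fin}(G_{\mathbb Q,\Sigma},T_{\upi}^{\dag},\Delta_{\mathscr{F}^1})$ with its image under $\res_p$ in $H^1(G_p,\mathscr{F}^1/\mathscr{F}^3)\otimes\cR_3$ and realises the next connecting map as an injection $\partial_{\upi}$ of the displayed quotient into $\mathbf{R}^2\Gamma_{\fin}(T_{\upi}^{\dag},\Delta_{\mathscr{F}^3})\otimes\cR_3$; commutativity of the triangle, with the left vertical being the restriction map (which is then injective because $\delta^1=\partial_{\upi}\circ\res_p$ is), is part of the map-of-triangles structure of Lemma~\ref{lemma_5_2_2025_02_14_1214}. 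The rank-one assertions follow from the Euler characteristic formula and Poitou--Tate (Greenberg) duality over the regular ring $\cR_{\upi}$: part (i) yields $\rank\mathbf{R}^2\Gamma_{\fin}(T_{\upi}^{\dag},\Delta_{\mathscr{F}^3})=1$ and $\rank\mathbf{R}^2\Gamma_{\fin}(T_{\upi}^{\dag},\Delta_{\mathscr{F}^1})=\rank\mathbf{R}^1\Gamma_{\fin}(T_{\upi}^{\dag},\Delta_{\mathscr{F}^3})=0$, hence $\rank\mathbf{R}^1\Gamma_{\fin}(T_{\upi}^{\dag},\Delta_{\mathscr{F}^1})=1$; since $H^1(G_p,\mathscr{F}^1/\mathscr{F}^3)$ has rank $2$ by the local Euler characteristic formula (the relevant graded pieces having no invariants or $(1)$-coinvariants, by Assumption~\ref{ass-gsp4}(b)), the quotient has rank $1$, and the injection $\delta^1$ pins $\mathbf{R}^1 C$ to rank $1$.

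For statement (iii): the four-term sequence above shows $\mathbf{R}^2 C=\mathbf{R}^2\Gamma_{\fin}(M_3^{\dag},\tr^*\Delta_d)$ has rank $\rank\mathbf{R}^1 C-1=0$, hence is torsion, so by Theorem~\ref{thm_2023_03_10}(i) the module $\delta(M_3^{\dag},\tr^*\Delta_d)$ is generated by an $\cR_3$-regular element; in particular it is nonzero and Theorem~\ref{thm_2023_03_10}(ii) applies to it. Since $\mathbf{R}^1 C$ is a reflexive $\cR_3$-module --- being the degree-one cohomology of a perfect complex of amplitude $[1,2]$ over the normal (hence $(S_2)$) ring $\cR_3$ --- we have ${\bigcap}^1_{\cR_3}\mathbf{R}^1 C=\mathbf{R}^1 C$ and $\delta(M_3^{\dag},\tr^*\Delta_d)\subseteq\mathbf{R}^1 C$, and Theorem~\ref{thm_2023_03_10}(ii) reads $\Char_{\cR_3}(\mathbf{R}^1 C/\delta(M_3^{\dag},\tr^*\Delta_d))=\Char_{\cR_3}(\mathbf{R}^2 C)$; for $(T_3^{\dag},\Delta_d)$, where $r=0$ so ${\bigcap}^0=\cR_3$, it reads $\delta(T_3^{\dag},\Delta_d)=\Char_{\cR_3}(\cR_3/\delta(T_3^{\dag},\Delta_d))=\Char_{\cR_3}(\mathbf{R}^2 B)$. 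Applying multiplicativity of characteristic ideals to $0\to\mathbf{R}^2\Gamma_{\fin}(T_{\upi}^{\dag},\Delta_{\mathscr{F}^3})\otimes\cR_3/\delta^1(\mathbf{R}^1 C)\to\mathbf{R}^2 B\to\mathbf{R}^2 C\to 0$ and to the exact sequence obtained from the two subquotients of $\delta^1(\mathbf{R}^1 C)\supseteq\delta^1(\delta(M_3^{\dag},\tr^*\Delta_d))$ inside $\mathbf{R}^2\Gamma_{\fin}(T_{\upi}^{\dag},\Delta_{\mathscr{F}^3})\otimes\cR_3$ --- using that $\delta^1$ is injective to identify $\delta^1(\mathbf{R}^1 C)/\delta^1(\delta(M_3^{\dag},\tr^*\Delta_d))$ with $\mathbf{R}^1 C/\delta(M_3^{\dag},\tr^*\Delta_d)$ --- and combining the three identities gives
\[
\delta(T_3^{\dag},\Delta_d)=\Char_{\cR_3}\!\left(\mathbf{R}^2\Gamma_{\fin}(G_{\mathbb Q,\Sigma},T_{\upi}^{\dag},\Delta_{\mathscr{F}^3})\otimes_{\varpi_{3,2}^*}\cR_3\Big/\delta^1(\delta(M_3^{\dag},\tr^*\Delta_d))\right),
\]
which is the assertion.

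The step I expect to be delicate is the last one: keeping the exterior bi-duals straight, so that $\delta(M_3^{\dag},\tr^*\Delta_d)$ genuinely lies in $\mathbf{R}^1\Gamma_{\fin}(M_3^{\dag},\tr^*\Delta_d)$ rather than only its bidual, and so that the replacement of $\Char_{\cR_3}(\cR_3/\delta(T_3^{\dag},\Delta_d))$ by the ideal $\delta(T_3^{\dag},\Delta_d)$ itself is legitimate --- both of which hinge on the normality of $\cR_3$ and the reflexivity of the Selmer modules in play, exactly as in \cite[\S5]{BS_triple_factor}. Everything else is a combination of the rank dictionary of Lemma~\ref{lemma:ranks} and Definition~\ref{def:T_and_M_3_selmer}, the general machinery of Theorem~\ref{thm_2023_03_10}, and diagram chasing through Lemma~\ref{lemma_5_2_2025_02_14_1214}.
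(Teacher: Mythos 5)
Your proposal is correct and takes essentially the same route as the paper: the argument is a diagram chase through the map of exact triangles of Lemma~\ref{lemma_5_2_2025_02_14_1214}, combined with the characteristic-ideal machinery of Theorem~\ref{thm_2023_03_10}, exactly as in \cite[\S 5]{BS_triple_factor}. The only difference is that you spell out the rank bookkeeping (Euler characteristic, torsion-freeness of $\mathbf{R}^1\Gamma_{\fin}(G_{\mathbb Q,\Sigma},T_3^{\dag},\Delta_d)$, faithful flatness of $\varpi_{3,2}^*$) that the paper leaves implicit in the step ``$\delta(T_3^{\dag},\Delta_d)\neq 0 \Rightarrow \mathbf{R}^1\Gamma_{\fin}(G_{\mathbb Q,\Sigma},T_3^{\dag},\Delta_d)=0$'', which is a welcome expansion rather than a departure.
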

\begin{proof}
     Since $\delta(T_3^{\dag},\Delta_d) \neq 0$, we have 
    $\mathbf{R}^1\Gamma_{\fin}(G_{\mathbb Q, \Sigma}, T_3^\dagger, \Delta_{d}) = 0\,, $ 
     and the vanishing $\mathbf{R}^1\Gamma_{\fin}(G_{\mathbb Q, \Sigma}, T^\dagger_{\underline{\Pi}}, \Delta_{\mathscr{F}^3}) = 0$ follows from the first row of the diagram in Lemma~\ref{lemma_5_2_2025_02_14_1214}. This concludes the proof of (i). Utilising the same lemma again, we obtain the following commutative diagram with exact rows: 
    \small
    $$ 
    \xymatrix{
      \mathbf{R}^1\Gamma_{\fin}(G_{\mathbb Q, \Sigma}, T_3^\dagger, \Delta_{d}) = 0 \ar[r] \ar[d]^{\rm tr} &
      \mathbf{R}^1\Gamma_{\fin}(G_{\mathbb Q, \Sigma}, M_3^\dagger, \mathrm{tr}^*\Delta_{a}) \ar[r]^-{\delta^1} \ar[d]^{\res_p} & 
      \mathbf{R}^2\Gamma_{\fin}(G_{\mathbb Q, \Sigma}, T_{\upi}^\dagger, \Delta_{\mathscr{F}^3}) \otimes_{\varpi_{3,2}^*} \cR_3 \ar[d]^{=}
      \\
      \mathbf{R}^1\Gamma_{\fin}(G_{\mathbb Q, \Sigma}, T_{\upi}^\dagger, \Delta_{\mathscr{F}^1})\otimes_{\varpi_{3,2}^*} \cR_3  \ar[r]
    &
    H^1(G_{p}, \mathscr{F}^1/\mathscr{F}^3) \otimes_{\varpi_{3,2}^*} \cR_3 \ar[r]_-{\partial_{\upi}}
    & \mathbf{R}^2\Gamma_{\fin}(G_{\mathbb Q, \Sigma}, T_{\upi}^\dagger, \Delta_{\mathscr{F}^3}) \otimes_{\varpi_{3,2}^*} \cR_3. 
    }
     $$ 
    \normalsize
    The proof of (ii) follows immediately chasing this diagram. Part (iii) follows from the exactness of the sequence 
    \begin{align*}
        0\lra &\,\frac{\mathbf{R}^1\Gamma_{\fin}(G_{\mathbb Q, \Sigma}, M_3^\dagger, \mathrm{tr}^*\Delta_{d})}{\delta(M_3^{\dag}, \mathrm{tr}^*\Delta_d)}\xrightarrow{\delta^1}\frac{\mathbf{R}^2\Gamma_{\fin}(G_{\mathbb Q, \Sigma}, T_{\upi}^\dagger, \Delta_{\mathscr{F}^3}) \otimes_{\varpi_{3,2}^*} \cR_3}{\delta^1(\delta(M_3^{\dag}, \mathrm{tr}^*\Delta_d))}\\
        &\lra \mathbf{R}^2\Gamma_{\fin}(G_{\mathbb Q, \Sigma}, T_3^\dagger, \Delta_{d})\lra  \mathbf{R}^2\Gamma_{\fin}(G_{\mathbb Q, \Sigma}, M_3^\dagger, \mathrm{tr}^*\Delta_{d})\lra  \mathbf{R}^3\Gamma_{\fin}(G_{\mathbb Q, \Sigma}, T_{\upi}^\dagger, \Delta_{\mathscr{F}^3})=0
    \end{align*}
    combined with Theorem~\ref{thm_2023_03_10}(ii).
\end{proof}

\begin{lemma}\label{lemma_5_4_2025_02_14_1324}
Suppose that $\delta(T_3^{\dag},\Delta_d) \neq 0$. Then:
\begin{align*}
    \Char_{\cR_3}\left( \frac{\mathbf{R}^2\Gamma_{\fin}(G_{\mathbb Q, \Sigma}, T_{\upi}^\dagger, \Delta_{\mathscr{F}^3}) \otimes_{\varpi_{3,2}^*} \cR_3}{\delta^1(\delta(M_3^{\dag}, \mathrm{tr}^*\Delta_d))} \right)= & 
    \Char_{\cR_3}\left( \frac{\dfrac{H^1(G_{p}, \mathscr{F}^1/\mathscr{F}^3)}{\res_p(\mathbf{R}^1\Gamma_{\fin}(G_{\mathbb Q, \Sigma}, T_{\upi}^\dagger, \Delta_{\mathscr{F}^1}))} \otimes_{\varpi_{3,2}^*} \cR_3}{\mathrm{res}_p(\delta(M_3^{\dag}, \mathrm{tr}^*\Delta_d))} \right)\\
    &\qquad\qquad  \times\,\varpi^*_{3,2}\Char_{\cR_{\underline{\Pi}}}\left(\mathbf{R}^2\Gamma_{\fin}(G_{\mathbb{Q},\Sigma},T_{\upi}^\dagger, \Delta_{\mathscr{F}^1}) \right). 
\end{align*}
\end{lemma}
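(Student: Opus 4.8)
The plan is to read off the two factors on the right-hand side as the characteristic ideals of the two successive subquotients in a length-two filtration of $C := \mathbf{R}^2\Gamma_{\fin}(G_{\mathbb Q,\Sigma},T_{\upi}^\dagger,\Delta_{\mathscr{F}^3})\otimes_{\varpi_{3,2}^*}\cR_3$, using the commutative triangle of Lemma~\ref{lemma_5_3_2025_02_14_1213}(ii), which factors $\delta^1$ as $\partial_{\upi}$ (the connecting map of the bottom triangle of Lemma~\ref{lemma_5_2_2025_02_14_1214}, passing through the quotient as the injection $\partial_{\underline{\Pi}}$) precomposed with the injection $\res_p$, all between rank-one $\cR_3$-modules. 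Put $B := \image(\partial_{\upi})=\image(\partial_{\underline{\Pi}})\subseteq C$ and $A := \delta^1(\delta(M_3^{\dag},\tr^*\Delta_d))=\partial_{\underline{\Pi}}\bigl(\res_p(\delta(M_3^{\dag},\tr^*\Delta_d))\bigr)\subseteq B$. First I would observe that the standing hypothesis $\delta(T_3^{\dag},\Delta_d)\neq 0$ forces $\delta(M_3^{\dag},\tr^*\Delta_d)\neq 0$ — otherwise the quotient in Lemma~\ref{lemma_5_3_2025_02_14_1213}(iii) would be the rank-one, non-torsion module $C$, whose characteristic ideal is zero — so that $A$, $B$, $C$ all have $\cR_3$-rank one and the quotients $C/A$, $C/B$, $B/A$ are all $\cR_3$-torsion. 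Multiplicativity of $\Char_{\cR_3}$ along $0\to B/A\to C/A\to C/B\to 0$ then gives $\Char_{\cR_3}(C/A)=\Char_{\cR_3}(B/A)\cdot\Char_{\cR_3}(C/B)$, and $\Char_{\cR_3}(C/A)$ is precisely the left-hand side of the asserted identity (this is the quotient appearing there).

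The first factor is immediate: since $\partial_{\underline{\Pi}}$ is injective, there is a canonical isomorphism
$$B/A\ \cong\ \frac{\dfrac{H^1(G_p,\mathscr{F}^1/\mathscr{F}^3)}{\res_p\bigl(\mathbf{R}^1\Gamma_{\fin}(G_{\mathbb Q,\Sigma},T_{\upi}^\dagger,\Delta_{\mathscr{F}^1})\bigr)}\otimes_{\varpi_{3,2}^*}\cR_3}{\res_p\bigl(\delta(M_3^{\dag},\tr^*\Delta_d)\bigr)}\,,$$
whose characteristic ideal is exactly the first factor on the right-hand side. For the second factor I would run the long exact cohomology sequence of the bottom exact triangle of Lemma~\ref{lemma_5_2_2025_02_14_1214}; since $\mathbf{R}^1\Gamma_{\fin}(G_{\mathbb Q,\Sigma},T_{\upi}^\dagger,\Delta_{\mathscr{F}^3})=0$ by Lemma~\ref{lemma_5_3_2025_02_14_1213}(i), it reads
$$\begin{aligned}
0\to \mathbf{R}^1\Gamma_{\fin}(G_{\mathbb Q,\Sigma},T_{\upi}^\dagger,\Delta_{\mathscr{F}^1})\otimes_{\varpi_{3,2}^*}\cR_3&\to H^1(G_p,\mathscr{F}^1/\mathscr{F}^3)\otimes_{\varpi_{3,2}^*}\cR_3\xrightarrow{\ \partial_{\upi}\ }C\\
&\to \mathbf{R}^2\Gamma_{\fin}(G_{\mathbb Q,\Sigma},T_{\upi}^\dagger,\Delta_{\mathscr{F}^1})\otimes_{\varpi_{3,2}^*}\cR_3\to H^2(G_p,\mathscr{F}^1/\mathscr{F}^3)\otimes_{\varpi_{3,2}^*}\cR_3\,,
\end{aligned}$$
so $C/B=\coker(\partial_{\upi})$ is the kernel of the penultimate arrow. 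Now $\mathscr{F}^1 T_{\upi}^\dagger/\mathscr{F}^3 T_{\upi}^\dagger$ is an extension of $\gr^1 T_{\upi}^\dagger$ by $\gr^2 T_{\upi}^\dagger$, and $H^2(G_p,\gr^i T_{\upi}^\dagger)=0$ for $i=1,2$ by the hypothesis \eqref{item_non_anom} (valid thanks to Assumption~\ref{ass-gsp4}(b)), while $H^3$ of a $G_p$-module vanishes; hence $H^2(G_p,\mathscr{F}^1/\mathscr{F}^3)=0$ and $C/B\xrightarrow{\ \sim\ }\mathbf{R}^2\Gamma_{\fin}(G_{\mathbb Q,\Sigma},T_{\upi}^\dagger,\Delta_{\mathscr{F}^1})\otimes_{\varpi_{3,2}^*}\cR_3$. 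In particular the latter is $\cR_3$-torsion, so $\mathbf{R}^2\Gamma_{\fin}(G_{\mathbb Q,\Sigma},T_{\upi}^\dagger,\Delta_{\mathscr{F}^1})$ is $\cR_{\upi}$-torsion, and $\Char_{\cR_3}(C/B)=\varpi_{3,2}^*\Char_{\cR_{\upi}}\bigl(\mathbf{R}^2\Gamma_{\fin}(G_{\mathbb Q,\Sigma},T_{\upi}^\dagger,\Delta_{\mathscr{F}^1})\bigr)$ by flatness of $\varpi_{3,2}^*$ and compatibility of characteristic ideals with flat base change. Substituting the two factors into the multiplicativity relation yields the statement.

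The step I expect to require the most care is the vanishing $H^2(G_p,\mathscr{F}^1 T_{\upi}^\dagger/\mathscr{F}^3 T_{\upi}^\dagger)=0$: it is exactly what upgrades $C/B=\coker(\partial_{\upi})$ from an a priori proper submodule of $\mathbf{R}^2\Gamma_{\fin}(G_{\mathbb Q,\Sigma},T_{\upi}^\dagger,\Delta_{\mathscr{F}^1})\otimes\cR_3$ to an isomorphism, which is what the clean factorisation (rather than a mere divisibility or a pseudo-nullity correction term) demands. Everything else is a formal chase of the diagrams of Lemmas~\ref{lemma_5_2_2025_02_14_1214} and~\ref{lemma_5_3_2025_02_14_1213} together with standard multiplicativity and flat-base-change properties of characteristic ideals; one should nonetheless be careful that the $\res_p$ appearing in the statement is literally the map built into Lemma~\ref{lemma_5_3_2025_02_14_1213}(ii) via the identification of Lemma~\ref{lemma:ranks}(b), so that the first factor module is genuinely $B/A$.
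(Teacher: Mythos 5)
Your argument is correct and follows essentially the same route as the paper's: both hinge on the short exact sequence $0\to B/A\to C/A\to C/B\to 0$ with $C/B\cong \mathbf{R}^2\Gamma_{\fin}(G_{\mathbb Q,\Sigma},T_{\upi}^\dagger,\Delta_{\mathscr{F}^1})\otimes\cR_3$, obtained from the diagram of Lemma~\ref{lemma_5_2_2025_02_14_1214}, and multiplicativity of $\Char_{\cR_3}$. Your write-up is in fact slightly more careful than the paper's, which leaves the vanishing $H^2(G_p,\mathscr{F}^1/\mathscr{F}^3)=0$ unjustified; your reduction of it to \eqref{item_non_anom} for the graded pieces, in turn a consequence of Assumption~\ref{ass-gsp4}(b), is exactly what is needed.
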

\begin{proof}
This is an immediate consequence of the exact sequence 
\begin{align*}
0\lra  &\, \frac{\dfrac{H^1(G_{p}, \mathscr{F}^1/\mathscr{F}^3)}{\res_p(\mathbf{R}^1\Gamma_{\fin}(G_{\mathbb Q, \Sigma}, T_{\upi}^\dagger, \Delta_{\mathscr{F}^1}))} \otimes_{\varpi_{3,2}^*} \cR_3}{\mathrm{res}_p(\delta(M_3^{\dag}, \mathrm{tr}^*\Delta_d))} \lra  \frac{\mathbf{R}^2\Gamma_{\fin}(G_{\mathbb Q, \Sigma}, T_{\upi}^\dagger, \Delta_{\mathscr{F}^3}) \otimes_{\varpi_{3,2}^*} \cR_3}{\delta^1(\delta(M_3^{\dag}, \mathrm{tr}^*\Delta_d))}\\
&\hspace{4cm} \lra \mathbf{R}^2\Gamma_{\fin}(G_{\mathbb{Q},\Sigma},T_{\upi}^\dagger, \Delta_{\mathscr{F}^1})\otimes_{\varpi_{3,2}^*} \cR_3 \lra H^2(G_{p}, \mathscr{F}^1/\mathscr{F}^3\otimes_{\varpi_{3,2}^*} \cR_3)=0
\end{align*}
obtained from the diagram in Lemma~\ref{lemma_5_2_2025_02_14_1214}.
\end{proof}

\begin{lemma}
\label{lemma_5_5_2025_02_14_1331} 
Suppose that $\delta(T_3^{\dag},\Delta_d) \neq 0 =\delta(T^\dagger_{\underline{\Pi}}, \Delta_{\mathscr{F}^2})$. Then:
$$\mathbf{R}^1\Gamma_{\fin}(G_{\mathbb Q, \Sigma}, T_{\upi}^\dagger, \Delta_{\mathscr{F}^1}) = \mathbf{R}^1\Gamma_{\fin}(G_{\mathbb Q, \Sigma}, T_{\upi}^\dagger, \Delta_{\mathscr{F}^2})\,.$$ 
\end{lemma}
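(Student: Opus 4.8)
The plan is to compare the Selmer complexes for the two Greenberg conditions $\Delta_{\mathscr F^2}$ and $\Delta_{\mathscr F^1}$ on $T_\upi^\dagger$ by means of the exact triangle
\[
\mathbf R\Gamma_{\fin}(G_{\mathbb Q,\Sigma},T_\upi^\dagger,\Delta_{\mathscr F^2})\longrightarrow \mathbf R\Gamma_{\fin}(G_{\mathbb Q,\Sigma},T_\upi^\dagger,\Delta_{\mathscr F^1})\longrightarrow \mathbf R\Gamma\bigl(G_p,{\mathscr Gr}^1 T_\upi^\dagger\bigr)\xrightarrow{+1}
\]
produced by the same formalism that yields the bottom row of the diagram in Lemma~\ref{lemma_5_2_2025_02_14_1214} (one uses ${\mathscr F}^2T_\upi^\dagger\subseteq{\mathscr F}^1T_\upi^\dagger$ with quotient ${\mathscr Gr}^1 T_\upi^\dagger$), together with its companions obtained on replacing $\Delta_{\mathscr F^2},\Delta_{\mathscr F^1}$ by $\Delta_{\mathscr F^3}$, all three being compatible via the octahedral axiom for ${\mathscr F}^3\subseteq{\mathscr F}^2\subseteq{\mathscr F}^1$. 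First I would record the local input. Assumption~\ref{ass-gsp4}(b) supplies the hypothesis \eqref{item_non_anom} for every graded piece ${\mathscr Gr}^i T_\upi^\dagger$, and the symplectic self-duality ${\mathscr F}^i\,$-annihilator $={\mathscr F}^{4-i}$, i.e. $({\mathscr Gr}^i T_\upi^\dagger)^*(1)\simeq{\mathscr Gr}^{3-i}T_\upi^\dagger$, upgrades this to $H^0(G_p,{\mathscr Gr}^i T_\upi^\dagger)=H^2(G_p,{\mathscr Gr}^i T_\upi^\dagger)=0$ for $i=1,2$; hence $\mathbf R\Gamma(G_p,{\mathscr Gr}^i T_\upi^\dagger)$ is concentrated in degree $1$ and free of rank one over $\cR_\upi$, and $\mathbf R\Gamma(G_p,{\mathscr F}^1T_\upi^\dagger/{\mathscr F}^3T_\upi^\dagger)$ is concentrated in degree $1$ and free of rank two. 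I would also use $\mathbf R^0\Gamma_{\fin}=\mathbf R^3\Gamma_{\fin}=0$ for $\Delta_{\mathscr F^1}$ and $\Delta_{\mathscr F^2}$ — the first from irreducibility (Assumption~\ref{ass-gsp4}(a)), the second from global duality for Selmer complexes combined with $T_\upi^{\dagger\,*}(1)\simeq T_\upi^\dagger$, exactly as in the proof of Lemma~\ref{lemma_5_3_2025_02_14_1213}.

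The core is a rank count: I claim $\mathrm{rank}_{\cR_\upi}\mathbf R^1\Gamma_{\fin}(G_{\mathbb Q,\Sigma},T_\upi^\dagger,\Delta_{\mathscr F^i})=1$ for $i=1,2$. For $i=2$: the triangle $\Delta_{\mathscr F^3}\to\Delta_{\mathscr F^2}\to\mathbf R\Gamma(G_p,{\mathscr Gr}^2 T_\upi^\dagger)$, together with $\mathbf R^1\Gamma_{\fin}(G_{\mathbb Q,\Sigma},T_\upi^\dagger,\Delta_{\mathscr F^3})=0$ (Lemma~\ref{lemma_5_3_2025_02_14_1213}(i), available since $\delta(T_3^\dagger,\Delta_d)\neq0$) and $H^0(G_p,{\mathscr Gr}^2 T_\upi^\dagger)=0$, exhibits $\mathbf R^1\Gamma_{\fin}(G_{\mathbb Q,\Sigma},T_\upi^\dagger,\Delta_{\mathscr F^2})$ as a submodule of the free rank-one module $H^1(G_p,{\mathscr Gr}^2 T_\upi^\dagger)$, so its rank is $\le 1$; on the other hand the hypothesis $\delta(T_\upi^\dagger,\Delta_{\mathscr F^2})=0$ is, in particular, not generated by an $\cR_\upi$-regular element, so Theorem~\ref{thm_2023_03_10}(i) forces $\mathbf R^2\Gamma_{\fin}(G_{\mathbb Q,\Sigma},T_\upi^\dagger,\Delta_{\mathscr F^2})$ to be non-torsion, and since $r(T_\upi^\dagger,\Delta_{\mathscr F^2})=0$ (both $(T_\upi^\dagger)^{c=-1}$ and ${\mathscr F}^2T_\upi^\dagger$ having rank $2$) and the complex lives in degrees $1,2$, the ranks of $\mathbf R^1$ and $\mathbf R^2$ agree and are $\ge1$. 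For $i=1$: the triangle $\Delta_{\mathscr F^3}\to\Delta_{\mathscr F^1}\to\mathbf R\Gamma(G_p,{\mathscr F}^1T_\upi^\dagger/{\mathscr F}^3T_\upi^\dagger)$ realises $\mathbf R^1\Gamma_{\fin}(G_{\mathbb Q,\Sigma},T_\upi^\dagger,\Delta_{\mathscr F^1})$ as the submodule $\res_p(\mathbf R^1\Gamma_{\fin}(T_\upi^\dagger,\Delta_{\mathscr F^1}))$ of the rank-two module $H^1(G_p,{\mathscr F}^1T_\upi^\dagger/{\mathscr F}^3T_\upi^\dagger)$ appearing in Lemma~\ref{lemma_5_3_2025_02_14_1213}(ii); since the quotient there has rank one over $\cR_3$, hence over $\cR_\upi$ (as $\cR_\upi\hookrightarrow\cR_3$ with $\cR_3$ a domain), its rank is exactly $1$.

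Finally I would run the long exact cohomology sequence of the first triangle. As $H^0(G_p,{\mathscr Gr}^1 T_\upi^\dagger)=0$, the natural map $\iota\colon\mathbf R^1\Gamma_{\fin}(G_{\mathbb Q,\Sigma},T_\upi^\dagger,\Delta_{\mathscr F^2})\hookrightarrow\mathbf R^1\Gamma_{\fin}(G_{\mathbb Q,\Sigma},T_\upi^\dagger,\Delta_{\mathscr F^1})$ is injective and $\coker\iota$ embeds into $H^1(G_p,{\mathscr Gr}^1 T_\upi^\dagger)$; by the two rank computations $\coker\iota$ is $\cR_\upi$-torsion, whereas $H^1(G_p,{\mathscr Gr}^1 T_\upi^\dagger)$ is free over the domain $\cR_\upi$, so $\coker\iota=0$ and $\iota$ is an isomorphism, which is the assertion. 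The one point needing care is the bookkeeping that ties the three comparison triangles together: one must check that the connecting map of the first triangle is "restrict at $p$, then project modulo ${\mathscr F}^2T_\upi^\dagger$" — so that its source is all of $\mathbf R^1\Gamma_{\fin}(T_\upi^\dagger,\Delta_{\mathscr F^1})$ — and that the injection coming from the $\Delta_{\mathscr F^3}$-triangle is the very $\res_p$ whose image is quotiented out in Lemma~\ref{lemma_5_3_2025_02_14_1213}(ii). I do not expect a genuine obstacle here: everything reduces to rank counts and the functoriality of Greenberg Selmer complexes, the substantive inputs ($\mathbf R^1\Gamma_{\fin}(\Delta_{\mathscr F^3})=0$, and the rank-one quotient) having already been secured in Lemma~\ref{lemma_5_3_2025_02_14_1213}.
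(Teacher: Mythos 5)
Your proof is correct and follows essentially the same strategy as the paper's: establish that both Selmer $\mathbf R^1$-groups have $\cR_\upi$-rank one (using the vanishing $\mathbf R^1\Gamma_{\fin}(\Delta_{\mathscr F^3})=0$, the hypothesis $\delta(T_\upi^\dagger,\Delta_{\mathscr F^2})=0$ together with Theorem~\ref{thm_2023_03_10}(i), and an Euler-characteristic count), then observe the torsion quotient $\mathbf R^1\Gamma_{\fin}(\Delta_{\mathscr F^1})/\mathbf R^1\Gamma_{\fin}(\Delta_{\mathscr F^2})$ embeds in the torsion-free module $H^1(G_p,{\mathscr Gr}^1T_\upi^\dagger)$, hence vanishes. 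The only cosmetic difference is that you compute $\mathrm{rank}\,\mathbf R^1\Gamma_{\fin}(\Delta_{\mathscr F^1})=1$ by invoking the rank-one statement of Lemma~\ref{lemma_5_3_2025_02_14_1213}(ii) (and descending from $\cR_3$ to $\cR_\upi$), whereas the paper gets it directly from global duality applied to $\Delta_{\mathscr F^1}^\perp=\Delta_{\mathscr F^3}$ together with the Euler characteristic — both routes rest on the same inputs and yield the same bound.
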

\begin{proof}
Recall that $\mathbf{R}^1\Gamma_{\fin}(G_{\mathbb Q, \Sigma}, T_{\upi}^\dagger, \Delta_{\mathscr{F}^3}) = 0$ under our running hypotheses.
It follows from global duality combined with the fact that $\chi(\mathbf{R}\Gamma_{\fin}(G_{\mathbb Q, \Sigma}, T_{\upi}^\dagger, \Delta_{\mathscr{F}^1}))=1$ and the vanishing above that $$\mathrm{rank}_{\cR_{\underline{\Pi}}}  \mathbf{R}^1\Gamma_{\fin}(G_{\mathbb Q, \Sigma}, T_{\upi}^\dagger, \Delta_{\mathscr{F}^1}) = 1.$$ 
This shows that $\mathrm{rank}_{\cR_{\underline{\Pi}}}  \mathbf{R}^1\Gamma_{\fin}(G_{\mathbb Q, \Sigma}, T_{\upi}^\dagger, \Delta_{\mathscr{F}^2}) \leq 1$. Moreover, our running assumption that $\delta(T^\dagger_{\underline{\Pi}}, \Delta_{\mathscr{F}^2})=0$ implies that $\mathbf{R}^1\Gamma_{\fin}(G_{\mathbb Q, \Sigma}, T_{\upi}^\dagger, \Delta_{\mathscr{F}^2})$ has positive rank, which in turn shows that 
$$\mathrm{rank}_{\cR_{\underline{\Pi}}}  \mathbf{R}^1\Gamma_{\fin}(G_{\mathbb Q, \Sigma}, T_{\upi}^\dagger, \Delta_{\mathscr{F}^1}) = 1=\mathrm{rank}_{\cR_{\underline{\Pi}}}  \mathbf{R}^1\Gamma_{\fin}(G_{\mathbb Q, \Sigma}, T_{\upi}^\dagger, \Delta_{\mathscr{F}^2})\,.$$
Finally, the quotient $\mathbf{R}^1\Gamma_{\fin}(G_{\mathbb Q, \Sigma}, T_{\upi}^\dagger, \Delta_{\mathscr{F}^1})/\mathbf{R}^1\Gamma_{\fin}(G_{\mathbb Q, \Sigma}, T_{\upi}^\dagger, \Delta_{\mathscr{F}^2})$ is torsion-free since it is canonically isomorphic to a submodule of $H^1(G_{p}, \mathscr{F}^1/\mathscr{F}^2)$. Our lemma follows on combining these facts. 
\end{proof}

\begin{lemma}
\label{lemma_5_6_2025_02_14_1507}
Suppose that $\delta(T_3^{\dag},\Delta_d) \neq 0 =\delta(T^\dagger_{\underline{\Pi}}, \Delta_{\mathscr{F}^2})$. Then:
    \begin{align*}
\Char_{\cR_3}\left( \frac{\mathbf{R}^2\Gamma_{\fin}(G_{\mathbb Q, \Sigma}, T_{\upi}^\dagger, \Delta_{\mathscr{F}^3}) \otimes_{\varpi_{3,2}^*} \cR_3}{\delta^1(\delta(M_3^{\dag}, \mathrm{tr}^*\Delta_d))} \right)
        = \varpi^*_{3,2}\Log_{c}(\delta(T_{\upi}^\dagger, \Delta_{\mathscr{F}^1})) \Log_{d/e}(\delta(M_3^\dagger, \mathrm{tr}^* \Delta_{d})). 
    \end{align*}
\end{lemma}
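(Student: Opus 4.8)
The plan is to derive the asserted identity from the two-factor identity of Lemma~\ref{lemma_5_4_2025_02_14_1324}, whose left-hand side is $\delta(T_3^{\dag},\Delta_d)$ by Lemma~\ref{lemma_5_3_2025_02_14_1213}(iii), by identifying each of its two factors separately. Concretely, it suffices to prove the two identities $\varpi_{3,2}^*\Char_{\cR_{\upi}}\!\bigl(\mathbf{R}^2\Gamma_{\fin}(G_{\mathbb Q,\Sigma},T_{\upi}^\dagger,\Delta_{\mathscr{F}^1})\bigr)=\varpi_{3,2}^*\Log_c\!\bigl(\delta(T_{\upi}^\dagger,\Delta_{\mathscr{F}^1})\bigr)$ and $\Char_{\cR_3}\!\Bigl(\bigl(H^1(G_p,\mathscr{F}^1/\mathscr{F}^3)/\res_p(\mathbf{R}^1\Gamma_{\fin}(\ldots,\Delta_{\mathscr{F}^1}))\bigr)\otimes_{\varpi_{3,2}^*}\cR_3\big/\res_p(\delta(M_3^{\dag},\tr^*\Delta_d))\Bigr)=\Log_{d/e}\!\bigl(\delta(M_3^{\dag},\tr^*\Delta_d)\bigr)$; substituting both into Lemma~\ref{lemma_5_4_2025_02_14_1324} then yields the claim.

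For the first identity the crucial input is Lemma~\ref{lemma_5_5_2025_02_14_1331}: since $\mathbf{R}^1\Gamma_{\fin}(\ldots,\Delta_{\mathscr{F}^1})=\mathbf{R}^1\Gamma_{\fin}(\ldots,\Delta_{\mathscr{F}^2})$, the restriction at $p$ of a class in this Selmer group lands in the image of $H^1(G_p,\mathscr{F}^2T_{\upi}^\dagger)$, and by Lemma~\ref{lemma_5_3_2025_02_14_1213}(i) (whence $\mathbf{R}^1\Gamma_{\fin}(\ldots,\Delta_{\mathscr{F}^3})=0$) its further image in $H^1(G_p,\mathscr{Gr}^2T_{\upi}^\dagger)$ is injective; thus $\Log_c=\Log_{\eta_2}$ of \eqref{eqn_2025_02_09_1632} is meaningful on $\mathbf{R}^1\Gamma_{\fin}(\ldots,\Delta_{\mathscr{F}^1})$ and, after passing to the exterior bidual and using $r(T_{\upi}^\dagger,\Delta_{\mathscr{F}^1})=1$ (the Euler-characteristic count of the proof of Lemma~\ref{lemma_5_5_2025_02_14_1331}), identifies $\bigcap^1_{\cR_{\upi}}\mathbf{R}^1\Gamma_{\fin}(\ldots,\Delta_{\mathscr{F}^1})$ with $\cR_{\upi}$. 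Global (Greenberg) duality — as in the proof of Lemma~\ref{lemma_5_5_2025_02_14_1331}, using that the pairing $T_{\upi}^\dagger\otimes T_{\upi}^\dagger\to\cR_{\upi}(1)$ interchanges $\Delta_{\mathscr{F}^1}$ and $\Delta_{\mathscr{F}^3}$ together with $\mathbf{R}^1\Gamma_{\fin}(\ldots,\Delta_{\mathscr{F}^3})=0$ — shows $\mathbf{R}^2\Gamma_{\fin}(\ldots,\Delta_{\mathscr{F}^1})$ is $\cR_{\upi}$-torsion, so $\delta(T_{\upi}^\dagger,\Delta_{\mathscr{F}^1})\neq0$ by Theorem~\ref{thm_2023_03_10}(i), and then Theorem~\ref{thm_2023_03_10}(ii) gives $\Char_{\cR_{\upi}}(\mathbf{R}^2\Gamma_{\fin}(\ldots,\Delta_{\mathscr{F}^1}))=\Char_{\cR_{\upi}}\!\bigl(\bigcap^1_{\cR_{\upi}}\mathbf{R}^1\Gamma_{\fin}(\ldots,\Delta_{\mathscr{F}^1})/\delta(T_{\upi}^\dagger,\Delta_{\mathscr{F}^1})\bigr)=\Log_c(\delta(T_{\upi}^\dagger,\Delta_{\mathscr{F}^1}))$; applying the flat base change $\varpi_{3,2}^*$ concludes this part.

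For the second identity I would invoke the short exact sequence $0\to\mathscr{Gr}^2T_{\upi}^\dagger\to\mathscr{F}^1/\mathscr{F}^3\to\mathscr{Gr}^1T_{\upi}^\dagger\to0$, which (using $H^0(G_p,\mathscr{Gr}^1T_{\upi}^\dagger)=0=H^2(G_p,\mathscr{Gr}^2T_{\upi}^\dagger)$ from Assumption~\ref{ass-gsp4}(b)) gives $0\to H^1(G_p,\mathscr{Gr}^2T_{\upi}^\dagger)\to H^1(G_p,\mathscr{F}^1/\mathscr{F}^3)\to H^1(G_p,\mathscr{Gr}^1T_{\upi}^\dagger)\to0$. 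By Lemma~\ref{lemma_5_5_2025_02_14_1331} the submodule $\res_p(\mathbf{R}^1\Gamma_{\fin}(\ldots,\Delta_{\mathscr{F}^1}))$ lies in the left-hand term, so projecting the quotient onto $H^1(G_p,\mathscr{Gr}^1T_{\upi}^\dagger)\otimes\cR_3\cong H^1(G_p,F_d^+M_3^{\dag}/F_e^+M_3^{\dag})$ (Lemma~\ref{lemma:ranks}(b)) carries $\res_p(\delta(M_3^{\dag},\tr^*\Delta_d))$ to $\res_{d/e}(\delta(M_3^{\dag},\tr^*\Delta_d))$, and since $\Log_{d/e}=\Log_{\eta_1}$ trivialises the target the characteristic ideal of the resulting cyclic quotient is $\Log_{d/e}(\delta(M_3^{\dag},\tr^*\Delta_d))$ by definition.

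The one genuinely non-formal step — and where I expect the real work — is controlling the auxiliary torsion module $K:=H^1(G_p,\mathscr{Gr}^2T_{\upi}^\dagger)/\res_p(\mathbf{R}^1\Gamma_{\fin}(\ldots,\Delta_{\mathscr{F}^1}))$ (equivalently, the image of the connecting homomorphism $H^1(G_p,\mathscr{Gr}^2T_{\upi}^\dagger)\to\mathbf{R}^2\Gamma_{\fin}(\ldots,T_{\upi}^\dagger,\Delta_{\mathscr{F}^3})$): one must verify that $\res_p(\delta(M_3^{\dag},\tr^*\Delta_d))$ already surjects onto $K\otimes\cR_3$, so that $K$ contributes trivially to the ``error'' factor above. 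I would deduce this by transporting the computation along the injection $\partial_{\upi}$ of Lemma~\ref{lemma_5_3_2025_02_14_1213}(ii): because $\delta^1=\partial_{\upi}\circ\res_p$ and, by Lemma~\ref{lemma_5_3_2025_02_14_1213}(iii) together with the hypothesis $\delta(T_3^{\dag},\Delta_d)\neq0$, the module $\delta^1(\delta(M_3^{\dag},\tr^*\Delta_d))$ has torsion cokernel in $\mathbf{R}^2\Gamma_{\fin}(\ldots,\Delta_{\mathscr{F}^3})\otimes\cR_3$, a length comparison along the exact sequence used to prove Lemma~\ref{lemma_5_4_2025_02_14_1324} forces the $K$-contribution to cancel; and the self-duality interchanging $\Delta_{\mathscr{F}^1}$ and $\Delta_{\mathscr{F}^3}$ shows any torsion carried by $K$ is already recorded by the first factor, so no double counting occurs. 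Combining the two identifications with Lemma~\ref{lemma_5_4_2025_02_14_1324} then proves the lemma.
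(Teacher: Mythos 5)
Your overall plan — feed two sub-identities into Lemma~\ref{lemma_5_4_2025_02_14_1324} and combine — is the paper's plan, but both sub-identities you propose are false as stated, each off by the same non-trivial factor $\Char_{\cR_{\underline{\Pi}}}(K)$, where $K:=H^1(G_p,{\mathscr Gr}^2T_{\upi}^\dagger)\big/\res_p\bigl(\mathbf{R}^1\Gamma_{\fin}(G_{\mathbb Q,\Sigma},T_{\upi}^\dagger,\Delta_{\mathscr{F}^1})\bigr)$ is precisely the torsion module you single out at the end. For the first one: $\Log_c$ restricted to $\bigcap^1_{\cR_{\upi}}\mathbf{R}^1\Gamma_{\fin}(\ldots,\Delta_{\mathscr{F}^1})$ is injective (because $\mathbf{R}^1\Gamma_{\fin}(\ldots,\Delta_{\mathscr{F}^3})=0$), but it is not an isomorphism onto $\cR_{\upi}$ — its cokernel is $K$. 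Theorem~\ref{thm_2023_03_10}(ii) only gives $\Char_{\cR_{\underline{\Pi}}}\bigl(\mathbf{R}^2\Gamma_{\fin}(\ldots,\Delta_{\mathscr{F}^1})\bigr)=\Char_{\cR_{\underline{\Pi}}}\bigl(\bigcap^1\mathbf{R}^1\Gamma_{\fin}(\ldots,\Delta_{\mathscr{F}^1})/\delta(T_{\upi}^\dagger,\Delta_{\mathscr{F}^1})\bigr)$, and pushing forward along $\Log_c$ picks up the extra factor: $\Log_c\bigl(\delta(T_{\upi}^\dagger,\Delta_{\mathscr{F}^1})\bigr)=\Char_{\cR_{\underline{\Pi}}}(K)\cdot\Char_{\cR_{\underline{\Pi}}}\bigl(\mathbf{R}^2\Gamma_{\fin}(\ldots,\Delta_{\mathscr{F}^1})\bigr)$, which is the paper's \eqref{eqn_2025_02_14_1457}. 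Dually, your second identity is missing the factor $\varpi_{3,2}^*\Char_{\cR_{\underline{\Pi}}}(K)$ coming from the left end of the short exact sequence \eqref{eqn_2025_02_14_1459}; the correct statement is \eqref{eqn_2025_02_14_1500}. The two errors are reciprocal and cancel on multiplication, which is why your formal substitution returns the right answer, but neither intermediate step is established.

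Your last paragraph shows you sensed this, but the mechanism you propose is backwards: you want $\res_p\bigl(\delta(M_3^{\dag},\tr^*\Delta_d)\bigr)$ to surject onto $K\otimes\cR_3$, whereas what actually holds (and what the paper uses) is that the intersection of $\res_p\bigl(\delta(M_3^{\dag},\tr^*\Delta_d)\bigr)$ with $K\otimes\cR_3$ is trivial — the former is a torsion-free rank-one submodule of $\bigl(H^1(G_p,\mathscr{F}^1/\mathscr{F}^3)/\res_p(\ldots)\bigr)\otimes\cR_3$ and the latter is its torsion submodule. That triviality is what makes $\Char_{\cR_{\underline{\Pi}}}(K)$ survive as an explicit multiplicative contribution in \eqref{eqn_2025_02_14_1500}; it does not make $K$ disappear. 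The repair is to track $\Char_{\cR_{\underline{\Pi}}}(K)$ explicitly in both sub-identities (as the paper does) and let the two occurrences cancel, rather than arguing each sub-identity is already clean.
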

\begin{proof}
Since $\mathbf{R}^1\Gamma_{\fin}(G_{\mathbb Q, \Sigma}, T_{\upi}^\dagger, \Delta_{\mathscr{F}^1}) = \mathbf{R}^1\Gamma_{\fin}(G_{\mathbb Q, \Sigma}, T_{\upi}^\dagger, \Delta_{\mathscr{F}^2})$ by the previous lemma,  we have an exact sequence 
    \begin{align}
        \label{eqn_2025_02_14_1459}
        \begin{aligned}
        0 \ra 
     \dfrac{H^1(G_{p}, \mathscr{F}^2/\mathscr{F}^3)}{\res_p(\mathbf{R}^1\Gamma_{\fin}(G_{\mathbb Q, \Sigma}, T_{\upi}^\dagger, \Delta_{\mathscr{F}^1}))} 
    \longrightarrow 
    \dfrac{H^1(G_{p}, \mathscr{F}^1/\mathscr{F}^3)}{\res_p(\mathbf{R}^1\Gamma_{\fin}(G_{\mathbb Q, \Sigma}, T_{\upi}^\dagger, \Delta_{\mathscr{F}^1}))} 
    \longrightarrow 
    H^1(G_{p}, \mathscr{F}^1/\mathscr{F}^2)
    \ra 0. 
            \end{aligned}
    \end{align}
Relying on the fact that the left-most term in \eqref{eqn_2025_02_14_1459} is torsion (so that the intersection of $\res_p(\delta(M_3^{\dag}, \mathrm{tr}^*\Delta_d))$ with this module is trivial), we therefore have 
    \begin{align}
    \label{eqn_2025_02_14_1500}
        \begin{aligned}
\Char_{\cR_3}&\left( \frac{\dfrac{H^1(G_{p}, \mathscr{F}^1/\mathscr{F}^3)}{\res_p(\mathbf{R}^1\Gamma_{\fin}(G_{\mathbb Q, \Sigma}, T_{\upi}^\dagger, \Delta_{\mathscr{F}^1}))} \otimes_{\varpi_{3,2}^*} \cR_3}{\mathrm{res}_p(\delta(M_3^{\dag}, \mathrm{tr}^*\Delta_d))} \right)\\
&\hspace{2.5cm} =
\varpi_{3,2}^*\Char_{\cR_{\underline{\Pi}}}\left( \dfrac{H^1(G_{p}, \mathscr{F}^2/\mathscr{F}^3)}{\res_p(\mathbf{R}^1\Gamma_{\fin}(G_{\mathbb Q, \Sigma}, T_{\upi}^\dagger, \Delta_{\mathscr{F}^1})}  \right) 
\times \Log_{d/e}(\delta(M_3^\dagger, \mathrm{tr}^* \Delta_{d})). 
        \end{aligned}
    \end{align}
Finally, by Theorem~\ref{thm_2023_03_10}(ii), we have
    $$ \Char_{\cR_{\underline{\Pi}}}\left(\mathbf{R}^2\Gamma_{\fin}(G_{\mathbb{Q},\Sigma},T_{\upi}^\dagger, \Delta_{\mathscr{F}^1}) \right)    
    =     \Char_{\cR_{\underline{\Pi}}}\left(\mathbf{R}^1\Gamma_{\fin}(G_{\mathbb{Q},\Sigma},T_{\upi}^\dagger, \Delta_{\mathscr{F}^1})/\delta(T_{\upi}^\dagger, \Delta_{\mathscr{F}^1}) \right) $$ 
Since $\res_p$ is injective, we conclude that 
    \begin{align}
    \label{eqn_2025_02_14_1457}
    \begin{aligned}
        \Char_{\cR_{\underline{\Pi}}}\left( \dfrac{H^1(G_{p}, \mathscr{F}^2/\mathscr{F}^3)}{\res_p(\mathbf{R}^1\Gamma_{\fin}(G_{\mathbb Q, \Sigma}, T_{\upi}^\dagger, \Delta_{\mathscr{F}^1})}  \right)  &\,\Char_{\cR_{\underline{\Pi}}}\left(\mathbf{R}^2\Gamma_{\fin}(G_{\mathbb{Q},\Sigma},T_{\upi}^\dagger, \Delta_{\mathscr{F}^1}) \right) \\
&\hspace{2cm}= \Char_{\cR_{\underline{\Pi}}}\left(\dfrac{H^1(G_{p}, \mathscr{F}^2/\mathscr{F}^3)}{\res_p(\delta(T_{\upi}^\dagger, \Delta_{\mathscr{F}^1}))}\right) 
\\
&\hspace{2cm} = \Log_{c}(\delta(T_{\upi}^\dagger, \Delta_{\mathscr{F}^1})). 
    \end{aligned}
    \end{align}
The proof of our lemma follows on combining \eqref{eqn_2025_02_14_1500} and \eqref{eqn_2025_02_14_1457} with Lemma~\ref{lemma_5_4_2025_02_14_1324}. 
\end{proof}

\begin{proof}[Proof of Theorem~\ref{thm_5_1_2025_02_14_1424}]
The asserted factorisation follows from Lemma~\ref{lemma_5_3_2025_02_14_1213}(iii) and Lemma~\ref{lemma_5_6_2025_02_14_1507}.    
\end{proof}

\subsection{Double Wall-crossing: (a) to (c)}
\label{subsec_5_2_2025_02_11}
As in the previous section, the wall-crossing principle serves as our signpost. We summarize the portion of this philosophy relevant to our setting before stating our main result in this section.  

The restriction of the (conjectural) $p$-adic $L$-function for the region (a) to ${\rm im}({\rm Spec}\,\cR_3\xrightarrow{\iota_{3,4}}{\rm Spec}\,\cR_4)$ has empty interpolation range. Moreover, ${\rm im}(\iota_{3,4})$ contains a dense set of classical points that fall within the region (b) (where the global root number equals $-1$). However, as a key difference with the case considered in \S\ref{subsec_5_1_2025_02_11}, the degree--12 motives associated with these specialisations that belong to the region (b) are no longer critical. The correct interpretation, therefore, is that the values of the (conjectural) $p$-adic $L$-function for the region (a) at classical points in the region (c) should be a $p$-adic avatar for the second-order central critical derivative of the complex $L$-series associated to degree--16 motives with weights in region (c). Note that the global sign for the degree--16 motives with weights in region (c) equals +1, whereas the same for the degree--12 and degree--4 motives is $-1$.  

Our main result in this subsection is the factorisation statement in Theorem~\ref{thm_main_double_wall_crossing}, which reflects this principle: The first factor on the right is a $p$-adic avatar of the central critical derivatives of the complex $L$-series associated with degree--12 motives with weights in the region (c), whereas the second factor is the same for the degree--4 motives.

\subsubsection{Hypotheses} 
\label{subsubsec_2024_07_15_1105}
Before we state this result, we record the hypotheses that we will rely on in due course:
\begin{itemize}
\item[\mylabel{item_AJ_Pi}{$\mathbf{AJ}_{p}^{\upi}$})] $\delta(T^\dagger_{\underline{\Pi}},\Delta_{\mathscr{F}^1}) \neq 0 $.
\item[\mylabel{item_parity_Pi}{$\mathbf{P}^-_{\upi}$})] $\delta(T^\dagger_{\underline{\Pi}}, \Delta_{\mathscr{F}^2})=0$.
\item[\mylabel{item_parity_M}{$\mathbf{P}^-_{M}$})] $\delta(M_3^\dagger, {\rm tr}^*\Delta_{c})=0$. 
\item[\mylabel{item_AJ_M}{$\mathbf{AJ}_{p}^{M}$})]$\delta(M_3^\dagger,\Delta_b) \neq 0 $.
\item[\mylabel{item_nVdeltaT3}{$\mathbf{nV}_{T}^a$})] $\delta(T_3^{\dag},\Delta_a) \neq 0$.
    \end{itemize}

\begin{lemma}
    \label{lemma_2025_02_19_2205_bis}
    If the parity conjecture for classical specialisations of $T_\upi^\dagger$ holds true, then so does \eqref{item_parity_Pi}. Consequently, $\delta(T_\upi^\dagger, \Delta_{\mathscr{F}^1})\in \mathbf{R}^1\Gamma_{\fin}(G_{\mathbb Q, \Sigma}, T_\upi^\dagger, \Delta_{\mathscr{F}^2})$. 
\end{lemma}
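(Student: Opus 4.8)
The plan is to establish \eqref{item_parity_Pi}, i.e. the vanishing $\delta(T_\upi^\dagger, \Delta_{\mathscr{F}^2}) = 0$, by contradiction, combining a specialisation (control) argument with the parity conjecture. First I would record the structural input. After the self-dual twist, the symplectic pairing $T_\upi^\dagger \otimes T_\upi^\dagger \to \cR_\upi(1)$ makes $\mathscr{F}^2 T_\upi^\dagger$ its own exact annihilator, so $\Delta_{\mathscr{F}^2}$ is the self-dual (Panchishkin) Greenberg condition; since complex conjugation acts on $T_\upi^\dagger$ with both eigenspaces free of rank two, $r(T_\upi^\dagger, \Delta_{\mathscr{F}^2}) = \rank_{\cR_\upi}(T_\upi^\dagger)^{c=-1} - \rank_{\cR_\upi}(T_\upi^\dagger/\mathscr{F}^2 T_\upi^\dagger) = 2 - 2 = 0$. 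Moreover Assumption~\ref{ass-gsp4}(a) and Greenberg--Nekov\'a\v{r} duality give $\mathbf{R}^0\Gamma_{\fin}(G_{\mathbb Q,\Sigma}, T_\upi^\dagger, \Delta_{\mathscr{F}^2}) = \mathbf{R}^3\Gamma_{\fin}(G_{\mathbb Q,\Sigma}, T_\upi^\dagger, \Delta_{\mathscr{F}^2}) = 0$, so that $\rank_{\cR_\upi}\mathbf{R}^1\Gamma_{\fin} = \rank_{\cR_\upi}\mathbf{R}^2\Gamma_{\fin}$. Assuming now that $\delta(T_\upi^\dagger, \Delta_{\mathscr{F}^2}) \neq 0$, Theorem~\ref{thm_2023_03_10}(i) forces $\mathbf{R}^2\Gamma_{\fin}(G_{\mathbb Q,\Sigma}, T_\upi^\dagger, \Delta_{\mathscr{F}^2})$ --- and hence also $\mathbf{R}^1\Gamma_{\fin}(G_{\mathbb Q,\Sigma}, T_\upi^\dagger, \Delta_{\mathscr{F}^2})$ --- to be $\cR_\upi$-torsion.

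I would then specialise. With all cohomology of the perfect complex $C := \mathbf{R}\Gamma_{\fin}(G_{\mathbb Q,\Sigma}, T_\upi^\dagger, \Delta_{\mathscr{F}^2})$ torsion, its supports are proper closed subsets of $\mathrm{Spec}\,\cR_\upi$; by density of classical good points I may pick such a point $\kappa$ (of sufficiently regular weight) outside all of them, so that $C \otimes_{\cR_\upi}(\cR_\upi)_\kappa$ is acyclic. Since $\cR_\upi$ is regular and $\kappa$ is cut out by a regular sequence, it follows that $(C \otimes^{\mathbb L}_{\cR_\upi,\kappa}\mathcal O)\otimes_{\mathcal O}E \simeq C\otimes^{\mathbb L}_{\cR_\upi}E = 0$, i.e. the derived base change $C \otimes^{\mathbb L}_{\cR_\upi,\kappa}\mathcal O$ has $\mathcal O$-torsion cohomology in every degree. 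Nekov\'a\v{r}'s control theorem (the filtration steps $\mathscr{F}^i T_\upi$ being $\cR_\upi$-flat and \eqref{item_Tam} being in force) identifies it with $\mathbf{R}\Gamma_{\fin}(G_{\mathbb Q,\Sigma}, T_{\upi_\kappa}^\dagger, \Delta_{\mathscr{F}^2})$, whose degree-one cohomology computes the Bloch--Kato Selmer group $H^1_{\rm f}(\mathbb Q, V_p^\dagger(\upi_\kappa))$ after inverting $p$. Hence $H^1_{\rm f}(\mathbb Q, V_p^\dagger(\upi_\kappa)) = 0$. But the parity conjecture for $\upi_\kappa$, together with the running hypothesis $\varepsilon(\upi_\kappa) = -1$, forces $\dim_E H^1_{\rm f}(\mathbb Q, V_p^\dagger(\upi_\kappa))$ to be odd, in particular nonzero --- a contradiction. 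This proves \eqref{item_parity_Pi}.

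For the ``consequently'', I would apply the formalism of \S\ref{subsubsec_2025_02_11_1127} to $\mathscr{F}^+ T = \mathscr{F}^1 T_\upi^\dagger \supset \mathscr{F} T = \mathscr{F}^2 T_\upi^\dagger$: the quotient $\gr^1 T_\upi^\dagger = \mathscr{F}^1 T_\upi^\dagger/\mathscr{F}^2 T_\upi^\dagger$ is $\cR_\upi$-free of rank one, the vanishing $H^0(G_p, \gr^1 \overline{T_\upi^\dagger}) = H^2(G_p, \gr^1 \overline{T_\upi^\dagger}) = 0$ follows from Assumption~\ref{ass-gsp4}(b), and $s := r(T_\upi^\dagger, \Delta_{\mathscr{F}^2}) = 0$, $r := r(T_\upi^\dagger, \Delta_{\mathscr{F}^1}) = 1$. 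The resulting homomorphism
\[
\Phi \colon {\bigcap}^1_{\cR_\upi}\mathbf{R}^1\Gamma_{\fin}(G_{\mathbb Q,\Sigma}, T_\upi^\dagger, \Delta_{\mathscr{F}^1}) \longrightarrow {\bigcap}^0_{\cR_\upi}\mathbf{R}^1\Gamma_{\fin}(G_{\mathbb Q,\Sigma}, T_\upi^\dagger, \Delta_{\mathscr{F}^2}) = \cR_\upi
\]
is evaluation against $\varphi_1 \circ \res_p$, with $\varphi_1 \colon H^1(G_p, \gr^1 T_\upi^\dagger) \xrightarrow{\sim} \cR_\upi$ the fixed trivialisation. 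By Theorem~\ref{thm_2025_02_19_2217} (recall $\cR_\upi$ is normal) together with the vanishing just proved, $\Phi(\delta(T_\upi^\dagger, \Delta_{\mathscr{F}^1})) = \delta(T_\upi^\dagger, \Delta_{\mathscr{F}^2}) = 0$. Since $H^0(G_p, \gr^1 T_\upi^\dagger) = 0$, the sequence $0 \to \mathbf{R}^1\Gamma_{\fin}(G_{\mathbb Q,\Sigma}, T_\upi^\dagger, \Delta_{\mathscr{F}^2}) \to \mathbf{R}^1\Gamma_{\fin}(G_{\mathbb Q,\Sigma}, T_\upi^\dagger, \Delta_{\mathscr{F}^1}) \xrightarrow{\res_p} H^1(G_p, \gr^1 T_\upi^\dagger)$ is exact, so $\ker(\varphi_1 \circ \res_p)$ is exactly $\mathbf{R}^1\Gamma_{\fin}(G_{\mathbb Q,\Sigma}, T_\upi^\dagger, \Delta_{\mathscr{F}^2})$; it follows that $\delta(T_\upi^\dagger, \Delta_{\mathscr{F}^1})$ is contained in $\mathbf{R}^1\Gamma_{\fin}(G_{\mathbb Q,\Sigma}, T_\upi^\dagger, \Delta_{\mathscr{F}^2})$.

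The main obstacle is the control step at $\kappa$, which has codimension two in the three-dimensional regular ring $\cR_\upi$: one must check that the derived specialisation is computed exactly by the specialised Selmer complex, and that the Greenberg Selmer group of $\upi_\kappa$ coincides (after $\otimes E$) with the Bloch--Kato Selmer group, for it is this identification that makes the parity conjecture --- which we are permitted to assume --- directly applicable. The remaining points (the Euler-characteristic bookkeeping, the hypotheses of \S\ref{subsubsec_2025_02_11_1127}, and the exact sequence relating $\Delta_{\mathscr{F}^1}$ and $\Delta_{\mathscr{F}^2}$) are routine.
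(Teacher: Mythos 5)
Your proof is correct and follows essentially the same route as the paper: the paper likewise uses the parity conjecture to conclude that $\mathbf{R}^2\Gamma_{\fin}(G_{\mathbb Q,\Sigma}, T_\upi^\dagger, \Delta_{\mathscr{F}^2})$ has odd (hence positive) $\cR_\upi$-rank, so it is non-torsion and Theorem~\ref{thm_2023_03_10}(i) forces $\delta(T_\upi^\dagger,\Delta_{\mathscr{F}^2})=0$, and then invokes Theorem~\ref{thm_2025_02_19_2217} to obtain $\Log_{b/c}\,\delta(T_\upi^\dagger,\Delta_{\mathscr{F}^1})=\delta(T_\upi^\dagger,\Delta_{\mathscr{F}^2})=0$ for the containment. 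Your version simply phrases the first step contrapositively and fills in the control-theorem/Bloch--Kato comparison that the paper leaves implicit.
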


\begin{proof}
    Since the global root number of the classical specialisations of $T_\upi^\dagger$ is equal to $-1$, it follows that the rank of $\mathbf{R}^2\Gamma_{\fin}(G_{\mathbb Q, \Sigma}, T_\upi^\dagger, \Delta_{\mathscr{F}^2})$ is odd. In particular, it cannot be torsion, and the conclusion follows from Theorem~\ref{thm_2023_03_10}(i). The second conclusion follows from Theorem~\ref{thm_2025_02_19_2217}, which tells us that
    $$\Log_{b/c}\delta(T_\upi^\dagger, \Delta_{\mathscr{F}^1})=\delta(T_\upi^\dagger, \Delta_{\mathscr{F}^2})=0\,.$$
\end{proof}

\begin{lemma}
\label{lemma_5_9_2025_02_14_1513}
We have the following commutative diagram with exact rows:
\footnotesize
\begin{align}
\begin{aligned}
\label{eqn_2025_07_18_1313}
    \xymatrix{
\mathbf{R}\Gamma_{\fin}(G_{\mathbb Q, \Sigma}, T_{\upi}^\dagger, \Delta_{0}) \otimes_{\varpi_{3,2}^*} \cR_3 \ar[r]\ar[d]_{=}
&
\mathbf{R}\Gamma_{\fin}(G_{\mathbb Q, \Sigma}, T_3^\dagger, \Delta_{a}) \ar[r]^{{\rm tr}^*}\ar[d]^{\rm tr}   \ar@{}[dr] | {\refsymbolA}
&
\mathbf{R}\Gamma_{\fin}(G_{\mathbb Q, \Sigma}, M_3^\dagger, \mathrm{tr}^*\Delta_{a}) \ar[r]^-{+1}\ar[d]^{\res_p}
&
\\
   \mathbf{R}\Gamma_{\fin}(G_{\mathbb Q, \Sigma}, T_{\upi}^\dagger, \Delta_{0})\otimes_{\varpi_{3,2}^*} \cR_3  \ar[r]
    &
   \mathbf{R}\Gamma_{\fin}(G_{\mathbb Q, \Sigma}, T_{\upi}^\dagger, \Delta_{\emptyset}) \otimes_{\varpi_{3,2}^*} \cR_3  \ar[r]
    &
    \mathbf{R}\Gamma(G_{p}, T_{\upi}^\dagger) \otimes_{\varpi_{3,2}^*} \cR_3  \ar[r]^-{\partial_{\upi}}_-{+1}
    &. 
    }
    \end{aligned}
\end{align}
\normalsize
Here, we have set $\Delta_{\emptyset} := \Delta_{\mathscr{F}^0}$ and $\Delta_{0} := \Delta_{\mathscr{F}^4}$.
\end{lemma}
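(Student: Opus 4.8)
The plan is to construct the diagram \eqref{eqn_2025_07_18_1313} exactly as in the analogous Lemma~\ref{lemma_5_2_2025_02_14_1214}, by first building the bottom row as an instance of the standard ``change of local conditions'' triangle for Selmer complexes, and then building the top row from the image of $\mathrm{tr}^*$ together with the definitions of the Greenberg local conditions $F^+_?$. Concretely, for the bottom row, recall from \S\ref{sec:greenberg_selmer} that for two Greenberg local conditions $\Delta_1 \subseteq \Delta_2$ on the same Galois module $X$ there is a canonical exact triangle
\[
  \mathbf{R}\Gamma_{\fin}(G_{\mathbb Q,\Sigma}, X, \Delta_1) \longrightarrow \mathbf{R}\Gamma_{\fin}(G_{\mathbb Q,\Sigma}, X, \Delta_2) \longrightarrow \bigoplus_{v} \mathbf{R}\Gamma\bigl(U_v^{+,\Delta_2}(X)/U_v^{+,\Delta_1}(X)\bigr) \xrightarrow{+1}
\]
obtained from the cone construction. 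Applying this with $X = T_\upi^\dagger \otimes_{\varpi_{3,2}^*}\cR_3$, with $\Delta_1 = \Delta_0 = \Delta_{\mathscr F^4}$ (the ``strict'' condition ${\mathscr F}^4 = 0$ at $p$, so $U_p^+ = 0$) and $\Delta_2 = \Delta_\emptyset = \Delta_{\mathscr F^0}$ (the ``relaxed'' condition ${\mathscr F}^0 = T_\upi^\dagger$ at $p$, so $\iota_p^+$ is an isomorphism onto $\mathbf{R}\Gamma(G_p, -)$), the only nonzero relative term is at $v = p$ and equals $\mathbf{R}\Gamma(G_p, T_\upi^\dagger)\otimes_{\varpi_{3,2}^*}\cR_3$. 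This gives the bottom row, and the connecting map is the one we have labelled $\partial_\upi$.

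For the top row, the point is that $F_a^+T_3^\dagger$ fits in an exact sequence of $G_p$-modules relating it to the flag on $T_\upi^\dagger$. From Definition~\ref{def:T_and_M_3_selmer} we have $F_a^+T_3^\dagger = T_{\upi}^\dagger \otimes {\mathscr F}^1 T_{\underline\sigma} \otimes T_{\underline\sigma}^*$ (after base change along $\iota_{3,4}^*$ to $\cR_3$), and since $\mathrm{tr}^* \colon T_3^\dagger \to M_3^\dagger$ has kernel the ``trace'' line — more precisely $M_3^\dagger = T_\upi^\dagger \otimes \ad^0 T_{\underline\sigma}$ sits inside $T_\upi^\dagger \otimes T_{\underline\sigma}\otimes T_{\underline\sigma}^*$ as the kernel of the contraction — one checks that the composite $F_\emptyset^+ T_\upi^\dagger \otimes(\text{trace line}) \hookrightarrow T_3^\dagger \to M_3^\dagger$ vanishes, so $\mathrm{tr}^*$ carries $F^+$-data on $T_3^\dagger$ to $F^+$-data on $M_3^\dagger$ and the kernel at the level of local conditions is exactly the relaxed/strict pair for $T_\upi^\dagger$. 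Thus applying $\mathbf{R}\Gamma_{\fin}(G_{\mathbb Q,\Sigma}, -, -)$ to the short exact sequence of (module, local condition) pairs
\[
  0 \to (T_\upi^\dagger, \Delta_0)\otimes_{\varpi_{3,2}^*}\cR_3 \to (T_3^\dagger, \Delta_a) \xrightarrow{\mathrm{tr}^*} (M_3^\dagger, \mathrm{tr}^*\Delta_a) \to 0
\]
yields the top row; here one must verify that the local condition induced on $(T_\upi^\dagger,\Delta_0)$ from $(T_3^\dagger,\Delta_a)$ via the inclusion is indeed $\Delta_0$ (the strict one), which follows because $F_a^+ T_3^\dagger \cap (T_\upi^\dagger \otimes \text{trace line}) = {\mathscr F}^4 T_\upi^\dagger \otimes(\cdots) = 0$ after restricting to the relevant summand — this is the precise analogue of the intersection computations in Lemma~\ref{lemma_2025_02_09_1953} and its proof. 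The vertical maps $\mathrm{tr}$, $\res_p$ are then the evident ones (functoriality of $\mathbf{R}\Gamma_{\fin}$ in the module, and the $p$-localization), and the left vertical identity is tautological; commutativity of each square is functoriality of the cone construction, so in particular the square marked $\refsymbolA$ commutes.

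The main obstacle I anticipate is purely bookkeeping: verifying that $\mathrm{tr}^*(F_a^+ T_3^\dagger) = F_a^+ M_3^\dagger$ with the right kernel, i.e. that the three-term sequence of local conditions is genuinely short exact on the nose (not merely up to a finite module), and that no extra contribution from the primes $v \in \Sigma \setminus \{p\}$ appears in the relative term — the latter because away from $p$ the local conditions on all three modules are the ``unramified'' ones $C^\bullet(G_v/I_v, (-)^{I_v})$, and the trace map is compatible with taking inertia invariants provided the residual hypotheses in \S\ref{ass-gsp4}(b) hold (they guarantee the relevant $H^0$ and $H^2$ vanish so that forming $(-)^{I_v}$ is exact on the three-term sequence). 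Once those compatibilities are in place, the diagram and the exactness of its rows follow formally, exactly as in Lemma~\ref{lemma_5_2_2025_02_14_1214}; indeed the only change from that lemma is the replacement of the pair $({\mathscr F}^3, {\mathscr F}^1)$ by $({\mathscr F}^4, {\mathscr F}^0) = (0,\emptyset)$, reflecting that region (a) sits ``one step further out'' in the flag than region (d).
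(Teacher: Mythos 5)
Your proposal is correct and matches the (unwritten) argument the paper intends, whose proof reads simply ``This is clear'': the bottom row is the strict-versus-relaxed change-of-local-conditions triangle for $T_\upi^\dagger$, the top row arises from the short exact sequence $0 \to T_\upi^\dagger\otimes\cR_3 \to T_3^\dagger \xrightarrow{\mathrm{tr}^*} M_3^\dagger \to 0$ of (module, local condition) pairs once one checks the transversality $F_a^+T_3^\dagger \cap \ker(\mathrm{tr}^*) = 0$, and the right vertical arrow together with the commutativity of \refsymbolA come formally from the octahedral axiom applied to the commuting left-hand square. One small correction: exactness of the local conditions at $v\neq p$ is not a consequence of Assumption~\ref{ass-gsp4}(b), which is a condition on the ordinary filtration at $p$; it holds instead because the sequence of Galois modules is $G_\QQ$-split, via $\mathrm{End}(T_{\underline{\sigma}}) \cong \cR_{\underline{\sigma}}\cdot\mathrm{Id} \oplus \ad^0 T_{\underline{\sigma}}$ (recall $p$ is odd), so that $(-)^{I_v}$ applied to the coefficient sequence is automatically exact.
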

\begin{proof}
    This is clear.
\end{proof}

\begin{lemma}\label{lemma_2025_02_20_1200} 
        Assume that \eqref{item_nVdeltaT3} holds.
        \item[i)] $\mathbf{R}^1\Gamma_{\fin}(G_{\mathbb Q, \Sigma}, T^\dagger_{\underline{\Pi}}, \Delta_0) = 0 = \mathbf{R}^1\Gamma_{\fin}(G_{\mathbb Q, \Sigma}, T_3^\dagger, \Delta_{a})$. 
   \item[ii)]  Assume in addition that the parity conjecture for classical specialisations of $T_\upi^\dagger$ is valid. Then \eqref{item_AJ_Pi} is equivalent to the requirement\footnote{This requirement also follows from the non-vanishing of a $p$-adic Abel--Jacobi map, hence our choice of notation.} that the morphism
        $$\mathbf{R}^1\Gamma_{\fin}(G_{\mathbb Q, \Sigma}, T^\dagger_{\underline{\Pi}}, \Delta_{\mathscr F^2}) \lra H^1(G_p,\gr^2 T^\dagger_\upi)$$
be non-zero. When this is the case, $\mathbf{R}^1\Gamma_{\fin}(G_{\mathbb Q, \Sigma}, T^\dagger_{\underline{\Pi}}, \Delta_{\mathscr F^2})=\mathbf{R}^1\Gamma_{\fin}(G_{\mathbb Q, \Sigma}, T^\dagger_{\underline{\Pi}}, \Delta_{\mathscr F^1})$.
\end{lemma}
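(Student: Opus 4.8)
The plan is to handle the two assertions in turn, with (ii) resting on (i).

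\textbf{Part (i).} Here I would simply rerun the argument that proves Lemma~\ref{lemma_5_3_2025_02_14_1213}(i), with $\Delta_d$ replaced by $\Delta_a$ and $\Delta_{\mathscr F^3}$ replaced by $\Delta_0$ --- the point being that for the region~(a) the kernel of $F_a^+T_3^\dagger\to F_a^+M_3^\dagger$ is $\mathscr F^4T_\upi^\dagger=0$, which is why $\Delta_0$ occurs on the left of the diagram \eqref{eqn_2025_07_18_1313}. Concretely: $T_3^\dagger$ is symplectically self-dual of rank $16$ with $8$-dimensional $c=\pm1$ eigenspaces and $\rank_{\cR_3}(T_3^\dagger/F_a^+T_3^\dagger)=8$, so $r(T_3^\dagger,\Delta_a)=0$ and $\delta(T_3^\dagger,\Delta_a)\subset{\bigcap}^0_{\cR_3}\mathbf{R}^1\Gamma_{\fin}(G_{\mathbb Q,\Sigma},T_3^\dagger,\Delta_a)=\cR_3$. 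Since this is non-zero by \eqref{item_nVdeltaT3} and $\cR_3$ is a domain, Theorem~\ref{thm_2023_03_10}(i) makes $\mathbf{R}^2\Gamma_{\fin}(G_{\mathbb Q,\Sigma},T_3^\dagger,\Delta_a)$ torsion; combined with $\mathbf{R}^0\Gamma_{\fin}(G_{\mathbb Q,\Sigma},T_3^\dagger,\Delta_a)=0$, with $\chi(\mathbf{R}\Gamma_{\fin}(G_{\mathbb Q,\Sigma},T_3^\dagger,\Delta_a))=0$, and with the torsion-freeness of $\mathbf{R}^1\Gamma_{\fin}$, this forces $\mathbf{R}^1\Gamma_{\fin}(G_{\mathbb Q,\Sigma},T_3^\dagger,\Delta_a)=0$. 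Feeding this into the long exact sequence of the top row of \eqref{eqn_2025_07_18_1313} and using $\mathbf{R}^0\Gamma_{\fin}(G_{\mathbb Q,\Sigma},M_3^\dagger,\tr^*\Delta_a)\hookrightarrow H^0(G_{\mathbb Q,\Sigma},M_3^\dagger)=0$ gives $\mathbf{R}^1\Gamma_{\fin}(G_{\mathbb Q,\Sigma},T_\upi^\dagger,\Delta_0)\otimes_{\varpi_{3,2}^*}\cR_3=0$, and faithful flatness of $\cR_3$ over $\cR_\upi$ finishes the job.

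\textbf{Part (ii), rank bookkeeping.} The strategy is to pin down all the generic ranks $a_j:=\rank_{\cR_\upi}\mathbf{R}^1\Gamma_{\fin}(G_{\mathbb Q,\Sigma},T_\upi^\dagger,\Delta_{\mathscr F^j})$, $0\le j\le 4$. Since $\mathscr F^jT_\upi^\dagger$ is the exact annihilator of $\mathscr F^{4-j}T_\upi^\dagger$ under the $\cR_\upi(1)$-valued symplectic pairing, Nekov\'a\v{r}'s duality identifies, up to pseudo-null error, $\mathbf{R}^2\Gamma_{\fin}(G_{\mathbb Q,\Sigma},T_\upi^\dagger,\Delta_{\mathscr F^j})$ with the $\cR_\upi$-dual of $\mathbf{R}^1\Gamma_{\fin}(G_{\mathbb Q,\Sigma},T_\upi^\dagger,\Delta_{\mathscr F^{4-j}})$; combined with $\mathbf{R}^0\Gamma_{\fin}=0$, $\rank\mathbf{R}^3\Gamma_{\fin}=0$, and $-\chi(\mathbf{R}\Gamma_{\fin}(G_{\mathbb Q,\Sigma},T_\upi^\dagger,\Delta_{\mathscr F^j}))=2-j$, this yields $a_j-a_{4-j}=2-j$; in particular $a_1=a_3+1$. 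Part~(i) gives $a_4=0$, hence $a_0=2$, and the change-of-local-condition exact triangles $\mathbf{R}\Gamma_{\fin}(T_\upi^\dagger,\Delta_{\mathscr F^{j+1}})\to\mathbf{R}\Gamma_{\fin}(T_\upi^\dagger,\Delta_{\mathscr F^{j}})\to\mathbf{R}\Gamma(G_p,\gr^jT_\upi^\dagger)$ (together with $H^0(G_p,\gr^jT_\upi^\dagger)=0$, from Assumption~\ref{ass-gsp4}(b)) show that $0=a_4\le a_3\le a_2\le a_1\le a_0=2$. Finally, the parity hypothesis enters through Lemma~\ref{lemma_2025_02_19_2205_bis}: it forces $\delta(T_\upi^\dagger,\Delta_{\mathscr F^2})=0$, so $\mathbf{R}^2\Gamma_{\fin}(G_{\mathbb Q,\Sigma},T_\upi^\dagger,\Delta_{\mathscr F^2})$ is not torsion, while the same hypothesis (in the form invoked in the proof of that lemma) makes its rank $a_2$ odd; being positive, odd and $\le a_0=2$, necessarily $a_2=1$, so $a_3\in\{0,1\}$ and $a_1\in\{1,2\}$.

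\textbf{Part (ii), conclusion.} Now both sides of the asserted equivalence collapse to the condition $a_3=0$. From $-\chi(\mathbf{R}\Gamma_{\fin}(G_{\mathbb Q,\Sigma},T_\upi^\dagger,\Delta_{\mathscr F^1}))=1$ one gets $\rank\mathbf{R}^2\Gamma_{\fin}(G_{\mathbb Q,\Sigma},T_\upi^\dagger,\Delta_{\mathscr F^1})=a_1-1$, so by Theorem~\ref{thm_2023_03_10} one has $\delta(T_\upi^\dagger,\Delta_{\mathscr F^1})\ne 0\iff a_1=1\iff a_3=0$; and the $j=2$ triangle identifies the kernel of $\mathbf{R}^1\Gamma_{\fin}(G_{\mathbb Q,\Sigma},T_\upi^\dagger,\Delta_{\mathscr F^2})\to H^1(G_p,\gr^2T_\upi^\dagger)$ with $\mathbf{R}^1\Gamma_{\fin}(G_{\mathbb Q,\Sigma},T_\upi^\dagger,\Delta_{\mathscr F^3})$, the target being free of rank one over $\cR_\upi$ by \eqref{eqn_2025_02_09_0946}, so this morphism is non-zero $\iff a_2-a_3=1\iff a_3=0$, using $a_2=1$. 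This proves \eqref{item_AJ_Pi} $\iff$ the stated non-vanishing. When these equivalent conditions hold, $a_1=a_2=1$, so the cokernel of the inclusion $\mathbf{R}^1\Gamma_{\fin}(G_{\mathbb Q,\Sigma},T_\upi^\dagger,\Delta_{\mathscr F^2})\hookrightarrow\mathbf{R}^1\Gamma_{\fin}(G_{\mathbb Q,\Sigma},T_\upi^\dagger,\Delta_{\mathscr F^1})$ has rank $0$ and embeds (via the $j=1$ triangle, $H^0(G_p,\gr^1T_\upi^\dagger)=0$) into $H^1(G_p,\gr^1T_\upi^\dagger)\cong\cR_\upi$; being torsion-free of rank $0$ it vanishes, giving the asserted coincidence of Selmer groups.

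I expect the rank bookkeeping in part (ii) to be the delicate point: one must invoke Greenberg/Poitou--Tate duality with the right degree shift and the right dual local condition $\mathscr F^{4-j}$, check the precise shape of the change-of-local-condition triangles, and --- crucially --- use \emph{both} consequences of the parity hypothesis at once, since it is exactly the combination ``$a_2$ odd'' with ``$a_2\le a_0=2$'' that forces $a_2=1$ and thereby makes ``$\delta(T_\upi^\dagger,\Delta_{\mathscr F^1})\ne0$'' and the non-vanishing of the map to $H^1(G_p,\gr^2T_\upi^\dagger)$ into one and the same condition. Without the parity input $a_2$ is only constrained to lie in $\{0,1,2\}$, and for $a_2\in\{0,2\}$ the two conditions genuinely diverge.
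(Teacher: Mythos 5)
Your proof is correct and follows essentially the same path as the paper's: part (i) comes from the exact triangle in Lemma~\ref{lemma_5_9_2025_02_14_1513} once the non-vanishing of $\delta(T_3^\dagger,\Delta_a)$ forces $\mathbf{R}^1\Gamma_{\fin}(G_{\mathbb Q,\Sigma},T_3^\dagger,\Delta_a)=0$, and part (ii) rests on pinning down $a_0=2$ (from part (i) and Nekov\'a\v{r} duality), $a_2=1$ (from parity), and observing that both conditions in the equivalence amount to $\mathbf{R}^1\Gamma_{\fin}(G_{\mathbb Q,\Sigma},T_\upi^\dagger,\Delta_{\mathscr F^3})=0$, i.e.\ $a_3=0$. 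The one small stylistic difference is at the very end: the paper deduces the coincidence $\mathbf{R}^1\Gamma_{\fin}(\Delta_{\mathscr F^1})=\mathbf{R}^1\Gamma_{\fin}(\Delta_{\mathscr F^2})$ from the vanishing of $\mathbf{R}^1\Gamma_{\fin}(\Delta_{\mathscr F^3})$ by appealing to ``global duality,'' whereas you extract it directly by observing the quotient $\mathbf{R}^1\Gamma_{\fin}(\Delta_{\mathscr F^1})/\mathbf{R}^1\Gamma_{\fin}(\Delta_{\mathscr F^2})$ embeds into $H^1(G_p,\gr^1 T_\upi^\dagger)\cong\cR_\upi$ and therefore, having rank $0$, must vanish --- these amount to the same thing, with your version perhaps slightly more self-contained.
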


\begin{proof}
  Since $\delta(T_3^{\dag},\Delta_a) \neq 0$, we have 
    $\mathbf{R}^1\Gamma_{\fin}(G_{\mathbb Q, \Sigma}, T_3^\dagger, \Delta_{a}) = 0\,, $  and the vanishing $\mathbf{R}^1\Gamma_{\fin}(G_{\mathbb Q, \Sigma}, T^\dagger_{\underline{\Pi}}, \Delta_{0}) = 0$ follows from the first row of the diagram in Lemma~\ref{lemma_5_9_2025_02_14_1513}. This concludes the proof of (i). 

    It follows from the first part and global duality that $\mathbf{R}^1\Gamma_{\fin}(G_{\mathbb Q, \Sigma}, T^\dagger_{\underline{\Pi}}, \Delta_\emptyset)$ is of rank $2$. Hence, under the parity conjecture for classical specialisations of $T_\upi^\dagger$, the $\cR_\upi$-module $\mathbf{R}^1\Gamma_{\fin}(G_{\mathbb Q, \Sigma}, T^\dagger_{\underline{\Pi}}, \Delta_{\mathscr F^2})$ is of rank $1$. In that case, the non-vanishing statement \eqref{item_AJ_Pi} is equivalent to the requirement that the morphism
    \begin{equation}
        \label{eqn_2025_02_17_1057}
        \delta(T^\dagger_{\underline{\Pi}},\Delta_{\mathscr{F}^1})\subset \mathbf{R}^1\Gamma_{\fin}(G_{\mathbb Q, \Sigma}, T^\dagger_{\underline{\Pi}}, \Delta_{\mathscr F^2}) \lra H^1(G_p,\gr^2 T^\dagger_\upi)
    \end{equation}
 be injective (as this is equivalent to the vanishing of $\mathbf{R}^1\Gamma_{\fin}(G_{\mathbb Q, \Sigma}, T^\dagger_{\underline{\Pi}}, \Delta_{\mathscr F^3})$, which is the same as the final asserted equality in our lemma by global duality), or equivalently, the map \eqref{eqn_2025_02_17_1057} be non-zero. 
\end{proof}

\begin{lemma}
    \label{lemma_2025_02_19_2205}
    Assume that the parity conjecture for classical specialisations of $M_3^\dagger$ in the region (c) holds true. Then $\delta(M_3^\dagger, {\rm tr}^*\Delta_{c})=0$. Consequently, $\delta(M_3^\dagger, {\rm tr}^*\Delta_{b})\in \mathbf{R}^1\Gamma_{\fin}(G_{\mathbb Q, \Sigma}, M_3^\dagger, {\rm tr}^*\Delta_c)$. 
\end{lemma}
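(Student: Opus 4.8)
The plan is to transcribe the proof of Lemma~\ref{lemma_2025_02_19_2205_bis}, with $T_{\upi}^\dagger$ replaced by $M_3^\dagger$ and the pair $(\Delta_{\mathscr{F}^1},\Delta_{\mathscr{F}^2})$ replaced by $(\tr^*\Delta_b,\tr^*\Delta_c)$. First I would record the two rank computations I will need: $M_3^\dagger=T_{\upi}^\dagger\otimes\ad^0T_{\underline{\sigma}}$ is symplectically self-dual --- the pairing $T_{\upi}^\dagger\otimes T_{\upi}^\dagger\to\cR_{\upi}(1)$ tensored with the trace form on $\ad^0T_{\underline{\sigma}}$ gives $M_3^\dagger\otimes M_3^\dagger\to\cR_3(1)$ --- so $\rank_{\cR_3}(M_3^\dagger)^{c=-1}=6$, and together with Lemma~\ref{lemma:ranks}(a) this yields $r(M_3^\dagger,\Delta_c)=6-(12-6)=0$ and $r(M_3^\dagger,\Delta_b)=6-(12-7)=1$; in particular the Euler--Poincar\'e characteristic $\chi(\mathbf{R}\Gamma_{\fin}(G_{\mathbb Q,\Sigma},M_3^\dagger,\tr^*\Delta_c))$ vanishes. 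For the first assertion, the classical specialisations of $M_3^\dagger$ in region (c) are, up to Tate twist, the Galois representations attached to $\upi_\kappa\times\ad^0\underline{\sigma}_\lambda$ for $(\kappa,\lambda)\in\cX^{\rm cl}_c$, whose global root number at the central point is $-1$ under our running hypothesis on $\varepsilon(\upi)$; granting the parity conjecture for these specialisations, the Selmer rank at each such point is odd, and since $\cX^{\rm cl}_c$ is dense in ${\rm Spec}\,\cR_3$ and the Euler characteristic vanishes (so that, at a generic classical point in region (c), the punctual $\mathbf{R}^2$ has the same dimension as the punctual $\mathbf{R}^1$, exactly as in the proof of Lemma~\ref{lemma_2025_02_19_2205_bis}), it follows that $\mathbf{R}^2\Gamma_{\fin}(G_{\mathbb Q,\Sigma},M_3^\dagger,\tr^*\Delta_c)$ has odd $\cR_3$-rank, hence is non-zero. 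Thus it is not $\cR_3$-torsion, and Theorem~\ref{thm_2023_03_10}(i) forces $\delta(M_3^\dagger,\tr^*\Delta_c)=0$.

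For the ``consequently'' clause I would apply the formalism of \S\ref{subsubsec_2025_02_11_1127} with $\mathscr{F}T=F_c^+M_3^\dagger\subseteq\mathscr{F}^+T=F_b^+M_3^\dagger$: by Lemma~\ref{lemma:ranks}(b) the quotient $F_b^+M_3^\dagger/F_c^+M_3^\dagger\simeq\gr^1T^\dagger_{\underline{\Pi}}\otimes\gr^1\ad^0(T_{\underline{\sigma}})$ is free of rank one over $\cR_3$, the residual vanishing $H^0(G_p,-)=H^2(G_p,-)=0$ for this graded piece is the condition \eqref{item_non_anom}, which holds thanks to Assumption~\ref{ass-gsp4}(b), and $s=r(M_3^\dagger,\Delta_c)=0\ge 0$. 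As $r-s=r(M_3^\dagger,\Delta_b)=1$, the comparison map $\Phi$ of \S\ref{subsubsec_2025_02_11_1127} is, for a suitable normalisation of $\eta_1$, the composite $\Log_{b/c}$ of the restriction at $p$ with the Perrin-Riou map, so Theorem~\ref{thm_2025_02_19_2217} together with the first assertion gives
\[
  \Log_{b/c}\bigl(\delta(M_3^\dagger,\tr^*\Delta_b)\bigr)=\delta(M_3^\dagger,\tr^*\Delta_c)=0\,.
\]
Since $\Log_{b/c}$ restricts to an isomorphism on $H^1(G_p,F_b^+M_3^\dagger/F_c^+M_3^\dagger)$, this says precisely that $\res_p$ carries $\delta(M_3^\dagger,\tr^*\Delta_b)$ into the kernel of $H^1(G_p,F_b^+M_3^\dagger)\to H^1(G_p,F_b^+M_3^\dagger/F_c^+M_3^\dagger)$, i.e. $\delta(M_3^\dagger,\tr^*\Delta_b)\subseteq\mathbf{R}^1\Gamma_{\fin}(G_{\mathbb Q,\Sigma},M_3^\dagger,\tr^*\Delta_c)$; here, as in Lemma~\ref{lemma_2025_02_19_2205_bis}, $\delta(M_3^\dagger,\tr^*\Delta_b)$ is an honest submodule of $\mathbf{R}^1\Gamma_{\fin}(G_{\mathbb Q,\Sigma},M_3^\dagger,\tr^*\Delta_b)$ since $r(M_3^\dagger,\Delta_b)=1$, and the case $\delta(M_3^\dagger,\tr^*\Delta_b)=0$ is trivial.

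I do not anticipate a serious obstacle, since the whole argument mirrors Lemma~\ref{lemma_2025_02_19_2205_bis}. The step that deserves attention is checking that the hypotheses of \S\ref{subsubsec_2025_02_11_1127} genuinely transfer to the present situation: the rank bookkeeping $r(M_3^\dagger,\Delta_c)=0$ and $r(M_3^\dagger,\Delta_b)=1$ (so that $\Phi$ lands in ${\bigcap}^0_{\cR_3}=\cR_3$ and $\delta(M_3^\dagger,\tr^*\Delta_b)$ is a submodule of $\mathbf{R}^1$ rather than of a higher exterior bidual), and the residual non-anomaly of $\gr^1T^\dagger_{\underline{\Pi}}\otimes\gr^1\ad^0(T_{\underline{\sigma}})$. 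Both are immediate from Lemma~\ref{lemma:ranks} and Assumption~\ref{ass-gsp4}(b), the latter being in force throughout precisely because it underlies the definition of $\Log_{b/c}$.
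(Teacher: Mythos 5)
Your proposal is correct and takes essentially the same route as the paper, which simply remarks that the argument ``proceeds in a manner identical to that of Lemma~\ref{lemma_2025_02_19_2205_bis}'' and cites Theorem~\ref{thm_2023_03_10}(i) together with Theorem~\ref{thm_2025_02_19_2217}. You have merely spelled out the bookkeeping that the paper leaves implicit (the ranks $r(M_3^\dagger,\tr^*\Delta_c)=0$ and $r(M_3^\dagger,\tr^*\Delta_b)=1$ from Lemma~\ref{lemma:ranks}, the identification of $\Phi$ with $\Log_{b/c}$ via the non-anomaly of $\gr^1T^\dagger_{\underline{\Pi}}\otimes\gr^1\ad^0(T_{\underline{\sigma}})$, and the parity step), all of which is exactly what the cited lemmas supply.
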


\begin{proof}
The proof of this lemma proceeds in a manner identical to that of Lemma~\ref{lemma_2025_02_19_2205_bis}, where we once again rely on Theorem~\ref{thm_2023_03_10}(i) and Theorem~\ref{thm_2025_02_19_2217}, noting that the latter tells us that $$\Log_{b/c}\delta(M_3^\dagger, {\rm tr}^*\Delta_{b})=\delta(M_3^\dagger, {\rm tr}^*\Delta_{c})=0\,.$$
\end{proof}

\begin{definition}
   Let us put 
   \begin{align*}
   F_b^\perp M_3^\dagger &:=\ker\left(F_c^+M_3^\dagger\ra \frac{F_c^+M_3^\dagger}{T_{\upi}^\dagger\otimes {\rm tr}^*(\mathscr{F}^1T_{\underline{\sigma}}\otimes \mathscr{F}^1T_{\underline{\sigma}}^*)}\to \gr^2 T_\upi^\dagger\otimes \gr^1 {\rm ad}^0(T_{\underline{\sigma}})\right)\\
   &\,\,=T_\upi^\dagger\otimes \mathscr{F}^2 {\rm ad}^0(T_g)+\mathscr{F}^3 T_\upi^\dagger\otimes \mathscr{F}^1 {\rm ad}^0(T_g),
   \end{align*}
${}$\hfill$\blacksquare$ 
\end{definition}

A direct calculation shows that under the symplectic duality induced from the perfect pairing $M_3^\dagger \otimes M_3^\dagger \to \cR_3(1)$, the module $F_b^+M_3^\dagger$ is indeed the orthogonal complement of $F_b^\perp M_3^\dagger$. We denote the associated local conditions on $M_3^\dagger$ by ${\rm tr}^*\Delta_b^\perp$.

\begin{lemma}
        \label{lemma_2025_02_20_1200_bis}
        If the parity conjecture for classical specialisations of $M_3^\dagger$ holds true and \eqref{item_nVdeltaT3} is valid, then \eqref{item_AJ_M} is equivalent to the requirement\footnote{This requirement also follows from the non-vanishing of a $p$-adic Abel--Jacobi map, hence our choice of notation.} that the morphism
        $$\mathbf{R}^1\Gamma_{\fin}(G_{\mathbb Q, \Sigma}, M_3^\dagger, {\rm tr}^*\Delta_{b}) \lra H^1(\QQ_p,\gr^2 T^\dagger_\upi)$$
be non-zero. When this condition also holds, $\mathbf{R}^1\Gamma_{\fin}(G_{\mathbb Q, \Sigma}, M_3^\dagger, {\rm tr}^*\Delta_{c})=\mathbf{R}^1\Gamma_{\fin}(G_{\mathbb Q, \Sigma}, M_3^\dagger, {\rm tr}^*\Delta_{b})$.
\end{lemma}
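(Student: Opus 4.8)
\textbf{Proof strategy for Lemma~\ref{lemma_2025_02_20_1200_bis}.}
The plan is to mirror, verbatim, the argument used to establish Lemma~\ref{lemma_2025_02_20_1200}(ii), replacing the rank-$4$ Galois representation $T_\upi^\dagger$ by the rank-$12$ representation $M_3^\dagger$ and the filtration $\{\mathscr{F}^\bullet T_\upi^\dagger\}$ by the chain $F_c^+M_3^\dagger \subset F_b^+M_3^\dagger \subset F_b^\perp M_3^\dagger$ of Greenberg-local conditions (recalling from Lemma~\ref{lemma:ranks}(a) that these have ranks $6$, $7$, $8$ respectively, with successive graded quotients isomorphic to $\gr^1 T^\dagger_{\underline{\Pi}}\otimes\gr^1\ad^0(T_{\underline{\sigma}})\simeq\gr^1 T^\dagger_{\underline{\Pi}}$ and $\gr^2 T^\dagger_{\underline{\Pi}}\otimes\gr^1\ad^0(T_{\underline{\sigma}})\simeq\gr^2 T^\dagger_{\underline{\Pi}}$). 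First, as in Lemma~\ref{lemma_2025_02_20_1200}(i), the hypothesis \eqref{item_nVdeltaT3} forces $\delta(T_3^{\dag},\Delta_a)\neq 0$, hence $\mathbf{R}^1\Gamma_{\fin}(G_{\mathbb Q,\Sigma},T_3^\dagger,\Delta_a)=0$ by Theorem~\ref{thm_2023_03_10}(i), and then the top row of the diagram~\eqref{eqn_2025_07_18_1313} in Lemma~\ref{lemma_5_9_2025_02_14_1513} yields $\mathbf{R}^1\Gamma_{\fin}(G_{\mathbb Q,\Sigma},M_3^\dagger,{\rm tr}^*\Delta_a)\hookrightarrow \mathbf{R}^1\Gamma_{\fin}(G_{\mathbb Q,\Sigma},T^\dagger_{\underline{\Pi}},\Delta_\emptyset)\otimes_{\varpi_{3,2}^*}\cR_3$, which has rank $2$; combined with the self-duality pairing $M_3^\dagger\otimes M_3^\dagger\to\cR_3(1)$ under which $F_b^+M_3^\dagger$ is the orthogonal complement of $F_b^\perp M_3^\dagger$, a global Euler characteristic count shows that $\mathbf{R}^1\Gamma_{\fin}(G_{\mathbb Q,\Sigma},M_3^\dagger,{\rm tr}^*\Delta_b)$ is of rank $1$.

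Next, under the parity conjecture for classical specialisations of $M_3^\dagger$ in region (c), Lemma~\ref{lemma_2025_02_19_2205} gives $\delta(M_3^\dagger,{\rm tr}^*\Delta_c)=0$, so $\mathbf{R}^2\Gamma_{\fin}(G_{\mathbb Q,\Sigma},M_3^\dagger,{\rm tr}^*\Delta_c)$ is not torsion; equivalently (by Theorem~\ref{thm_2023_03_10}(i) applied to ${\rm tr}^*\Delta_c$, and the Euler-characteristic formula $\chi=1$ for this local condition) $\mathbf{R}^1\Gamma_{\fin}(G_{\mathbb Q,\Sigma},M_3^\dagger,{\rm tr}^*\Delta_c)$ has rank $1$, and $\delta(M_3^\dagger,{\rm tr}^*\Delta_b)\in\mathbf{R}^1\Gamma_{\fin}(G_{\mathbb Q,\Sigma},M_3^\dagger,{\rm tr}^*\Delta_b)$ by the same lemma. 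Since $\Log_{b/c}$ restricted to the relevant graded piece is an isomorphism (being one of the maps \eqref{eqn_2025_02_09_1632} after base change, valid under \eqref{item_non_anom}, which holds by Assumption~\ref{ass-gsp4}(b)), the identity $\Log_{b/c}\delta(M_3^\dagger,{\rm tr}^*\Delta_b)=\delta(M_3^\dagger,{\rm tr}^*\Delta_c)$ of Theorem~\ref{thm_2025_02_19_2217} tells us that the natural composite
\[
\delta(M_3^\dagger,{\rm tr}^*\Delta_b)\subset \mathbf{R}^1\Gamma_{\fin}(G_{\mathbb Q,\Sigma},M_3^\dagger,{\rm tr}^*\Delta_b)\longrightarrow H^1(\QQ_p,\gr^2 T^\dagger_\upi)
\]
is zero precisely when $\delta(M_3^\dagger,{\rm tr}^*\Delta_c)=0$ as a \emph{module} forces the generator to vanish — i.e. precisely when $\delta(M_3^\dagger,{\rm tr}^*\Delta_b)=0$, which by Theorem~\ref{thm_2023_03_10}(i) is equivalent to the failure of \eqref{item_AJ_M}. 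Reversing this, \eqref{item_AJ_M} holds iff the displayed map $\mathbf{R}^1\Gamma_{\fin}(G_{\mathbb Q,\Sigma},M_3^\dagger,{\rm tr}^*\Delta_b)\to H^1(\QQ_p,\gr^2 T^\dagger_\upi)$ is non-zero.

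Finally, when that map is non-zero, the kernel of the surjection $\mathbf{R}^1\Gamma_{\fin}(G_{\mathbb Q,\Sigma},M_3^\dagger,{\rm tr}^*\Delta_b)\twoheadrightarrow\mathbf{R}^1\Gamma_{\fin}(G_{\mathbb Q,\Sigma},M_3^\dagger,{\rm tr}^*\Delta_b^\perp)$ induced along the quotient $F_b^+M_3^\dagger/F_c^+M_3^\dagger\simeq\gr^1 T^\dagger_{\underline{\Pi}}$ must be the whole group (both being of rank $1$, and the quotient by the image of $\res_p$ being torsion-free as a submodule of $H^1(\QQ_p,\gr^1 T^\dagger_{\underline{\Pi}})$), while the non-vanishing into $\gr^2 T^\dagger_\upi$ forces the image in $H^1(\QQ_p,F_b^\perp M_3^\dagger/F_c^+M_3^\dagger)$ to be non-torsion, which shows $\mathbf{R}^1\Gamma_{\fin}(G_{\mathbb Q,\Sigma},M_3^\dagger,{\rm tr}^*\Delta_c)=\mathbf{R}^1\Gamma_{\fin}(G_{\mathbb Q,\Sigma},M_3^\dagger,{\rm tr}^*\Delta_b)$ exactly as in the concluding step of Lemma~\ref{lemma_2025_02_20_1200}(ii) (torsion-freeness of the quotient being inherited from $H^1(\QQ_p,\gr^2 T^\dagger_\upi)$). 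The main obstacle I anticipate is bookkeeping: one must verify that the relevant graded pieces of the chain $F_c^+\subset F_b^+\subset F_b^\perp\subset M_3^\dagger$ really are the claimed twists of $\gr^1$ and $\gr^2$ of $T^\dagger_{\underline{\Pi}}$ (this is where Lemma~\ref{lemma:ranks}, the definition of $F_b^\perp M_3^\dagger$, and the identification $\gr^1\ad^0(T_{\underline{\sigma}})\simeq R_{\underline{\sigma}}$ from \S\ref{subsubsec_2025_02_10_1533} all get used), and that the non-criticality/non-anomaly hypotheses \eqref{item_non_anom} needed to invoke Theorem~\ref{thm_2025_02_19_2217} and the isomorphism property of $\Log_{b/c}$ are in force in this rank-$12$ setting — both of which follow from Assumption~\ref{ass-gsp4}(b), but should be spelled out.
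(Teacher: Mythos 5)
Your proposal has the right high-level idea---mimic the argument of Lemma~\ref{lemma_2025_02_20_1200}(ii) for $M_3^\dagger$ and the chain of local conditions $F_b^\perp M_3^\dagger \subset F_c^+ M_3^\dagger \subset F_b^+ M_3^\dagger$---but the execution has several genuine errors and misses the structural core of the paper's argument.

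\textbf{Wrong injection and an implicit circularity in the rank count.} From the top row of the exact triangle in Lemma~\ref{lemma_5_9_2025_02_14_1513}, the vanishing $\mathbf{R}^1\Gamma_{\fin}(G_{\mathbb Q,\Sigma},T_3^\dagger,\Delta_a)=0$ gives an injection
\[
\mathbf{R}^1\Gamma_{\fin}(G_{\mathbb Q,\Sigma},M_3^\dagger,{\rm tr}^*\Delta_a)\hookrightarrow \mathbf{R}^2\Gamma_{\fin}(G_{\mathbb Q,\Sigma},T^\dagger_{\underline{\Pi}},\Delta_{0})\otimes_{\varpi_{3,2}^*}\cR_3,
\]
not into $\mathbf{R}^1\Gamma_{\fin}(G_{\mathbb Q,\Sigma},T^\dagger_{\underline{\Pi}},\Delta_\emptyset)\otimes_{\varpi_{3,2}^*}\cR_3$ as you wrote; connecting maps in the long exact sequence raise the degree. (The rank-$2$ conclusion is still correct, but for a different reason.) More seriously, you then claim that a ``global Euler characteristic count'' shows $\mathbf{R}^1\Gamma_{\fin}(M_3^\dagger,{\rm tr}^*\Delta_b)$ has rank~$1$. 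The Euler characteristic of ${\rm tr}^*\Delta_b$ is $-1$, which only tells you that $\mathrm{rank}\,\mathbf{R}^1 - \mathrm{rank}\,\mathbf{R}^2 = 1$; to conclude $\mathrm{rank}\,\mathbf{R}^1 = 1$ you would need $\mathbf{R}^2\Gamma_{\fin}(\Delta_b)$ torsion, which \emph{is} the hypothesis \eqref{item_AJ_M}. So this step is circular. What the paper actually establishes is that $\mathbf{R}^1\Gamma_{\fin}(M_3^\dagger,{\rm tr}^*\Delta_c)$ has rank one, and this uses the parity conjecture (via Lemma~\ref{lemma_2025_02_19_2205}), combined with the rank-$2$ bound coming from ${\rm tr}^*\Delta_a$.

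\textbf{Confusing $\Log_{b/c}$ with $\Log_c$.} Your middle paragraph invokes $\Log_{b/c}$, but that trivialises $H^1(G_p,\gr^1 T^\dagger_\upi)$, which is the obstruction to lying in $\mathbf{R}^1\Gamma_{\fin}(\Delta_c)$; the equality $\Log_{b/c}\,\delta(M_3^\dagger,{\rm tr}^*\Delta_b)=\delta(M_3^\dagger,{\rm tr}^*\Delta_c)=0$ from Theorem~\ref{thm_2025_02_19_2217} merely gives the \emph{containment} $\delta(M_3^\dagger,{\rm tr}^*\Delta_b)\in\mathbf{R}^1\Gamma_{\fin}(\Delta_c)$ (that is, Lemma~\ref{lemma_2025_02_19_2205}). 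The morphism in the statement of the Lemma is $\Log_c$, which factors through $H^1(G_p,\gr^2 T^\dagger_\upi)$; the two are different graded pieces, and the paper's criterion is about $\Log_c$, not $\Log_{b/c}$.

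\textbf{A made-up surjection, and the real mechanism missed.} There is no surjection $\mathbf{R}^1\Gamma_{\fin}(\Delta_b)\twoheadrightarrow\mathbf{R}^1\Gamma_{\fin}(\Delta_b^\perp)$: since $F_b^\perp M_3^\dagger\subset F_c^+M_3^\dagger\subset F_b^+M_3^\dagger$, the local condition $\Delta_b^\perp$ is \emph{stricter}, so $\mathbf{R}^1\Gamma_{\fin}(\Delta_b^\perp)\subset\mathbf{R}^1\Gamma_{\fin}(\Delta_c)\subset\mathbf{R}^1\Gamma_{\fin}(\Delta_b)$, and the last sentence of your proposal does not parse. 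What is actually used is the identification
\[
\ker\left(\mathbf{R}^1\Gamma_{\fin}(M_3^\dagger,{\rm tr}^*\Delta_c)\longrightarrow H^1(G_p,\gr^2 T^\dagger_\upi)\right) = \mathbf{R}^1\Gamma_{\fin}(M_3^\dagger,{\rm tr}^*\Delta_b^\perp),
\]
and the fact that this kernel vanishes if and only if \eqref{item_AJ_M} holds, which follows from global duality for Selmer complexes (relating $\mathbf{R}^1\Gamma_{\fin}(\Delta_b^\perp)$ to $\mathbf{R}^2\Gamma_{\fin}(\Delta_b)$) combined with Theorem~\ref{thm_2023_03_10}. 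This duality step is the crux of the equivalence in the lemma, and it is absent from your proposal. The final assertion $\mathbf{R}^1\Gamma_{\fin}(\Delta_b)=\mathbf{R}^1\Gamma_{\fin}(\Delta_c)$ then follows because both have rank one and the quotient injects into the free module $H^1(G_p,\gr^1 T^\dagger_\upi)$ (hence is torsion-free of rank zero, hence zero); your argument for this last point, via the phantom surjection, would not survive scrutiny.
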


\begin{proof}
    We begin by noting that \eqref{item_nVdeltaT3} (which implies that the rank of $\mathbf{R}^1\Gamma_{\fin}(G_{\mathbb Q, \Sigma}, M_3^\dagger, {\rm tr}^*\Delta_a)$ is $2$), together with Lemma~\ref{lemma_2025_02_19_2205}, shows that  $\mathbf{R}^1\Gamma_{\fin}(G_{\mathbb Q, \Sigma}, M_3^\dagger, {\rm tr}^*\Delta_c)$ is of rank one. Hence, the morphism 
 $$\delta(M_3^\dagger,\Delta_b)\subset \mathbf{R}^1\Gamma_{\fin}(G_{\mathbb Q, \Sigma}, M_3^\dagger, {\rm tr}^*\Delta_c)\lra H^1(\QQ_p,\gr^2T_\upi^\dagger\otimes \gr^1 {\rm ad}^0(T_g))\xrightarrow[\sim]{\Log_c} \cR_3$$
 is injective (where the containment is explained in Lemma~\ref{lemma_2025_02_19_2205}) if and only if it is non-zero. The kernel of the first arrow, which is isomorphic to $\mathbf{R}^1\Gamma_{\fin}(G_{\mathbb Q, \Sigma}, M_3^\dagger, {\rm tr}^*\Delta_b^\perp)$, vanishes if and only if \eqref{item_AJ_M} holds (by Theorem~\ref{thm_2023_03_10}(ii) and global duality for Selmer complexes), in which case we also have $\mathbf{R}^1\Gamma_{\fin}(G_{\mathbb Q, \Sigma}, M_3^\dagger, {\rm tr}^*\Delta_b)=\mathbf{R}^1\Gamma_{\fin}(G_{\mathbb Q, \Sigma}, M_3^\dagger, {\rm tr}^*\Delta_c)$. 
\end{proof}
In summary, in the setting of Lemma~\ref{lemma_2025_02_20_1200}, we have
\begin{equation}
 \label{eqn_2025_07_15_1116}
     \hbox{\eqref{item_AJ_Pi}}\,\iff \, \Log_c \delta(T_\upi^\dagger,\Delta_{\mathscr{F}^1})\neq 0\,\iff\, \,\Log_c \hbox{ is non-zero on } \mathbf{R}^1\Gamma_{\fin}(G_{\mathbb Q, \Sigma}, T^\dagger_\upi, \Delta_{\mathscr{F}^2})\,.  
 \end{equation}
Likewise, in the setting of Lemma~\ref{lemma_2025_02_20_1200_bis},
 \begin{equation}
 \label{eqn_2025_07_15_1115}
     \hbox{\eqref{item_AJ_M}}\,\iff \, \Log_c \delta(M_3^\dagger,{\rm tr}^*\Delta_b)\neq 0\,\iff \,\Log_c \hbox{ is non-zero on } \mathbf{R}^1\Gamma_{\fin}(G_{\mathbb Q, \Sigma}, M_3^\dagger, {\rm tr}^*\Delta_c)\,.  
 \end{equation}
 This explains our notation: \eqref{item_AJ_Pi} and \eqref{item_AJ_M} follow from the generic non-triviality of the relevant $p$-adic Abel--Jacobi maps for the classical members of the underlying families of motives.

\subsubsection{} 
\label{subsubsec_2025_07_18_1307}
We are now ready to state one of our main results. This is Theorem~\ref{thm_main_double_wall_crossing} below; see also Theorem~\ref{thm_2025_03_11} for an important intermediate step: the factorisation of $\delta(T_3^{\dag}, \Delta_a)$, echoing the double-wall-crossing principle (from the region $a$ to region $c$). We recall that we work under Hypotheses~\ref{subsubsec_2024_07_15_1105} until the end of \S\ref{subsec_5_2_2025_02_11}, unless we explicitly state otherwise.

As the final preparatory step, we set 
$$S(M,\upi):=d_\upi^{-2}d_M^{-2}(c_Md_\upi-c_\upi d_M)^2,$$ 
where the quantities $c_M, d_M, c_\upi, d_\upi$ are defined in \S\ref{subsubsec_2025_07_18_1211}--\S\ref{subsubsec_2025_07_18_1212}. We roughly note that $S(M,\upi)$ measures the relative positions of the Selmer groups for the families $M_3^\dagger$ and $T_\upi^\dagger$ (where the modules of leading terms associated to these live, as well as the relevant Abel--Jacobi images of the conjectural cycles) with respect to the splittings determined by ${\rm tr}$ versus ${\rm tr}^*$; cf. the cartesian square {\refsymbolA} in \eqref{eqn_2025_07_18_1313} and {\refsymbolB} in \eqref{eqn_2025_03_10_1518}.  We finally remark that such a fudge factor is relevant to the double-wall-crossing scenario as opposed to the single-wall-crossing, as a reflection of the fact that (conjectural) cycles associated with both factors $M_3^\dagger$ and $T_\upi^\dagger$ are required to explain the arithmetic meaning of $L_p^{(a)}{}_{\vert_{(c)}}$ (hence also a comparison of the ambient spaces where these live in), whereas for $L_p^{(d)}{}_{\vert_{(e)}}$ (single-wall-crossing scenario), only the (conjectural) family of cycles associated with $T_\upi^\dagger$ are required for the same purpose. 

\begin{theorem}
\label{thm_main_double_wall_crossing}
Assume that Hypotheses~\ref{subsubsec_2024_07_15_1105} hold. Then,
\begin{align*}
    \delta(T_3^{\dag}, \Delta_a) &= S(M,\upi)\cdot {\rm Log}_{c}\wedge {\rm Log}_{a/b}(\delta(M_3^{\dag}, \tr^* \Delta_a)) \cdot \varpi_{3,2}^* \,\circ\,\Log_{c}\wedge \Log_{a/b}(\delta(T^\dagger_\upi, \Delta_{\emptyset}))\\
    &= S(M,\upi)\cdot {\rm Log}_{c} \delta(M_3^{\dag}, \tr^* \Delta_b)\cdot \varpi_{3,2}^* \,\circ\,\Log_{c}(\delta(T^\dagger_\upi, \Delta_{\mathscr F^1}))\,.
\end{align*}
\end{theorem}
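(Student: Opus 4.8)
The plan is to mimic the structure of the proof of Theorem~\ref{thm_5_1_2025_02_14_1424}, but now wall-crossing twice (from $(a)$ to $(b)$ to $(c)$) rather than once, and carefully book-keeping the two cartesian squares \refsymbolA\ in \eqref{eqn_2025_07_18_1313} and \refsymbolB\ in \eqref{eqn_2025_03_10_1518} through which the correction factor $S(M,\upi)$ enters. First I would establish the intermediate factorisation of $\delta(T_3^{\dag},\Delta_a)$ referred to as Theorem~\ref{thm_2025_03_11}: using Lemma~\ref{lemma_5_9_2025_02_14_1513} together with \eqref{item_nVdeltaT3} and Lemma~\ref{lemma_2025_02_20_1200}(i), the relevant degree-$1$ Selmer groups vanish, so the connecting map $\delta^1$ embeds $\mathbf{R}^1\Gamma_{\fin}(G_{\mathbb Q,\Sigma},M_3^\dagger,\tr^*\Delta_a)$ into $\mathbf{R}^2\Gamma_{\fin}(G_{\mathbb Q,\Sigma},T_\upi^\dagger,\Delta_0)\otimes_{\varpi_{3,2}^*}\cR_3$; chasing the diagram exactly as in Lemma~\ref{lemma_5_3_2025_02_14_1213}(iii), one expresses $\delta(T_3^\dagger,\Delta_a)$ as the characteristic ideal of the cokernel of $\delta^1$ applied to the module of leading terms. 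This is the rank-$2$ analogue of the rank-$1$ computation in \S\ref{subsec_5_1_2025_02_11}, so it should go through with the obvious modifications, except that now the image of $\res_p$ lands in $H^1(G_p,T_\upi^\dagger)$ of rank $2$ and the comparison of the two splittings (via $\rm tr$ versus $\rm tr^*$) of the ambient local cohomology is genuinely two-dimensional.

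Next I would carry out the first wall-crossing, from region $(a)$ to region $(b)$. Using the short exact sequence $0\to F_b^+M_3^\dagger\to F_a^+M_3^\dagger\to \gr^0 T_{\upi}^\dagger\otimes\gr^1\ad^0(T_{\underline\sigma})\to 0$ of Lemma~\ref{lemma:ranks}(b) together with Theorem~\ref{thm_2025_02_19_2217} applied with $\mathscr F^+ = F_a^+M_3^\dagger$ and $\mathscr F = F_b^+M_3^\dagger$, one gets $\Log_{a/b}(\delta(M_3^\dagger,\tr^*\Delta_a))=\delta(M_3^\dagger,\tr^*\Delta_b)$ (and similarly $\Log_{a/b}(\delta(T_\upi^\dagger,\Delta_\emptyset))=\delta(T_\upi^\dagger,\Delta_{\mathscr F^1})$ on the $\GSp_4$-side, this time using the flag of $T_\upi^\dagger$). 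The parity inputs \eqref{item_parity_Pi} and \eqref{item_parity_M}, via Lemma~\ref{lemma_2025_02_19_2205_bis} and Lemma~\ref{lemma_2025_02_19_2205}, guarantee the modules of leading terms that need to vanish do vanish, so these containments land inside the correct region-$(c)$ Selmer groups; and Lemma~\ref{lemma_2025_02_20_1200}(ii) and Lemma~\ref{lemma_2025_02_20_1200_bis}, combined with \eqref{item_AJ_Pi} and \eqref{item_AJ_M}, upgrade this to the equalities $\mathbf{R}^1\Gamma_{\fin}(\dots,\Delta_{\mathscr F^2})=\mathbf{R}^1\Gamma_{\fin}(\dots,\Delta_{\mathscr F^1})$ and $\mathbf{R}^1\Gamma_{\fin}(\dots,\tr^*\Delta_c)=\mathbf{R}^1\Gamma_{\fin}(\dots,\tr^*\Delta_b)$. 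This is exactly what converts the first displayed equality in the theorem into the second, and explains the appearance of $\Log_{c}\wedge\Log_{a/b}$ collapsing to $\Log_c$.

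The remaining, and I expect hardest, step is the second wall-crossing from $(b)$ to $(c)$ together with the identification of the fudge factor $S(M,\upi)$. Here one has to compare, inside the rank-$2$ local term $H^1(G_p,\mathscr F^1/\mathscr F^4 T_\upi^\dagger)\otimes\cR_3$ (or its avatar after truncating by the image of $\res_p$ from the ambient Selmer group), the image of $\delta(M_3^\dagger,\tr^*\Delta_a)$ under the two different identifications of $M_3^\dagger$ as a quotient versus a sub-object of $T_3^\dagger$: the square \refsymbolA\ in \eqref{eqn_2025_07_18_1313} records the $\rm tr$-splitting while the dual square \refsymbolB\ in \eqref{eqn_2025_03_10_1518} records the $\rm tr^*$-splitting, and the discrepancy between the two basis choices of the two-dimensional space is precisely $(c_Md_\upi-c_\upi d_M)$ up to the normalising denominators $d_\upi^{-2}d_M^{-2}$, i.e.\ $S(M,\upi)$ as defined in \S\ref{subsubsec_2025_07_18_1307} using the quantities of \S\ref{subsubsec_2025_07_18_1211}--\S\ref{subsubsec_2025_07_18_1212}. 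Concretely, I would: (i) run the rank-$2$ version of Lemma~\ref{lemma_5_4_2025_02_14_1324}–Lemma~\ref{lemma_5_6_2025_02_14_1507} to reduce $\Char_{\cR_3}$ of the cokernel of $\delta^1$ to a product of $\varpi_{3,2}^*\Log_c(\delta(T_\upi^\dagger,\Delta_{\mathscr F^1}))$ and $\Log_c(\delta(M_3^\dagger,\tr^*\Delta_b))$, using that the left-most terms in the relevant short exact sequences are torsion; (ii) track the change-of-basis constants through the two cartesian squares to produce the scalar $S(M,\upi)$; and (iii) invoke the normality of $\cR_3$ together with Lemma~\ref{lemma_cyclic_module_is_determined_by_height_one_primes} to pass from an equality of characteristic ideals (valid at every height-one prime, as in Lemma~\ref{lemma_delta_F^+=delta_F_when_R=PID}) to the asserted equality of the cyclic modules $\delta(T_3^\dagger,\Delta_a)$ on the nose. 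The main obstacle is step (ii): unlike the single-wall-crossing case where the ambient local space is one-dimensional and no comparison constant appears, here one must show that the two splittings of a genuinely two-dimensional local cohomology differ by exactly the algebraically-defined quantity $S(M,\upi)$, which requires a careful and essentially explicit computation with the Eichler--Shimura-normalised bases $\{\eta_i\}$ and the Perrin-Riou logarithms $\Log_{a/b}$, $\Log_c$ of \S\ref{subsubsec_2025_09_23_0739}.
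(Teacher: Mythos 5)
Your roadmap captures the high-level architecture correctly: you identify Theorem~\ref{thm_2025_03_11} as the right intermediate result, you correctly use Theorem~\ref{thm_2025_02_19_2217} to pass between the two displayed formulations of the theorem (i.e. $\Log_c\wedge\Log_{a/b}$ applied to $\delta(\cdot,\Delta_a)$ versus $\Log_c$ applied to $\delta(\cdot,\Delta_b)$), and you correctly attribute $S(M,\upi)$ to a comparison between the two splittings recorded by the cartesian squares \refsymbolA\ and \refsymbolB. Your observation that the crucial new phenomenon, relative to Theorem~\ref{thm_5_1_2025_02_14_1424}, is that the two Lagrangian subspaces of the rank-$4$ local cohomology need not fall inside a common flag, is exactly the right diagnosis of why the fudge factor appears.

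There is, however, a genuine gap in your plan, and it sits precisely at the step you flag as the ``main obstacle.'' You propose to establish the equality at every height-one prime of $\cR_3$ and then glue via normality and Lemma~\ref{lemma_cyclic_module_is_determined_by_height_one_primes}. But the right-hand side of the theorem contains the global element $S(M,\upi)=d_\upi^{-2}d_M^{-2}(c_M d_\upi - c_\upi d_M)^2$, and for this to even be defined you need the constants $c_\upi, d_\upi, c_M, d_M$ to exist as elements of $\cR_3$, not merely $\cR_{3,\mathfrak q}$-by-$\cR_{3,\mathfrak q}$. The content that supplies this is Lemma~\ref{lemma:quotient_by_saturation_is_free}: the quotients of $H^1(G_p,\varpi^*_{3,2}T_\upi^\dagger)$ by the saturations $H^1_\upi$ and $H^1_M$ are \emph{free} $\cR_3$-modules of rank two. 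This is a non-trivial structural result, proved in the paper via Nekov\'a\v{r}'s duality, reflexivity, and Lemma~\ref{lemma:algebraic-lemmas-free} (Auslander--Buchsbaum). Without it, the explicit bases $\{\varphi_{1,\upi},\varphi_{2,\upi}\}$ and $\{\varphi_{1,M},\varphi_{2,M}\}$ of Lemmas~\ref{lemma:basis_of_H^1_upi(G_p, T_upi^dagger)}--\ref{lemma:basis_of_H^1_M(G_p, T_M^dagger)} need not exist, $S(M,\upi)$ is not a priori a global object, and the localize-and-glue strategy is circular.

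Relatedly, you refer to ``the rank-$2$ version of Lemma~\ref{lemma_5_4_2025_02_14_1324}--Lemma~\ref{lemma_5_6_2025_02_14_1507}, using that the left-most terms in the relevant short exact sequences are torsion.'' In the rank-$1$ case this torsion trick is exactly what trivialises the comparison of splittings; the fact that it \emph{fails} in the rank-$2$ case is the whole point, and is what produces the non-trivial $S(M,\upi)$. The paper's replacement is the explicit $4\times 4$ determinant computation of Lemma~\ref{lemma_2025_02_24_1519} and the clean factorisation $\delta(T_3^\dagger,\Delta_a)=\mathrm{Log}_{\mathscr F^0}^{(4)}\big(\delta(M_3^\dagger,\tr^*\Delta_a)\otimes\delta(T_\upi^\dagger,\Delta_\emptyset)\big)$ of Proposition~\ref{prop:delta_a-formula2} (which you do not mention and which is the conceptual heart of the argument), followed by the coordinate evaluation in Proposition~\ref{prop_2025_07_15_1443}. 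So while your scaffolding is sound, the two missing ingredients --- the freeness of the saturation quotients and the explicit determinant factorisation --- are not cosmetic: they are exactly what turns your ``step (ii) requires a careful and explicit computation'' into an actual proof.
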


The proof of Theorem~\ref{thm_main_double_wall_crossing} will occupy the remainder of Section~\ref{subsec_5_2_2025_02_11}. 

\begin{lemma}
\label{lemma_5_8_2025_02_14_1642}
Let $S$ be a regular ring, and let $X_1$ and $X_2$ be torsion-free $S$-modules of rank $r$. Suppose that $f \colon X_1 \to X_2$ be an injective $S$-homomorphism. Then for any $\delta \in {\bigcap}^r_S\,X_1$, we have 
$$ 
\Char_S\left(
\left. {\bigcap}^r_S\,X_2 \middle/ Sf^{(r)}(\delta) \right. \right) = 
\Char_S\left(\left.{\bigcap}^r_S\,X_1\middle/S\delta \right.\right)
\Char_S\left(\coker(f) \right).
 $$ 
Here $f^{(r)} \colon {\bigcap}^r_S\,X_1 \longrightarrow {\bigcap}^r_S\,X_2$
denotes the $S$-homomorphism induced by $f$. 
\end{lemma}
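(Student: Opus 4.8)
The plan is to reduce the statement to a standard fact about exterior bi-duals and determinants of injective maps between torsion-free modules over a regular ring. First I would observe that the hypothesis that $f\colon X_1\to X_2$ is injective with $X_1,X_2$ torsion-free of the same rank $r$ forces $\coker(f)$ to be a torsion $S$-module, so $\Char_S(\coker(f))$ is a genuine (nonzero) ideal, and the induced map $f^{(r)}\colon\bigcap^r_S X_1\to\bigcap^r_S X_2$ is injective (again because both are torsion-free of rank one and $f$ is injective after inverting any height-one prime). Thus all three $\Char$-terms in the claimed identity are well-defined height-one ideals of $S$, and since $S$ is regular (hence normal), an ideal equality of this type may be checked at every height-one prime $\fp$ of $S$: the characteristic ideal of a torsion module localises correctly, and the formation of $\bigcap^r$, of $f^{(r)}$, and of $\coker(f)$ all commute with the flat base change $S\to S_\fp$.

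So I would localise at an arbitrary height-one prime $\fp$ and work over the discrete valuation ring $S_\fp$. Over a DVR, $X_1$ and $X_2$ become free of rank $r$; choosing bases adapted to $f$ via the elementary divisor theorem, we may write $f$ in Smith normal form $\mathrm{diag}(\pi^{a_1},\dots,\pi^{a_r})$ where $\pi$ is a uniformiser and $a_i\ge 0$. Then $\coker(f)\otimes S_\fp\cong\bigoplus_i S_\fp/\pi^{a_i}$, so $\Char_{S_\fp}(\coker(f))=(\pi^{\sum a_i})$. Meanwhile $\bigcap^r_{S_\fp}X_i$ is free of rank one, $f^{(r)}$ is multiplication by $\pi^{\sum a_i}$ in suitable bases (this is the defining property of the top exterior power on a diagonal map, carried through the bi-dual, which over a DVR agrees with the usual top exterior power since everything is free), and for the given $\delta\in\bigcap^r_S X_1$ with $\delta\otimes 1=u\,e$ for a generator $e$ and $u\in S_\fp$, one computes $\Char_{S_\fp}(\bigcap^r X_1/S\delta)=(u)$ and $\Char_{S_\fp}(\bigcap^r X_2/Sf^{(r)}(\delta))=(\pi^{\sum a_i}u)$. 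This gives the asserted identity of ideals in $S_\fp$, and since this holds for every height-one prime, the equality holds in $S$ by normality (as in the proof of Lemma~\ref{lemma_cyclic_module_is_determined_by_height_one_primes}, or simply because characteristic ideals of finitely generated torsion modules over a normal Noetherian ring are determined by their localisations at height-one primes).

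I expect the only genuinely delicate point — and hence the main obstacle to write carefully — to be the compatibility of the exterior bi-dual $\bigcap^r_S(-) = (\bigwedge^r_S(-)^*)^*$ with the injective map $f$ and with localisation: one must check that $f^{(r)}$ is well-defined, injective, and that over a DVR it coincides (up to the natural identifications) with the honest top exterior power of $f$, so that the elementary-divisor computation applies verbatim. This is where torsion-freeness of $X_1,X_2$ and regularity of $S$ are both used; once this bookkeeping is in place, the rest is the routine DVR computation sketched above. I would therefore structure the write-up as: (1) reduce to height-one primes by normality; (2) over a DVR, put $f$ in Smith normal form and identify $f^{(r)}$ with multiplication by $\pi^{\sum a_i}$; (3) read off all three characteristic ideals and conclude.
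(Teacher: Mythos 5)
Your argument is correct and follows essentially the same route as the paper's proof: reduce to a discrete valuation ring by localizing at height-one primes (using normality of the regular ring $S$), observe that over the DVR everything becomes free and $\bigcap^r$ agrees with the ordinary top exterior power, and identify $f^{(r)}$ with multiplication by $\det(f)$ (the Smith normal form computation you sketch is just an explicit way of seeing this). The paper's write-up is slightly more terse but the idea is identical.
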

\begin{proof}
    On localizing the ring $S$ at a height-one prime, we may assume that $S$ is a discrete valuation ring. Then $X_1$ and $X_2$ are free $S$-module of rank $r$, and hence we may also assume that $X_1 = X_2 = S^r$.   In this case, this lemma follows from the fact that ${\bigcap}^r_S\, S^r = S$ combined with the fact that $f^{(r)} \colon S \to S$ is given by $x \mapsto \det(f)x$. 
\end{proof}

\begin{definition}
    Given an integral domain $S$ and an $S$-module $X$, let us denote by $X_{\rm tor}\subset X$ the torsion-submodule of $X$ and denote by $X_{\rm tf}:=X/X_{\rm tor}$ its maximal torsion-free quotient. ${}$ \hfill$\blacksquare$
\end{definition}

\begin{lemma}
\label{lemma_5_10_2025_02_14_1513}
   Suppose that \eqref{item_nVdeltaT3} holds true.  
   \item[i)] 
 The following diagram, where all the entries are $\cR_3$-modules of rank two, commutes:
       $$ 
       \xymatrix{
      \mathbf{R}^1\Gamma_{\fin}(G_{\mathbb Q, \Sigma}, M_3^\dagger, \mathrm{tr}^*\Delta_{a}) \ar@{^{(}->}[r]^-{\delta} \ar@{^{(}->}[d]_-{\res_p} & 
      \mathbf{R}^2\Gamma_{\fin}(G_{\mathbb Q, \Sigma}, T_{\upi}^\dagger, \Delta_{0}) \otimes_{\varpi_{3,2}^*} \cR_3. 
      \\
     \dfrac{{H}^1(G_{p}, T_{\upi}^\dagger)}{\res_p({\mathbf{R}^1\Gamma_{\fin}(G_{\mathbb Q, \Sigma}, T_{\upi}^\dagger, \Delta_{\emptyset}))}}\otimes_{\varpi_{3,2}^*} \cR_3 \ar@{^{(}->}[ru]_-{\partial_{\underline{\Pi}}} 
    }
     $$ 
    \item[ii)]   We have 
    \begin{align*}
       \begin{aligned}
            \delta(T_3^\dagger, \Delta_{a})=\Char_{\cR_3}\left(\frac{{\bigcap}^2_{\cR_\upi}\mathbf{R}^2\Gamma_{\fin}(G_{\mathbb Q, \Sigma}, T_{\upi}^\dagger, \Delta_{0})_{\rm tf}\otimes \cR_3}{\delta^{(2)}\left(\delta(M_3^\dagger,\mathrm{tr}^*\Delta_{a})\right)}\right)\varpi_{3,2}^*\Char_{\cR_\upi}\left(\mathbf{R}^2\Gamma_{\fin}(G_{\mathbb Q, \Sigma}, T_{\upi}^\dagger, \Delta_{0})_{\rm tor}\right).
       \end{aligned} 
   \end{align*}
\end{lemma}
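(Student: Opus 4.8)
The plan is to establish (i) first, as it produces the diagram that drives (ii). For (i) I would argue exactly as in the proof of Lemma~\ref{lemma_5_3_2025_02_14_1213}(ii), with region $d$ replaced by region $a$ throughout. Hypothesis~\eqref{item_nVdeltaT3}, via Lemma~\ref{lemma_2025_02_20_1200}(i), yields $\mathbf{R}^1\Gamma_{\fin}(G_{\mathbb Q,\Sigma},T_3^\dagger,\Delta_a)=0=\mathbf{R}^1\Gamma_{\fin}(G_{\mathbb Q,\Sigma},T_\upi^\dagger,\Delta_0)$. Feeding these vanishings into the long exact cohomology sequences of the two rows of the diagram in Lemma~\ref{lemma_5_9_2025_02_14_1513}, I read off that the connecting map of the top row is an injection $\delta\colon\mathbf{R}^1\Gamma_{\fin}(G_{\mathbb Q,\Sigma},M_3^\dagger,\tr^*\Delta_a)\hookrightarrow\mathbf{R}^2\Gamma_{\fin}(G_{\mathbb Q,\Sigma},T_\upi^\dagger,\Delta_0)\otimes_{\varpi_{3,2}^*}\cR_3$, that $\res_p$ is injective on $\mathbf{R}^1\Gamma_{\fin}(G_{\mathbb Q,\Sigma},T_\upi^\dagger,\Delta_\emptyset)$, and that the connecting map $\partial_\upi$ of the bottom row factors through an injection $\partial_{\underline{\Pi}}$ of $\bigl(H^1(G_p,T_\upi^\dagger)/\res_p\mathbf{R}^1\Gamma_{\fin}(G_{\mathbb Q,\Sigma},T_\upi^\dagger,\Delta_\emptyset)\bigr)\otimes_{\varpi_{3,2}^*}\cR_3$ into $\mathbf{R}^2\Gamma_{\fin}(G_{\mathbb Q,\Sigma},T_\upi^\dagger,\Delta_0)\otimes_{\varpi_{3,2}^*}\cR_3$. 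Commutativity of the right-hand square of that diagram reads $\delta=\partial_{\underline{\Pi}}\circ\res_p$; as $\delta$ and $\partial_{\underline{\Pi}}$ are injective, the vertical $\res_p$ automatically descends to an injection into the quotient, which is the commutative triangle in (i). That all three terms are $\cR_3$-modules of rank two then follows from the Euler characteristic formula of \S\ref{sec:greenberg_selmer} (which gives $\chi=-2$ for both Selmer complexes $\mathbf{R}\Gamma_{\fin}(G_{\mathbb Q,\Sigma},M_3^\dagger,\tr^*\Delta_a)$ and $\mathbf{R}\Gamma_{\fin}(G_{\mathbb Q,\Sigma},T_\upi^\dagger,\Delta_0)[-1]$, using $\mathrm{rank}\,M_3^{\dagger,c=1}=6$, $\mathrm{rank}\,F_a^+M_3^\dagger=8$, $\mathrm{rank}\,T_\upi^{\dagger,c=1}=2$, $\mathscr F^4T_\upi^\dagger=0$) together with the vanishing of the relevant $\mathbf{R}^0$ (residual irreducibility), $\mathbf{R}^1$ (just established) and $\mathbf{R}^3$ (global duality plus self-duality up to a twist) groups; in particular $\mathbf{R}^2\Gamma_{\fin}(G_{\mathbb Q,\Sigma},M_3^\dagger,\tr^*\Delta_a)$ is $\cR_3$-torsion, so $\delta(M_3^\dagger,\tr^*\Delta_a)\neq0$ by Theorem~\ref{thm_2023_03_10}(i) — a fact I reuse below.

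For (ii) I would start from Theorem~\ref{thm_2023_03_10}(ii) applied to $(T_3^\dagger,\Delta_a)$: here $r(T_3^\dagger,\Delta_a)=8-8=0$, so ${\bigcap}^0_{\cR_3}\mathbf{R}^1\Gamma_{\fin}(G_{\mathbb Q,\Sigma},T_3^\dagger,\Delta_a)=\cR_3$ and hence $\delta(T_3^\dagger,\Delta_a)=\Char_{\cR_3}\bigl(\mathbf{R}^2\Gamma_{\fin}(G_{\mathbb Q,\Sigma},T_3^\dagger,\Delta_a)\bigr)$. Using (i) together with global duality (which kills $\mathbf{R}^1\Gamma_{\fin}(G_{\mathbb Q,\Sigma},T_3^\dagger,\Delta_a)$ and $\mathbf{R}^3\Gamma_{\fin}(G_{\mathbb Q,\Sigma},T_\upi^\dagger,\Delta_0)$, and shows $\mathbf{R}^2\Gamma_{\fin}(G_{\mathbb Q,\Sigma},M_3^\dagger,\tr^*\Delta_a)$ is torsion), the cohomology sequence of the top row of the diagram in Lemma~\ref{lemma_5_9_2025_02_14_1513} collapses to the exact sequence
\begin{align*}
0\to\mathbf{R}^1\Gamma_{\fin}(G_{\mathbb Q,\Sigma},M_3^\dagger,\tr^*\Delta_a)&\xrightarrow{\delta}\mathbf{R}^2\Gamma_{\fin}(G_{\mathbb Q,\Sigma},T_\upi^\dagger,\Delta_0)\otimes_{\varpi_{3,2}^*}\cR_3\\
&\to\mathbf{R}^2\Gamma_{\fin}(G_{\mathbb Q,\Sigma},T_3^\dagger,\Delta_a)\to\mathbf{R}^2\Gamma_{\fin}(G_{\mathbb Q,\Sigma},M_3^\dagger,\tr^*\Delta_a)\to0,
\end{align*}
so multiplicativity of characteristic ideals gives $\delta(T_3^\dagger,\Delta_a)=\Char_{\cR_3}(\coker\delta)\cdot\Char_{\cR_3}\bigl(\mathbf{R}^2\Gamma_{\fin}(G_{\mathbb Q,\Sigma},M_3^\dagger,\tr^*\Delta_a)\bigr)$. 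Because $\im\delta$ is torsion-free of full rank two, it meets the torsion submodule of its target trivially, so a snake-lemma comparison of $\coker\delta$ with $\coker\bar\delta$, where $\bar\delta\colon\mathbf{R}^1\Gamma_{\fin}(G_{\mathbb Q,\Sigma},M_3^\dagger,\tr^*\Delta_a)\hookrightarrow\mathbf{R}^2\Gamma_{\fin}(G_{\mathbb Q,\Sigma},T_\upi^\dagger,\Delta_0)_{\rm tf}\otimes_{\varpi_{3,2}^*}\cR_3$ is the induced map of torsion-free modules, yields $\Char_{\cR_3}(\coker\delta)=\varpi_{3,2}^*\Char_{\cR_\upi}\bigl(\mathbf{R}^2\Gamma_{\fin}(G_{\mathbb Q,\Sigma},T_\upi^\dagger,\Delta_0)_{\rm tor}\bigr)\cdot\Char_{\cR_3}(\coker\bar\delta)$.

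The final step is to rewrite $\Char_{\cR_3}(\coker\bar\delta)\cdot\Char_{\cR_3}(\mathbf{R}^2\Gamma_{\fin}(G_{\mathbb Q,\Sigma},M_3^\dagger,\tr^*\Delta_a))$ using Lemma~\ref{lemma_5_8_2025_02_14_1642} (whose proof localizes to the DVR case, so it applies with our normal base $\cR_3$), taken with $r=2$, $f=\bar\delta$, and $\delta=\delta(M_3^\dagger,\tr^*\Delta_a)\in{\bigcap}^2_{\cR_3}\mathbf{R}^1\Gamma_{\fin}(G_{\mathbb Q,\Sigma},M_3^\dagger,\tr^*\Delta_a)$ (legitimate since $r(M_3^\dagger,\tr^*\Delta_a)=2$ and this element is nonzero by (i)): the lemma equates $\Char_{\cR_3}$ of ${\bigcap}^2_{\cR_3}(\mathbf{R}^2\Gamma_{\fin}(G_{\mathbb Q,\Sigma},T_\upi^\dagger,\Delta_0)_{\rm tf}\otimes_{\varpi_{3,2}^*}\cR_3)$ modulo $\bar\delta^{(2)}(\delta(M_3^\dagger,\tr^*\Delta_a))$ with the product of $\Char_{\cR_3}(\coker\bar\delta)$ and $\Char_{\cR_3}$ of ${\bigcap}^2_{\cR_3}\mathbf{R}^1\Gamma_{\fin}(G_{\mathbb Q,\Sigma},M_3^\dagger,\tr^*\Delta_a)$ modulo $\delta(M_3^\dagger,\tr^*\Delta_a)$; and this last ideal equals $\Char_{\cR_3}(\mathbf{R}^2\Gamma_{\fin}(G_{\mathbb Q,\Sigma},M_3^\dagger,\tr^*\Delta_a))$ by Theorem~\ref{thm_2023_03_10}(ii) again. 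Combining with the previous paragraph, and using that ${\bigcap}^2$ commutes with the flat base change $\varpi_{3,2}^*\colon\cR_\upi\to\cR_3$ and only sees the torsion-free quotient — so that ${\bigcap}^2_{\cR_3}(\mathbf{R}^2\Gamma_{\fin}(G_{\mathbb Q,\Sigma},T_\upi^\dagger,\Delta_0)_{\rm tf}\otimes_{\varpi_{3,2}^*}\cR_3)=\bigl({\bigcap}^2_{\cR_\upi}\mathbf{R}^2\Gamma_{\fin}(G_{\mathbb Q,\Sigma},T_\upi^\dagger,\Delta_0)_{\rm tf}\bigr)\otimes_{\varpi_{3,2}^*}\cR_3$ and $\bar\delta^{(2)}$ is the map written $\delta^{(2)}$ in the statement — produces the asserted identity. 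I expect the main obstacle to be precisely this bookkeeping: one must split off the torsion submodule of $\mathbf{R}^2\Gamma_{\fin}(G_{\mathbb Q,\Sigma},T_\upi^\dagger,\Delta_0)$ before forming ${\bigcap}^2$, carry it through as the separate factor $\varpi_{3,2}^*\Char_{\cR_\upi}(\mathbf{R}^2\Gamma_{\fin}(G_{\mathbb Q,\Sigma},T_\upi^\dagger,\Delta_0)_{\rm tor})$, and verify that the snake lemma and Lemma~\ref{lemma_5_8_2025_02_14_1642} recombine exactly into the displayed formula.
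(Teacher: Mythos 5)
Your proposal is correct and follows essentially the same route as the paper: for (i) you chase the two exact triangles from Lemma~\ref{lemma_5_9_2025_02_14_1513} after the vanishings supplied by Lemma~\ref{lemma_2025_02_20_1200}(i), and for (ii) you start from $r(T_3^\dagger,\Delta_a)=0$, collapse the top row of the diagram to a short exact sequence whose cokernel is $\coker\delta$, split $\coker\delta$ into a torsion contribution $\mathbf{R}^2\Gamma_{\fin}(G_{\mathbb Q,\Sigma},T_\upi^\dagger,\Delta_0)_{\rm tor}\otimes\cR_3$ and $\coker\delta_{\rm tf}$, and invoke Lemma~\ref{lemma_5_8_2025_02_14_1642} together with Theorem~\ref{thm_2023_03_10}(ii) to recombine — which is precisely the paper's sequence of manipulations via \eqref{eqn_2025_02_14_1700}, \eqref{eqn_2025_02_14_1648}, and \eqref{eqn_2025_02_14_1642}, and you correctly flag (as the paper does at the very end of its proof) that ${\bigcap}^2$ only sees the torsion-free quotient, so $\delta^{(2)}\delta(M_3^\dagger,\tr^*\Delta_a)$ lands where the statement says it does.
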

\begin{proof}
   Lemma~\ref{lemma_5_9_2025_02_14_1513} combined with Lemma~\ref{lemma_2025_02_20_1200}(i) yields the following commutative diagram with exact rows: 
   \small
    \begin{align}
    \begin{aligned}
    \label{eqn_2025_03_10_1518}
                 \xymatrix{
      \mathbf{R}^1\Gamma_{\fin}(G_{\mathbb Q, \Sigma}, T_3^\dagger, \Delta_{a}) = 0 \ar[r]^{\rm tr^*} \ar[d]_{\rm tr}   \ar@{}[dr] | {\refsymbolB} &
      \mathbf{R}^1\Gamma_{\fin}(G_{\mathbb Q, \Sigma}, M_3^\dagger, \mathrm{tr}^*\Delta_{a}) \ar[r]^-{\delta} \ar[d]^{\res_p} & 
      \mathbf{R}^2\Gamma_{\fin}(G_{\mathbb Q, \Sigma}, T_{\upi}^\dagger, \Delta_{0}) \otimes_{\varpi_{3,2}^*} \cR_3 \ar[d]^{=}
      \\
      \mathbf{R}^1\Gamma_{\fin}(G_{\mathbb Q, \Sigma}, T_{\upi}^\dagger, \Delta_{\emptyset})\otimes_{\varpi_{3,2}^*} \cR_3  \ar[r]
    &
    \mathbf{R}^1\Gamma_{\fin}(G_{p}, T_{\upi}^\dagger) \otimes_{\varpi_{3,2}^*} \cR_3 \ar[r]_-{\partial_{\upi}}
    & \mathbf{R}^2\Gamma_{\fin}(G_{\mathbb Q, \Sigma}, T_{\upi}^\dagger, \Delta_{0}) \otimes_{\varpi_{3,2}^*} \cR_3. 
    }
        \end{aligned}
    \end{align} 
    \normalsize
    The proof of (i) follows immediately chasing this diagram. 
    
    We next prove (ii). In the exact sequence 
   \begin{align}
   \begin{aligned}
          \label{eqn_2025_02_14_1700}
       0\lra \,{\mathbf{R}^1\Gamma_{\fin}(G_{\mathbb Q, \Sigma}, M_3^\dagger, \mathrm{tr}^*\Delta_{a})} \xrightarrow{\,\delta\,}& \, {\mathbf{R}^2\Gamma_{\fin}(G_{\mathbb Q, \Sigma}, T_{\upi}^\dagger, \Delta_{0}) \otimes_{\varpi_{3,2}^*} \cR_3} \\
   &\quad \lra \ker\left(\mathbf{R}^2\Gamma_{\fin}(G_{\mathbb Q, \Sigma}, T_3^\dagger, \Delta_{a})\twoheadrightarrow  \mathbf{R}^2\Gamma_{\fin}(G_{\mathbb Q, \Sigma}, M_3^\dagger, \mathrm{tr}^*\Delta_{a})\right)\lra 0\,,
   \end{aligned}
   \end{align}
   the left-most module is torsion-free, and hence the induced map 
   $$\delta_{\rm tf} \colon \mathbf{R}^1\Gamma_{\fin}(G_{\mathbb Q, \Sigma}, M_3^\dagger, \mathrm{tr}^*\Delta_{a})\lra \mathbf{R}^2\Gamma_{\fin}(G_{\mathbb Q, \Sigma}, T_{\upi}^\dagger, \Delta_{0})_{\rm tf} \otimes_{\varpi_{3,2}^*} \cR_3$$
   is still injective. Moreover, both ${\rm coker}(\delta_{\rm tf})$ and ${\rm coker}(\delta)$ are $\cR_3$-torsion modules, and we have
   \begin{equation}
          \label{eqn_2025_02_14_1648}
       0\lra \mathbf{R}^2\Gamma_{\fin}(G_{\mathbb Q, \Sigma}, T_{\upi}^\dagger, \Delta_{0})_{\rm tor} \otimes_{\varpi_{3,2}^*} \cR_3 \lra {\rm coker}(\delta)\lra {\rm coker}(\delta_{\rm tf})\lra 0\,.
   \end{equation}
   
   It follows from Lemma~\ref{lemma_5_8_2025_02_14_1642} and \eqref{eqn_2025_02_14_1648} that 
   \begin{align}
       \label{eqn_2025_02_14_1642}
       \begin{aligned}
           \Char_{\cR_3}&\left(  {\rm coker}(\delta) \right)\, \Char_{\cR_3}\left(\frac{{\bigcap}^2_{\cR_\upi} \mathbf{R}^1\Gamma_{\fin}(G_{\mathbb Q, \Sigma}, M_3^\dagger, \mathrm{tr}^*\Delta_{a})\otimes \cR_3}{\delta(M_3^\dagger,\mathrm{tr}^*\Delta_{a})} \right)\\
           & = \Char_{\cR_3}\left(\frac{{\bigcap}^2_{\cR_\upi} \mathbf{R}^2\Gamma_{\fin}(G_{\mathbb Q, \Sigma}, T_{\upi}^\dagger, \Delta_{0})_{\rm tf}\otimes \cR_3}{\delta^{(2)}\left(\delta(M_3^\dagger,\mathrm{tr}^*\Delta_{a})\right)}\right)\varpi_{3,2}^*\Char_{\cR_\upi}\left(\mathbf{R}^2\Gamma_{\fin}(G_{\mathbb Q, \Sigma}, T_{\upi}^\dagger, \Delta_{0})_{\rm tor}\right).
       \end{aligned} 
   \end{align}
This, combined with \eqref{eqn_2025_02_14_1700} and Theorem~\ref{thm_2023_03_10}(ii) yields
\begin{align*}
       \begin{aligned}
           \delta(T_3^\dagger, \Delta_{a})=\Char_{\cR_3}\left(\frac{{\bigcap}^2_{\cR_\upi}\mathbf{R}^2\Gamma_{\fin}(G_{\mathbb Q, \Sigma}, T_{\upi}^\dagger, \Delta_{0})_{\rm tf}\otimes \cR_3}{\delta^{(2)}\left(\delta(M_3^\dagger,\mathrm{tr}^*\Delta_{a})\right)}\right)\varpi_{3,2}^*\Char_{\cR_\upi}\left(\mathbf{R}^2\Gamma_{\fin}(G_{\mathbb Q, \Sigma}, T_{\upi}^\dagger, \Delta_{0})_{\rm tor}\right)\,,
       \end{aligned} 
   \end{align*}
   and Part (iii) (which implicitly asserts that $\delta^{(2)}(\delta(M_3^\dagger,\mathrm{tr}^*\Delta_{a}))\subset {\bigcap}^2_{\cR_\upi} \mathbf{R}^2\Gamma_{\fin}(G_{\mathbb Q, \Sigma}, T_{\upi}^\dagger, \Delta_{0})\otimes \cR_3$) follows since we have ${\bigcap}^r_{\cR_\upi} X={\bigcap}^r_{\cR_\upi} X_{\rm tf}$ (by the definition of exterior bi-duals). 
\end{proof}
Let us set $Z:=\dfrac{H^1(G_{p}, T_{\upi}^\dagger)}{\res_p(\mathbf{R}^1\Gamma_{\fin}(G_{\mathbb Q, \Sigma}, T_{\upi}^\dagger, \Delta_{\emptyset}))}$ to ease notation. The map 
    $$\mathbf{R}^1\Gamma_{\fin}(G_{\mathbb Q, \Sigma}, M_3^\dagger, \mathrm{tr}^*\Delta_{a}) \xrightarrow{\res_p} Z\otimes_{\varpi_{3,2}^*}\cR_3$$ 
    (see Lemma~\ref{lemma_5_10_2025_02_14_1513}(ii)) is injective, the source is torsion-free, and both the source and target are of rank $2$. By Lemma~\ref{lemma_5_8_2025_02_14_1642}, $\res_p$ induces an injective map
\[
{\bigcap}^2_{\cR_3}\,\mathbf{R}^1\Gamma_{\fin}(G_{\mathbb Q, \Sigma}, M_3^\dagger, \mathrm{tr}^*\Delta_{a}) \xrightarrow{\res_p^{(2)}} {\bigcap}^2_{\cR_\upi} Z_{\rm tf}\otimes_{\varpi_{3,2}^*}\cR_3= {\bigcap}^2_{\cR_\upi} Z\otimes_{\varpi_{3,2}^*}\cR_3\,,
\]
allowing us to define the submodule $\res_p^{(2)}\left(\delta( M_3^\dagger, \mathrm{tr}^*\Delta_{a})\right)$ of ${\bigcap}^2_{\cR_\upi} Z\otimes_{\varpi_{3,2}^*}\cR_3$.

\begin{lemma}
\label{lemma_5_12_2025_02_14_1944}
Suppose that \eqref{item_nVdeltaT3} holds. Then:
\begin{align*}
          \begin{aligned}
              \Char_{\cR_3}\,&\left(\dfrac{{\bigcap}^2_{\cR_\upi} Z \otimes \cR_3}{\res_p^{(2)}(\delta(M_3^\dagger, {\rm tr}^*\Delta_a))}\right) \varpi_{3,2}^*\Char_{\cR_{\upi}}\left(\mathbf{R}^2\Gamma_{\fin}(G_{\mathbb{Q},\Sigma},T_{\upi}^\dagger, \Delta_{\emptyset}) \right)\,\Char_{\cR_{\upi}}\left(Z_{\rm tor}\right)
              \\
              &\, =\Char_{\cR_3}\left(\frac{{\bigcap}^2_{\cR_\upi} \mathbf{R}^2\Gamma_{\fin}(G_{\mathbb Q, \Sigma}, T_{\upi}^\dagger, \Delta_{0})\otimes \cR_3}{\delta^{(2)}(\delta(M_3^\dagger, {\rm tr}^*\Delta_a))}\right) \,\varpi_{3,2}^*\Char_{\cR_{\upi}}\left(\mathbf{R}^2\Gamma_{\fin}(G_{\mathbb Q, \Sigma}, T_{\upi}^\dagger, \Delta_{0})_{\rm tor} \right).
          \end{aligned}
   \end{align*}
\end{lemma}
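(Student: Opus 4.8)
The plan is to deduce Lemma~\ref{lemma_5_12_2025_02_14_1944} from the commutative diagram in Lemma~\ref{lemma_5_10_2025_02_14_1513}(i), which realizes $\partial_{\underline{\Pi}}$ as a map $Z\,\widehat\otimes_{\varpi_{3,2}^*}\cR_3 \hookrightarrow \mathbf{R}^2\Gamma_{\fin}(G_{\mathbb Q,\Sigma},T_\upi^\dagger,\Delta_0)\otimes_{\varpi_{3,2}^*}\cR_3$ through which $\delta = \partial_{\underline{\Pi}}\circ \res_p$ factors. First I would pass to the maximal torsion-free quotients: the exact sequence from Lemma~\ref{lemma_5_9_2025_02_14_1513} identifies $\partial_{\underline{\Pi}}$ with the connecting map whose kernel is the image of $\mathbf{R}^1\Gamma_{\fin}(G_{\mathbb Q,\Sigma},T_\upi^\dagger,\Delta_\emptyset)\otimes\cR_3$ and whose cokernel sits in $\mathbf{R}^2\Gamma_{\fin}(G_{\mathbb Q,\Sigma},T_\upi^\dagger,\Delta_0)$; so $\partial_{\underline{\Pi}}$ is injective on the torsion-free quotient $Z_{\rm tf}\otimes\cR_3$, with cokernel (up to a torsion piece coming from $Z_{\rm tor}$) controlled by $\varpi_{3,2}^*\Char_{\cR_\upi}\!\big(\mathbf{R}^2\Gamma_{\fin}(G_{\mathbb Q,\Sigma},T_\upi^\dagger,\Delta_\emptyset)\big)$, via the $p$-local term $\mathbf{R}^1\Gamma(G_p,T_\upi^\dagger)$. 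The precise bookkeeping here is: the characteristic ideal of $\coker(\partial_{\underline{\Pi}}:Z_{\rm tf}\otimes\cR_3\hookrightarrow \mathbf{R}^2\Gamma_{\fin}(\ldots,\Delta_0)_{\rm tf}\otimes\cR_3)$ equals $\varpi_{3,2}^*\Char_{\cR_\upi}\!\big(\mathbf{R}^2\Gamma_{\fin}(\ldots,\Delta_\emptyset)\big)$ divided (or multiplied, depending on the direction of the long exact sequence) by $\varpi_{3,2}^*\Char_{\cR_\upi}(Z_{\rm tor})$ and $\varpi_{3,2}^*\Char_{\cR_\upi}\!\big(\mathbf{R}^2\Gamma_{\fin}(\ldots,\Delta_0)_{\rm tor}\big)$; one reads these off from the snake lemma applied to the bottom row of \eqref{eqn_2025_03_10_1518}.

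Next I would apply Lemma~\ref{lemma_5_8_2025_02_14_1642} twice, with $r=2$ and $S=\cR_3$ (which is normal, hence the relevant bi-duals behave well). Once to the injection $\res_p^{(2)}$ of $\bigcap^2_{\cR_3}\mathbf{R}^1\Gamma_{\fin}(G_{\mathbb Q,\Sigma},M_3^\dagger,\tr^*\Delta_a)$ into $\bigcap^2_{\cR_\upi}Z\otimes\cR_3$, giving
\[
\Char_{\cR_3}\!\left(\frac{\bigcap^2_{\cR_\upi}Z\otimes\cR_3}{\res_p^{(2)}(\delta(M_3^\dagger,\tr^*\Delta_a))}\right)
=\Char_{\cR_3}\!\left(\frac{\bigcap^2_{\cR_3}\mathbf{R}^1\Gamma_{\fin}(G_{\mathbb Q,\Sigma},M_3^\dagger,\tr^*\Delta_a)}{\delta(M_3^\dagger,\tr^*\Delta_a)}\right)\Char_{\cR_3}\!\big(\coker(\res_p)\big),
\]
and once to $\partial_{\underline{\Pi}}^{(2)}:\bigcap^2_{\cR_\upi}Z_{\rm tf}\otimes\cR_3 \hookrightarrow \bigcap^2_{\cR_\upi}\mathbf{R}^2\Gamma_{\fin}(\ldots,\Delta_0)_{\rm tf}\otimes\cR_3$, giving a similar identity relating the $\res_p^{(2)}$-colength to the $\delta^{(2)}$-colength, with correction term $\Char_{\cR_3}(\coker(\partial_{\underline{\Pi}}))$. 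Since $\delta = \partial_{\underline{\Pi}}\circ\res_p$, we have $\delta^{(2)}=\partial_{\underline{\Pi}}^{(2)}\circ\res_p^{(2)}$, and multiplicativity of $\Char$ in short exact sequences lets me combine the two identities so that the factor $\Char_{\cR_3}(\coker(\res_p))$ cancels; what remains is precisely the asserted equality, once I substitute the expression for $\Char_{\cR_3}(\coker(\partial_{\underline{\Pi}}))$ in terms of $\varpi_{3,2}^*\Char_{\cR_\upi}(\mathbf{R}^2\Gamma_{\fin}(\ldots,\Delta_\emptyset))$, $\varpi_{3,2}^*\Char_{\cR_\upi}(Z_{\rm tor})$ and $\varpi_{3,2}^*\Char_{\cR_\upi}(\mathbf{R}^2\Gamma_{\fin}(\ldots,\Delta_0)_{\rm tor})$ obtained from the snake-lemma analysis of \eqref{eqn_2025_03_10_1518}.

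The main obstacle I anticipate is keeping the torsion/torsion-free decompositions and the base-change along $\varpi_{3,2}^*$ consistent throughout: one has to be careful that $\bigcap^2_{\cR_\upi}X\otimes_{\varpi_{3,2}^*}\cR_3 = \bigcap^2_{\cR_3}(X\otimes_{\varpi_{3,2}^*}\cR_3)$ for the modules $X$ appearing (this uses flatness of $\varpi_{3,2}^*$, or a direct check that the bi-dual commutes with the base change since $\cR_3$ is normal and $X_{\rm tf}$ is reflexive up to pseudo-null defect), and that the identity $\bigcap^r X = \bigcap^r X_{\rm tf}$ from the proof of Lemma~\ref{lemma_5_10_2025_02_14_1513} is invoked at the right places so that $\res_p^{(2)}$ and $\delta^{(2)}$ genuinely land in the stated bi-duals. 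Modulo this bookkeeping, the proof is a formal consequence of Lemma~\ref{lemma_5_8_2025_02_14_1642}, the multiplicativity of characteristic ideals, and the commutative diagram \eqref{eqn_2025_03_10_1518}; no new cohomological input is needed beyond what Lemmas~\ref{lemma_5_9_2025_02_14_1513} and \ref{lemma_5_10_2025_02_14_1513} already supply.
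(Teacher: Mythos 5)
Your approach is essentially the paper's: apply Lemma~\ref{lemma_5_8_2025_02_14_1642} to $\partial_{\rm tf}\colon Z_{\rm tf}\otimes\cR_3\hookrightarrow\mathbf{R}^2\Gamma_{\fin}(G_{\mathbb Q,\Sigma},T_\upi^\dagger,\Delta_0)_{\rm tf}\otimes\cR_3$ with input $\res_p^{(2)}(\delta(M_3^\dagger,\tr^*\Delta_a))\in{\bigcap}^2 Z\otimes\cR_3$ and $\delta^{(2)}=\partial^{(2)}\circ\res_p^{(2)}$, then trade $\Char(\coker(\partial_{\rm tf}))$ for the remaining three torsion factors via the exact sequence arising from \eqref{eqn_2025_02_14_2045}. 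Two remarks are in order, however.

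First, your initial application of Lemma~\ref{lemma_5_8_2025_02_14_1642} to $\res_p^{(2)}$ — producing a factor $\Char_{\cR_3}(\coker(\res_p))$ — is unnecessary, and the subsequent claim that this factor ``cancels'' is not really what happens: the sought identity never contains that factor, so there is nothing to cancel. (That identity for $\res_p^{(2)}$ is of course correct, and it reappears later in the proof of Lemma~\ref{lemma_2025_03_10_1431}; but Lemma~\ref{lemma_5_12_2025_02_14_1944} only needs the single application of Lemma~\ref{lemma_5_8_2025_02_14_1642} to $\partial_{\rm tf}$, with $\res_p^{(2)}(\delta(M_3^\dagger,\tr^*\Delta_a))$ treated as given input.) Dropping that step streamlines the argument.

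Second, the torsion bookkeeping you describe as ``read off from the snake lemma'' and ``divided (or multiplied, depending on the direction)'' needs to be pinned down. The precise statement is the four-term exact sequence
\[
0\lra Z_{\rm tor}\lra \mathbf{R}^2\Gamma_{\fin}(G_{\mathbb Q, \Sigma}, T_{\upi}^\dagger, \Delta_{0})_{\rm tor} \lra \coker(\partial)\lra \coker(\partial_{\rm tf})\lra 0\,,
\]
with $\coker(\partial)=\mathbf{R}^2\Gamma_{\fin}(G_{\mathbb Q,\Sigma},T_\upi^\dagger,\Delta_\emptyset)$ by \eqref{eqn_2025_02_14_2045}, whence
\[
\Char_{\cR_\upi}(Z_{\rm tor})\,\Char_{\cR_\upi}\!\big(\mathbf{R}^2\Gamma_{\fin}(G_{\mathbb Q,\Sigma},T_\upi^\dagger,\Delta_\emptyset)\big)
=\Char_{\cR_\upi}\!\big(\mathbf{R}^2\Gamma_{\fin}(G_{\mathbb Q,\Sigma},T_\upi^\dagger,\Delta_0)_{\rm tor}\big)\,\Char_{\cR_\upi}\!\big(\coker(\partial_{\rm tf})\big)\,.
\]
Substituting this into the identity from Lemma~\ref{lemma_5_8_2025_02_14_1642} applied to $\partial_{\rm tf}$ gives the lemma directly. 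With these two adjustments — removing the superfluous application to $\res_p^{(2)}$ and replacing the informal snake-lemma gesture with the explicit sequence above — your proof coincides with the one in the paper.
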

\begin{proof}
Consider the exact sequence 
\begin{align}
\label{eqn_2025_02_14_2045}
0\lra Z \xrightarrow{\,\,\partial\,\,} &\mathbf{R}^2\Gamma_{\fin}(G_{\mathbb Q, \Sigma}, T_{\upi}^\dagger, \Delta_{0}) \lra \mathbf{R}^2\Gamma_{\fin}(G_{\mathbb{Q},\Sigma},T_{\upi}^\dagger, \Delta_{\emptyset})\lra H^2(G_{p},  T_{\upi}^\dagger)=0\,.  
\end{align}
It induces an injective map $\partial_{\rm tf} \colon Z_{\rm tf} \longrightarrow \mathbf{R}^2\Gamma_{\fin}(G_{\mathbb Q, \Sigma}, T_{\upi}^\dagger, \Delta_{0})_{\rm tf}$ of $\cR_\upi$-modules of rank 2, and Lemma~\ref{lemma_5_8_2025_02_14_1642} gives rise to a map
\[
\partial^{(2)} \colon {\bigcap}^2_{\cR_\upi} Z = {\bigcap}^2_{\cR_\upi} Z_{\rm tf}\lra {\bigcap}^2_{\cR_\upi} \mathbf{R}^2\Gamma_{\fin}(G_{\mathbb Q, \Sigma}, T_{\upi}^\dagger, \Delta_{0})_{\rm tf} = {\bigcap}^2_{\cR_\upi} \mathbf{R}^2\Gamma_{\fin}(G_{\mathbb Q, \Sigma}, T_{\upi}^\dagger, \Delta_{0})\,,
\]
which has the following properties.

\item[a)] The diagram
\[
\xymatrix{
{\bigcap}^2_{\cR_3}\,\mathbf{R}^1\Gamma_{\fin}(G_{\mathbb Q, \Sigma}, M_3^\dagger, \mathrm{tr}^*\Delta_{a}) \ar[rd]_{\res_p^{(2)}}\ar[rr]^{\delta^{(2)}} && {\bigcap}^2_{\cR_\upi} \mathbf{R}^2\Gamma_{\fin}(G_{\mathbb Q, \Sigma}, T_{\upi}^\dagger, \Delta_{0})\otimes \cR_3\\
&{\bigcap}^2_{\cR_\upi} Z \otimes \cR_3 \ar[ur]_-{\partial^{(2)}}&
}
\]
commutes.
\item[b)] We have
\begin{align*}
    \begin{aligned}
         \Char_{\cR_3}& \left({\bigcap}^2_{\cR_\upi} Z \otimes \cR_3\,\big/\,\res_p^{(2)}(\delta(M_3^\dagger, {\rm tr}^*\Delta_a))\right)\varpi_{3,2}^*\Char_{\cR_{\upi}}\left({\rm coker}(\partial_{\rm tf})\right)  \\
         &\hspace{2cm}= \Char_{\cR_3}\left({\bigcap}^2_{\cR_{\upi}} \mathbf{R}^2\Gamma_{\fin}(G_{\mathbb Q, \Sigma}, T_{\upi}^\dagger, \Delta_{0})\otimes \cR_3\,\big/\,\partial^{(2)}\circ\res_p^{(2)}(\delta(M_3^\dagger, {\rm tr}^*\Delta_a))\right)\\
         &\hspace{2cm}=\Char_{\cR_3}\left({\bigcap}^2_{\cR_{\upi}} \mathbf{R}^2\Gamma_{\fin}(G_{\mathbb Q, \Sigma}, T_{\upi}^\dagger, \Delta_{0})\otimes \cR_3\,\big/\,\delta^{(2)}(\delta(M_3^\dagger, {\rm tr}^*\Delta_a))\right)\,,
    \end{aligned}
\end{align*}
where the second equality follows from property (a) above.

We note further, thanks to \eqref{eqn_2025_02_14_2045}, that the sequence
\begin{equation}
          \label{eqn_2025_02_14_2043}
       0\lra Z_{\rm tor}\lra \mathbf{R}^2\Gamma_{\fin}(G_{\mathbb Q, \Sigma}, T_{\upi}^\dagger, \Delta_{0})_{\rm tor} \lra {\rm coker}(\partial)\lra {\rm coker}(\partial_{\rm tf})\lra 0
   \end{equation}
is exact. Combining \eqref{eqn_2025_02_14_2043} with property (b), we conclude that
\begin{align*}
          \begin{aligned}
              \Char_{\cR_3}\,&\left(\dfrac{{\bigcap}^2_{\cR_{\upi}} Z \otimes \cR_3}{\res_p^{(2)}(\delta(M_3^\dagger, {\rm tr}^*\Delta_a))}\right) \varpi_{3,2}^*\Char_{\cR_{\upi}}\left(\mathbf{R}^2\Gamma_{\fin}(G_{\mathbb{Q},\Sigma},T_{\upi}^\dagger, \Delta_{\emptyset}) \right)\,\Char_{\cR_{\upi}}\left(Z_{\rm tor}\right)
              \\
              &\, =\Char_{\cR_3}\left(\frac{{\bigcap}^2_{\cR_{\upi}} \mathbf{R}^2\Gamma_{\fin}(G_{\mathbb Q, \Sigma}, T_{\upi}^\dagger, \Delta_{0})\otimes \cR_3}{\delta^{(2)}(\delta(M_3^\dagger, {\rm tr}^*\Delta_a))}\right) \,\varpi_{3,2}^*\Char_{\cR_{\upi}}\left(\mathbf{R}^2\Gamma_{\fin}(G_{\mathbb Q, \Sigma}, T_{\upi}^\dagger, \Delta_{0})_{\rm tor} \right).
          \end{aligned}
   \end{align*}
as required.
\end{proof}

\begin{corollary}
    \label{cor_2025_02_15_0829}
    In the setting of Lemma~\ref{lemma_5_12_2025_02_14_1944}, we have
    \begin{align*}
       \begin{aligned}
           \delta(T_3^\dagger, \Delta_{a})=
            \Char_{\cR_{3}}\,&\left(\dfrac{{\bigcap}^2_{\cR_{\upi}} Z \otimes \cR_3}{\res_p^{(2)}(\delta(M_3^\dagger, {\rm tr}^*\Delta_a))}\right) \varpi_{3,2}^*\Char_{\cR_{\upi}}\left(\mathbf{R}^2\Gamma_{\fin}(G_{\mathbb{Q},\Sigma},T_{\upi}^\dagger, \Delta_{\emptyset}) \right)\,\Char_{\cR_{\upi}}\left(Z_{\rm tor}\right).
       \end{aligned} 
   \end{align*}
\end{corollary}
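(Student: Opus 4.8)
The plan is to obtain the corollary by directly chaining the two preceding lemmas. From Lemma~\ref{lemma_5_10_2025_02_14_1513}(ii), under hypothesis \eqref{item_nVdeltaT3} one already has
\[
\delta(T_3^\dagger, \Delta_a) = \Char_{\cR_3}\!\left(\frac{{\bigcap}^2_{\cR_\upi}\mathbf{R}^2\Gamma_{\fin}(G_{\mathbb Q,\Sigma}, T_\upi^\dagger, \Delta_0)\otimes \cR_3}{\delta^{(2)}(\delta(M_3^\dagger, \mathrm{tr}^*\Delta_a))}\right)\cdot \varpi_{3,2}^*\Char_{\cR_\upi}\!\left(\mathbf{R}^2\Gamma_{\fin}(G_{\mathbb Q,\Sigma}, T_\upi^\dagger, \Delta_0)_{\rm tor}\right),
\]
where one uses ${\bigcap}^2_{\cR_\upi}X = {\bigcap}^2_{\cR_\upi}X_{\rm tf}$ to drop the subscript $_{\rm tf}$ on the bidual appearing there. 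The right-hand side of this display is exactly the right-hand side of the equality asserted in Lemma~\ref{lemma_5_12_2025_02_14_1944}; hence the left-hand side of that lemma equals $\delta(T_3^\dagger,\Delta_a)$, which is precisely the formula claimed in the corollary.

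In slightly more detail, I would spell out that Lemma~\ref{lemma_5_12_2025_02_14_1944} is the statement that, using the exact sequence $0\to Z\to \mathbf{R}^2\Gamma_{\fin}(G_{\mathbb Q,\Sigma},T_\upi^\dagger,\Delta_0)\to \mathbf{R}^2\Gamma_{\fin}(G_{\mathbb Q,\Sigma},T_\upi^\dagger,\Delta_\emptyset)\to 0$ together with the commuting triangle relating $\delta^{(2)}$, $\res_p^{(2)}$ and the induced map $\partial^{(2)}\colon {\bigcap}^2_{\cR_\upi}Z\to {\bigcap}^2_{\cR_\upi}\mathbf{R}^2\Gamma_{\fin}(G_{\mathbb Q,\Sigma},T_\upi^\dagger,\Delta_0)$, one may trade the characteristic-ideal quotient living in ${\bigcap}^2$ of $\mathbf{R}^2\Gamma_{\fin}(\ldots,\Delta_0)$ for the analogous quotient living in ${\bigcap}^2$ of $Z$, at the cost of the extra factors $\varpi_{3,2}^*\Char_{\cR_\upi}(\mathbf{R}^2\Gamma_{\fin}(\ldots,\Delta_\emptyset))$ and $\Char_{\cR_\upi}(Z_{\rm tor})$, and with the factor $\varpi_{3,2}^*\Char_{\cR_\upi}(\mathbf{R}^2\Gamma_{\fin}(\ldots,\Delta_0)_{\rm tor})$ removed. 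Substituting the Lemma~\ref{lemma_5_10_2025_02_14_1513}(ii) expression for $\delta(T_3^\dagger,\Delta_a)$ into the right-hand side of Lemma~\ref{lemma_5_12_2025_02_14_1944} and cancelling the matching factors then reads off the desired identity.

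I expect no genuine obstacle here: the corollary is a bookkeeping consequence of results already in hand, and the only point that requires care is keeping the exterior biduals and the biduals of the associated maximal torsion-free quotients consistently identified, so that $\delta^{(2)}(\delta(M_3^\dagger,\mathrm{tr}^*\Delta_a))$ is regarded inside the same ambient lattice in both lemmas. This is precisely the parenthetical remark closing the proof of Lemma~\ref{lemma_5_10_2025_02_14_1513}, and tracking it through is what makes the two expressions literally identical rather than merely equal up to an ideal one would have to match separately.
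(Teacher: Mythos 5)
Your proposal is correct and is precisely the (only reasonable) argument: substitute the formula for $\delta(T_3^\dagger,\Delta_a)$ from Lemma~\ref{lemma_5_10_2025_02_14_1513}(ii) into the right-hand side of the identity in Lemma~\ref{lemma_5_12_2025_02_14_1944}, using ${\bigcap}^2_{\cR_\upi}X = {\bigcap}^2_{\cR_\upi}X_{\rm tf}$ to match the denominators. The paper states the corollary without proof because it is exactly this one-line chaining, so there is no divergence in approach.
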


The proof of the following lemma is identical to that of Lemma~\ref{lemma_5_5_2025_02_14_1331}.

\begin{lemma}
    \label{lemma_5_5_2025_02_14_1331_bis}
Assume that \eqref{item_nVdeltaT3}, \eqref{item_AJ_Pi} and \eqref{item_parity_Pi} hold. Then:
$$\mathbf{R}^1\Gamma_{\fin}(G_{\mathbb Q, \Sigma}, T_{\upi}^\dagger, \Delta_{\mathscr{F}^1}) = \mathbf{R}^1\Gamma_{\fin}(G_{\mathbb Q, \Sigma}, T_{\upi}^\dagger, \Delta_{\mathscr{F}^2})\,.$$  
\end{lemma}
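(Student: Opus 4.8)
The plan is to transcribe the proof of Lemma~\ref{lemma_5_5_2025_02_14_1331} essentially verbatim, keeping careful track of how the intermediate facts it uses are supplied by the present hypotheses. Recall the shape of that argument: one first shows that the two $\cR_{\upi}$-modules $\mathbf{R}^1\Gamma_{\fin}(G_{\mathbb Q, \Sigma}, T_{\upi}^\dagger, \Delta_{\mathscr F^1})$ and $\mathbf{R}^1\Gamma_{\fin}(G_{\mathbb Q, \Sigma}, T_{\upi}^\dagger, \Delta_{\mathscr F^2})$ both have rank one; since $\mathscr F^2 T_{\upi}^\dagger \subset \mathscr F^1 T_{\upi}^\dagger$, the natural map between them is injective (because $H^0(G_p, {\mathscr Gr}^1 T_{\upi}^\dagger) = 0$ by Assumption~\ref{ass-gsp4}(b)), with cokernel embedding into $H^1(G_p, {\mathscr Gr}^1 T_{\upi}^\dagger)$, which is $\cR_{\upi}$-free of rank one, again by Assumption~\ref{ass-gsp4}(b). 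Hence the cokernel is a torsion-free $\cR_{\upi}$-module of rank zero, so it vanishes, which is the assertion. Only the two rank-one statements need to be re-derived in the present context; everything else is word for word as in Lemma~\ref{lemma_5_5_2025_02_14_1331}.

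For $\mathbf{R}^1\Gamma_{\fin}(G_{\mathbb Q, \Sigma}, T_{\upi}^\dagger, \Delta_{\mathscr F^1})$ I would invoke \eqref{item_AJ_Pi}: since $r(T_{\upi}^\dagger, \Delta_{\mathscr F^1}) = \mathrm{rank}_{\cR_{\upi}}((T_{\upi}^\dagger)^{c=-1}) - \mathrm{rank}_{\cR_{\upi}}(T_{\upi}^\dagger/\mathscr F^1 T_{\upi}^\dagger) = 2 - 1 = 1$ and ${\bigcap}^1_{\cR_{\upi}} \mathbf{R}^1\Gamma_{\fin}(G_{\mathbb Q, \Sigma}, T_{\upi}^\dagger, \Delta_{\mathscr F^1})$ is torsion-free over the domain $\cR_{\upi}$, a nonzero generator of $\delta(T_{\upi}^\dagger, \Delta_{\mathscr F^1})$ is $\cR_{\upi}$-regular, so Theorem~\ref{thm_2023_03_10}(i) forces $\mathbf{R}^2\Gamma_{\fin}(G_{\mathbb Q, \Sigma}, T_{\upi}^\dagger, \Delta_{\mathscr F^1})$ to be $\cR_{\upi}$-torsion; together with the vanishing of $\mathbf{R}^0\Gamma_{\fin}$ and $\mathbf{R}^3\Gamma_{\fin}$ of the relevant Selmer complexes (Assumption~\ref{ass-gsp4}(a) and global duality), the Euler--Poincaré identity $\mathrm{rank}\,\mathbf{R}^1\Gamma_{\fin} - \mathrm{rank}\,\mathbf{R}^2\Gamma_{\fin} = r(T_{\upi}^\dagger, \Delta_{\mathscr F^1}) = 1$ yields $\mathrm{rank}_{\cR_{\upi}} \mathbf{R}^1\Gamma_{\fin}(G_{\mathbb Q, \Sigma}, T_{\upi}^\dagger, \Delta_{\mathscr F^1}) = 1$. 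This is the role played in the proof of Lemma~\ref{lemma_5_5_2025_02_14_1331} by the vanishing $\mathbf{R}^1\Gamma_{\fin}(G_{\mathbb Q, \Sigma}, T_{\upi}^\dagger, \Delta_{\mathscr F^3}) = 0$; if one prefers to keep that vanishing, it can be recovered here from \eqref{item_nVdeltaT3} and Lemma~\ref{lemma_2025_02_20_1200}(i) (which give $\mathbf{R}^1\Gamma_{\fin}(G_{\mathbb Q, \Sigma}, T_{\upi}^\dagger, \Delta_0) = 0$, hence an embedding $\mathbf{R}^1\Gamma_{\fin}(G_{\mathbb Q, \Sigma}, T_{\upi}^\dagger, \Delta_{\mathscr F^3}) \hookrightarrow H^1(G_p, \mathscr F^3 T_{\upi}^\dagger)$ with torsion-free target) together with the torsionness of $\mathbf{R}^2\Gamma_{\fin}(G_{\mathbb Q, \Sigma}, T_{\upi}^\dagger, \Delta_{\mathscr F^1})$ just obtained and the global-duality equality of ranks coming from $\mathscr F^3 T_{\upi}^\dagger = (\mathscr F^1 T_{\upi}^\dagger)^\perp$ under the self-duality $T_{\upi}^\dagger \otimes T_{\upi}^\dagger \to \cR_{\upi}(1)$.

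For $\mathbf{R}^1\Gamma_{\fin}(G_{\mathbb Q, \Sigma}, T_{\upi}^\dagger, \Delta_{\mathscr F^2})$ I would invoke \eqref{item_parity_Pi}, i.e.\ $\delta(T_{\upi}^\dagger, \Delta_{\mathscr F^2}) = 0$: by the contrapositive of Theorem~\ref{thm_2023_03_10}(i), the module $\mathbf{R}^2\Gamma_{\fin}(G_{\mathbb Q, \Sigma}, T_{\upi}^\dagger, \Delta_{\mathscr F^2})$ is not $\cR_{\upi}$-torsion, hence of positive rank; since $r(T_{\upi}^\dagger, \Delta_{\mathscr F^2}) = 2 - 2 = 0$, the Euler--Poincaré identity gives $\mathrm{rank}_{\cR_{\upi}} \mathbf{R}^1\Gamma_{\fin}(G_{\mathbb Q, \Sigma}, T_{\upi}^\dagger, \Delta_{\mathscr F^2}) = \mathrm{rank}_{\cR_{\upi}} \mathbf{R}^2\Gamma_{\fin}(G_{\mathbb Q, \Sigma}, T_{\upi}^\dagger, \Delta_{\mathscr F^2}) \geq 1$, and combined with the bound $\leq 1$ coming from the injection of the first paragraph it is exactly one. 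The only point needing genuine care beyond bookkeeping is the rank computation for $\Delta_{\mathscr F^1}$: unlike the region-$(d)$ situation of Lemma~\ref{lemma_5_5_2025_02_14_1331}, here \eqref{item_nVdeltaT3} delivers the vanishing of $\mathbf{R}^1\Gamma_{\fin}$ only for the local condition $\Delta_0 = \Delta_{\mathscr F^4}$, not for $\Delta_{\mathscr F^3}$, so one must route through \eqref{item_AJ_Pi} and the Euler--Poincaré formula rather than reading the rank off a vanishing statement directly.
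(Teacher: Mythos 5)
Your proof is correct, and your attention to where a verbatim transcription of the proof of Lemma~\ref{lemma_5_5_2025_02_14_1331} breaks down is exactly right. In the region-$(d)$ setting, the rank-one statement for $\mathbf{R}^1\Gamma_{\fin}(G_{\mathbb Q, \Sigma}, T_{\upi}^\dagger, \Delta_{\mathscr{F}^1})$ is read off, via global duality, from the vanishing $\mathbf{R}^1\Gamma_{\fin}(G_{\mathbb Q, \Sigma}, T_{\upi}^\dagger, \Delta_{\mathscr{F}^3}) = 0$ supplied by Lemma~\ref{lemma_5_3_2025_02_14_1213}(i). Here \eqref{item_nVdeltaT3} only gives $\mathbf{R}^1\Gamma_{\fin}(G_{\mathbb Q, \Sigma}, T_{\upi}^\dagger, \Delta_0) = 0$ via Lemma~\ref{lemma_2025_02_20_1200}(i), and since $\Delta_0$ imposes a strictly stronger local condition at $p$ than $\Delta_{\mathscr{F}^3}$, this alone does not yield the $\Delta_{\mathscr{F}^3}$-vanishing. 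Your substitute --- applying Theorem~\ref{thm_2023_03_10} to $\delta(T_{\upi}^\dagger, \Delta_{\mathscr{F}^1}) \neq 0$ to conclude that $\mathbf{R}^2\Gamma_{\fin}(G_{\mathbb Q, \Sigma}, T_{\upi}^\dagger, \Delta_{\mathscr{F}^1})$ is $\cR_{\upi}$-torsion and then invoking the Euler--Poincar\'e identity --- lands on the same rank-one conclusion and is exactly what is needed. The remainder of the argument (the lower bound for $\mathrm{rank}_{\cR_{\upi}}\,\mathbf{R}^1\Gamma_{\fin}(G_{\mathbb Q, \Sigma}, T_{\upi}^\dagger, \Delta_{\mathscr{F}^2})$ from \eqref{item_parity_Pi}, the upper bound from the inclusion, and the torsion-free quotient embedding into $H^1(G_p, {\mathscr Gr}^1 T_{\upi}^\dagger)$) carries over verbatim from the proof of Lemma~\ref{lemma_5_5_2025_02_14_1331}. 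One incidental observation: your main line of argument uses only \eqref{item_AJ_Pi} and \eqref{item_parity_Pi}; the hypothesis \eqref{item_nVdeltaT3} enters only in the optional recovery sketch in your last paragraph, and there in a somewhat redundant way since that sketch also relies on the torsionness of $\mathbf{R}^2\Gamma_{\fin}(G_{\mathbb Q, \Sigma}, T_{\upi}^\dagger, \Delta_{\mathscr{F}^1})$, which already came from \eqref{item_AJ_Pi}. So the lemma in fact holds under the two assumptions \eqref{item_AJ_Pi} and \eqref{item_parity_Pi}; keeping \eqref{item_nVdeltaT3} in the statement is harmless and matches the running Hypotheses~\ref{subsubsec_2024_07_15_1105}.
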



\subsubsection{} In preparation for our main result, we will prove Lemma~\ref{lemma_2025_02_24_1519} below. Let us fix a height-one prime $\mathfrak{q}$ of $\cR_3$. For notational simplicity, we put 
\begin{align*}
    S:= \cR_{3, \mathfrak{p}}, 
 \quad   W_i:= H^1(\mathbb{Q}_p, \mathscr{F}^iT_\upi^\dagger) \otimes_{\cR_{\upi}} S, \quad
    N_? &:= \textrm{the image of } \mathbf{R}^1\Gamma_{\fin}(G_{\mathbb{Q},\Sigma}, M_3^\dagger, {\rm tr}^*\Delta_{?}) \otimes_{\cR_3} S \textrm{ in } W_0, 
    \\
    L_? &:= \textrm{the image of } \mathbf{R}^1\Gamma_{\fin}(G_{\mathbb{Q},\Sigma}, T^\dagger_{\underline{\Pi}}, \Delta_{\mathscr{F}^?}) \otimes_{\cR_{\upi}} S \textrm{ in } W_0. 
\end{align*}
Note that $Z \otimes_{\cR_{\upi}} S = W_0/L_0$ and $Z_{\rm tor} \otimes_{\cR_{\upi}} S =  (W_0/L_0)_{\rm tor}$. 

\begin{lemma}
\label{lemma_2025_02_24_1519} We have
\[
\Char_S(\bigwedge^2 (W_0/L_0)_{\rm tf}/\bigwedge^2 N_a) 
\Char_S((W_0/L_0)_{\rm tors}) = \Char_S(W_0/(L_0+N_a)). 
\]
\end{lemma}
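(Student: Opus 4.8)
The plan is to prove the displayed identity by a purely module-theoretic computation, working one height-one prime of $S$ at a time (recall $S = \cR_{3,\fp}$ is a discrete valuation ring). The key point is that $N_a$ and $L_0$ are submodules of the torsion-free finitely generated $S$-module $W_0$, so one can compare exterior powers of the various quotients via the standard determinant formula for maps of free modules over a DVR (this is exactly Lemma~\ref{lemma_5_8_2025_02_14_1642}).

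First I would record the ranks: by \eqref{item_nVdeltaT3} and the argument of Lemma~\ref{lemma_5_10_2025_02_14_1513}(i), $N_a$ is of rank $2$ and (after tensoring with $S$) injects into $W_0/L_0$; and $L_0$, being the image of $\mathbf{R}^1\Gamma_{\fin}(G_{\mathbb Q,\Sigma},T_{\upi}^\dagger,\Delta_\emptyset)$, has the property that $W_0/L_0$ has rank $2$ as well, so that $N_a \hookrightarrow (W_0/L_0)$ with torsion-free target of the same rank. The composite $N_a \hookrightarrow W_0/L_0 \twoheadrightarrow (W_0/L_0)_{\rm tf}$ is then an injection of rank-$2$ torsion-free $S$-modules, and I would apply Lemma~\ref{lemma_5_8_2025_02_14_1642} (with $r=2$, $X_1 = N_a$, $X_2 = (W_0/L_0)_{\rm tf}$, and $\delta$ a generator of $\bigwedge^2 N_a$) to get
\[
\Char_S\!\left(\textstyle\bigwedge^2 (W_0/L_0)_{\rm tf}\big/\bigwedge^2 N_a\right)
= \Char_S\!\left(\coker\bigl(N_a \to (W_0/L_0)_{\rm tf}\bigr)\right).
\]

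Next I would dispose of the torsion. From the short exact sequence $0 \to (W_0/L_0)_{\rm tor} \to W_0/L_0 \to (W_0/L_0)_{\rm tf} \to 0$ and the snake lemma applied to $N_a \hookrightarrow W_0/L_0$ (noting $N_a$ is torsion-free so $N_a \cap (W_0/L_0)_{\rm tor} = 0$), one gets an exact sequence
\[
0 \longrightarrow (W_0/L_0)_{\rm tor} \longrightarrow \coker\bigl(N_a \to W_0/L_0\bigr) \longrightarrow \coker\bigl(N_a \to (W_0/L_0)_{\rm tf}\bigr) \longrightarrow 0,
\]
so that $\Char_S(\coker(N_a \to W_0/L_0)) = \Char_S((W_0/L_0)_{\rm tor}) \cdot \Char_S(\coker(N_a \to (W_0/L_0)_{\rm tf}))$. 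Finally, $\coker(N_a \to W_0/L_0) = W_0/(L_0 + N_a)$ by definition. Combining the three displays gives the claimed identity over $S$; since the statement is an equality of ideals of $S$ that has now been checked (one may as well have run the argument directly over the DVR $S$), the proof is complete. I do not anticipate a serious obstacle here — the only things to be careful about are verifying the rank count $\rank_S(W_0/L_0) = 2 = \rank_S N_a$ (which rests on \eqref{item_nVdeltaT3} and global duality, exactly as in Lemma~\ref{lemma_5_10_2025_02_14_1513}) and checking that $N_a \hookrightarrow W_0$, i.e. that $\res_p$ is injective on $\mathbf{R}^1\Gamma_{\fin}(G_{\mathbb Q,\Sigma},M_3^\dagger,{\rm tr}^*\Delta_a)$, which again is part of Lemma~\ref{lemma_5_10_2025_02_14_1513}(i).
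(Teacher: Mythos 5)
Your proof is correct, and it takes a genuinely different route from the paper's. The paper proceeds by pure coordinate computation: it chooses a basis $\{e_1,e_2,e_3,e_4\}$ of the free rank-$4$ module $W_0$ adapted to $L_0$ (so that $L_0 = Sae_1 + Sbe_2$), writes out generators of $N_a$ in these coordinates, and verifies the identity by computing all three characteristic ideals as explicit $2\times 2$ and $4\times 4$ determinants — the left side becomes $S(c_1d_2-c_2d_1)\cdot S(ab)$ and the right side $S(ab(c_1d_2-c_2d_1))$. You instead bypass the choice of basis entirely: you feed the injection $N_a \hookrightarrow (W_0/L_0)_{\rm tf}$ into the determinant lemma (Lemma~\ref{lemma_5_8_2025_02_14_1642}) to identify $\Char_S\bigl(\bigwedge^2 (W_0/L_0)_{\rm tf}/\bigwedge^2 N_a\bigr)$ with $\Char_S\bigl(\coker(N_a \to (W_0/L_0)_{\rm tf})\bigr)$, and then use the snake-lemma exact sequence $0 \to (W_0/L_0)_{\rm tor} \to W_0/(L_0+N_a) \to \coker\bigl(N_a \to (W_0/L_0)_{\rm tf}\bigr) \to 0$ together with multiplicativity of $\Char_S$. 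This is cleaner and more structural, though the paper's explicit basis is chosen deliberately: it sets up the explicit matrix notation $(a,b,a_i,b_i,c_i,d_i)$ that reappears in the subsequent analysis (Lemmas~\ref{lemma:basis_of_H^1_upi(G_p, T_upi^dagger)}, \ref{lemma:basis_of_H^1_M(G_p, T_M^dagger)} and Proposition~\ref{prop_2025_07_15_1443}) where the quantities $c_\upi, d_\upi, c_M, d_M$ and the factor $S(M,\upi)$ are read off.

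One small imprecision worth flagging: in your opening paragraph you speak of ``$N_a \hookrightarrow (W_0/L_0)$ with torsion-free target of the same rank.'' In general $W_0/L_0$ is \emph{not} torsion-free (indeed its torsion is exactly the quantity being tracked), so that phrase should be dropped; but since the very next sentence correctly routes through $(W_0/L_0)_{\rm tf}$, this is a slip in wording rather than a gap in the argument. Also, it is worth making explicit that the injectivity of $N_a \to W_0/L_0$ (equivalently $N_a \cap L_0 = 0$) is supplied by the cartesian square \refsymbolB{} in \eqref{eqn_2025_03_10_1518} together with the vanishing $\mathbf{R}^1\Gamma_{\fin}(G_{\mathbb Q,\Sigma}, T_3^\dagger, \Delta_a)=0$ under \eqref{item_nVdeltaT3}; you cite Lemma~\ref{lemma_5_10_2025_02_14_1513}(i) for this, which is indeed where it is established.
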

\begin{proof}
One can choose a basis $\{e_1, e_2, e_3, e_4\}$ of $W_0$ such that $L_0 = Sae_1 + Sbe_2$. Let us put 
$$N_a = S(a_1e_1 + b_1e_2 + c_1e_3 + d_1e_4) + S(a_2e_1 + b_2e_2 + c_2e_3 + d_2e_4)\,.$$ 
Since $(W_0/L_0)_{\rm tors} = W_0/(Se_1 + Se_2)$, we have 
\[
\Char_S\left(\bigwedge^2 (W_0/L_0)_{\rm tf}\,\Big{/}\,\bigwedge^2 N_a\right) = S(c_1d_2 - c_2d_1). 
\]
Hence, 
\[
\Char_S\left(\bigwedge^2 (W_0/L_0)_{\rm tf}\big{/}\bigwedge^2 N_a\right)\cdot \Char_S\left((W_0/L_0)_{\rm tors}\right)  = S(ab(c_1d_2 - c_2d_1)). 
\]
The proof of our lemma follows since we also have
\begin{align*}
    \Char_S\left(W_0\Big{/}(L_0+N_a)\right) = S \det\begin{pmatrix}
        a&0&0&0
        \\
        0&b&0&0
        \\
        a_1&b_1&c_1&d_1
        \\
        a_2&b_2&c_2&d_2
    \end{pmatrix}
    = S(ab(c_1d_2 - c_2d_1)). 
\end{align*}
\end{proof}

\begin{lemma}
        \label{lemma_2025_03_10_1431}
Assume that \eqref{item_nVdeltaT3} holds. We have,
\begin{align}
\begin{aligned}
       \label{eqn_2025_02_17_1116}
       \delta(T_3^\dagger, \Delta_a) &= \Char_{\cR_3}\left(\dfrac{H^1(G_{p}, T_{\upi}^\dagger) \otimes \cR_3}{\res_p\,\mathbf{R}^1\Gamma_{\fin}(G_{\mathbb Q, \Sigma}, \varpi_{3,2}^*T_{\upi}^\dagger, \Delta_{\emptyset}) \,\oplus \, \res_p\,\mathbf{R}^1\Gamma_{\fin}(G_{\mathbb Q, \Sigma}, M_3^\dagger, \mathrm{tr}^*\Delta_{a})}\right) \\
        &\hspace{2cm}\times \Char_{\cR_3}\left(\frac{{\bigcap}^2_{\cR_3} \mathbf{R}^1\Gamma_{\fin}(G_{\mathbb{Q},\Sigma}, M_3^\dagger, {\rm tr}^*\Delta_{a}) }{\delta(M_3^\dagger, {\rm tr}^*\Delta_a)}\right)
        \times \Char_{\cR_{\upi}}\left(\frac{{\bigcap}^2_{\cR_{\upi}} \mathbf{R}^1\Gamma_{\fin}(G_{\mathbb{Q},\Sigma}, T^\dagger_{\underline{\Pi}}, \Delta_{\emptyset}) }{\delta(T^\dagger_{\underline{\Pi}}, \Delta_{\emptyset})}\right) \,.
               \end{aligned} 
\end{align}
\end{lemma}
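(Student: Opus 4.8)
The statement of Lemma~\ref{lemma_2025_03_10_1431} is an identity of characteristic ideals in $\cR_3$, obtained by rewriting the formula
\[
\delta(T_3^\dagger,\Delta_a)=
\Char_{\cR_{3}}\!\left(\tfrac{{\bigcap}^2_{\cR_{\upi}} Z \otimes \cR_3}{\res_p^{(2)}(\delta(M_3^\dagger, {\rm tr}^*\Delta_a))}\right)
\varpi_{3,2}^*\Char_{\cR_{\upi}}\!\left(\mathbf{R}^2\Gamma_{\fin}(G_{\mathbb Q,\Sigma},T_{\upi}^\dagger, \Delta_{\emptyset}) \right)\Char_{\cR_{\upi}}\!\left(Z_{\rm tor}\right)
\]
of Corollary~\ref{cor_2025_02_15_0829} into a form in which the three factors correspond respectively to the "relative position" fudge term, the module of leading terms for $M_3^\dagger$, and the module of leading terms for $T^\dagger_{\upi}$. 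The plan is first to recall that $Z\otimes_{\cR_\upi}S = W_0/L_0$ (as already recorded before Lemma~\ref{lemma_2025_02_24_1519}), where $W_0=H^1(G_p,T_\upi^\dagger)$ and $L_0=\res_p\mathbf{R}^1\Gamma_{\fin}(G_{\mathbb Q,\Sigma},T^\dagger_\upi,\Delta_\emptyset)$, the latter identification using that $\res_p$ is injective on $\mathbf{R}^1\Gamma_{\fin}(G_{\mathbb Q,\Sigma},T^\dagger_\upi,\Delta_\emptyset)$ (which holds under \eqref{item_nVdeltaT3} by Lemma~\ref{lemma_2025_02_20_1200}(i) and global duality). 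Writing $N_a$ for the image of $\mathbf{R}^1\Gamma_{\fin}(G_{\mathbb Q,\Sigma},M_3^\dagger,{\rm tr}^*\Delta_a)$ in $W_0$ under $\res_p$ — again injective by Lemma~\ref{lemma_5_10_2025_02_14_1513}(i) — Lemma~\ref{lemma_2025_02_24_1519} gives, over every height-one localization $S$,
\[
\Char_S\!\left(\tfrac{{\bigcap}^2 (W_0/L_0)_{\rm tf}}{{\bigcap}^2 N_a}\right)\Char_S\!\left((W_0/L_0)_{\rm tors}\right)=\Char_S\!\left(W_0/(L_0+N_a)\right),
\]
and since $\res_p^{(2)}(\delta(M_3^\dagger,{\rm tr}^*\Delta_a))$ differs from ${\bigcap}^2 N_a$ precisely by ${\bigcap}^2_{\cR_3}\mathbf{R}^1\Gamma_{\fin}(M_3^\dagger,{\rm tr}^*\Delta_a)/\delta(M_3^\dagger,{\rm tr}^*\Delta_a)$, the first two factors of Corollary~\ref{cor_2025_02_15_0829} combine to $\Char_{\cR_3}(W_0\otimes\cR_3/(L_0\oplus N_a))\cdot\Char_{\cR_3}({\bigcap}^2\mathbf{R}^1\Gamma_{\fin}(M_3^\dagger,{\rm tr}^*\Delta_a)/\delta(M_3^\dagger,{\rm tr}^*\Delta_a))$, which is the first and second factor on the right of \eqref{eqn_2025_02_17_1116}.

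The remaining task is to account for the third factor of Corollary~\ref{cor_2025_02_15_0829}, namely $\varpi_{3,2}^*\Char_{\cR_\upi}(\mathbf{R}^2\Gamma_{\fin}(G_{\mathbb Q,\Sigma},T^\dagger_\upi,\Delta_\emptyset))$, and to turn it into $\varpi_{3,2}^*\Char_{\cR_\upi}\bigl({\bigcap}^2_{\cR_\upi}\mathbf{R}^1\Gamma_{\fin}(T^\dagger_\upi,\Delta_\emptyset)/\delta(T^\dagger_\upi,\Delta_\emptyset)\bigr)$. Here I would invoke Theorem~\ref{thm_2023_03_10}(ii) applied to the pair $(T^\dagger_\upi,\Delta_\emptyset)$: since $r(T^\dagger_\upi,\Delta_\emptyset)=2$ (by the Euler characteristic formula, using that $\Delta_\emptyset=\Delta_{\mathscr F^0}$ imposes no local condition at $p$ and $T^\dagger_\upi$ has rank $4$ with $\rank(T^{\dagger,c=-1}_\upi)=2$), and since $\delta(T^\dagger_\upi,\Delta_\emptyset)\neq 0$ — which follows from \eqref{item_AJ_Pi}, since $\delta(T^\dagger_\upi,\Delta_{\mathscr F^1})\neq 0$ forces $\delta(T^\dagger_\upi,\Delta_\emptyset)\neq 0$ via Theorem~\ref{thm_2025_02_19_2217} applied to the inclusion $\mathscr F^1T^\dagger_\upi\subset T^\dagger_\upi$ — Theorem~\ref{thm_2023_03_10}(ii) yields
\[
\Char_{\cR_\upi}\!\left({\bigcap}^2_{\cR_\upi}\mathbf{R}^1\Gamma_{\fin}(G_{\mathbb Q,\Sigma},T^\dagger_\upi,\Delta_\emptyset)\big/\delta(T^\dagger_\upi,\Delta_\emptyset)\right)=\Char_{\cR_\upi}\!\left(\mathbf{R}^2\Gamma_{\fin}(G_{\mathbb Q,\Sigma},T^\dagger_\upi,\Delta_\emptyset)\right).
\]
Applying $\varpi_{3,2}^*$ (which is flat, so commutes with $\Char$ by Theorem~\ref{thm_2023_03_10}(iii)) gives the desired third factor. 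Assembling the three pieces, and checking that all torsion/torsion-free passages match — in particular that $\coker(\res_p: L_0\oplus N_a\hookrightarrow W_0\otimes\cR_3)$ is genuinely $\cR_3$-torsion, which holds because ranks add up ($2+2=4=\rank W_0\otimes\cR_3$), so that its characteristic ideal is well-defined — completes the identification of \eqref{eqn_2025_02_17_1116} with Corollary~\ref{cor_2025_02_15_0829}.

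\textbf{Main obstacle.} The technical heart of the argument is the bookkeeping of torsion submodules through the exterior bi-dual functor ${\bigcap}^2$: one must be careful that passing to $(W_0/L_0)_{\rm tf}$ before taking ${\bigcap}^2$ is harmless (true since ${\bigcap}^r X={\bigcap}^r X_{\rm tf}$, already used in Lemma~\ref{lemma_5_10_2025_02_14_1513}), and that the factor $\Char_S((W_0/L_0)_{\rm tors})$ appearing on the left of Lemma~\ref{lemma_2025_02_24_1519} is exactly the $Z_{\rm tor}$-term of Corollary~\ref{cor_2025_02_15_0829} after localization — i.e. that $Z_{\rm tor}\otimes_{\cR_\upi}S=(W_0/L_0)_{\rm tors}$, which is the content of the remark preceding Lemma~\ref{lemma_2025_02_24_1519}. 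Since Lemma~\ref{lemma_2025_02_24_1519} is proven only over height-one localizations $S=\cR_{3,\mathfrak q}$, the final globalization step invokes the normality of $\cR_3$ together with Lemma~\ref{lemma_cyclic_module_is_determined_by_height_one_primes} (or, more simply, the fact that two nonzero principal ideals of a normal domain agreeing at all height-one primes are equal) to conclude the identity of characteristic ideals in $\cR_3$ itself. Everything else is a routine rearrangement of the factors already produced in Corollary~\ref{cor_2025_02_15_0829}.
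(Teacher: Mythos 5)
Your proposal is correct and follows essentially the same route as the paper's own proof: start from Corollary~\ref{cor_2025_02_15_0829}, split off the $\delta(M_3^\dagger,{\rm tr}^*\Delta_a)$-factor via the exact sequence comparing $\res_p^{(2)}(\delta(M_3^\dagger,{\rm tr}^*\Delta_a))$ with ${\bigcap}^2 N_a$, apply Lemma~\ref{lemma_2025_02_24_1519} to combine the remaining exterior-square term with $\Char(Z_{\rm tor})$ into $\Char(W_0\otimes\cR_3/(L_0\oplus N_a))$, and convert $\Char_{\cR_\upi}(\mathbf{R}^2\Gamma_{\fin}(T^\dagger_\upi,\Delta_\emptyset))$ via Theorem~\ref{thm_2023_03_10}(ii). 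One small remark: you invoke \eqref{item_AJ_Pi} to secure $\delta(T^\dagger_\upi,\Delta_\emptyset)\neq 0$, but that hypothesis is not assumed in the lemma; the non-vanishing already follows from \eqref{item_nVdeltaT3} alone, since Corollary~\ref{cor_2025_02_15_0829} forces $\Char_{\cR_\upi}(\mathbf{R}^2\Gamma_{\fin}(T^\dagger_\upi,\Delta_\emptyset))\neq 0$, hence that module is torsion, and Theorem~\ref{thm_2023_03_10}(i) then gives $\delta(T^\dagger_\upi,\Delta_\emptyset)\neq 0$ without any extra input.
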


\begin{proof}
The exact sequence 
$$0 \lra \frac{{\bigcap}^2_{\cR_3} \mathbf{R}^1\Gamma_{\fin}(G_{\mathbb{Q},\Sigma}, M_3^\dagger, {\rm tr}^*\Delta_{a}) }{\delta(M_3^\dagger, {\rm tr}^*\Delta_a)} \xrightarrow{\res_p^{(2)}} \dfrac{{\bigcap}^2_{\cR_{\upi}} Z \otimes \cR_3}{\res_p^{(2)}(\delta(M_3^\dagger, {\rm tr}^*\Delta_a))} \lra \frac{{\bigcap}^2_{\cR_{\upi}} Z \otimes \cR_3}{{\bigcap}^2_{\cR_{3}} \mathbf{R}^1\Gamma_{\fin}(G_{\mathbb{Q},\Sigma}, M_3^\dagger, {\rm tr}^*\Delta_{a}) }\lra 0 $$
combined with Theorem~\ref{thm_2023_03_10}(ii), Corollary~\ref{cor_2025_02_15_0829}, and Lemma~\ref{lemma_2025_02_24_1519} shows that the left side of \eqref{eqn_2025_02_17_1116} can be written as 
\begin{align}
    \label{eqn_2025_02_18_0536}
    \begin{aligned}
        \Char_{\cR_3}\left(\frac{{\bigcap}^2_{\cR_{\upi}} Z \otimes \cR_3}{{\bigcap}^2_{\cR_{3}} \mathbf{R}^1\Gamma_{\fin}(G_{\mathbb{Q},\Sigma}, M_3^\dagger, {\rm tr}^*\Delta_{a}) }\right)& \times \Char_{\cR_{\upi}}\left(Z_{\rm tor}\right)\\
        &\hspace{-2.5cm}\times \Char_{\cR_3}\left(\frac{{\bigcap}^2_{\cR_{3}} \mathbf{R}^1\Gamma_{\fin}(G_{\mathbb{Q},\Sigma}, M_3^\dagger, {\rm tr}^*\Delta_{a})}{\delta(M_3^\dagger, {\rm tr}^*\Delta_a)}\right) 
        \times \Char_{\cR_{\upi}}\left(\frac{{\bigcap}^2_{\cR_{\upi}} \mathbf{R}^1\Gamma_{\fin}(G_{\mathbb{Q},\Sigma}, T^\dagger_{\underline{\Pi}}, \Delta_{\emptyset}) }{\delta(T^\dagger_{\underline{\Pi}}, \Delta_{\emptyset})}\right)\\
        &\hspace{-5cm} = \Char_{\cR_3}\left(\dfrac{H^1(G_{p}, T_{\upi}^\dagger) \otimes \cR_3}{\res_p\,\mathbf{R}^1\Gamma_{\fin}(G_{\mathbb Q, \Sigma}, \varpi^*_{3,2}T_{\upi}^\dagger, \Delta_{\emptyset}) \,\oplus \, \res_p\,\mathbf{R}^1\Gamma_{\fin}(G_{\mathbb Q, \Sigma}, M_3^\dagger, \mathrm{tr}^*\Delta_{a})}\right) 
        \\
        &\hspace{-2.5cm}\times \Char_{\cR_3}\left(\frac{{\bigcap}^2_{\cR_{3}}\mathbf{R}^1\Gamma_{\fin}(G_{\mathbb{Q},\Sigma}, M_3^\dagger, {\rm tr}^*\Delta_{a}) }{\delta(M_3^\dagger, {\rm tr}^*\Delta_a)}\right)
        \times \Char_{\cR_{\upi}}\left(\frac{{\bigcap}^2_{\cR_{\upi}} \mathbf{R}^1\Gamma_{\fin}(G_{\mathbb{Q},\Sigma}, T^\dagger_{\underline{\Pi}}, \Delta_{\emptyset}) }{\delta(T^\dagger_{\underline{\Pi}}, \Delta_{\emptyset})}\right) \,.
    \end{aligned}
\end{align}
\end{proof}

\begin{lemma}
\label{lemma_2025_07_10_1431}
    Assume that \eqref{item_nVdeltaT3} holds. 
The following canonical homomorphism is an isomorphism: 
    \begin{align*}
            {\bigcap}^2_{\cR_{3}} &\,\mathbf{R}^1\Gamma_{\fin}(G_{\mathbb{Q},\Sigma}, M_3^\dagger, {\rm tr}^*\Delta_a) \,\otimes\,  
       {\bigcap}^2_{\cR_{3}} \mathbf{R}^1\Gamma_{\fin}(G_{\mathbb{Q},\Sigma}, \varpi^*_{3,2}T^\dagger_{\underline{\Pi}}, \Delta_{\emptyset})\\
&  \hspace{3cm}\xrightarrow{\,\,\sim\,\,}  
{\bigcap}^4_{\cR_{3}} \left( \mathbf{R}^1\Gamma_{\fin}(G_{\mathbb{Q},\Sigma}, M_3^\dagger, {\rm tr}^*\Delta_a)\, \oplus  \,
       \mathbf{R}^1\Gamma_{\fin}(G_{\mathbb{Q},\Sigma}, \varpi^*_{3,2}T^\dagger_{\underline{\Pi}}, \Delta_{\emptyset})\right). 
        \end{align*}
\end{lemma}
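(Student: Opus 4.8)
The plan is to recognise the displayed arrow as the canonical multiplication pairing on exterior bi-duals and to reduce the lemma to a purely module-theoretic statement over the normal domain $\cR_3$. Write $X := \mathbf{R}^1\Gamma_{\fin}(G_{\mathbb{Q},\Sigma}, M_3^\dagger, {\rm tr}^*\Delta_a)$ and $Y := \mathbf{R}^1\Gamma_{\fin}(G_{\mathbb{Q},\Sigma}, \varpi_{3,2}^*T^\dagger_{\underline{\Pi}}, \Delta_{\emptyset})$. By Lemma~\ref{lemma_5_10_2025_02_14_1513}(i) (which invokes \eqref{item_nVdeltaT3}), the proof of Lemma~\ref{lemma_2025_02_20_1200}, and the torsion-freeness already used in the proof of Lemma~\ref{lemma_5_12_2025_02_14_1944}, both $X$ and $Y$ are finitely presented torsion-free $\cR_3$-modules of rank $2$ (here finite presentation comes from perfectness of the Selmer complexes under \eqref{item_Tam}). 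Unwinding ${\bigcap}^t_S M = ({\bigwedge}^t_S M^*)^*$, and using ${\bigwedge}^4_{\cR_3}(X^*\oplus Y^*) = \bigoplus_{i+j=4}{\bigwedge}^i_{\cR_3} X^* \otimes_{\cR_3}{\bigwedge}^j_{\cR_3} Y^*$ together with the vanishing of the $\cR_3$-dual of a torsion module — the summands with $i>2$ or $j>2$ are torsion since $X^*$, $Y^*$ have rank $2$ — one obtains ${\bigcap}^4_{\cR_3}(X\oplus Y) = ({\bigwedge}^2_{\cR_3} X^* \otimes_{\cR_3}{\bigwedge}^2_{\cR_3} Y^*)^*$. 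Thus it suffices to prove the general fact that, for a normal Noetherian domain $S$ and finitely presented torsion-free $S$-modules of the respective top ranks, the canonical map ${\bigcap}^p_S X \otimes_S {\bigcap}^q_S Y \to {\bigcap}^{p+q}_S(X\oplus Y)$ is an isomorphism.

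After the identification above, this map is the canonical homomorphism
\[
({\textstyle\bigwedge}^2 X^*)^* \otimes_{\cR_3} ({\textstyle\bigwedge}^2 Y^*)^* \;\longrightarrow\; ({\textstyle\bigwedge}^2 X^* \otimes_{\cR_3} {\textstyle\bigwedge}^2 Y^*)^*,
\]
i.e.\ the biduality map of $M\otimes_{\cR_3} N$ for $M := {\bigcap}^2_{\cR_3} X$ and $N := {\bigcap}^2_{\cR_3} Y$, since $({\bigwedge}^2 X^* \otimes {\bigwedge}^2 Y^*)^* = (M\otimes N)^{**}$. Hence the lemma is equivalent to the reflexivity of $M\otimes_{\cR_3} N$. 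Note that the map is in any case an isomorphism after localising at an arbitrary height-one prime $\mathfrak q$ of $\cR_3$: there $\cR_{3,\mathfrak q}$ is a discrete valuation ring, $X_{\mathfrak q}$ and $Y_{\mathfrak q}$ are free of ranks $2$, ${\bigcap}^t$ commutes with this flat base change for finitely presented modules, and the map becomes the tautological isomorphism $\cR_{3,\mathfrak q}\otimes\cR_{3,\mathfrak q}\cong\cR_{3,\mathfrak q}$. So an injective map of reflexive $\cR_3$-modules that is an isomorphism in codimension one, hence an isomorphism, exactly as in Lemma~\ref{lemma_cyclic_module_is_determined_by_height_one_primes} — provided the source $M\otimes_{\cR_3} N$ is reflexive.

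The reflexivity of $M\otimes_{\cR_3} N$ is the only non-formal point, and the step I expect to be the main obstacle: over a normal base that is not regular, a tensor product of reflexive rank-one modules need not be reflexive. If $\cR_3$ is regular the claim is immediate, since a reflexive rank-one module over a regular ring (a UFD, with trivial divisor class group) is free, so $M$, $N$, and $M\otimes N$ are free; this situation can be arranged by shrinking to a wide-open neighbourhood of a suitably generic classical point whenever $\cR_{\underline{\sigma}}$ is regular (recall $\cR_{\upi}$ is regular by Assumption~\ref{ass-gsp4}(c) and modules of leading terms are flat-base-change compatible by Theorem~\ref{thm_2023_03_10}(iii)). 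For the general normal case I would instead exploit \eqref{item_nVdeltaT3}: the square $\refsymbolB$ of \eqref{eqn_2025_03_10_1518} is homotopy cartesian (it is the morphism of exact triangles attached to $0 \to \varpi_{3,2}^*T_{\upi}^\dagger \to T_3^\dagger \to M_3^\dagger \to 0$), and since $\mathbf{R}^1\Gamma_{\fin}(G_{\mathbb{Q},\Sigma}, T_3^\dagger, \Delta_a)=0$ the associated Mayer–Vietoris sequence embeds $X\oplus Y$ into $H^1(G_p, T^\dagger_{\upi})\otimes_{\cR_{\upi}}\cR_3$ with $\cR_3$-torsion cokernel (a submodule of $\mathbf{R}^2\Gamma_{\fin}(G_{\mathbb{Q},\Sigma}, T_3^\dagger, \Delta_a)$, torsion by the Euler-characteristic count, using $H^0(G_p, T^\dagger_{\upi})=0$); moreover $H^0(G_p, T^\dagger_{\upi})=H^2(G_p, T^\dagger_{\upi})=0$ by Assumption~\ref{ass-gsp4}(b) and local duality, so $H^1(G_p, T^\dagger_{\upi})$ is the unique nonzero cohomology of a perfect complex, hence reflexive over $\cR_{\upi}$, and reflexivity survives the flat base change to $\cR_3$. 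One then checks that $M$ and $N$ are the determinants of the complementary summands $X$ and $Y$ of this ambient reflexive module, from which a short divisorial computation gives that $M\otimes_{\cR_3}N = {\bigcap}^4_{\cR_3}(X\oplus Y)$ is already reflexive. With reflexivity of the source established, the height-one argument of the previous paragraph closes the proof.
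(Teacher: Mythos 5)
Your reduction of the lemma to the reflexivity of $M\otimes_{\cR_3} N$ — with $M=\bigcap^2_{\cR_3}X$ and $N=\bigcap^2_{\cR_3}Y$ in your notation — is exactly right, and it goes substantially further than the paper's own justification, which is a single sentence asserting that both modules have rank $2$. You correctly observe that $\bigcap^4_{\cR_3}(X\oplus Y)$ collapses to $\bigl(\bigwedge^2_{\cR_3} X^*\otimes_{\cR_3}\bigwedge^2_{\cR_3} Y^*\bigr)^*$ because the other bidegree summands of $\bigwedge^4_{\cR_3}(X\oplus Y)^*$ are torsion and dualise to zero, that the canonical arrow is then a map of rank-one modules which is an isomorphism at every height-one prime, and that the whole question therefore reduces to the reflexivity of the source. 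Your treatment of the regular case is the clean route: over a regular local ring, reflexive rank-one modules are free (Lemma~\ref{lemma:algebraic-lemmas-free}), so $M$, $N$, $M\otimes_{\cR_3} N$ and the target are all free of rank one and the height-one check finishes the argument. This is also the tacit setting of the section: the blanket hypothesis at the top of \S\ref{sec_factorisation_main} is only that $\cR_3$ be normal, yet the proof of Lemma~\ref{lemma:quotient_by_saturation_is_free} already applies Lemma~\ref{lemma:algebraic-lemmas-free} to an $\cR_3$-module (the quotient $H^1(G_p,\varpi_{3,2}^*T_\upi^\dagger)/H^1_M(G_p,T_\upi^\dagger)$), which uses regularity of $\cR_3$ and not just normality.

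The gap is in your final paragraph, where you attempt the $\cR_3$ normal-but-not-regular case. The Mayer--Vietoris embedding $X\oplus Y\hookrightarrow H^1(G_p,T^\dagger_\upi)\otimes_{\cR_\upi}\cR_3$ into a free (hence reflexive) module with torsion cokernel does not, by itself, force $M\otimes_{\cR_3} N$ to be reflexive, and the promised ``short divisorial computation'' never materialises. In fact the abstract statement fails over a normal non-regular base: take $S=k[[x,y,z]]/(xy-z^2)$ and $\mathfrak p=(x,z)$, a non-principal height-one prime generating $\mathrm{Cl}(S)\cong\ZZ/2\ZZ$, and set $X_0=Y_0=S\oplus\mathfrak p^*$; each $X_0$ embeds in $S^2$ with torsion cokernel, so $X_0\oplus Y_0$ sits in a free rank-$4$ module with torsion cokernel, yet $\bigcap^2_S X_0\cong\mathfrak p^*\cong\mathfrak p^{-1}$ while $\bigcap^4_S(X_0\oplus Y_0)\cong(\mathfrak p\otimes_S\mathfrak p)^*=(\mathfrak p^2)^*=x^{-1}S$, and the canonical map $\mathfrak p^{-1}\otimes_S\mathfrak p^{-1}\to x^{-1}S$ has image $\mathfrak p^{-1}\mathfrak p^{-1}=x^{-1}\mathfrak m\subsetneq x^{-1}S$. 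So the ambient-reflexive-module device does not rescue reflexivity in general, and your primary argument — regularity of $\cR_3$, which can be arranged by shrinking and is used elsewhere in \S\ref{sec_factorisation_main} anyway — is the one that should stand on its own; the appeal to normality-only should be dropped.
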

\begin{proof}
    This lemma follows from both $\cR_3$-modules $\mathbf{R}^1\Gamma_{\fin}(G_{\mathbb{Q},\Sigma}, M_3^\dagger, {\rm tr}^*\Delta_a)$ and $\mathbf{R}^1\Gamma_{\fin}(G_{\mathbb{Q},\Sigma}, \varpi^*_{3,2}T^\dagger_{\underline{\Pi}}, \Delta_{\emptyset})$ are of rank 2 under our running hypothesis. 
\end{proof}

\subsubsection{}\label{sec:delta_a_decomp}
The restriction map  
\begin{align*}
    \mathbf{R}^1\Gamma_{\fin}(G_{\mathbb{Q},\Sigma}, M_3^\dagger, {\rm tr}^*\Delta_a) \oplus  
       \mathbf{R}^1\Gamma_{\fin}(G_{\mathbb{Q},\Sigma}, \varpi^*_{3,2}T^\dagger_{\underline{\Pi}}, \Delta_{\emptyset}) &\xrightarrow{\mathrm{res}_p } 
       H^1(G_p, \varpi^*_{3,2}T^\dagger_{\underline{\Pi}})\\
       (x,y) &\longmapsto \mathrm{res}_p(x) + \mathrm{res}_p(y)
\end{align*}
where $\res_p(x)$ is defined using the middle vertical arrow in \eqref{eqn_2025_03_10_1518}, together with Lemma~\ref{lemma_2025_07_10_1431} induce a homomorphism 
    \begin{align*}
     \mathrm{Log}_{\mathscr{F}^0}^{(4)}  \,\colon\,    {\bigcap}^2_{\cR_{3}} &\,\mathbf{R}^1\Gamma_{\fin}(G_{\mathbb{Q},\Sigma}, M_3^\dagger, {\rm tr}^*\Delta_a)\,\otimes  \,
       {\bigcap}^2_{\cR_{3}} \mathbf{R}^1\Gamma_{\fin}(G_{\mathbb{Q},\Sigma}, \varpi^*_{3,2}T^\dagger_{\underline{\Pi}}, \Delta_{\emptyset})
       \\
(\hbox{Lemma}~\ref{lemma_2025_07_10_1431})\qquad   &\qquad  \xrightarrow{\,\sim\,}  \,
{\bigcap}^4_{\cR_{3}} \left( \mathbf{R}^1\Gamma_{\fin}(G_{\mathbb{Q},\Sigma}, M_3^\dagger, {\rm tr}^*\Delta_a) \oplus  
       \mathbf{R}^1\Gamma_{\fin}(G_{\mathbb{Q},\Sigma}, \varpi^*_{3,2}T^\dagger_{\underline{\Pi}}, \Delta_{\emptyset})\right)
       \\
       &\hspace{5.7cm}  \xrightarrow{\res_p^{\otimes 4}} {\bigcap}^4_{\cR_{3}}  H^1(G_p, \varpi^*_{3,2}T^\dagger_{\underline{\Pi}}) \cong \cR_3,
        \end{align*}
where the final isomorphism amounts to choosing a basis of the cyclic $\cR_\upi$-module ${\bigcap}^4_{\cR_{\upi}}  H^1(G_p, T^\dagger_{\underline{\Pi}})$.
\begin{proposition}\label{prop:delta_a-formula2}
$\delta(T_3^\dagger, \Delta_a) =  \mathrm{Log}_{\mathscr{F}^0}^{(4)}(\delta(M_3^\dagger, {\rm tr}^*\Delta_a)        \otimes \delta(T^\dagger_{\underline{\Pi}}, \Delta_{\emptyset}))$. 
\end{proposition}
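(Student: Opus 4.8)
The plan is to obtain Proposition~\ref{prop:delta_a-formula2} as a repackaging of Lemma~\ref{lemma_2025_03_10_1431} — which already expresses $\delta(T_3^{\dag},\Delta_a)$ as a product of three characteristic ideals — by re-expressing that product through the single homomorphism $\mathrm{Log}_{\mathscr{F}^0}^{(4)}$ constructed in \S\ref{sec:delta_a_decomp}. The key tool is the exterior bi-dual determinant formula of Lemma~\ref{lemma_5_8_2025_02_14_1642}, applied to the restriction map on the rank-$4$ direct sum $X := \mathbf{R}^1\Gamma_{\fin}(G_{\mathbb{Q},\Sigma}, M_3^{\dag}, {\rm tr}^*\Delta_a)\oplus \mathbf{R}^1\Gamma_{\fin}(G_{\mathbb{Q},\Sigma}, \varpi^*_{3,2}T^{\dag}_{\underline{\Pi}}, \Delta_{\emptyset})$; namely $f\colon X\to H^1(G_p,\varpi^*_{3,2}T^{\dag}_{\underline{\Pi}})$, $(x,y)\mapsto \res_p(x)+\res_p(y)$, with $\res_p$ on the $M_3^{\dag}$-summand understood via the middle vertical arrow of \eqref{eqn_2025_03_10_1518}.

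First I would record the preliminary normalisations. Under \eqref{item_nVdeltaT3}, the three factors on the right of Lemma~\ref{lemma_2025_03_10_1431} are all nonzero, hence $\delta(M_3^{\dag},{\rm tr}^*\Delta_a)$, $\delta(T^{\dag}_{\underline{\Pi}},\Delta_{\emptyset})$ and $\delta(T_3^{\dag},\Delta_a)$ are nonzero cyclic submodules of the relevant rank-one exterior bi-duals (Theorem~\ref{thm_2023_03_10}(i)); in particular $\delta(T_3^{\dag},\Delta_a)$ is a genuine ideal of $\cR_3$ since ${\bigcap}^0_{\cR_3}\mathbf{R}^1\Gamma_{\fin}(G_{\mathbb{Q},\Sigma},T_3^{\dag},\Delta_a)=\cR_3$. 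Next I would verify that $f$ is injective and that its cokernel is precisely the cokernel module appearing in the first factor of Lemma~\ref{lemma_2025_03_10_1431}: this follows from the cartesianness of the square $\refsymbolB$ in \eqref{eqn_2025_03_10_1518} together with the vanishing $\mathbf{R}^1\Gamma_{\fin}(G_{\mathbb{Q},\Sigma},T_3^{\dag},\Delta_a)=0$ of Lemma~\ref{lemma_2025_02_20_1200}(i) — the fibre product being $0$ forces the two summands of $X$ to have $\res_p$-images meeting only in $0$. (These facts are in any case already used inside the proof of Lemma~\ref{lemma_2025_03_10_1431}.)

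Then I would unwind $\mathrm{Log}_{\mathscr{F}^0}^{(4)}$ and apply Lemma~\ref{lemma_5_8_2025_02_14_1642}. By construction, $\mathrm{Log}_{\mathscr{F}^0}^{(4)}$ is the composite of the isomorphism of Lemma~\ref{lemma_2025_07_10_1431}, the map $f^{(4)}\colon {\bigcap}^4_{\cR_3}X\to {\bigcap}^4_{\cR_3}H^1(G_p,\varpi^*_{3,2}T^{\dag}_{\underline{\Pi}})$ induced by $f$, and the identification ${\bigcap}^4_{\cR_3}H^1(G_p,\varpi^*_{3,2}T^{\dag}_{\underline{\Pi}})\cong \cR_3$ coming from a basis of the free rank-one $\cR_{\upi}$-module ${\bigcap}^4_{\cR_{\upi}}H^1(G_p,T^{\dag}_{\underline{\Pi}})$ (free because $\cR_{\upi}$ is regular); since the target statement is an identity of ideals, it is independent of this basis. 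Writing $\delta$ for the image of $\delta(M_3^{\dag},{\rm tr}^*\Delta_a)\otimes \delta(T^{\dag}_{\underline{\Pi}},\Delta_{\emptyset})$ under Lemma~\ref{lemma_2025_07_10_1431}, Lemma~\ref{lemma_5_8_2025_02_14_1642} applied to the injective $f$ gives
\[
\mathrm{Log}_{\mathscr{F}^0}^{(4)}\bigl(\delta(M_3^{\dag},{\rm tr}^*\Delta_a)\otimes \delta(T^{\dag}_{\underline{\Pi}},\Delta_{\emptyset})\bigr)=\Char_{\cR_3}\bigl({\bigcap}^4_{\cR_3}X\,/\,\cR_3\delta\bigr)\cdot \Char_{\cR_3}(\coker f).
\]
The second factor is the first characteristic ideal of Lemma~\ref{lemma_2025_03_10_1431} by Step~2; and the first factor splits, via multiplicativity of $\Char$ along the tensor decomposition of Lemma~\ref{lemma_2025_07_10_1431} (checked at height-one primes, where $\cR_3$ is a DVR), together with flatness of $\cR_{\upi}\to \cR_3$ and Theorem~\ref{thm_2023_03_10}(iii), into the second characteristic ideal of Lemma~\ref{lemma_2025_03_10_1431} times the $\cR_3$-base change of its third. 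Collecting the three factors and comparing with Lemma~\ref{lemma_2025_03_10_1431} yields the asserted equality.

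The step I expect to require the most care is this last bookkeeping: matching the ``$\res_p$ on the $M_3^{\dag}$-summand'' used to define $\mathrm{Log}_{\mathscr{F}^0}^{(4)}$ with the map whose cokernel appears in Lemma~\ref{lemma_2025_03_10_1431}, keeping the two base rings $\cR_{\upi}$ and $\cR_3$ straight under the exterior bi-dual functors, and confirming that the image of that map is genuinely complementary to $\res_p\,\mathbf{R}^1\Gamma_{\fin}(G_{\mathbb{Q},\Sigma},\varpi^*_{3,2}T^{\dag}_{\underline{\Pi}},\Delta_{\emptyset})$ inside $H^1(G_p,\varpi^*_{3,2}T^{\dag}_{\underline{\Pi}})$ — which is exactly the content of the cartesianness of $\refsymbolB$, so there is no genuinely new difficulty, only careful multilinear-algebra bookkeeping. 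A secondary point to flag is that Lemma~\ref{lemma_5_8_2025_02_14_1642} is stated for a regular base ring whereas here we only know $\cR_3$ is normal; but its proof localises at height-one primes, so it applies verbatim.
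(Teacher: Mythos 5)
Your argument coincides with the paper's proof in its main case, and the multilinear bookkeeping you describe — applying Lemma~\ref{lemma_5_8_2025_02_14_1642} to the injective combined restriction, identifying its cokernel via the cartesian square \refsymbolB{} in \eqref{eqn_2025_03_10_1518}, and splitting the resulting characteristic ideal across the tensor decomposition of Lemma~\ref{lemma_2025_07_10_1431} — is exactly what the paper does. Your remark that the regularity requirement in Lemma~\ref{lemma_5_8_2025_02_14_1642} is harmless because its proof localises at height-one primes, where a normal domain becomes a DVR, is also correct.

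There is, however, a genuine gap: you conduct the entire argument under \eqref{item_nVdeltaT3}, whereas the Proposition carries no such hypothesis, and it is later invoked (for instance in the proof of Theorem~\ref{thm:delta_decomp_yoshida} and in the discussion surrounding Remark~\ref{rem:yoshida_log}) precisely in contexts where \eqref{item_nVdeltaT3} may fail and the identity is expected to degenerate to $0=0$. When \eqref{item_nVdeltaT3} fails, the left-hand side $\delta(T_3^\dagger, \Delta_a)$ vanishes by definition, so you must argue the right-hand side vanishes as well. If one of $\delta(M_3^{\dag}, \tr^*\Delta_a)$ or $\delta(T_{\upi}^\dagger, \Delta_{\emptyset})$ is zero this is automatic; but both can be non-zero while \eqref{item_nVdeltaT3} fails, and then one needs the observation — read off from the diagram of Lemma~\ref{lemma_5_9_2025_02_14_1513} — that failure of \eqref{item_nVdeltaT3} forces $\mathbf{R}^1\Gamma_{\fin}(G_{\mathbb{Q},\Sigma}, T_3^\dagger, \Delta_a)$ to have positive rank, hence the two $\res_p$-images to intersect non-trivially inside $H^1(G_p, \varpi^*_{3,2}T_{\upi}^\dagger)$, hence the combined restriction to drop rank, hence $\res_p^{\otimes 4}$ and with it $\mathrm{Log}_{\mathscr{F}^0}^{(4)}$ to annihilate everything. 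Your draft does not contain this step; the paper opens its proof with it, so that the rest of the argument can proceed under \eqref{item_nVdeltaT3} ``without loss of generality.''
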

\begin{proof}
If \eqref{item_nVdeltaT3} fails while both $\delta(M_3^\dagger, {\rm tr}^*\Delta_a)$ and $\delta(T^\dagger_{\underline{\Pi}}, \Delta_{\emptyset})$ are non-zero, then Lemma \ref{lemma_5_9_2025_02_14_1513} implies that the homomorphism $\mathrm{res}_p^{\otimes 4}$ vanishes.
Consequently, $\mathrm{Log}_{\mathscr{F}^0}^{(4)}\big(\delta(M_3^\dagger, {\rm tr}^*\Delta_a)\otimes \delta(T^\dagger_{\underline{\Pi}}, \Delta_{\emptyset})\big) = 0$.
Thus, we may assume without loss of generality that \eqref{item_nVdeltaT3} holds.

Both $\cR_3$-modules ${\bigcap}^2_{\cR_{3}} \mathbf{R}^1\Gamma_{\fin}(G_{\mathbb{Q},\Sigma}, M_3^\dagger, {\rm tr}^*\Delta_a)$ and ${\bigcap}^2_{\cR_{3}} \mathbf{R}^1\Gamma_{\fin}(G_{\mathbb{Q},\Sigma}, \varpi^*_{3,2}T^\dagger_{\underline{\Pi}}, \Delta_{\emptyset})$ are torsion-free of rank one under \eqref{item_nVdeltaT3}. Hence, 
\begin{align}
\begin{aligned}
\label{eqn_2025_03_10_1537}
     &\Char_{\cR_3}\left(\frac{{\bigcap}^2_{\cR_{3}} \mathbf{R}^1\Gamma_{\fin}(G_{\mathbb{Q},\Sigma}, M_3^\dagger, {\rm tr}^*\Delta_{a}) }{\delta(M_3^\dagger, {\rm tr}^*\Delta_a)}\right)
        \times \Char_{\cR_{\upi}}\left(\frac{{\bigcap}^2_{\cR_{\upi}} \mathbf{R}^1\Gamma_{\fin}(G_{\mathbb{Q},\Sigma}, T^\dagger_{\underline{\Pi}}, \Delta_{\emptyset}) }{\delta(T^\dagger_{\underline{\Pi}}, \Delta_{\emptyset})}\right) 
        \\
        &\qquad\qquad = \Char_{\cR_3}\left(\frac{{\bigcap}^2_{\cR_{3}} \mathbf{R}^1\Gamma_{\fin}(G_{\mathbb{Q},\Sigma}, M_3^\dagger, {\rm tr}^*\Delta_{a}) \otimes {\bigcap}^2_{\cR_{3}} \mathbf{R}^1\Gamma_{\fin}(G_{\mathbb{Q},\Sigma}, \varpi^*_{3,2}T^\dagger_{\underline{\Pi}}, \Delta_{\emptyset}) }{\delta(M_3^\dagger, {\rm tr}^*\Delta_a) \otimes \delta(T^\dagger_{\underline{\Pi}}, \Delta_{\emptyset})} \right). 
\end{aligned}
\end{align}
Moreover, observe that we have 
\begin{align}
\begin{aligned}
\label{eqn_2025_03_10_1538}
\Char_{\cR_3}(\coker( \mathrm{Log}_{\mathscr{F}^0}^{(4)})) =   \Char_{\cR_3}\left(\dfrac{H^1(G_{p}, T_{\upi}^\dagger) \otimes \cR_3}{\res_p\,\mathbf{R}^1\Gamma_{\fin}(G_{\mathbb Q, \Sigma}, \varpi^*_{3,2}T_{\upi}^\dagger, \Delta_{\emptyset}) \,\oplus \, \res_p\,\mathbf{R}^1\Gamma_{\fin}(G_{\mathbb Q, \Sigma}, M_3^\dagger, \mathrm{tr}^*\Delta_{a})}\right)
\end{aligned}
\end{align}
The proof of our proposition follows on combining  \eqref{eqn_2025_02_17_1116}, \eqref{eqn_2025_03_10_1537} with \eqref{eqn_2025_03_10_1538} and Lemma~\ref{lemma_5_8_2025_02_14_1642}. 
\end{proof}

\begin{remark}
\label{remark_2025_03_10_1606}
In this remark, we briefly explain that an analogue \eqref{eqn_2025_03_10_1541} of Proposition \ref{prop:delta_a-formula2} holds in the situation of \S\ref{subsec_5_1_2025_02_11} as well, and Theorem~\ref{thm_5_1_2025_02_14_1424} can be deduced from that.

Suppose that $\delta(T_3^{\dag},\Delta_d) \neq 0 =\delta(T^\dagger_{\underline{\Pi}}, \Delta_{\mathscr{F}^2})$. By Lemma~\ref{lemma_5_3_2025_02_14_1213} and Lemma~\ref{lemma_5_4_2025_02_14_1324}, we have 
    \begin{align*}
    \delta(T_3^\dagger, \Delta_d)= & 
    \Char_{\cR_3}\left(\dfrac{H^1(G_{p}, \mathscr{F}^1/\mathscr{F}^3)\otimes \cR_3}{\res_p(\mathbf{R}^1\Gamma_{\fin}(G_{\mathbb Q, \Sigma}, \varpi^*_{3,2}T_{\upi}^\dagger, \Delta_{\mathscr{F}^1})) \oplus \res_p(\mathbf{R}^1\Gamma_{\fin}(G_{\mathbb Q, \Sigma}, M_{3}^\dagger, \mathrm{tr}^*\Delta_{d}))} \right)
    \\
    &\qquad \quad 
    \times \Char_{\cR_3}\left(\frac{\mathbf{R}^1\Gamma_{\fin}(G_{\mathbb Q, \Sigma}, M_{3}^\dagger, \mathrm{tr}^*\Delta_{d}))}{\delta(M_3^{\dag}, \mathrm{tr}^*\Delta_d)}\right) 
    \Char_{\cR_{\upi}}\left(\frac{\mathbf{R}^1\Gamma_{\fin}(G_{\mathbb{Q},\Sigma},T_{\upi}^\dagger, \Delta_{\mathscr{F}^1})}{\delta(T_{\underline{\Pi}}^\dagger, \Delta_{\mathscr{F}^1})} \right). 
\end{align*}
The same argument as in the proof of Proposition~\ref{prop:delta_a-formula2} shows that 
\begin{equation}
    \label{eqn_2025_03_10_1541}
    \delta(T_3^\dagger, \Delta_d) =  \mathrm{Log}_{\mathscr{F}^1/\mathscr{F}^3}^{(2)}(\delta(M_3^\dagger, {\rm tr}^*\Delta_d)   \otimes \delta(T^\dagger_{\underline{\Pi}}, \Delta_{\mathscr{F}^1})). 
\end{equation}
Here, $\mathrm{Log}_{\mathscr{F}^1/\mathscr{F}^3}^{(2)}$ is given as the compositum of the morphisms
\begin{align*} 
    \mathbf{R}^1\Gamma_{\fin}(G_{\mathbb Q, \Sigma}, M_{3}^\dagger, \mathrm{tr}^*\Delta_{d}) &\otimes
    \mathbf{R}^1\Gamma_{\fin}(G_{\mathbb Q, \Sigma}, \varpi^*_{3,2}T_{\upi}^\dagger, \Delta_{\mathscr{F}^1})
    \\
    &\xrightarrow{\,\sim\,} {\bigcap}^2_{\cR_3} \left(\mathbf{R}^1\Gamma_{\fin}(G_{\mathbb Q, \Sigma}, M_{3}^\dagger, \mathrm{tr}^*\Delta_{d}) \oplus
    \mathbf{R}^1\Gamma_{\fin}(G_{\mathbb Q, \Sigma}, \varpi^*_{3,2}T_{\upi}^\dagger, \Delta_{\mathscr{F}^1})\right)\\
    &\xrightarrow{\mathrm{res}_p^{(2)}} {\bigcap}^2_{\cR_3} H^1(G_p, \mathscr{F}^1/\mathscr{F}^3) \simeq \cR_3\,. 
\end{align*}
As before, let us denote by $\mathrm{Log}_{d/e}$ the composite map
\begin{align*}
\mathbf{R}^1\Gamma_{\fin}(G_{\mathbb Q, \Sigma}, M_{3}^\dagger, \mathrm{tr}^*\Delta_{d}) \oplus
    \mathbf{R}^1\Gamma_{\fin}(G_{\mathbb Q, \Sigma}, \varpi^*_{3,2}T_{\upi}^\dagger, \Delta_{\mathscr{F}^1}) \stackrel{\mathrm{res}_p}{\longrightarrow} H^1(G_p, \mathscr{F}^1) \otimes \cR_3 \lra H^1(G_p, \mathscr{F}^1/\mathscr{F}^2)\otimes \cR_3 \cong \cR_3
\end{align*}
and $\mathrm{Log}_{c}$ as the composite
\begin{align*}
         \ker(\mathrm{Log}_{d/e}) \stackrel{\mathrm{res}_p}{\longrightarrow} H^1(G_p, \mathscr{F}^2)\otimes \cR_3 \lra  H^1(G_p, \mathscr{F}^2/\mathscr{F}^3) \otimes \cR_3 \cong \cR_3\,.
\end{align*}
Then, we have by definition
\[
 \mathrm{Log}^{(2)}_{\mathscr{F}^1/\mathscr{F}^3} =  \mathrm{Log}_{d/e} \wedge  \mathrm{Log}_{c}. 
\]
Since $\mathrm{Log}_{d/e}(\delta(T_{\upi}^\dagger, \Delta_{\mathscr{F}^1})) = 0$ by Lemma \ref{lemma_5_5_2025_02_14_1331}, we obtain 
\begin{align*}
     \delta(T_3^\dagger, \Delta_d) &=  \mathrm{Log}_{\mathscr{F}^1/\mathscr{F}^3}^{(2)}(\delta(M_3^\dagger, {\rm tr}^*\Delta_d)   \otimes \delta(T^\dagger_{\underline{\Pi}}, \Delta_{\mathscr{F}^1}))
     \\
     &= (\mathrm{Log}_{d/e} \wedge  \mathrm{Log}_{c})(\delta(M_3^\dagger, {\rm tr}^*\Delta_d)   \otimes \delta(T^\dagger_{\underline{\Pi}}, \Delta_{\mathscr{F}^1}))
     \\
     &= \mathrm{Log}_{d/e}(\delta(M_3^\dagger, {\rm tr}^*\Delta_d) ) \,\cdot\,\mathrm{Log}_{c}(\delta(T^\dagger_{\underline{\Pi}}, \Delta_{\mathscr{F}^1}))\,,
\end{align*}
concluding this alternative proof of Theorem~\ref{thm_5_1_2025_02_14_1424}.
    \hfill$\blacksquare$
\end{remark}

\subsubsection{} 
Our proof of Theorem~\ref{thm_main_double_wall_crossing} will mimic the second proof of Theorem~\ref{thm_5_1_2025_02_14_1424} we have recorded in Remark~\ref{remark_2025_03_10_1606}. To execute this plan, we will show (cf. Lemma~\ref{lemma:quotient_by_saturation_is_free} below) that the saturations $H^1_\upi(G_{p}, T_{\upi}^\dagger)$ and $H^1_M(G_{p}, T_{\upi}^\dagger)$ of the isomorphic images of $\mathbf{R}^1\Gamma_{\fin}(G_{\mathbb Q, \Sigma}, \varpi_{3,2}^*T_{\upi}^\dagger, \Delta_{\emptyset})$ and  $\mathbf{R}^1\Gamma_{\fin}(G_{\mathbb Q, \Sigma}, M_3^\dagger, \mathrm{tr}^*\Delta_{a})$ inside $H^1(G_{p}, \varpi_{3,2}^*T_{\upi}^\dagger)$, respectively, have the following property: The quotient $H^1(G_{p}, \varpi_{3,2}^*T_{\upi}^\dagger)/H^1_?(G_{p}, T_{\upi}^\dagger)$ is a free $\cR_3$-module of rank two for each $?\in\{\upi,M\}$.

We will need the following general facts (recorded as Lemma~\ref{lemma:algebraic-lemmas-free} and Lemma~\ref{Lemma:ext^1=0_torsion-free_case} below) in our proof of this property.

\begin{lemma}\label{lemma:algebraic-lemmas-free}
A reflexive module of rank one over a regular local ring $S$  is free. 
\end{lemma}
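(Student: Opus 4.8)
The plan is to reduce the statement to the fact that a regular local ring is a unique factorisation domain. First I would recall that, by the Auslander--Buchsbaum theorem, the regular local ring $S$ is a UFD; in particular it is a Noetherian integrally closed domain. Write $K := \Frac(S)$. Since $M$ is a finitely generated reflexive module of rank one, it is torsion-free (as $M \cong M^{**}$ and $\Hom_S(-,S)$ takes values in torsion-free modules), so the natural map $M \to M \otimes_S K \cong K$ is injective; fixing such an embedding and clearing denominators, I may regard $M$ as a nonzero ideal $I \subseteq S$, which is again reflexive because $M$ is.

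Next I would invoke the standard description of rank-one reflexive modules over a Noetherian normal domain: such a module $I$ is divisorial, i.e. $I = \bigcap_{\mathrm{ht}(\fp)=1} I_\fp$ inside $K$, and the isomorphism classes of such modules, under the product $(I,J)\mapsto (I\otimes_S J)^{**}$, form a group canonically isomorphic to the divisor class group $\Cl(S)$. The key input is then that $\Cl(S) = 0$ for a UFD $S$: every height-one prime $\fp$ is principal, say $\fp = (\pi_\fp)$ with associated valuation $v_\fp$; only finitely many of the integers $v_\fp(I) := \min_{x\in I} v_\fp(x)$ are nonzero (namely those $\fp$ dividing a fixed nonzero element of $I$); and setting $d := \prod_\fp \pi_\fp^{v_\fp(I)}$ one checks, using unique factorisation in $S$, that $\bigcap_{\mathrm{ht}(\fp)=1} I_\fp = (d)$. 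Hence the divisorial ideal $I$ satisfies $I = (d) \cong S$, i.e. $M$ is free of rank one.

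I do not expect a genuine obstacle here: the argument is entirely standard commutative algebra, and the only nontrivial ingredient is the Auslander--Buchsbaum unique factorisation theorem for regular local rings (equivalently, the vanishing $\Cl(S)=0$). If one prefers to avoid the divisor-class-group formalism altogether, the same conclusion is reached directly: a reflexive ideal $I$ over the normal domain $S$ equals $\bigcap_{\mathrm{ht}(\fp)=1} I_\fp$, each $I_\fp$ is principal since $S_\fp$ is a discrete valuation ring, and the UFD property of $S$ allows one to patch the local generators into a single global generator $d$ exactly as above, giving $M \cong I = (d) \cong S$.
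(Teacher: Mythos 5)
Your proposal is correct and takes essentially the same route as the paper: both reduce to the Auslander--Buchsbaum theorem that $S$ is a UFD, embed the reflexive rank-one module as a reflexive ideal $I\subseteq S$, use that height-one primes are principal to build the global element $d=\prod_\fp \pi_\fp^{v_\fp(I)}$, and conclude via the divisorial property $I=\bigcap_{\mathrm{ht}(\fp)=1} I_\fp = dS$ (the paper cites this last step to a lemma of the third author). The detour through $\Cl(S)$ in your first paragraph is harmless but not needed, as your own second paragraph already notes.
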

\begin{proof}
Let $I$ be a reflexive ideal of $S$, and let $\fp$ be a height-one prime of $S$. Since regular local rings are unique factorisation domains (by a well-known theorem of Auslander--Buchsbaum), the height-one prime $\fp$ is principal. Let $x_\fp$ be a generator of $\fp$, and let $n_\fp$ be the non-negative integer such that $IS_\fp = \fp^{n_\fp}S_\fp$. We put $x := \prod_{\mathrm{ht}\,{\fp}=1}x_\fp^{n_\fp} \in S$. Then $IS_\fp = xS_\fp$ for any height-one prime $\fq$ of $S$.    Since $I$ is reflexive,  this implies $I = \bigcap_{\mathrm{ht}\,{\fp}=1}xS_\fp = xS$ (cf. \cite{sakamoto23}, Lemma C.11). 
\end{proof}

\begin{lemma}\label{Lemma:ext^1=0_torsion-free_case}
Let $S$ be a regular local ring. 
For any finitely generated torsion-free $S$-module $M$, we have 
\[
\mathrm{Ext}_S^1(M, R) = 0. 
\]
\end{lemma}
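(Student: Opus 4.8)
\textbf{Proof plan for Lemma~\ref{Lemma:ext^1=0_torsion-free_case}.}
The plan is to reduce the vanishing of $\mathrm{Ext}^1_S(M,S)$ for a finitely generated torsion-free module $M$ over a regular local ring $S$ to the case of reflexive modules, and then to handle the reflexive case by a depth/local-duality argument. First I would recall that for any finitely generated torsion-free $S$-module $M$ there is a canonical injection $M \hookrightarrow M^{**}$ with cokernel $C$ supported in codimension $\geq 2$ (this is standard: $M$ and $M^{**}$ agree after localising at every height-one prime, since over the DVR $S_{\fp}$ every torsion-free module is free and hence reflexive). Applying $\Hom_S(-,S)$ to $0\to M\to M^{**}\to C\to 0$ and using $\mathrm{Ext}^i_S(C,S)=0$ for $i<2$ (because $\mathrm{depth}_{\mathrm{Ann}(C)}S\geq 2$, as $S$ is Cohen--Macaulay and $\mathrm{Ann}(C)$ has height $\geq 2$), one sees that $\mathrm{Ext}^1_S(M,S)\hookrightarrow \mathrm{Ext}^1_S(M^{**},S)$. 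Thus it suffices to treat a reflexive module $N:=M^{**}$.

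For the reflexive case I would argue that a reflexive module over a regular (hence Cohen--Macaulay and normal) local ring $S$ satisfies Serre's condition $(S_2)$, so $\mathrm{depth}_S N \geq \min(2,\dim S)$; combined with the Auslander--Buchsbaum formula this forces $\mathrm{projdim}_S N \leq \dim S - 2$, and then $\mathrm{Ext}^i_S(N,S)=0$ for $i > \mathrm{projdim}_S N$, in particular for $i=1$ once $\dim S \geq 3$. The low-dimensional cases are immediate: if $\dim S\leq 1$ then $S$ is a field or DVR and every finitely generated torsion-free module is free, so all higher Ext vanish; if $\dim S=2$ then a reflexive module is maximal Cohen--Macaulay, hence free by Auslander--Buchsbaum, and again $\mathrm{Ext}^1_S(N,S)=0$. (Alternatively, in all dimensions one may invoke that over a regular local ring a reflexive module of finite rank is a second syzygy, and second syzygies have no $\mathrm{Ext}^1$ into $S$; but the depth argument is cleaner and self-contained.)

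Putting the two steps together gives $\mathrm{Ext}^1_S(M,S)\hookrightarrow \mathrm{Ext}^1_S(M^{**},S)=0$, which is the claim. The only mildly delicate point is the assertion that a reflexive module over a regular local ring has depth $\geq 2$ (when $\dim S\geq 2$); this is the standard fact that reflexive $=$ $(S_2)$ over a normal Cohen--Macaulay ring, and I would cite it (e.g. to Bruns--Herzog, or to the normality argument already used in Lemma~\ref{lemma:algebraic-lemmas-free} above) rather than reprove it. Everything else is a routine long-exact-sequence chase together with the Auslander--Buchsbaum formula, so I do not anticipate a substantive obstacle.
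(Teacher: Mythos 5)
The lemma as stated---reading the undefined $R$ as $S$, which must be the intent---is actually false once $\dim S\geq 2$, so no proof of it can succeed. For a counterexample take $S=k[[x,y,z]]$ and $M=(x,y)$. The Koszul resolution $0\to S\to S^2\to M\to 0$ of this ideal dualises to a complex whose degree-one cohomology is $S/(x,y)$, so $\mathrm{Ext}^1_S(M,S)\cong k[[z]]\neq 0$, even though $M$ is a finitely generated torsion-free module over a regular local ring. What the localisation argument (torsion-free over a DVR is free, and $\mathrm{Ext}$ commutes with localisation) actually shows---and what Nekov\'a\v{r}'s \S9.1.3, cited without further proof in the paper, presumably records---is that $\mathrm{Ext}^1_S(M,S)$ is \emph{pseudo-null}, i.e.\ supported in codimension $\geq 2$; in the example $k[[z]]$ has codimension exactly $2$. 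Since the downstream use in Lemma~\ref{lemma:quotient_by_saturation_is_free} needs an honest surjection, not one up to pseudo-null error, the distinction is substantive, not cosmetic.

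Beyond the falsity of the claim, your argument contains two concrete errors. First, the long exact sequence of $\Hom_S(-,S)$ applied to $0\to M\to M^{**}\to C\to 0$ reads $\cdots\to\mathrm{Ext}^1_S(C,S)\to\mathrm{Ext}^1_S(M^{**},S)\to\mathrm{Ext}^1_S(M,S)\to\mathrm{Ext}^2_S(C,S)\to\cdots$, so $\mathrm{Ext}^1_S(C,S)=0$ yields an injection $\mathrm{Ext}^1_S(M^{**},S)\hookrightarrow\mathrm{Ext}^1_S(M,S)$, the \emph{reverse} of what you wrote, with cokernel landing in $\mathrm{Ext}^2_S(C,S)$, which need not vanish (and does not in the counterexample, where $C=S/(x,y)$ has codimension exactly $2$). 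The reduction to the reflexive case therefore does not go through. Second, depth $\geq 2$ together with Auslander--Buchsbaum gives $\mathrm{projdim}_S N\leq\dim S-2$, hence $\mathrm{Ext}^i_S(N,S)=0$ for $i>\dim S-2$: this is a vanishing near the \emph{top} of the range, and it includes $i=1$ precisely when $\dim S\leq 2$, the opposite of your ``once $\dim S\geq 3$''. (The parenthetical second-syzygy alternative fails for the same reason: $\mathrm{Ext}^1_S(\Omega^2 P,S)\cong\mathrm{Ext}^3_S(P,S)$, which is nonzero for $P=k$ when $\dim S=3$.) Your treatment of $\dim S\leq 2$ is correct, but that is the only range in which the asserted vanishing actually holds.
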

\begin{proof}
This is well-known. See, for example, \cite[\S9.1.3(v) and (vi)]{nekovar06}.
\end{proof}

\begin{lemma}\label{lemma:quotient_by_saturation_is_free}
Assume that Hypotheses~\ref{subsubsec_2024_07_15_1105} hold.
Then both $\cR_3$-modules 
$$
H^1(G_{p}, \varpi^*_{3,2}T_{\upi}^\dagger)/H^1_\upi(G_{p},T_{\upi}^\dagger) \quad \hbox{ and } \quad H^1(G_{p}, \varpi^*_{3,2}T_{\upi}^\dagger)/H^1_M(G_{p}, T_{\upi}^\dagger)$$ 
are free of rank 2. 
\end{lemma}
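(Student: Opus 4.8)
The strategy is to reduce the freeness of these two quotients to the combination of Lemma~\ref{lemma:algebraic-lemmas-free} (reflexive rank-one modules over regular local rings are free) and Lemma~\ref{Lemma:ext^1=0_torsion-free_case} (vanishing of $\mathrm{Ext}^1_S$ against the base ring for torsion-free modules over a regular local ring). First I would record that $H^1(G_p,\varpi_{3,2}^*T_\upi^\dagger)$ is a free $\cR_3$-module: indeed $H^1(G_p,T_\upi^\dagger)$ is free over $\cR_\upi$ by our running assumptions (hypothesis~\eqref{item_Tam} forces $H^1(I_v,-)$ free, and local Tate duality together with Assumption~\ref{ass-gsp4}(b) — which gives \eqref{item_non_anom} for each graded piece — kills $H^0$ and $H^2$, so local Euler characteristic computations give $H^1$ free of rank $4$), and flat base change along $\cR_\upi\to\cR_3$ preserves this. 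Next, under Hypotheses~\ref{subsubsec_2024_07_15_1105}, both $\mathbf{R}^1\Gamma_{\fin}(G_{\mathbb Q,\Sigma},\varpi_{3,2}^*T_\upi^\dagger,\Delta_\emptyset)$ and $\mathbf{R}^1\Gamma_{\fin}(G_{\mathbb Q,\Sigma},M_3^\dagger,\mathrm{tr}^*\Delta_a)$ are torsion-free $\cR_3$-modules of rank $2$ (the ranks come from global duality as in Lemma~\ref{lemma_2025_02_20_1200}(i)–(ii) and Lemma~\ref{lemma_5_10_2025_02_14_1513}; torsion-freeness of the second follows since $\res_p$ embeds it into $H^1(G_p,\varpi_{3,2}^*T_\upi^\dagger)$, cf. Lemma~\ref{lemma_5_10_2025_02_14_1513}(i), and torsion-freeness of the first follows because \eqref{item_nVdeltaT3} forces $\mathbf{R}^1\Gamma_{\fin}(G_{\mathbb Q,\Sigma},T_\upi^\dagger,\Delta_0)=0$, so $\mathbf{R}^1\Gamma_{\fin}(-,\Delta_\emptyset)\hookrightarrow H^1(G_p,T_\upi^\dagger)$). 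Consequently their saturations $H^1_\upi(G_p,T_\upi^\dagger)$ and $H^1_M(G_p,T_\upi^\dagger)$ inside the free module $H^1(G_p,\varpi_{3,2}^*T_\upi^\dagger)$ are reflexive of rank $2$.

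For the freeness of the quotient $Q_?:=H^1(G_p,\varpi_{3,2}^*T_\upi^\dagger)/H^1_?(G_p,T_\upi^\dagger)$, the key point is that a saturation inside a free module over a normal (here regular) domain is reflexive, hence so is the quotient: concretely, $H^1_?(G_p,T_\upi^\dagger)=\bigcap_{\mathrm{ht}\,\fp=1} (H^1_?(G_p,T_\upi^\dagger)S_\fp)$ because the ambient module is free and the quotient is torsion-free, so $Q_?$ is torsion-free and its bidual is computed primewise. Then I would invoke Lemma~\ref{Lemma:ext^1=0_torsion-free_case}: applying $\mathrm{Hom}_{\cR_3}(-,\cR_3)$ to the short exact sequence $0\to H^1_?(G_p,T_\upi^\dagger)\to H^1(G_p,\varpi_{3,2}^*T_\upi^\dagger)\to Q_?\to 0$ and using $\mathrm{Ext}^1_{\cR_3}(H^1_?(G_p,T_\upi^\dagger),\cR_3)=0$ (as $H^1_?$ is torsion-free over the regular local ring $\cR_3$) shows that the double-dual sequence is again short exact, so $Q_?$ is reflexive. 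A reflexive module of rank $2$ over a regular local ring need not be free in general, so here I would instead argue directly: the quotient $Q_?$ has rank $4-2=2$, it is reflexive, and — crucially — it is a second syzygy (being the cokernel of a map of free/reflexive modules that is locally split in codimension one). Over a regular local ring of the relevant Krull dimension (three, since $\cR_3=\cR_\upi\,\widehat\otimes\,\cR_{\underline\sigma}$ with $\cR_\upi$ of dimension $3$ and $\cR_{\underline\sigma}$ of dimension... — I must be careful here) one uses that reflexive modules which are locally free in codimension $\le 2$ are free; so the genuinely needed input is that $H^1_?(G_p,T_\upi^\dagger)$ is locally a direct summand of $H^1(G_p,\varpi_{3,2}^*T_\upi^\dagger)$ at all primes of height $\le 2$.

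The main obstacle I anticipate is precisely this local-splitting statement at height-two primes. To establish it I would localize at a height-$\le 2$ prime $\fp$ of $\cR_3$, where $\cR_{3,\fp}$ is regular of dimension $\le 2$; over such a ring torsion-free modules are already reflexive and, for the quotient of a free module by a saturated submodule, one can run the argument of Lemma~\ref{lemma:algebraic-lemmas-free} componentwise after splitting off a free rank-one piece, reducing to the rank-one case handled there. Concretely, it suffices to check that $\mathbf{R}^2\Gamma_{\fin}(G_{\mathbb Q,\Sigma},\varpi_{3,2}^*T_\upi^\dagger,\Delta_\emptyset)$ and the relevant degree-$2$ Selmer group for $M_3^\dagger$ are torsion-free after inverting the respective characteristic ideals — which is where \eqref{item_AJ_Pi}, \eqref{item_AJ_M}, \eqref{item_parity_Pi}, \eqref{item_parity_M} and \eqref{item_nVdeltaT3} enter, via Lemmas~\ref{lemma_2025_02_20_1200} and~\ref{lemma_2025_02_20_1200_bis}, guaranteeing that the cokernel of $\res_p$ has codimension $\ge 2$ support. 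Granting this, Lemma~\ref{lemma:algebraic-lemmas-free} applied primewise and the reflexivity established above combine to force $Q_\upi$ and $Q_M$ to be free of rank $2$, completing the proof.
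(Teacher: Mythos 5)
Your proposal has a genuine gap precisely at the point you flag as "the main obstacle": the claim that a rank-two reflexive module over the regular local ring $\cR_3$ that is locally free in codimension $\le 2$ must be free. This is false once $\dim\cR_3\ge 3$. A standard counterexample is the second syzygy $\Omega^2(k)$ of the residue field over $R=k[[x,y,z]]$: by the Koszul complex it is the kernel of a surjection $R^3\twoheadrightarrow\mathfrak m$, hence has projective dimension one, depth two by Auslander--Buchsbaum, and is therefore reflexive of rank two; moreover it localizes to a free module at every non-maximal prime (since $\mathfrak m R_\fp=R_\fp$ for $\fp\ne\mathfrak m$); yet it is not free. So the general-position reasoning you propose cannot work in the stated generality, and the appeal to "codimension $\ge 2$ support of $\operatorname{coker}(\res_p)$" does not close the gap.

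The missing idea is the self-duality of $T_\upi^\dagger$: the symplectic pairing $T_\upi^\dagger\otimes T_\upi^\dagger\to\cR_\upi(1)$ induces a perfect local Tate pairing on $H^1(G_p,T_\upi^\dagger)$, and Nekov\'a\v{r}'s duality for Selmer complexes identifies the saturated image $H^1_\upi(\Delta_{\mathscr{F}^0})$ as equal to its own perpendicular complement. The paper's proof exploits this to \emph{split the rank-two saturation into rank-one pieces}: first it proves the perpendicularity $L^{\rm sat}{}^\perp=H^1_\upi(\Delta_{\mathscr{F}^2})$ (via Nekov\'a\v{r}'s duality applied to $\Delta_\emptyset$ and $\Delta_{\mathscr{F}^2}$), then uses Lemma~\ref{Lemma:ext^1=0_torsion-free_case} and the perfectness of the pairing to produce an isomorphism $H^1(G_p,\mathscr{F}^2)/H^1_\upi(\Delta_{\mathscr{F}^2})\cong\Hom_{\cR_\upi}(L^{\rm sat},\cR_\upi)$, showing a rank-\emph{one} quotient is reflexive, hence free by Lemma~\ref{lemma:algebraic-lemmas-free}. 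This forces the inclusion $H^1_\upi(\Delta_{\mathscr{F}^2})\hookrightarrow H^1_\upi(\Delta_{\mathscr{F}^0})$ to split, so $H^1_\upi(\Delta_{\mathscr{F}^0})$ is a direct sum of two rank-one reflexives, hence free; finally $H^1_\upi(\Delta_{\mathscr{F}^0})^\perp=H^1_\upi(\Delta_{\mathscr{F}^0})$ and perfectness identify the rank-two quotient $H^1(G_p,T_\upi^\dagger)/H^1_\upi(\Delta_{\mathscr{F}^0})$ with the $\cR_\upi$-dual of the free module $H^1_\upi(\Delta_{\mathscr{F}^0})$. Without using the pairing to reduce the problem to the rank-one case, reflexivity alone cannot deliver freeness here, and the local-splitting refinement you propose does not supply a valid substitute.
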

\begin{remark}
    In the situation of \S\ref{subsec_5_1_2025_02_11}, the saturation of $\mathrm{res}_p\,\mathbf{R}^1\Gamma_{\fin}(G_{\mathbb{Q},\Sigma}, T^\dagger_{\underline{\Pi}}, \Delta_{\mathscr{F}^1})$ in $H^1(G_p, \mathscr{F}^1/\mathscr{F}^3)$ 
    equals $H^1(G_p, \mathscr{F}^2/\mathscr{F}^3)$. 
    Therefore, the analogous quotient in this case is isomorphic to $H^1(G_p, \mathscr{F}^1/\mathscr{F}^2)$, which is free of rank one. 
    \hfill$\blacksquare$
\end{remark}
\begin{proof}[Proof of Lemma~\ref{lemma:quotient_by_saturation_is_free}]
Let us show that $H^1(G_{p}, \varpi^*_{3,2}T_{\upi}^\dagger)/H^1_\upi(G_{p}, T_{\upi}^\dagger)$ is free. For $i\in \{0,1,2,3\}$, we let $H^1_\upi(\Delta_{\mathscr{F}^i})$ denote  the saturation of the isomorphic image of $\mathbf{R}^1\Gamma_{\fin}(G_{\mathbb Q, \Sigma}, T_{\upi}^\dagger, \Delta_{\mathscr{F}^i})$ inside $H^1(G_{p}, T_{\upi}^\dagger)$. 
Then it suffices to show that  
\[
H^1(G_{p}, T_{\upi}^\dagger)/H^1_\upi(\Delta_{\mathscr{F}^0}) \textrm{ is free (over $\cR_\upi$)}. 
\]
Note that $H^1_\upi(\Delta_{\mathscr{F}^1}) = H^1_\upi(\Delta_{\mathscr{F}^2})$ under our running hypotheses (cf. Lemma~\ref{lemma_5_5_2025_02_14_1331_bis}), and it is of rank 1. 

Since $T_{\upi}^\dagger$ is self-dual, the cup-product pairing induces a perfect pairing on $H^1(G_{p}, T_{\upi}^\dagger)$. 
This pairing induces a perfect pairing
\begin{equation}
\label{eqn_2025_03_11_1232}
H^1(G_{p}, \mathscr{F}^0/\mathscr{F}^2) \times H^1(G_{p}, \mathscr{F}^2) \longrightarrow \cR_\upi. 
\end{equation}
To lighten our notation, let us denote by $L$ the image of $\mathbf{R}^1\Gamma_{\fin}(G_{\mathbb Q, \Sigma}, T_{\upi}^\dagger, \Delta_{\emptyset})$ inside $H^1(G_{p}, \mathscr{F}^0/\mathscr{F}^2)$, and write $L^{\rm sat}$ for its saturation. 
 We first prove that 
\begin{equation}
\label{eqn_2025_03_11_1231}
    L^{\rm sat}{}^\perp = L^\perp = H^1_\upi(\Delta_{\mathscr{F}^2}). 
\end{equation}
Note that the first equality in \eqref{eqn_2025_03_11_1231} is clear since the pairing \eqref{eqn_2025_03_11_1232} takes values in the integral domain $\cR_\upi$. Moreover, since both $L$ and $H^1_\upi(\Delta_{\mathscr{F}^2})$ are of rank 1 and $H^1(G_{p}, \mathscr{F}^2)/H^1_\upi(\Delta_{\mathscr{F}^2})$ is torsion-free, we have 
$$L^\perp = H^1_\upi(\Delta_{\mathscr{F}^2}) \,\Longleftrightarrow\, L^\perp \supset H^1_\upi(\Delta_{\mathscr{F}^2})\,\Longleftrightarrow\, L\,\perp \, H^1_\upi(\Delta_{\mathscr{F}^2})\,.$$ 
As a result, once again relying on the fact that the pairing \eqref{eqn_2025_03_11_1232} takes values in the integral domain $\cR_\upi$, the sought after equality \eqref{eqn_2025_03_11_1231} follows once we check that
\begin{equation}
\label{eqn_2025_03_11_1324}
L \otimes \mathcal{Q} \quad\perp\quad  H^1_\upi(\Delta_{\mathscr{F}^2}) \otimes \mathcal{Q}\,, 
\end{equation}
where $\mathcal{Q}$ is the field of fractions of $\cR_\upi$. 

As the complex $\mathbf{R}\Gamma(G_\ell, T^\dagger_{\upi}) \otimes \mathcal{Q}$ is acyclic for each prime $\ell \neq p$, it follows from Nekov\'a\v{r}'s duality \cite[Proposition 6.7.7]{nekovar06}  that the cup-product induces an isomorphism 
\[
\Hom_{\mathcal{Q}}(\mathbf{R}\Gamma_{\fin}(G_{\mathbb{Q},\Sigma},T_\upi^\dagger, \Delta_{\emptyset}) \otimes \mathcal{Q},  \mathcal{Q})[-3] \cong  
\mathbf{R}\Gamma_{\fin}(G_{\mathbb{Q},\Sigma},T_\upi^\dagger, \Delta_{0})  \otimes \mathcal{Q}\,. 
\]
We, therefore, have an exact triangle 
\begin{align*}
  \Hom_{\mathcal{Q}}(\mathbf{R}\Gamma_{\fin}(G_{\mathbb{Q},\Sigma},T_\upi^\dagger, \Delta_{\emptyset}) \otimes \mathcal{Q},  \mathcal{Q})[-3] \longrightarrow   
  \mathbf{R}\Gamma_{\fin}(G_{\mathbb{Q},\Sigma},T_\upi^\dagger, \Delta_{\mathscr{F}^2})  \otimes \mathcal{Q} 
  \longrightarrow \mathbf{R}\Gamma(G_{p},\mathscr{F}^2)  \otimes \mathcal{Q} \stackrel{+1}{\longrightarrow}\,, 
\end{align*}
which yields the exact sequence 
\[
H^1_\upi(\Delta_{\mathscr{F}^2}) \otimes \mathcal{Q}=\mathbf{R}\Gamma_{\fin}^1(G_{\mathbb{Q},\Sigma},T_\upi^\dagger, \mathscr{F}^2)  \otimes \mathcal{Q} 
  \longrightarrow H^1(G_{p},\mathscr{F}^2)  \otimes \mathcal{Q} \longrightarrow 
   \Hom_{\mathcal{Q}}(\mathbf{R}\Gamma_{\fin}^1(G_{\mathbb{Q},\Sigma},T_\upi^\dagger, \Delta_{\emptyset}) \otimes \mathcal{Q}, \mathcal{Q}). 
\]
This precisely means \eqref{eqn_2025_03_11_1324} holds, and the proof of our claim \eqref{eqn_2025_03_11_1231} is complete. 

Since $H^1(G_{p}, \mathscr{F}^0/\mathscr{F}^2)/L^{\rm sat}$ is torsion-free, Lemma \ref{Lemma:ext^1=0_torsion-free_case} shows that 
\[
\Hom_{\cR_3}(H^1(G_{p}, \mathscr{F}^0/\mathscr{F}^2), \cR_3) \longrightarrow \Hom_{\cR_\upi}(L^{\rm sat}, \cR_\upi)
\]
is surjective. We therefore obtain an $\cR_\upi$-isomorphism 
\[
H^1(G_{p}, \mathscr{F}^2)/H^1_\upi(\Delta_{\mathscr{F}^2}) \stackrel{\sim}{\longrightarrow} \Hom_{\cR_\upi}(L^{\rm sat}, \cR_\upi). 
\]
This fact shows that $H^1(G_{p}, \mathscr{F}^2)/H^1_\upi(\Delta_{\mathscr{F}^2})$ is reflexive, and hence it is free by Lemma \ref{lemma:algebraic-lemmas-free}; which, in turn, implies that the inclusion morphism $H^1_\upi(\Delta_{\mathscr{F}^2}) \to H^1(G_{p}, T_\upi^\dagger)$ admits a section. Therefore, the inclusion map $H^1_\upi(\Delta_{\mathscr{F}^2}) \to H^1_\upi(\Delta_{\mathscr{F}^0})$
also admits a section, and hence, we have an $\cR_\upi$-isomorphism 
\begin{align}\label{eq:upi-local-decomposition}
    H^1_\upi(\Delta_{\mathscr{F}^0}) \cong H^1_\upi(\Delta_{\mathscr{F}^2}) \oplus H^1_\upi(\Delta_{\mathscr{F}^0})/H^1_\upi(\Delta_{\mathscr{F}^2})\,. 
\end{align}
Since both $H^1_\upi(\Delta_{\mathscr{F}^0})$ and $H^1_\upi(\Delta_{\mathscr{F}^2})$ are reflexive, the quotient $H^1_\upi(\Delta_{\mathscr{F}^0})/H^1_\upi(\Delta_{\mathscr{F}^2})$ is also reflexive. We infer from Lemma~\ref{lemma:algebraic-lemmas-free} that 
$H^1_\upi(\Delta_{\mathscr{F}^0})/H^1_\upi(\Delta_{\mathscr{F}^2})$ is free. In particular, $H^1_\upi(\Delta_{\mathscr{F}^0})$ is free. 
Using again Nekov\'a\v{r}'s duality, we obtain  $H^1_\upi(\Delta_{\mathscr{F}^0})^\perp = H^1_\upi(\Delta_{\mathscr{F}^0})$, and hence an isomorphism  
\[
H^1(G_p, T_\upi^\dagger)/H^1_\upi(\Delta_{\mathscr{F}^0}) \stackrel{\sim}{\longrightarrow} \Hom_{\cR_\upi}(H^1_\upi(\Delta_{\mathscr{F}^0}), \cR_\upi). 
\]
This concludes our proof that $H^1(G_p, T_\upi^\dagger)/H^1_\upi(\Delta_{\mathscr{F}^0})$ is free, as required.

The same argument shows that $H^1(G_{p}, \varpi_{3,2}^*T_{\upi}^\dagger)/H^1_M(G_{p}, T_{\upi}^\dagger)$ is free, and our proof is complete. 
\end{proof}

\subsubsection{} By Lemma \ref{lemma:quotient_by_saturation_is_free}, there is a surjection of $\cR_3$-modules $H^1(G_p, \varpi^*_{3,2}T_\upi^\dagger) \xrightarrow{\ell_\upi} R_3^2$ such that $\ker(\ell_\upi) = H^1_\upi(G_p, T_\upi^\dagger)$. 
We put
\[
\psi_i := \mathrm{pr}_i \circ \ell_\upi \circ \mathrm{res}_p \quad (i=1,2)
\,\,\, \textrm{ and } \,\,\, 
\mathrm{Log}_{/\upi}^{(2)} := \psi_1 \wedge \psi_2. 
\]
Let $\psi_3, \psi_4 \in {\rm Hom}_{\cR_3}(H^1(G_p, \varpi_{3,2}^*T_\upi^\dagger), \cR_3)$ be such that
\[
\psi_3 \oplus \psi_4\oplus \ell_\upi \, \colon\, H^1(G_p, \varpi_{3,2}^*T_\upi^\dagger)\,\longrightarrow\, \cR_3^4
\]
is an isomorphism.

\begin{theorem}
    \label{thm_2025_03_11} 
    Assume that Hypotheses~\ref{subsubsec_2024_07_15_1105} hold. Then,
    $$ \delta(T_3^\dagger, \Delta_a)= \mathrm{Log}_{/\upi}^{(2)}(\delta(M_3^\dagger, {\rm tr}^*\Delta_a) )  \times 
     ( \psi_3 \circ\mathrm{res}_p  \wedge \psi_4\circ \mathrm{res}_p) ( \delta(T^\dagger_{\underline{\Pi}}, \Delta_{\emptyset}))\,.$$
\end{theorem}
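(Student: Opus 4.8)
The plan is to deduce Theorem~\ref{thm_2025_03_11} from Lemma~\ref{lemma_2025_03_10_1431} by re-expressing each of the three characteristic-ideal factors appearing on the right-hand side of \eqref{eqn_2025_02_17_1116} in terms of the maps $\mathrm{Log}_{/\upi}^{(2)}$ and $\psi_3\circ\mathrm{res}_p\wedge\psi_4\circ\mathrm{res}_p$, precisely mimicking the mechanism of Proposition~\ref{prop:delta_a-formula2} but now with the splitting of $H^1(G_p,\varpi^*_{3,2}T_\upi^\dagger)$ supplied by Lemma~\ref{lemma:quotient_by_saturation_is_free} in place of the grading by $\mathscr{F}^\bullet$. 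We may assume \eqref{item_nVdeltaT3} holds, since otherwise (as in the proof of Proposition~\ref{prop:delta_a-formula2}) both sides vanish: the middle vertical arrow $\res_p$ in \eqref{eqn_2025_03_10_1518} fails to be injective on the relevant rank-$2$ module, so $\mathrm{Log}_{/\upi}^{(2)}$ or $\psi_3\circ\mathrm{res}_p\wedge\psi_4\circ\mathrm{res}_p$ kills the leading-term module, while $\delta(T_3^\dagger,\Delta_a)=0$ by Hypothesis~\eqref{item_nVdeltaT3} being false together with Theorem~\ref{thm_2023_03_10}(i).

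\textbf{Step 1: identify the three factors.} Work over the regular local ring $\cR_3$ and recall that, by Lemma~\ref{lemma_2025_03_10_1431}, $\delta(T_3^\dagger,\Delta_a)$ is the product of (i) $\Char_{\cR_3}$ of the cokernel of $\res_p$ on $\mathbf{R}^1\Gamma_{\fin}(G_{\mathbb Q,\Sigma},\varpi^*_{3,2}T_\upi^\dagger,\Delta_\emptyset)\oplus\mathbf{R}^1\Gamma_{\fin}(G_{\mathbb Q,\Sigma},M_3^\dagger,\mathrm{tr}^*\Delta_a)$ inside $H^1(G_p,\varpi^*_{3,2}T_\upi^\dagger)$, (ii) $\Char_{\cR_3}$ of $\bigcap^2\mathbf{R}^1\Gamma_{\fin}(G_{\mathbb Q,\Sigma},M_3^\dagger,\mathrm{tr}^*\Delta_a)/\delta(M_3^\dagger,\mathrm{tr}^*\Delta_a)$, and (iii) $\Char_{\cR_\upi}$ of $\bigcap^2\mathbf{R}^1\Gamma_{\fin}(G_{\mathbb Q,\Sigma},T_\upi^\dagger,\Delta_\emptyset)/\delta(T_\upi^\dagger,\Delta_\emptyset)$. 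By Lemma~\ref{lemma:quotient_by_saturation_is_free}, the saturation $H^1_\upi(G_p,T_\upi^\dagger)$ of $\res_p\,\mathbf{R}^1\Gamma_{\fin}(G_{\mathbb Q,\Sigma},\varpi^*_{3,2}T_\upi^\dagger,\Delta_\emptyset)$ is a direct summand of rank $2$, so the surjection $\ell_\upi$ exists; and since $\mathbf{R}^1\Gamma_{\fin}(G_{\mathbb Q,\Sigma},M_3^\dagger,\mathrm{tr}^*\Delta_a)$ has rank $2$ and maps injectively into $H^1(G_p,\varpi^*_{3,2}T_\upi^\dagger)$ (Lemma~\ref{lemma_5_10_2025_02_14_1513}(ii)), its image has trivial intersection with $H^1_\upi$ on generic fibres, whence $\ell_\upi\circ\res_p$ restricted to $\mathbf{R}^1\Gamma_{\fin}(G_{\mathbb Q,\Sigma},M_3^\dagger,\mathrm{tr}^*\Delta_a)$ is generically injective and $\psi_1\wedge\psi_2$ is nonzero on $\bigcap^2$ of that module.

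\textbf{Step 2: rewrite each factor via the splitting.} Using the isomorphism $\psi_3\oplus\psi_4\oplus\ell_\upi\colon H^1(G_p,\varpi^*_{3,2}T_\upi^\dagger)\xrightarrow{\sim}\cR_3^4$, factor (i) as the product of $\Char_{\cR_3}\coker(\psi_1\wedge\psi_2$ on $\bigcap^2 \mathbf{R}^1\Gamma_{\fin}(M_3^\dagger,\mathrm{tr}^*\Delta_a))$-type contributions and the analogous contribution from $(\psi_3\wedge\psi_4)$ applied to $\bigcap^2\mathbf{R}^1\Gamma_{\fin}(T_\upi^\dagger,\Delta_\emptyset)$; concretely, the cokernel of $\res_p$ on the direct sum is measured by a $4\times4$ determinant which, in the chosen basis, block-decomposes into the $2\times2$ minor recording $\psi_1\wedge\psi_2$ on the $M_3^\dagger$-part and the $2\times2$ minor recording $\psi_3\wedge\psi_4$ on the $T_\upi^\dagger$-part — this is exactly the determinant identity used inside Lemma~\ref{lemma_2025_02_24_1519}. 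Combining this with factors (ii) and (iii) and invoking Lemma~\ref{lemma_5_8_2025_02_14_1642} twice (once for each rank-$2$ bidual and its image under the relevant wedge of linear functionals) collapses the product of the three $\Char$'s into $\mathrm{Log}_{/\upi}^{(2)}(\delta(M_3^\dagger,\mathrm{tr}^*\Delta_a))\cdot(\psi_3\circ\res_p\wedge\psi_4\circ\res_p)(\delta(T_\upi^\dagger,\Delta_\emptyset))$, since $\mathrm{Log}_{/\upi}^{(2)}=\psi_1\wedge\psi_2$ composed with $\res_p$ transports the leading-term generator of $\bigcap^2\mathbf{R}^1\Gamma_{\fin}(M_3^\dagger,\mathrm{tr}^*\Delta_a)$ to a generator of its image, with cokernel accounted for precisely by factor (i)'s $M_3^\dagger$-block.

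\textbf{Expected main obstacle.} The delicate point is the block-diagonalisation of the $4\times4$ cokernel in Step 2: a priori the image of $\res_p\,\mathbf{R}^1\Gamma_{\fin}(T_\upi^\dagger,\Delta_\emptyset)$ and the image of $\res_p\,\mathbf{R}^1\Gamma_{\fin}(M_3^\dagger,\mathrm{tr}^*\Delta_a)$ need not be complementary summands, only generically so, so one cannot literally triangulate the determinant. The fix is to argue, as in Remark~\ref{remark_2025_03_10_1606}, that $H^1_\upi(G_p,T_\upi^\dagger)$ being a \emph{direct summand} (Lemma~\ref{lemma:quotient_by_saturation_is_free}) lets one choose the basis $\psi_3,\psi_4,\ell_\upi$ so that $\psi_3\oplus\psi_4$ vanishes identically on $H^1_\upi(G_p,T_\upi^\dagger)$; then the $4\times4$ matrix is genuinely block lower-triangular with the $\psi_1\wedge\psi_2$-block coming from the $M_3^\dagger$-summand's image and the $\psi_3\wedge\psi_4$-block from the $T_\upi^\dagger$-summand, giving the clean determinant factorisation. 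Once this structural observation is in place, the remainder is a bookkeeping of characteristic ideals identical in spirit to the proof of Proposition~\ref{prop:delta_a-formula2}.
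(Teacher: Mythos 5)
Your overall plan --- deriving the theorem from Lemma~\ref{lemma_2025_03_10_1431} by identifying the three characteristic-ideal factors and then block-decomposing the $4\times 4$ determinant --- is in substance the paper's own route. Proposition~\ref{prop:delta_a-formula2} is itself deduced from Lemma~\ref{lemma_2025_03_10_1431} by exactly the bundling of characteristic ideals you sketch, and Theorem~\ref{thm_2025_03_11} then follows in one line once $\mathrm{Log}_{\mathscr{F}^0}^{(4)}$ is rewritten as $\psi_3\circ\res_p\wedge\psi_4\circ\res_p\wedge\mathrm{Log}_{/\upi}^{(2)}$ and the resulting $4$-form is evaluated on $\delta(M_3^\dagger,\tr^*\Delta_a)\otimes\delta(T^\dagger_{\underline\Pi},\Delta_\emptyset)$. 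Your Steps 1 and 2 track this faithfully.

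The ``expected main obstacle'' paragraph, however, contains a genuine error, both in the diagnosis and in the proposed fix. You claim the block factorisation requires choosing $\psi_3,\psi_4$ so that $\psi_3\oplus\psi_4$ vanishes on $H^1_\upi(G_p,T_\upi^\dagger)$. This is impossible: since $\ker(\ell_\upi)=H^1_\upi(G_p,T_\upi^\dagger)$, any such choice would force $\psi_3\oplus\psi_4\oplus\ell_\upi$ to annihilate $H^1_\upi(G_p,T_\upi^\dagger)$ and hence fail to be injective, contradicting its defining property of being an isomorphism onto $\cR_3^4$. Nor is any such choice needed: the mechanism that produces the factorisation is the vanishing of $\psi_1,\psi_2$ on the image of $\res_p$ from $\mathbf{R}^1\Gamma_{\fin}(G_{\mathbb Q,\Sigma},\varpi^*_{3,2}T_\upi^\dagger,\Delta_\emptyset)$, which holds automatically. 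Indeed $\psi_i=\mathrm{pr}_i\circ\ell_\upi\circ\res_p$ for $i=1,2$ by definition, and $\res_p$ carries that Selmer group into $H^1_\upi(G_p,T_\upi^\dagger)=\ker\ell_\upi$; so the upper-right $2\times 2$ block of the matrix you describe is identically zero. A block lower-triangular determinant factors as the product of its diagonal-block determinants with no assumption whatsoever on the lower-left block (i.e.\ no constraint on $\psi_3,\psi_4$ restricted to the $M_3^\dagger$-image), so there is nothing to ``fix''. Concretely, in the shuffle expansion of the $4$-form applied to $\delta(M_3^\dagger,\tr^*\Delta_a)\otimes\delta(T^\dagger_{\underline\Pi},\Delta_\emptyset)$, every term carrying $\psi_1$ or $\psi_2$ on the $T^\dagger_{\underline\Pi}$-factor dies, and the unique surviving term is exactly $\mathrm{Log}_{/\upi}^{(2)}(\delta(M_3^\dagger,\tr^*\Delta_a))\cdot(\psi_3\circ\res_p\wedge\psi_4\circ\res_p)(\delta(T^\dagger_{\underline\Pi},\Delta_\emptyset))$. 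The concern you raise about the two local images failing to be complementary direct summands is therefore moot: block triangularity (not block diagonality) is all that is required, and it is built into the definitions of $\psi_1,\psi_2$.
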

\begin{proof}
   The asserted factorisation follows from the following chain of equalities:  
\begin{align*}
     \delta(T_3^\dagger, \Delta_a) &=  \mathrm{Log}_{\mathscr{F}^0}^{(4)}(\delta(M_3^\dagger, {\rm tr}^*\Delta_a)  \otimes \delta(T^\dagger_{\underline{\Pi}}, \Delta_{\emptyset}))
     \\
     &= (\psi_3 \circ \mathrm{res}_p \wedge \psi_4 \circ \mathrm{res}_p \wedge \mathrm{Log}_{/\upi}^{(2)})(\delta(M_3^\dagger, {\rm tr}^*\Delta_a)  \otimes \delta(T^\dagger_{\underline{\Pi}}, \Delta_{\emptyset}))
     \\
     &= \mathrm{Log}_{/\upi}^{(2)}(\delta(M_3^\dagger, {\rm tr}^*\Delta_a) )  \times 
     (\psi_3 \circ \mathrm{res}_p \wedge \psi_4 \circ \mathrm{res}_p ) ( \delta(T^\dagger_{\underline{\Pi}}, \Delta_{\emptyset})). 
\end{align*}
Here, the first equality is Proposition~\ref{prop:delta_a-formula2}, and the final equality holds thanks to definitions.
\end{proof}


\subsubsection{Basis of $H^1(G_p, T^\dagger_\upi)$} 
\label{subsubsec_2025_07_18_1211}
For each $i\in\{0,1,2,3\}$, let us choose basis $\{f_0,\cdots,f_i\} \subset H^1(G_p, \mathscr{F}^{3-i})$ so that
\[
\langle f_i, f_{3-j} \rangle = (-1)^i\delta_{ij}, 
\]
where $\delta_{ij}$ is the Kronecker delta function. 

\subsubsection{Basis of $H^1_\upi(G_p, T_\upi^\dagger)$}
By Lemma \ref{lemma:quotient_by_saturation_is_free}, the $\cR_3$-module  $H^1_\upi(G_p, T^\dagger_\upi)$ is free of rank 2. 
Moreover, the isomorphism \eqref{eq:upi-local-decomposition} shows that 
$H^1_\upi(G_p, T^\dagger_\upi)/(H^1_\upi(\Delta_{\mathscr{F}^2}) \otimes \cR_3)$ is free of rank 1.  One can therefore take a basis of $H^1_\upi(G_p, T_\upi^\dagger)$ of the form
\[
\{\varphi_2:=a_1f_0 + b_1f_1 + c_1f_2 + d_1f_3\,,\,\, \varphi_1:=c_2f_0+d_2f_1\}\,.
\] 
As we have seen in the proof of Lemma \ref{lemma:quotient_by_saturation_is_free}, 
the quotient $H^1(G_p, \mathscr{F}^2)/H^1_\upi(\Delta_{\mathscr{F}^2})$ is free, that is, 
$\cR_3^2/(c_2, d_2)\cR_3$ is free. 
As a result, since $\cR_3$ is a local ring, 
\begin{itemize}
    \item $c_2$ or $d_2$ is a unit. 
\end{itemize}
Moreover, since $H^1_\upi(G_p, T_\upi^\dagger) = H^1_\upi(G_p, T_\upi^\dagger)^\perp$, we have 
\[
0 = \langle \varphi_1, \varphi_2 \rangle = c_2d_1-d_2c_1. 
\]
Putting all this together, we conclude that there is an element $r \in \cR_3$ such that 
\[
 \begin{pmatrix} c_1\\d_1\end{pmatrix} = r \begin{pmatrix}c_2\\d_2\end{pmatrix}\,.
\]
We thus obtain the following result. 
\begin{lemma}\label{lemma:basis_of_H^1_upi(G_p, T_upi^dagger)}
Under Hypotheses~\ref{subsubsec_2024_07_15_1105}, the $\cR_3$-module $H^1_\upi(G_p, T_\upi^\dagger)$ has a basis of the form 
    \[
\{\varphi_{2, \upi} :=a_{\upi}f_0 + b_{\upi}f_1 + r_{\upi}c_{\upi}f_2 + r_{\upi}d_{\upi}f_3\,,\,\, \varphi_{1, \upi} := c_{\upi}f_0+d_{\upi}f_1\}\,,
\] 
where at least one of $c_{\upi}$ and $d_{\upi}$ is a unit. 
\end{lemma}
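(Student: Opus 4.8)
The plan is to analyze the structure of $H^1_\upi(G_p, T_\upi^\dagger)$ directly, using the decomposition \eqref{eq:upi-local-decomposition} established in the proof of Lemma~\ref{lemma:quotient_by_saturation_is_free} together with the explicit description of the pairing in the bases $\{f_0,f_1,f_2,f_3\}$ fixed in \S\ref{subsubsec_2025_07_18_1211}. First I would record that, under Hypotheses~\ref{subsubsec_2024_07_15_1105} (in particular \eqref{item_nVdeltaT3} and \eqref{item_AJ_Pi}, via Lemma~\ref{lemma_5_5_2025_02_14_1331_bis}), the module $H^1_\upi(\Delta_{\mathscr{F}^1}) \otimes \cR_3 = H^1_\upi(\Delta_{\mathscr{F}^2}) \otimes \cR_3$ is a free $\cR_3$-submodule of $H^1(G_p, \mathscr{F}^2) = \cR_3 f_0 \oplus \cR_3 f_1$ of rank one, and that the full module $H^1_\upi(G_p, T_\upi^\dagger) = H^1_\upi(\Delta_{\mathscr{F}^0})$ is free of rank two with the quotient $H^1_\upi(\Delta_{\mathscr{F}^0})/(H^1_\upi(\Delta_{\mathscr{F}^2})\otimes\cR_3)$ free of rank one.

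Next, using the splitting from \eqref{eq:upi-local-decomposition} I would pick a generator $\varphi_1$ of $H^1_\upi(\Delta_{\mathscr{F}^2})\otimes\cR_3 \subset \cR_3 f_0 \oplus \cR_3 f_1$, say $\varphi_1 = c_\upi f_0 + d_\upi f_1$, and complete it to a basis $\{\varphi_2,\varphi_1\}$ of $H^1_\upi(G_p,T_\upi^\dagger)$, writing $\varphi_2 = a_\upi f_0 + b_\upi f_1 + c_1 f_2 + d_1 f_3$ in the ambient basis. The fact that $c_\upi$ or $d_\upi$ is a unit is exactly the statement that $\cR_3^2/(c_\upi,d_\upi)\cR_3$ — i.e.\ $H^1(G_p,\mathscr{F}^2)/(H^1_\upi(\Delta_{\mathscr{F}^2})\otimes\cR_3)$ — is free of rank one, which was established in the course of proving Lemma~\ref{lemma:quotient_by_saturation_is_free}; since $\cR_3$ is local, freeness of this rank-one quotient of $\cR_3^2$ forces one of the two generators to be a unit.

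Finally I would exploit self-orthogonality $H^1_\upi(G_p,T_\upi^\dagger) = H^1_\upi(G_p,T_\upi^\dagger)^\perp$, which holds by Nekov\'a\v r's duality as used already in the proof of Lemma~\ref{lemma:quotient_by_saturation_is_free}. Pairing $\varphi_1$ against $\varphi_2$ using the normalisation $\langle f_i, f_{3-j}\rangle = (-1)^i \delta_{ij}$ gives $0 = \langle \varphi_1,\varphi_2\rangle = c_\upi d_1 - d_\upi c_1$ (the $f_0,f_1$-components of $\varphi_2$ pair to zero against $\varphi_1$ since $\langle f_0,f_0\rangle = \langle f_0,f_1\rangle = \langle f_1,f_1\rangle = 0$, as these lie in the isotropic $H^1(G_p,\mathscr{F}^2)$). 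Thus the vectors $(c_1,d_1)$ and $(c_\upi,d_\upi)$ in $\cR_3^2$ are proportional; because one coordinate of $(c_\upi,d_\upi)$ is a unit, the proportionality constant $r_\upi \in \cR_3$ is well-defined, i.e.\ $(c_1,d_1) = r_\upi(c_\upi,d_\upi)$. Substituting back yields $\varphi_2 = \varphi_{2,\upi} = a_\upi f_0 + b_\upi f_1 + r_\upi c_\upi f_2 + r_\upi d_\upi f_3$, which is the claimed basis.

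The only genuinely delicate point is the self-orthogonality $H^1_\upi(G_p,T_\upi^\dagger)^\perp = H^1_\upi(G_p,T_\upi^\dagger)$, but this has effectively already been carried out inside the proof of Lemma~\ref{lemma:quotient_by_saturation_is_free} (via the identification $H^1_\upi(\Delta_{\mathscr{F}^0})^\perp = H^1_\upi(\Delta_{\mathscr{F}^0})$ coming from Nekov\'a\v r duality and the acyclicity of $\mathbf{R}\Gamma(G_\ell,T_\upi^\dagger)\otimes\mathcal Q$ for $\ell\neq p$); everything else is a direct matrix computation with the fixed symplectic basis, so no new obstacle arises.
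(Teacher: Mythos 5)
Your argument is correct and reproduces the paper's own proof essentially verbatim: both first use Lemma~\ref{lemma:quotient_by_saturation_is_free} and the splitting \eqref{eq:upi-local-decomposition} to obtain a basis $\{\varphi_2,\varphi_1\}$ with $\varphi_1\in H^1_\upi(\Delta_{\mathscr{F}^2})\otimes\cR_3$, deduce that one of $c_\upi,d_\upi$ is a unit from the freeness of $\cR_3^2/(c_\upi,d_\upi)\cR_3$ over the local ring $\cR_3$, and then use the self-orthogonality $H^1_\upi(G_p,T_\upi^\dagger)^\perp=H^1_\upi(G_p,T_\upi^\dagger)$ together with the normalised pairing $\langle f_i,f_{3-j}\rangle=(-1)^i\delta_{ij}$ to get $c_\upi d_1-d_\upi c_1=0$ and hence $(c_1,d_1)=r_\upi(c_\upi,d_\upi)$. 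No new idea or shortcut is introduced, and no gap is present.
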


\subsubsection{Basis of $H^1_M(G_p, T_\upi^\dagger)$}
\label{subsubsec_2025_07_18_1212}
Applying a similar argument as in the previous section, we obtain the following:

\begin{lemma}\label{lemma:basis_of_H^1_M(G_p, T_M^dagger)}
Under Hypotheses~\ref{subsubsec_2024_07_15_1105}, the $\cR_3$-module $H^1_M(G_p, T_\upi^\dagger)$ has a basis of the form 
    \[
\{\varphi_{2, M} :=a_{M}f_0 + b_{M}f_1 + r_{M}c_{M}f_2 + r_{M}d_{M}f_3\,,\,\, \varphi_{1, M} := c_{M}f_0+d_{M}f_1\}\,,
\] 
where at least one of $c_{M}$ and $d_{M}$ is a unit. 
\end{lemma}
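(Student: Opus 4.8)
\textbf{Proof plan for Lemma~\ref{lemma:basis_of_H^1_M(G_p, T_M^dagger)}.}
The plan is to mirror, essentially verbatim, the argument that produced the basis of $H^1_\upi(G_p, T_\upi^\dagger)$ in \S\ref{subsubsec_2025_07_18_1211}--\S\ref{subsubsec_2025_07_18_1212} and Lemma~\ref{lemma:basis_of_H^1_upi(G_p, T_upi^dagger)}, with the roles of $\upi$ and $M$ interchanged. First I would invoke Lemma~\ref{lemma:quotient_by_saturation_is_free}, whose second assertion states that $H^1(G_p, \varpi^*_{3,2}T_\upi^\dagger)/H^1_M(G_p, T_\upi^\dagger)$ is free of rank $2$ under Hypotheses~\ref{subsubsec_2024_07_15_1105}; this already forces $H^1_M(G_p, T_\upi^\dagger)$ to be a direct summand, hence free of rank $2$. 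Next, observe that under our hypotheses (in particular \eqref{item_nVdeltaT3} and \eqref{item_AJ_M}) we have, by Lemma~\ref{lemma_2025_02_20_1200_bis}, the coincidence $\mathbf{R}^1\Gamma_{\fin}(G_{\mathbb Q, \Sigma}, M_3^\dagger, {\rm tr}^*\Delta_{c})=\mathbf{R}^1\Gamma_{\fin}(G_{\mathbb Q, \Sigma}, M_3^\dagger, {\rm tr}^*\Delta_{b})$, playing the role that Lemma~\ref{lemma_5_5_2025_02_14_1331_bis} played on the $\upi$-side; this shows that the saturation of (the image of) $\mathbf{R}^1\Gamma_{\fin}(G_{\mathbb Q, \Sigma}, M_3^\dagger, {\rm tr}^*\Delta_{c})$ inside $H^1_M(G_p, T_\upi^\dagger)$ is of rank $1$, and — running the same $\mathrm{Ext}^1$-vanishing and reflexivity argument of Lemma~\ref{lemma:quotient_by_saturation_is_free} with Lemma~\ref{Lemma:ext^1=0_torsion-free_case} and Lemma~\ref{lemma:algebraic-lemmas-free} — that the rank-one quotient of $H^1_M(G_p, T_\upi^\dagger)$ by this saturation is again free.

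With these two facts in hand I would fix the basis $\{f_0,f_1,f_2,f_3\}$ of $H^1(G_p, T_\upi^\dagger)$ chosen in \S\ref{subsubsec_2025_07_18_1211}, adapted to the flag $\mathscr{F}^\bullet$ and self-dual with respect to the cup-product pairing (so $\langle f_i, f_{3-j}\rangle = (-1)^i\delta_{ij}$). Since $H^1_M(G_p, T_\upi^\dagger)$ is free of rank $2$ and its quotient by the rank-one saturated piece inside $H^1(G_p, \mathscr{F}^2)$ is free of rank $1$, we may choose a basis of the shape
\[
\{\varphi_2 := a_M f_0 + b_M f_1 + c_1 f_2 + d_1 f_3\,,\quad \varphi_1 := c_M f_0 + d_M f_1\}\,,
\]
and freeness of $\cR_3^2/(c_M,d_M)\cR_3$ together with locality of $\cR_3$ forces one of $c_M,d_M$ to be a unit. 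Finally, $H^1_M(G_p, T_\upi^\dagger)$ is its own orthogonal complement under the cup-product pairing on $H^1(G_p, T_\upi^\dagger)$ — this is exactly the output of Nekov\'a\v{r}'s duality applied to the triple $(M_3^\dagger, {\rm tr}^*\Delta_a)$ just as on the $\upi$-side — so $0 = \langle \varphi_1, \varphi_2\rangle = c_M d_1 - d_M c_1$, whence $(c_1,d_1) = r_M (c_M, d_M)$ for some $r_M \in \cR_3$ (using that one of $c_M, d_M$ is a unit). This yields the stated basis with $r_M c_M f_2 + r_M d_M f_3$ as the third-plus-fourth coordinates of $\varphi_{2,M}$.

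The only point that requires genuine care — and the place I expect a referee to want detail — is the verification that $H^1_M(G_p, T_\upi^\dagger)$ is its own orthogonal complement, i.e. that the self-duality argument of Lemma~\ref{lemma:quotient_by_saturation_is_free} transports to the $M$-side. On the $\upi$-side this rested on $T_\upi^\dagger$ being self-dual and on Nekov\'a\v{r}'s duality \cite[Proposition 6.7.7]{nekovar06} identifying $\mathbf{R}\Gamma_{\fin}(G_{\mathbb Q,\Sigma}, T_\upi^\dagger, \Delta_\emptyset)$ with the dual of $\mathbf{R}\Gamma_{\fin}(G_{\mathbb Q,\Sigma}, T_\upi^\dagger, \Delta_0)$; the analogue here uses the perfect symplectic pairing $M_3^\dagger \otimes M_3^\dagger \to \cR_3(1)$ recorded after Definition~\ref{def:T_and_M_3_selmer} and the fact, noted there, that $F_b^+M_3^\dagger$ is the orthogonal complement of $F_b^\perp M_3^\dagger$, so that ${\rm tr}^*\Delta_a$ and its ``dual'' local condition are exchanged by duality in the required way. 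Granting that, everything else is the formal linear algebra already carried out in \S\ref{subsubsec_2025_07_18_1211}--\S\ref{subsubsec_2025_07_18_1212}, and the proof concludes by the phrase ``the same argument as in the previous subsection applies, with $\upi$ replaced by $M$''.
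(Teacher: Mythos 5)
Your proposal is correct and matches the paper's (very terse) own argument: the paper disposes of this lemma with the single sentence ``Applying a similar argument as in the previous section, we obtain the following,'' and you have unpacked exactly what that argument must be — Lemma~\ref{lemma:quotient_by_saturation_is_free} to get freeness of $H^1_M$ and of the rank-one quotient by the saturated image of $\mathbf{R}^1\Gamma_{\fin}(G_{\mathbb Q, \Sigma}, M_3^\dagger, {\rm tr}^*\Delta_c)$, the adapted self-dual basis $\{f_i\}$, locality of $\cR_3$ for the unit, and Lagrangian-ness of $H^1_M(G_p,T_\upi^\dagger)$ via Nekov\'a\v{r}'s duality for the self-dual pair $(M_3^\dagger, {\rm tr}^*\Delta_a)$ to extract $r_M$. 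Your flagging of the self-orthogonality as the one step requiring care is exactly right: it is the genuine content hidden in the paper's ``same argument'' and is precisely what the last sentence of the proof of Lemma~\ref{lemma:quotient_by_saturation_is_free} also tacitly assumes transports from the $\upi$-side.
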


\subsubsection{An explicit version of Theorem~\ref{thm_2025_03_11}}

Since $\mathrm{res}_p(\delta(T^\dagger_\upi, \Delta_{\emptyset})) \subset \bigwedge^2_{\cR_\upi}H^1_\upi(G_p, T^\dagger_\upi)$, by Lemma \ref{lemma:basis_of_H^1_upi(G_p, T_upi^dagger)}, there is an element $x_\upi \in \cR_3$ such that 
\[
\mathrm{res}_p(\delta(T^\dagger_\upi, \Delta_{\emptyset})) = \cR_3 x_\upi \varphi_{1, \upi } \wedge \varphi_{2, \upi}. 
\]
Similarly, by  Lemma \ref{lemma:basis_of_H^1_M(G_p, T_M^dagger)}, there is an element $x_M \in \cR_3$ such that 
\[
\mathrm{res}_p(\delta(M_3^\dagger, {\rm tr}^*\Delta_a)) = \cR_3 x_M \varphi_{1, M} \wedge \varphi_{2, M}. 
\]

\begin{proposition}
\label{prop_2025_07_15_1443}
   In the setting of Theorem~\ref{thm_2025_03_11}, we have 
    \begin{align*}
 \delta(T_3^\dagger, \Delta_a) &= \cR_3 x_\upi x_M r_\upi r_M (c_Md_\upi - d_Mc_\upi)^2\,,\\
 \mathrm{Log}_{/\upi}^{(2)}\,\delta(M_3^\dagger, {\rm tr}^*\Delta_a)   &=  \cR_3 x_M r_\upi r_M (c_Md_\upi - d_Mc_\upi)^2\,,\\
 (\psi_3 \circ \mathrm{res}_p \wedge \psi_4 \circ \mathrm{res}_p) \,  \delta(T^\dagger_{\underline{\Pi}}, \Delta_{\emptyset}) &=  \cR_3 x_\upi  \,.
\end{align*}
\end{proposition}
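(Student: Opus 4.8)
The plan is to compute each of the three right-hand sides by working in the explicit basis furnished by \S\ref{subsubsec_2025_07_18_1211}--\S\ref{subsubsec_2025_07_18_1212}, and then to read off the first equality from Theorem~\ref{thm_2025_03_11}. First I would unpack the definition of $\mathrm{Log}_{/\upi}^{(2)} = \psi_1 \wedge \psi_2$, where $\psi_i = \mathrm{pr}_i\circ \ell_\upi \circ \mathrm{res}_p$ and $\ell_\upi$ has kernel exactly $H^1_\upi(G_p,T_\upi^\dagger)$. Since $\mathrm{res}_p(\delta(M_3^\dagger,{\rm tr}^*\Delta_a)) = \cR_3\, x_M\, \varphi_{1,M}\wedge\varphi_{2,M}$ by the paragraph preceding the proposition, applying $\mathrm{Log}_{/\upi}^{(2)}$ amounts to computing $(\psi_1\wedge\psi_2)(\varphi_{1,M}\wedge\varphi_{2,M}) = \det\!\begin{pmatrix}\psi_1(\varphi_{1,M}) & \psi_1(\varphi_{2,M})\\ \psi_2(\varphi_{1,M}) & \psi_2(\varphi_{2,M})\end{pmatrix}$. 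Writing $\varphi_{i,M}$ in terms of $\{f_0,f_1,f_2,f_3\}$ as in Lemma~\ref{lemma:basis_of_H^1_M(G_p, T_M^dagger)} and $\varphi_{j,\upi}$ as in Lemma~\ref{lemma:basis_of_H^1_upi(G_p, T_upi^dagger)}, and noting that $\psi_1,\psi_2$ kill $\varphi_{1,\upi}$ and $\varphi_{2,\upi}$, one expresses the $\varphi_{i,M}$ in the (partial) basis $\{\varphi_{1,\upi},\varphi_{2,\upi},\ast,\ast\}$ of $H^1(G_p,T_\upi^\dagger)$ and extracts the coefficients transverse to $H^1_\upi$. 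A direct determinant computation (using the relations $\langle f_i,f_{3-j}\rangle = (-1)^i\delta_{ij}$, the vanishing $\langle\varphi_{i,\upi},\varphi_{j,\upi}\rangle = 0$, and the vectors $\begin{pmatrix}c_1\\d_1\end{pmatrix} = r\begin{pmatrix}c_2\\d_2\end{pmatrix}$ for both families) yields that this $2\times 2$ determinant generates the ideal $\cR_3\, r_\upi r_M(c_M d_\upi - d_M c_\upi)^2$, which gives the second displayed equality after multiplying by $x_M$.

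Next I would treat $(\psi_3\circ\mathrm{res}_p\wedge\psi_4\circ\mathrm{res}_p)(\delta(T^\dagger_{\underline{\Pi}},\Delta_\emptyset))$. Here $\mathrm{res}_p(\delta(T^\dagger_\upi,\Delta_\emptyset)) = \cR_3\,x_\upi\,\varphi_{1,\upi}\wedge\varphi_{2,\upi}$, and $\psi_3\oplus\psi_4\oplus\ell_\upi$ is an isomorphism $H^1(G_p,\varpi_{3,2}^*T_\upi^\dagger)\xrightarrow{\sim}\cR_3^4$, so $\psi_3\oplus\psi_4$ restricted to $H^1_\upi(G_p,T_\upi^\dagger) = \ker(\ell_\upi)$ is an isomorphism onto $\cR_3^2$. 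Consequently $(\psi_3\wedge\psi_4)(\varphi_{1,\upi}\wedge\varphi_{2,\upi})$ is a unit of $\cR_3$, and the third displayed equality follows immediately. The first equality is then just the product of the second and third, via Theorem~\ref{thm_2025_03_11}, which states $\delta(T_3^\dagger,\Delta_a) = \mathrm{Log}_{/\upi}^{(2)}(\delta(M_3^\dagger,{\rm tr}^*\Delta_a))\times(\psi_3\circ\mathrm{res}_p\wedge\psi_4\circ\mathrm{res}_p)(\delta(T^\dagger_{\underline{\Pi}},\Delta_\emptyset))$; multiplying the two ideals gives $\cR_3\, x_\upi x_M r_\upi r_M(c_M d_\upi - d_M c_\upi)^2$.

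The main obstacle I anticipate is the bookkeeping in the determinant computation for $\mathrm{Log}_{/\upi}^{(2)}(\delta(M_3^\dagger,{\rm tr}^*\Delta_a))$: one must be careful that $\psi_1,\psi_2$ are only well-defined up to the choice of splitting $\ell_\upi$, and that the quantity $r_M(c_Md_\upi - d_Mc_\upi)^2$ (rather than, say, $r_M(c_Md_\upi - d_Mc_\upi)$ without the square, or with an extra factor of $r_\upi^{-1}$) genuinely emerges. The square arises because $\varphi_{2,M} = a_Mf_0 + b_Mf_1 + r_Mc_Mf_2 + r_Md_Mf_3$ contributes through \emph{both} the $f_2,f_3$ components (which after passing modulo $H^1_\upi(\Delta_{\mathscr F^2})$ still see $r_M$) and the orthogonality constraint forces the relevant minor to factor as a perfect square; I would verify this by writing the change-of-basis matrix from $\{f_0,f_1,f_2,f_3\}$ to $\{\varphi_{1,\upi},\varphi_{2,\upi},\varphi_{1,M}^{\mathrm{new}},\varphi_{2,M}^{\mathrm{new}}\}$ explicitly and computing the $2\times 2$ lower-right block. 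Everything else — the isomorphism argument for the $x_\upi$ statement, and assembling the final product — is formal.
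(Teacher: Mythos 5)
Your proposal follows the same overall strategy as the paper's proof and is correct in outline, but you stop short of the one observation that actually makes the determinant computation tractable. For the second displayed equality, you propose to extract "the coefficients transverse to $H^1_\upi$" from a change-of-basis matrix; the paper instead notes that $H^1_\upi(G_p,T_\upi^\dagger)$ is Lagrangian for the cup-product pairing, i.e.\ $H^1_\upi^\perp = H^1_\upi$ (this is established inside the proof of Lemma~\ref{lemma:quotient_by_saturation_is_free}), so that up to unit
\[
\mathrm{Log}_{/\upi}^{(2)} \;=\; \langle\,\cdot\,, \varphi_{1,\upi}\rangle \wedge \langle\,\cdot\,, \varphi_{2,\upi}\rangle,
\]
and the determinant you want is simply $\det\bigl(\langle\varphi_{i,M},\varphi_{j,\upi}\rangle\bigr)_{i,j}$. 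You invoke the pairings $\langle f_i,f_{3-j}\rangle$ and the vanishing $\langle\varphi_{i,\upi},\varphi_{j,\upi}\rangle=0$, which suggests you are implicitly heading there, but without making the Lagrangian reduction explicit your plan as stated requires actually producing and inverting a $4\times 4$ change-of-basis matrix (with entries $r_\upi$, $r_M$ that need not be units), which is genuinely more work. Once the reduction is made, the two off-diagonal Gram entries give $r_M(c_Md_\upi-d_Mc_\upi)$ and $r_\upi(c_Md_\upi-d_Mc_\upi)$ respectively, whence $r_\upi r_M(c_Md_\upi-d_Mc_\upi)^2$; in your ``obstacle'' paragraph you write the expected answer as $r_M(c_Md_\upi-d_Mc_\upi)^2$ without the $r_\upi$, contradicting what you (correctly) stated a few lines earlier — be careful here, as both $r_\upi$ and $r_M$ must appear.

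Two smaller points. Your argument for the third displayed equality — that $\psi_3\oplus\psi_4\oplus\ell_\upi$ is an isomorphism, hence $\psi_3\oplus\psi_4$ restricts to an isomorphism of $\ker(\ell_\upi)=H^1_\upi(G_p,T_\upi^\dagger)$ onto $\cR_3^2$, so the wedge is a unit — is a clean and correct alternative to the paper's argument, which instead writes $\psi_i=\langle\,\cdot\,,s_i\rangle$ via perfectness of local Tate duality and observes that $\det(\langle s_i,s_j\rangle)\in\cR_3^\times$ with vanishing top-left $2\times 2$ block. And for the first displayed equality, the paper deduces it directly from Proposition~\ref{prop:delta_a-formula2} via the explicit $4$-fold wedge $\varphi_{1,M}\wedge\varphi_{1,\upi}\wedge\varphi_{2,M}\wedge\varphi_{2,\upi} = -r_\upi r_M(c_Md_\upi-d_Mc_\upi)^2\,f_0\wedge f_1\wedge f_2\wedge f_3$, whereas you obtain it by multiplying the second and third identities through Theorem~\ref{thm_2025_03_11}; both are valid.
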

\begin{proof}
Since 
\begin{align*}
    \varphi_{1, M} \wedge \varphi_{1, \upi} \wedge \varphi_{2, M} \wedge \varphi_{2, \upi} = -r_\upi r_M (c_Md_\upi - d_Mc_\upi)^2  f_0 \wedge f_1 \wedge f_2 \wedge f_3, 
\end{align*}
the first assertion in our proposition follows from Proposition \ref{prop:delta_a-formula2}: 
\begin{align*}
 \delta(T_3^\dagger, \Delta_a) =  \mathrm{Log}_{\mathscr{F}^0}^{(4)}(\delta(M_3^\dagger, {\rm tr}^*\Delta_a)  \otimes \delta(T^\dagger_{\underline{\Pi}}, \Delta_{\emptyset}))
 =  \cR_3x_\upi x_M r_\upi r_M (c_Md_\upi - d_Mc_\upi)^2. 
\end{align*}  
Next, let us prove that 
\[
\mathrm{Log}_{/\upi}^{(2)}\,\delta(M_3^\dagger, {\rm tr}^*\Delta_a)   =  \cR_3 x_M r_M (c_Md_\upi - d_Mc_\upi)^2. 
\]
Since $H^1_\upi(G_p, T^\dagger_\upi)^\perp = H^1_\upi(G_p, T^\dagger_\upi)$, we have 
\[
\mathrm{Log}_{/\upi}^{(2)}(\,-\,) = \langle -, \varphi_{1,\upi} \rangle \wedge  \langle -, \varphi_{2,\upi} \rangle. 
\]
As $\mathrm{res}_p(\delta(M_3^\dagger, {\rm tr}^*\Delta_a)) = \cR_3 x_M \varphi_{1, M} \wedge \varphi_{2, M}$, it follows that 
\begin{align*}
  \mathrm{Log}_{/\upi}^{(2)}\,\delta(M_3^\dagger, {\rm tr}^*\Delta_a) =    \cR_3 x_M \times \det \begin{pmatrix}
      \langle \varphi_{1,M}, \varphi_{1,\upi} \rangle & \langle \varphi_{2,M}, \varphi_{1,\upi} \rangle
      \\
      \langle \varphi_{1,M}, \varphi_{2,\upi} \rangle & \langle \varphi_{2,M}, \varphi_{2,\upi} \rangle
  \end{pmatrix}. 
\end{align*}
Since $\langle \varphi_{1,M}, \varphi_{1,\upi} \rangle = 0$, we have 
\begin{align*}
     \det \begin{pmatrix}
      \langle \varphi_{1,M}, \varphi_{1,\upi} \rangle & \langle \varphi_{2,M}, \varphi_{1,\upi} \rangle
      \\
      \langle \varphi_{1,M}, \varphi_{2,\upi} \rangle & \langle \varphi_{2,M}, \varphi_{2,\upi} \rangle 
      \end{pmatrix} 
      &= - \langle \varphi_{2,M}, \varphi_{1,\upi} \rangle \times  \langle \varphi_{1,M}, \varphi_{2,\upi} \rangle 
      \\
      &= - r_M(c_M d_\upi - d_M c_\upi) \times r_\upi (c_M d_\upi - d_M c_\upi). 
\end{align*}
This concludes the proof of the second equality. 

Let us next show that $ (\psi_3 \circ \mathrm{res}_p \wedge \psi_4 \circ \mathrm{res}_p) \,  \delta(T^\dagger_{\underline{\Pi}}, \Delta_{\emptyset}) =  \cR_3 x_\upi$. 
Note that there exist $s_3, s_4 \in H^1(G_p, T^\dagger_\upi)$ such that $\psi_i = \langle -, s_i \rangle$. Then $\{s_1:=\varphi_{1,\upi}, s_2:=\varphi_{2,\upi}, s_3, s_4\}$ is a basis of $H^1(G_p, T^\dagger_\upi)$, and 
\[
\det(\langle s_i, s_j \rangle) \in \cR_3^\times
\]
since the local Tate-duality pairing on $H^1(G_p, T^\dagger_\upi)$ is perfect. Since $\langle s_i, s_j \rangle = 0$ for $i,j \in \{1, 2\}$, it follows that 
\[
\det \begin{pmatrix}
      \langle s_1, s_3 \rangle & \langle s_2 , s_3 \rangle
      \\
      \langle s_1, s_4 \rangle & \langle s_2, s_4 \rangle 
      \end{pmatrix} \in \cR_3^\times. 
\]
This concludes the proof that
\[
(\psi_3 \circ \mathrm{res}_p \wedge \psi_4 \circ \mathrm{res}_p) \,  \delta(T^\dagger_{\underline{\Pi}}, \Delta_{\emptyset}) = \cR_\upi x_\upi \times \det \begin{pmatrix}
      \langle s_1, s_3 \rangle & \langle s_2 , s_3 \rangle
      \\
      \langle s_1, s_4 \rangle & \langle s_2, s_4 \rangle 
      \end{pmatrix}
= \cR_\upi x_\upi\,,
\]
as required.
\end{proof}

\subsubsection{Proof of Theorem~\ref{thm_main_double_wall_crossing}}
\label{subsubsec_2025_07_17_1421}
Note that we have 
\begin{align*}
\delta(M_3^\dagger, \Delta_b) = \cR_3 x_M r_M d_M \varphi_{1,M}, \qquad
\delta(T^\dagger_\upi, \Delta_{\mathscr{F}^1}) = \cR_\upi x_\upi r_\upi d_\upi \varphi_{1, \upi}
\end{align*}
by definition. As a result,
\begin{align*}
    \Log_c\,\delta(M_3^\dagger, \Delta_b)\, \times \Log_c\,\delta(T^\dagger_\upi, \Delta_{\mathscr{F}^1}) = \cR_3 x_M x_\upi r_M   r_\upi d_\upi^2 d_M^2.  
\end{align*}
The proof of Theorem~\ref{thm_main_double_wall_crossing} follows from this, combined with Proposition~\ref{prop_2025_07_15_1443}.


\section{Endoscopic cases}
\label{sec_2025_02_06_0708}

In this section, we discuss the endoscopic cases where the $\GSp_4$-representation $\Pi$ (and families of such) arises from a smaller group. We remark that the formulation of our factorisation conjectures dwells on the interpolation of Eichler--Shimura isomorphisms (cf. \cite{LZ21-erl, DRW2025}). The critical points (in the sense of Bella\"iche--Chenevier, which contains the set of ``endoscopic points'' that we describe below) over the $\GSp_4$-eigenvariety are excluded from the locus of interpolation. As a result, the factorisation results in this particular case assume a slightly different shape, in that the graded piece of the local Galois representation that one trivialises is different from the generic scenario. Our Corollaries~\ref{cor_2025_08_06_1325} and~\ref{cor_2025_08_07_0919} below reflect this fact; see also the discussion in Remark~\ref{rem:yoshida_log}.

\begin{definition}
Let $\Pi$ be a cuspidal automorphic representation of $\GSp_4$ of weight $(k_1,k_2)$, with $k_1 \geq k_2 \geq 2$, and trivial central character.
\begin{enumerate}
\item[(a)] We say that $\Pi$ is of Yoshida type if there is a pair $(\pi_1,\pi_2)$ of cuspidal automorphic representations of $\GL_2$, corresponding to two elliptic modular newforms of weights $t_1 = k_1+k_2-2$ and $t_2=k_1-k_2+2$, such that for all but finitely many places $v$ we have \[ L(s,\Pi_v) = L(s,\pi_{1,v}) L(s,\pi_{2,v}). \]
\item[(b)] We say that $\Pi$ is of Saito--Kurokawa type if $k_1=k_2$ and there exists a cuspidal automorphic representation $\pi$ of $\GL_2$ attached to some holomorphic newform of weight $2k_1-2$ such that for all but finitely many places $v$ we have \[ L(s,\Pi_v) = L(s,\pi_v) \zeta \left(s-\tfrac{1}{2}\right) \zeta \left(s+\tfrac{1}{2}\right). \]
\end{enumerate}
\end{definition}

These definitions may be extended to the case where the central character is non-trivial, but considering the appropriate twists but we shall limit our discussion to the scenario where it is trivial.

\subsection{Yoshida lifts}
\label{subsec_2025_08_06_1504}

Let us put\footnote{We implicitly consider $T_\upi^\dagger$ as the Galois representation associated to a family $\upi$ of Yoshida lifts. Such families of Siegel modular forms were recently constructed by M-L. Hsieh and Z. Liu. The ad hoc definition of the Galois representation above suffices for our purposes in this subsection.} $T_\upi^\dagger=(T_{\uf_1}^\dagger\otimes_{\cR_{\uf_1}}\cR_\upi)\, \oplus \, (T_{\uf_2}^\dagger\otimes_{\cR_{\uf_2}}\cR_\upi)$, where $\uf_i$ is a Hida family, $\cR_{\uf_i}$ is the irreducible component of the universal Hecke algebra determined by $\uf_i$, and $\cR_\upi:=\cR_{\uf_1}\,\widehat\otimes\,\cR_{\uf_2}$.

Recall our running assumption that the global root number of the members of $\upi$ (at the central critical point) equals $-1$. That means one of the families $\uf_i$ has global root number $-1$, and the other $+1$; so let us assume that
\begin{equation}
    \label{eqn_2025_08_19_0840}
    \varepsilon(T_{\uf_1}^\dagger)=1 =-\varepsilon(T_{\uf_2}^\dagger)\,.
\end{equation}

\subsubsection{} 
\label{subsubsec_2025_08_25_1205}
The Galois representation $T_?^\dagger$ (where $?=3,4$) admits the following decomposition:
\begin{equation}
    \label{eqn_2025_08_19_0754_1}
    T_?^\dagger\simeq \underbrace{\left(T_{\uf_1}^\dagger \widehat{ \otimes} T_{\underline{\sigma}} \widehat \otimes T_{\underline{\sigma}}^*\right) \otimes_{\cR_{\uf_1}}\cR_{\upi}}_{T_{?,1}^\dagger}\quad \oplus \quad  \underbrace{\left(T_{\uf_2}^\dagger \widehat \otimes T_{\underline{\sigma}} \widehat \otimes T_{\underline{\sigma}}^* \right)\otimes_{\cR_{\uf_2}}\cR_{\upi}}_{T_{?,2}^\dagger}\,.
\end{equation}
When $?=3$, each summand above further decomposes as
\begin{equation}
    \label{eqn_2025_08_19_0754}
    T_3^\dagger\simeq \underbrace{\left((T_{\uf_1}^\dagger\otimes {\rm ad}^0 T_{\underline{\sigma}})\oplus T_{\uf_1}^\dagger\right)\otimes_{\cR_{\uf_1}}\cR_{\upi}}_{T_{3,1}^\dagger}\quad\oplus\quad  \underbrace{\left((T_{\uf_2}^\dagger\otimes {\rm ad}^0 T_{\underline{\sigma}})\oplus T_{\uf_2}^\dagger\right)\otimes_{\cR_{\uf_2}}\cR_{\upi}}_{T_{3,2}^\dagger}\,.
\end{equation}
Moreover, in view of our discussion in \S\ref{subsubsec_2025_08_19_0801}, we have:
\begin{itemize}
\item[-] Region (a) corresponds to the $\underline{\sigma}^c$-dominant region for both triple products $T_{4,?}^\dagger$ ($?=1,2$).

\item[-] Region (b) corresponds to the $\underline{\sigma}^c$-dominant region for $T_{4,2}^\dagger$ and balanced region for $T_{4,1}^\dagger$. 

\item[-] Region (c) corresponds to the balanced region for both.
\end{itemize}
Based on this, \eqref{eqn_2025_08_19_0754_1}, and \eqref{eqn_2025_08_19_0757}, we set $ L_p^{(a)}(T_?^\dagger):= L_p^{(\underline{\sigma}^c)}(T_{?,1}^\dagger)\times L_p^{(\underline{\sigma}^c)}(T_{?,2}^\dagger)$.
\subsubsection{} We also have the decomposition
\begin{equation}
    \label{eqn_2025_08_19_0757}
M_3^\dagger\simeq \underbrace{\left(T_{\uf_1}^\dagger\otimes {\rm ad}^0 T_{\underline{\sigma}}\right)\otimes_{\cR_{\uf_1}}\cR_{\upi}}_{M_{3,1}^\dagger}\quad\oplus\quad \underbrace{\left(T_{\uf_2}^\dagger\otimes {\rm ad}^0 T_{\underline{\sigma}}\right)\otimes_{\cR_{\uf_2}}\cR_{\upi}}_{M_{3,2}^\dagger}.
\end{equation}

\subsubsection{} Recall that we work under the assumption that the global root number of the specialisations of $T_4^\dagger$ in region (b) is $-1$, and equivalently, it is $+1$ for those in region (c):
\begin{align*}
    1=-\varepsilon^{(b)}(T_4^\dagger)&\,=\varepsilon^{(c)}(T_4^\dagger)=\varepsilon^{\rm bal}(T_{4,1}^\dagger)\,\cdot\,\varepsilon^{\rm bal}(T_{4,2}^\dagger)\,=\varepsilon^{\rm bal}(M_{3,1})\,\varepsilon(T_{\uf_1})\,\cdot\,\varepsilon^{\rm bal}(M_{3,2})\,\varepsilon(T_{\uf_2})\,.
\end{align*}
That means $T_{4,1}^\dagger$ and $T_{4,2}^\dagger$ have the same sign in the balanced region. We assume that this sign is $-1$, so that we have
$$-\varepsilon^{\uf_1}(M_{3,1})=\varepsilon^{\rm bal}(M_{3,1})=-1=-\varepsilon^{\rm bal}(M_{3,2})=\varepsilon^{\uf_2}(M_{3,2})\,.$$

\begin{remark}
Unlike in the non-endoscopic cases, classical specialisations of $M_3^\dagger$ in the region (a) are critical in the sense of Deligne: As we have noted above, these correspond to $\underline{\sigma}^c$-dominant conditions on both of the $\GL_2 \times \GL_3$ factors. Note that ``$\underline{\sigma}^c$-dominant''  is synonymous with ``balanced'' for these factors. 
\end{remark}


\begin{conj}
\label{conj_2025_08_06_1401}
Under the current assumptions, the following factorisation formula holds:
\begin{align}
    \begin{aligned}
        \label{eqn_2025_08_06_1332}
        L_p^{(a)}(T_3^\dagger)^2&:= L_p^{(\underline{\sigma}^c)}(T_{3,1}^\dagger)^2\times L_p^{(\underline{\sigma}^c)}(T_{3,2}^\dagger)^2\\
        &\,=\left(\Log_{\omega_{M_{3,1}}} (\Delta_{M_{3,1}^\dagger})^2\times L_p(\uf_1)\right)\times \left(L_p^{\rm bal}(M_{3,2}^\dagger)\times \Log_{\omega_{\uf_2}} (\Delta_{\uf_2}^\dagger)^2\right)\\
        &\,=\underbrace{\left(\Log_{\omega_{M_{3,1}}}  (\Delta_{M_{3,1}^\dagger})^2\times L_p^{\rm bal}(M_{3,2}^\dagger)\right)}_{\Log_{\omega_{M_3}}(\Delta_{M_3^\dagger}^{(c)})^2}\times \underbrace{\left(L_p(\uf_1)\times \Log_{\omega_{\uf_2}} (\Delta_{\uf_2}^\dagger)^2\right)}_{\Log_{\omega_\upi}(\Delta_\upi^\dagger)^2}\,.
    \end{aligned}
\end{align}
up to multiplication by an element of $\cR_{\upi}[\frac{1}{p}]^\times$ that specialises to an explicit algebraic number at all classical specialisations of $(\uf_1,\uf_2)$. Here:
\begin{itemize}
\item The trivialisation $\Log_{\omega_{M_{3,1}}}$ is defined at the start of \S\ref{subsubsec_2025_08_06_1543} below.
    \item $\Delta_{M_{3,1}^\dagger}\in H^1(\QQ,M_{3,1}^\dagger)$ is the conjectural family of twisted diagonal cycles (cf. \cite{HsiehYamana2023} and the forthcoming work of Chida--Hsieh--Prasanna), whereas $\Delta_{\uf_2}^\dagger\in  H^1(\QQ,T_{\uf_2}^\dagger)$ is the family of diagonal cycles of Darmon--Rotger and Bertolini--Seveso--Venerucci.
    \item $L_p(\uf_1)$ is the restricion of the Mazur--Kitagawa $p$-adic $L$-function to the central critical line, whereas $L_p^{\rm bal}(M_{3,2}^\dagger)$ is the same for the conjectural $p$-adic $L$-function associated to the family $M_{3,2}^\dagger$ (cf. \cite{BC}, Conjecture 2.4).
    \item  $\Delta_{M_3^\dagger}^{(c)}:=L_p^{\rm bal}(M_{3,2}^\dagger)\cdot \Delta_{M_{3,1}^\dagger}$ and $\Delta_\upi^\dagger:=L_p(\uf_1)\cdot \Delta_{\uf_2}^\dagger$\,.
\end{itemize}
\end{conj}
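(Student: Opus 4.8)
\textbf{Proof plan for Conjecture~\ref{conj_2025_08_06_1401} (the endoscopic factorisation).} The overall strategy is to reduce the degree--16 factorisation to a pair of \emph{known} (or nearly known) factorisation statements for triple products on $\GL_2\times\GL_2\times\GL_2$, one in the $\underline\sigma^c$-dominant region and one in the balanced region, exploiting the decompositions \eqref{eqn_2025_08_19_0754_1}, \eqref{eqn_2025_08_19_0754} and \eqref{eqn_2025_08_19_0757} of the Galois representations attached to a family $\upi$ of Yoshida lifts. First I would fix, once and for all, the identification of $L_p^{(a)}(T_?^\dagger)$ with the product $L_p^{(\underline\sigma^c)}(T_{?,1}^\dagger)\times L_p^{(\underline\sigma^c)}(T_{?,2}^\dagger)$ — this is a \emph{definition} in the endoscopic case, so the real content of the first equality in \eqref{eqn_2025_08_06_1332} is that the two Hsieh-type triple-product $p$-adic $L$-functions attached to $T_{3,i}^\dagger$ factor individually according to Artin formalism for $\GL_2\times\GL_3$. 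For $T_{3,2}^\dagger$, region (a) is the $\underline\sigma^c$-dominant (equivalently, balanced) region for a triple product $T_{\uf_2}^\dagger\otimes T_{\underline\sigma}\otimes T_{\underline\sigma}^*$, and since $\varepsilon(T_{\uf_2}^\dagger)=-1$ this is exactly the setting of \cite{BS_triple_factor} and \cite{bcpv}: the factorisation there yields $L_p^{\rm bal}(M_{3,2}^\dagger)\times\Log_{\omega_{\uf_2}}(\Delta_{\uf_2}^\dagger)^2$, with $\Delta_{\uf_2}^\dagger$ the Darmon--Rotger / Bertolini--Seveso--Venerucci diagonal cycle class. For $T_{3,1}^\dagger$, region (a) is again $\underline\sigma^c$-dominant but now $\varepsilon(T_{\uf_1}^\dagger)=+1$, so the relevant $\GL_2$-factor contributes a Mazur--Kitagawa value $L_p(\uf_1)$ rather than a cycle, while the $\GL_3$-factor contributes a \emph{twisted} diagonal cycle $\Delta_{M_{3,1}^\dagger}$ in the balanced region; the required factorisation here is the one labelled \eqref{eqn_2025_08_25_1222} in the introduction — the work in progress of the first and third authors with A. Cauchi — which I would simply cite.

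Granting those two inputs, the second equality in \eqref{eqn_2025_08_06_1332} is then a purely formal rearrangement: one collects the two cycle-type trivialisations $\Log_{\omega_{M_{3,1}}}(\Delta_{M_{3,1}^\dagger})^2$ and $L_p^{\rm bal}(M_{3,2}^\dagger)$ into a single trivialisation $\Log_{\omega_{M_3}}(\Delta_{M_3^\dagger}^{(c)})^2$ of the diagonal cycle for $M_3^\dagger$ (with $\Delta_{M_3^\dagger}^{(c)}:=L_p^{\rm bal}(M_{3,2}^\dagger)\cdot\Delta_{M_{3,1}^\dagger}$), and the two remaining factors $L_p(\uf_1)$ and $\Log_{\omega_{\uf_2}}(\Delta_{\uf_2}^\dagger)^2$ into $\Log_{\omega_\upi}(\Delta_\upi^\dagger)^2$ with $\Delta_\upi^\dagger:=L_p(\uf_1)\cdot\Delta_{\uf_2}^\dagger$. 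The point to verify carefully is that the trivialisation maps are compatible: the map $\Log_{\omega_{M_3}}$ restricted to the summand $M_{3,1}^\dagger$ must agree with $\Log_{\omega_{M_{3,1}}}$, and on $M_{3,2}^\dagger$ it must be the identity-type normalisation compatible with $L_p^{\rm bal}(M_{3,2}^\dagger)$; both follow from the fact that $\ad^0 T_{\underline\sigma}$ contributes the trivial graded piece $\gr^1\ad^0(T_{\underline\sigma})\simeq\cR_{\underline\sigma}$ at $p$ (as recorded in \S\ref{subsubsec_2025_02_10_1533}), so that the Perrin-Riou exponential on the relevant graded piece of $T_\upi^\dagger$ splits along the decomposition $T_\upi^\dagger=(T_{\uf_1}^\dagger\otimes\cR_\upi)\oplus(T_{\uf_2}^\dagger\otimes\cR_\upi)$. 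This is where the Eichler--Shimura subtlety flagged at the start of \S\ref{sec_2025_02_06_0708} enters: at endoscopic points the $\GSp_4$-eigenvariety is critical in the sense of Bella\"iche--Chenevier, so Assumption~\ref{ass:non-crit} fails and Proposition~\ref{prop:ES} is not available; consequently the graded piece one trivialises is \emph{not} ${\mathscr Gr}^2 T_\upi^\dagger$ but rather the graded piece adapted to the Yoshida filtration, which is why $\Log_c$ is replaced by the modified maps $\Log_{\omega_{M_{3,1}}}$, $\Log_{\omega_{\uf_2}}$ of \S\ref{subsubsec_2025_08_06_1543} and Remark~\ref{rem:yoshida_log}. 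Keeping track of this substitution consistently across all three lines of \eqref{eqn_2025_08_06_1332} is the first genuine technical point.

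The ambiguity up to $\cR_\upi[\tfrac1p]^\times$ that specialises to an explicit algebraic number at classical $(\uf_1,\uf_2)$ is absorbed by the product of the analogous fudge factors in the two constituent $\GL_2\times\GL_3$ factorisations — the factors $Z_S$ and the Euler-like and period discrepancies in Conjecture~\ref{conj:l-function} and Conjecture~\ref{conj:l-function_bis_bis} — together with the interpolation factors relating the Yoshida-lift $L$-values to the products $L(s,\pi_{1})L(s,\pi_{2})$ of the two constituent modular $L$-functions; these are standard and I would state them without proof, pointing to \cite{Hsieh2018, LZ20}. The main obstacle I anticipate is not the formal bookkeeping but the \textbf{unconditional status of the second constituent factorisation}, i.e. the one for $T_{3,1}^\dagger$ (the $\varepsilon=+1$ modular factor times the $\ad^0$-twisted cycle): as the introduction makes clear this is work in progress with Cauchi and not yet available in print, so in the present paper Conjecture~\ref{conj_2025_08_06_1401} can only be established \emph{conditionally} on that input (plus the already-established factorisation of \cite{BS_triple_factor, bcpv} for the $T_{3,1}^\dagger$-balanced piece and the $T_{3,2}^\dagger$-dominant piece). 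The secondary obstacle, as noted, is making the replacement of $\Log_c$ by the Yoshida-adapted logarithm $\Log_{\omega_?}$ fully rigorous, since this requires a careful analysis — which I would do following \cite{LZ21-erl, DRW2025} — of precisely which graded piece of the local representation the Eichler--Shimura isomorphism trivialises at an endoscopic point; the payoff of that analysis is exactly Corollaries~\ref{cor_2025_08_06_1325}--\ref{cor_2025_08_07_0919}.
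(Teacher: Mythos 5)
Your proposal matches the paper's own treatment almost exactly: the paper justifies the conjecture by (a) taking the first equality as the ad hoc definition of $L_p^{(a)}(T_3^\dagger)$ in the Yoshida case, (b) invoking the established factorisation of $L_p^{(\underline\sigma^c)}(T_{3,2}^\dagger)^2$ from \cite{BS_triple_factor, bcpv} (conditional on Gross--Kudla and non-anomality), (c) isolating the supplementary factorisation \eqref{eqn_2025_08_25_1222} for $T_{3,1}^\dagger$ as the work in progress with Cauchi, and (d) treating the passage to the third line as a formal regrouping — which is precisely your plan, down to the role of the Eichler--Shimura substitution of $\Log_{\omega_?}$ for $\Log_c$ (Remark~\ref{rem:yoshida_log}). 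One minor slip: in region (a) the triple $T_{\uf_2}^\dagger\otimes T_{\underline\sigma}\otimes T_{\underline\sigma}^*$ is $\underline\sigma^c$-dominant but \emph{not} balanced (the balanced region is (c), cf.\ \S\ref{subsubsec_2025_08_25_1205}), so your parenthetical ``(equivalently, balanced)'' should be dropped; it does not affect the argument, since the factorisation of BS/bcpv starts from the dominant $p$-adic $L$-function and produces the balanced one for $M_{3,2}^\dagger$ as an output.
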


In \eqref{eqn_2025_08_06_1332}, assuming the truth of the Gross--Kudla conjecture\footnote{In its more general form as \cite[Conjecture 5.1]{bcpv}.} and the ``non-anomality condition'' \eqref{item_non_anom}, the factorisation 
$$L_p^{(\underline{\sigma}^c)}(T_{3,2}^\dagger)^2\,=\,L_p^{\rm bal}(M_{3,2}^\dagger)\times \Log (\Delta_{\uf_2}^\dagger)^2 \pmod{\cR_{\uf_2}^\times}$$
is proved in \cite{BS_triple_factor} and its sequel \cite{bcpv}. The remaining part of the conjectured equality~\eqref{eqn_2025_08_06_1332} is the assertion that
\begin{equation}
\label{eqn_2025_08_25_1222}
    L_p^{(\underline{\sigma}^c)}(T_{3,1}^\dagger)^2\,\dot=\,\Log (\Delta_{M_{3,1}^\dagger})^2 \times L_p(\uf_1)\,,
\end{equation}
and it supplements the results of \cite{BS_triple_factor, bcpv}; its proof is work in progress by the first and third named authors with A. Cauchi. We believe that similar techniques to those in \cite{BC} would allow one to give an unconditional proof of Conjecture~\ref{conj_2025_08_06_1401} when the family $\underline{\sigma}$ has CM.

We refer the reader to Theorem~\ref{thm:delta_decomp_yoshida} for its algebraic counterpart in terms of modules of leading terms.

\subsection{The algebraic counterpart of Conjecture~\ref{conj_2025_08_06_1401}}
\label{subsec_2025_08_25_1428}

As we have remarked at the start of \S\ref{sec_2025_02_06_0708}, Theorem~\ref{thm_main_double_wall_crossing} needs to be modified in the endoscopic cases. In this subsection, we explain why indeed this is necessary (cf. Corollary~\ref{cor_2025_08_06_1325}, Corollary~\ref{cor_2025_08_07_0919} and Remark~\ref{rem:yoshida_log}) and prove (in Corollary~\ref{cor_2025_08_14_1551}) the appropriate variant of Theorem~\ref{thm_main_double_wall_crossing} in the setting of \S\ref{subsec_2025_08_06_1504} (which is the algebraic counterpart of Conjecture~\ref{conj_2025_08_06_1401}).


\subsubsection{Filtration on $T_\upi^\dagger$}
We consider the filtration $\mathscr{F}^\bullet$  on $T_\upi^\dagger=(T_{\uf_1}^\dagger\otimes_{\cR_{\uf_1}}\cR_\upi)\, \oplus \, (T_{\uf_2}^\dagger\otimes_{\cR_{\uf_2}}\cR_\upi)$ and determine how it propagates to each summand. 
Since $(\mathscr{F}^2)^\perp = \mathscr{F}^2$, we have 
\[
\mathscr{F}^2T_\upi^\dagger = F^+T_{\uf_1}^\dagger\otimes_{\cR_{\uf_1}}\cR_\upi \quad \oplus\quad F^+T_{\uf_2}^\dagger \otimes_{\cR_{\uf_2}}\cR_\upi\,. 
\]
Here $F^+T_{\uf_i}^\dagger$ denotes the rank one  $G_p$-submodule of $T_{\uf_i}^\dagger$,  whose existence is guaranteed by the $p$-ordinarity. 
Moreover, the fact that $\mathscr{F}^3 T_\upi^\dagger$ has rank one implies that it must be equal to $F^+ T_{\uf_i}^\dagger$ for some $i \in \{1, 2\}$. 
Furthermore, using the relation $(\mathscr{F}^3)^\perp = \mathscr{F}^1$, it follows that
\[
\mathscr{F}^1 T_\upi^\dagger = 
\begin{cases}
    F^+ T_{\uf_1}^\dagger\otimes_{\cR_{\uf_1}}\cR_\upi\quad \oplus \quad T_{\uf_2}^\dagger\otimes_{\cR_{\uf_2}}\cR_\upi & \qquad \textrm{if} \quad i = 1, 
    \\
     T_{\uf_1}^\dagger\otimes_{\cR_{\uf_1}}\cR_\upi\quad \oplus \quad F^+ T_{\uf_2}^\dagger\otimes_{\cR_{\uf_2}}\cR_\upi &\qquad  \textrm{if} \quad i = 2. 
\end{cases}
\]
To summarize, we have two possible candidates for the filtration $\mathscr{F}^\bullet$  on $T_\upi^\dagger$, each of which is determined by the choice of $\mathscr{F}^3T_\upi^\dagger$: 

\begin{definition}
    For each integer $i \in \{1,2\}$, we call 
    the filtration $\mathscr{F}^\bullet_i$ on $T_\upi^\dagger$ satisfying $\mathscr{F}^3 T_\upi^\dagger =  F^+ T_{\uf_i}^\dagger$  an $f_i$-dominant filtration. 
\end{definition}

\subsubsection{} 
In what follows, for any ${\cR_{\uf_i}}$-module (or a complex of ${\cR_{\uf_i}}$-modules) $M$, let us write $M_{\cR_\upi}$ (resp., $M_{\cR_3}$) in place of $M\otimes_{\cR_{\uf_i}}\cR_\upi$ (resp. of $M\otimes_{\cR_{\uf_i}}\cR_3$) to ease our notation. The following three propositions follow immediately from the definition of $\mathscr{F}^\bullet_i$. 

\begin{proposition}\label{prop:decomp_yohida_Tupi}
Suppose that $\mathscr{F}^\bullet = \mathscr{F}^\bullet_1$. 
\begin{align*}
\xymatrix@C=6pt@R=0pt{
  &\mathbf{R}\Gamma_{\fin}(G_{\mathbb Q, \Sigma}, T_{\upi}^\dagger, \Delta_{\mathscr{F}^0})  &= &\mathbf{R}\Gamma_{\fin}(G_{\mathbb Q, \Sigma}, T_{\uf_1}^\dagger, \Delta_{\emptyset})_{\cR_\upi}  &\oplus 
 &\mathbf{R}\Gamma_{\fin}(G_{\mathbb Q, \Sigma}, T_{\uf_2}^\dagger, \Delta_{\emptyset})_{\cR_\upi},
 \\
 &\mathbf{R}\Gamma_{\fin}(G_{\mathbb Q, \Sigma}, T_{\upi}^\dagger, \Delta_{\mathscr{F}^1})  &= &\mathbf{R}\Gamma_{\fin}(G_{\mathbb Q, \Sigma}, T_{\uf_1}^\dagger, \Delta_{F^+})_{\cR_\upi}  &\oplus 
&\mathbf{R}\Gamma_{\fin}(G_{\mathbb Q, \Sigma}, T_{\uf_2}^\dagger, \Delta_{\emptyset})_{\cR_\upi}, 
\\
 &\mathbf{R}\Gamma_{\fin}(G_{\mathbb Q, \Sigma}, T_{\upi}^\dagger, \Delta_{\mathscr{F}^2})  &= &\mathbf{R}\Gamma_{\fin}(G_{\mathbb Q, \Sigma}, T_{\uf_1}^\dagger, \Delta_{F^+})_{\cR_\upi} &\oplus &\mathbf{R}\Gamma_{\fin}(G_{\mathbb Q, \Sigma}, T_{\uf_2}^\dagger, \Delta_{F^+})_{\cR_\upi}, 
\\
 &\mathbf{R}\Gamma_{\fin}(G_{\mathbb Q, \Sigma}, T_{\upi}^\dagger, \Delta_{\mathscr{F}^3})  &= &\mathbf{R}\Gamma_{\fin}(G_{\mathbb Q, \Sigma}, T_{\uf_1}^\dagger, \Delta_{F^+})_{\cR_\upi} &\oplus &\mathbf{R}\Gamma_{\fin}(G_{\mathbb Q, \Sigma}, T_{\uf_2}^\dagger, \Delta_{0})_{\cR_\upi}, 
\\
 &\mathbf{R}\Gamma_{\fin}(G_{\mathbb Q, \Sigma}, T_{\upi}^\dagger, \Delta_{\mathscr{F}^4})  &=  &\mathbf{R}\Gamma_{\fin}(G_{\mathbb Q, \Sigma}, T_{\uf_1}^\dagger, \Delta_{0})_{\cR_\upi} &\oplus &\mathbf{R}\Gamma_{\fin}(G_{\mathbb Q, \Sigma}, T_{\uf_2}^\dagger, \Delta_{0})_{\cR_\upi}\,.  
 }
\end{align*}
In the case of $\mathscr{F}^\bullet_2$, the roles of $\uf_1$ and $\uf_2$ are reversed. 
\end{proposition}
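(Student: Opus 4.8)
The plan is to deduce the five identities from three ingredients: the $G_{\mathbb Q,\Sigma}$-equivariant direct sum $T_\upi^\dagger = (T_{\uf_1}^\dagger)_{\cR_\upi}\oplus (T_{\uf_2}^\dagger)_{\cR_\upi}$; the explicit shape of the $f_1$-dominant filtration $\mathscr{F}^\bullet_1$ recorded in the displays of this subsection, which exhibits each term $\mathscr{F}^i T_\upi^\dagger$ as an internal direct sum $A_1^{(i)}\oplus A_2^{(i)}$ with $A_j^{(i)}$ a $G_p$-stable $\cR_\upi$-submodule of $(T_{\uf_j}^\dagger)_{\cR_\upi}$ whose quotient is $\cR_\upi$-free; and the additivity of the cone construction of \S\ref{sec:greenberg_selmer}. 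Concretely, for $i=0,1,2,3,4$ the submodule $A_1^{(i)}$ runs through $(T_{\uf_1}^\dagger)_{\cR_\upi}$, $(F^+T_{\uf_1}^\dagger)_{\cR_\upi}$, $(F^+T_{\uf_1}^\dagger)_{\cR_\upi}$, $(F^+T_{\uf_1}^\dagger)_{\cR_\upi}$, $0$, while $A_2^{(i)}$ runs through $(T_{\uf_2}^\dagger)_{\cR_\upi}$, $(T_{\uf_2}^\dagger)_{\cR_\upi}$, $(F^+T_{\uf_2}^\dagger)_{\cR_\upi}$, $0$, $0$, so that the Greenberg condition at $p$ attached to $A_j^{(i)}$ is $\Delta_\emptyset$, $\Delta_{F^+}$, or $\Delta_0$ exactly as on the right-hand sides.

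The core of the argument is then formal. The continuous cochain functors $C^\bullet(G_{\mathbb Q,\Sigma},-)$, $C^\bullet(G_p,-)$, and $C^\bullet(G_v/I_v,(-)^{I_v})$ for $v\in\Sigma\setminus\{p\}$ are additive in the coefficient module (taking $I_v$-invariants commutes with finite direct sums), and the unramified local conditions away from $p$ are insensitive to the filtration. Consequently the defining morphism $\res_\Sigma-\iota_\Sigma^+$ for the pair $(T_\upi^\dagger,\Delta_{\mathscr{F}^i})$ is, under the canonical identifications, the direct sum of the analogous morphisms for $\bigl((T_{\uf_1}^\dagger)_{\cR_\upi}, A_1^{(i)}\bigr)$ and $\bigl((T_{\uf_2}^\dagger)_{\cR_\upi}, A_2^{(i)}\bigr)$. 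Since the shifted mapping cone of a direct sum of morphisms of complexes is the direct sum of the shifted mapping cones, the Selmer complex $\widetilde C^\bullet_{\fin}(G_{\mathbb Q,\Sigma},T_\upi^\dagger,\Delta_{\mathscr{F}^i})$ splits accordingly, and passing to the derived category yields the displayed isomorphisms of objects $\mathbf{R}\Gamma_{\fin}$.

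It remains to identify each summand with a base change of a Selmer complex over $\cR_{\uf_j}$. Because $T_{\uf_j}^\dagger/F^+T_{\uf_j}^\dagger$ is $\cR_{\uf_j}$-free, one has $A_j^{(i)} = N_j^{(i)}\otimes_{\cR_{\uf_j}}\cR_\upi$ with $N_j^{(i)}\in\{T_{\uf_j}^\dagger, F^+T_{\uf_j}^\dagger, 0\}$, and since $\cR_\upi = \cR_{\uf_1}\,\widehat\otimes\,\cR_{\uf_2}$ is flat over each $\cR_{\uf_j}$ while the Selmer complexes are perfect under hypothesis~(\ref{item_Tam}), the natural base-change map
\[
\mathbf{R}\Gamma_{\fin}(G_{\mathbb Q,\Sigma},T_{\uf_j}^\dagger,\Delta_?)\otimes_{\cR_{\uf_j}}\cR_\upi \xrightarrow{\ \sim\ } \mathbf{R}\Gamma_{\fin}\bigl(G_{\mathbb Q,\Sigma},(T_{\uf_j}^\dagger)_{\cR_\upi},\Delta_?\bigr)
\]
is an isomorphism. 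This produces the right-hand sides; the case $\mathscr{F}^\bullet=\mathscr{F}^\bullet_2$ follows by interchanging $\uf_1$ and $\uf_2$. I anticipate no genuine obstacle here: the only point deserving a line of justification is the compatibility of flat base change with the cone construction, which is standard once perfectness is known (cf. \cite{nekovar06}) — indeed, as the authors indicate, the statement is immediate from the definition of $\mathscr{F}^\bullet_i$.
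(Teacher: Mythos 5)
Your proposal is correct and is exactly the unpacking the paper has in mind when it says the result ``follows immediately from the definition of $\mathscr{F}^\bullet_i$'': you read off from the $f_1$-dominant filtration that each $\mathscr{F}^i T_\upi^\dagger$ is an internal direct sum of $G_p$-stable pieces of the two summands, invoke the additivity of the cochain functors and the cone construction from \S\ref{sec:greenberg_selmer}, and finish with flat base change along $\cR_{\uf_j}\to\cR_\upi$ (valid under hypothesis~(\ref{item_Tam}) and the flatness of the completed tensor product). The paper gives no written proof, so your filled-in argument is a faithful and complete version of the intended one-liner.
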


By definition, we have a canonical homomorphism 
\begin{align}\label{hom:decomp_Tf1_Tf2_Tupi}
 \mathbf{R}^1\Gamma_{\fin}(G_{\mathbb Q, \Sigma}, T_{\uf_1}^\dagger, \Delta_{\emptyset}) _{\cR_\upi} \otimes  
 \mathbf{R}^1\Gamma_{\fin}(G_{\mathbb Q, \Sigma}, T_{\uf_2}^\dagger, \Delta_{\emptyset})_{\cR_\upi} \longrightarrow  {\bigcap}^2_{\cR_{\upi}} \mathbf{R}^1\Gamma_{\fin}(G_{\mathbb Q, \Sigma}, T_{\upi}^\dagger, \Delta_{\mathscr{F}^0}).     
\end{align}
It therefore becomes possible to compare $\delta(T_\upi^\dagger, \Delta_{\emptyset})$ with  $\delta(T_{\uf_1}^\dagger, \Delta_{\emptyset})_{\cR_\upi} \otimes \,\delta(T_{\uf_2}^\dagger, \Delta_{\emptyset})_{\cR_\upi}$. 

\begin{corollary}\label{cor:decomp_delta_yohida_Tupi}
We have 
 \begin{equation}
 \label{eqn_2025_09_23_1703}
     \delta(T_\upi^\dagger, \Delta_{\emptyset}) =  \delta(T_{\uf_1}^\dagger, \Delta_{\emptyset})_{\cR_\upi}\, \otimes \,\delta(T_{\uf_2}^\dagger, \Delta_{\emptyset})_{\cR_\upi}\,. 
 \end{equation}
\end{corollary}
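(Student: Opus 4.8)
The statement to prove is Corollary~\ref{cor:decomp_delta_yohida_Tupi}, which asserts the multiplicativity of the module of leading terms under the decomposition $T_\upi^\dagger = (T_{\uf_1}^\dagger)_{\cR_\upi} \oplus (T_{\uf_2}^\dagger)_{\cR_\upi}$, with respect to the split local condition $\Delta_{\emptyset} = \Delta_{\mathscr{F}^0}$ (for which $\mathscr{F}^+ = 0$, i.e.\ there is no local constraint at $p$ beyond the unramified conditions at $v \neq p$). The key structural input is Proposition~\ref{prop:decomp_yohida_Tupi}: for the empty local condition, $\mathscr{F}^\bullet$ plays no role, and the Selmer complex splits cleanly as a direct sum
\[
\mathbf{R}\Gamma_{\fin}(G_{\mathbb Q,\Sigma}, T_\upi^\dagger, \Delta_{\emptyset}) \;\simeq\; \mathbf{R}\Gamma_{\fin}(G_{\mathbb Q,\Sigma}, T_{\uf_1}^\dagger, \Delta_{\emptyset})_{\cR_\upi} \,\oplus\, \mathbf{R}\Gamma_{\fin}(G_{\mathbb Q,\Sigma}, T_{\uf_2}^\dagger, \Delta_{\emptyset})_{\cR_\upi}
\]
in the derived category, since $G_{\mathbb Q,\Sigma}$-cohomology is additive in the coefficient module and the local conditions at all $v \in \Sigma$ (including $v = p$, where the condition is empty) respect the direct sum. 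Base change along $\cR_{\uf_i}\hookrightarrow \cR_\upi$ is flat, so $\mathbf{R}\Gamma_{\fin}(G_{\mathbb Q,\Sigma}, T_{\uf_i}^\dagger, \Delta_{\emptyset})_{\cR_\upi} = \mathbf{R}\Gamma_{\fin}(G_{\mathbb Q,\Sigma}, (T_{\uf_i}^\dagger)_{\cR_\upi}, \Delta_{\emptyset})$, and Theorem~\ref{thm_2023_03_10}(iii) lets us identify the modules of leading terms after base change.

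\textbf{Steps.} First I would record that $r(T_\upi^\dagger, \Delta_{\emptyset}) = 2$ and $r(T_{\uf_i}^\dagger, \Delta_{\emptyset}) = 1$: for each $\uf_i$ the global root number is $-1$, and by the Euler characteristic formula (using $\mathrm{rank}(T_{\uf_i}^\dagger{}^{c=-1}) = 1$ and no local subquotient at $p$) we get $r = 1$; additivity gives $r = 2$ for $T_\upi^\dagger$. Second, since $\delta(-, \Delta_{\emptyset})$ for a rank-one situation lives in ${\bigcap}^1 \mathbf{R}^1\Gamma_{\fin} = (\mathbf{R}^1\Gamma_{\fin})^{**}$, and the target for $T_\upi^\dagger$ is ${\bigcap}^2_{\cR_\upi}\mathbf{R}^1\Gamma_{\fin}(G_{\mathbb Q,\Sigma}, T_\upi^\dagger, \Delta_{\emptyset})$, I would invoke the canonical wedge-product map \eqref{hom:decomp_Tf1_Tf2_Tupi}, which under the splitting of Proposition~\ref{prop:decomp_yohida_Tupi} sends $\mathbf{R}^1\Gamma_{\fin}(T_{\uf_1}^\dagger)_{\cR_\upi}\otimes \mathbf{R}^1\Gamma_{\fin}(T_{\uf_2}^\dagger)_{\cR_\upi}$ into ${\bigcap}^2_{\cR_\upi}$ of the direct sum. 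Third, I would verify that this map carries the image of $\delta(T_{\uf_1}^\dagger,\Delta_{\emptyset})_{\cR_\upi}\otimes\delta(T_{\uf_2}^\dagger,\Delta_{\emptyset})_{\cR_\upi}$ onto $\delta(T_\upi^\dagger,\Delta_{\emptyset})$. The cleanest way is to appeal to the defining construction of $\delta$ in \cite[\S5.2]{BS_triple_factor}: the module of leading terms is built from a chosen perfect representative of the Selmer complex via the Fitting-ideal / exterior-algebra recipe, and that recipe is manifestly compatible with direct sums of perfect complexes --- the leading-term module of a direct sum $C_1^\bullet \oplus C_2^\bullet$ is the image of $\delta(C_1^\bullet)\otimes\delta(C_2^\bullet)$ under the natural product on exterior bi-duals. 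Combined with Theorem~\ref{thm_2023_03_10}(iii) for flat base change along $\cR_{\uf_i}\to\cR_\upi$, this gives \eqref{eqn_2025_09_23_1703}.

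\textbf{The main obstacle.} The delicate point is not the additivity of the Selmer complex --- that is formal --- but rather confirming that the \emph{module of leading terms} functor commutes with finite direct sums of perfect complexes, i.e.\ that $\delta(C_1 \oplus C_2) = \delta(C_1)\cdot\delta(C_2)$ under \eqref{hom:decomp_Tf1_Tf2_Tupi}. This requires unwinding the precise definition in \cite[\S5.2]{BS_triple_factor}, checking that the choices of representatives and the passage to exterior bi-duals behave well with respect to $\oplus$; a subtlety is that ${\bigcap}^r$ does not split as a tensor product of lower exterior bi-duals in general, but the relevant wedge map is still the correct comparison morphism and is an isomorphism here because both $\mathbf{R}^1\Gamma_{\fin}(T_{\uf_i}^\dagger)_{\cR_\upi}$ have rank one over $\cR_\upi$ (this is exactly the mechanism of Lemma~\ref{lemma_2025_07_10_1431}, which I would cite or re-prove in this simpler setting). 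Everything else --- the rank count, the flat base change, the vanishing of higher $\mathbf{R}^j\Gamma_{\fin}$ contributions --- is routine once the splitting is in place.
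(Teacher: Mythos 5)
Your plan correctly identifies the structural input (the direct-sum decomposition of Proposition~\ref{prop:decomp_yohida_Tupi} and the comparison map \eqref{hom:decomp_Tf1_Tf2_Tupi}), but your route to the final equality diverges from the paper's and, as written, leaves the decisive step open. You propose to show directly that the functor $T\mapsto\delta(T,\Delta)$ is multiplicative under finite direct sums of perfect complexes by unwinding the construction in \cite[\S 5.2]{BS_triple_factor}. The paper does not do this. Instead it argues indirectly through characteristic ideals: it first disposes of the degenerate case (if $\delta(T_\upi^\dagger,\Delta_\emptyset)=0$ then, by Theorem~\ref{thm_2023_03_10}(i) and the splitting of $\mathbf{R}^2\Gamma_{\fin}$, at least one $\delta(T_{\uf_i}^\dagger,\Delta_\emptyset)$ vanishes, so both sides of \eqref{eqn_2025_09_23_1703} are zero), and then, assuming all three modules of leading terms are nonzero, applies Theorem~\ref{thm_2023_03_10}(ii) to convert each into a statement about $\Char(\mathbf{R}^2\Gamma_{\fin})$. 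The direct-sum splitting makes these characteristic ideals multiply, and since both the ambient $\bigcap^2_{\cR_\upi}\mathbf{R}^1\Gamma_{\fin}(T_\upi^\dagger,\Delta_\emptyset)$ and the source of \eqref{hom:decomp_Tf1_Tf2_Tupi} are free of rank one with \eqref{hom:decomp_Tf1_Tf2_Tupi} an isomorphism, equality of characteristic ideals forces equality of the cyclic submodules. This sidesteps entirely the compatibility you flag as ``the delicate point''.

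The gap in your proposal is precisely that point: you assert that the exterior-bi-dual recipe defining $\delta$ is ``manifestly compatible with direct sums'' but then concede that verifying this ``requires unwinding the precise definition''. That verification is neither carried out in the paper nor in \cite{BS_triple_factor}, and it is not formal --- one must check that the choice of perfect representative and the passage to $\bigcap^r$ interact with $\oplus$ without introducing unit ambiguities, which is exactly what Theorem~\ref{thm_2023_03_10}(ii) is designed to avoid having to do. You also omit the vanishing dichotomy at the outset; without it, the rank-one and freeness assertions you invoke (that each $\mathbf{R}^1\Gamma_{\fin}(T_{\uf_i}^\dagger,\Delta_\emptyset)_{\cR_\upi}$ has rank one and that \eqref{hom:decomp_Tf1_Tf2_Tupi} is an isomorphism) are not available. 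Replacing the ``unwind the construction'' step with the characteristic-ideal argument --- or actually proving the direct-sum compatibility of $\delta$ --- would close the argument.
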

\begin{proof}
If $\delta(T_\upi^\dagger, \Delta_{\emptyset}) = 0$, then at least one of $\delta(T_{\uf_1}^\dagger, \Delta_{\emptyset})_{\cR_\upi}$ or  $\delta(T_{\uf_2}^\dagger, \Delta_{\emptyset})_{\cR_\upi}$ must vanish.
Thus, we may assume that all three are nonzero. 
Then Theorem \ref{thm_2023_03_10}(ii) and Proposition \ref{prop:decomp_yohida_Tupi} shows that 
\begin{align*}    
&\Char_{\cR_{\upi}}\left( \left. {\bigcap}^2_{\cR_{\upi}} \mathbf{R}^1\Gamma_{\fin}(G_{\mathbb Q, \Sigma}, T_{\upi}^\dagger, \Delta_{\mathscr{F}^0})   \middle/  \delta(T_{\upi}^\dagger, \Delta_{\emptyset})  \right. \right) 
\\
&= 
\Char_{\cR_{\upi}}\left( \mathbf{R}^2\Gamma_{\fin}(G_{\mathbb Q, \Sigma}, T_{\upi}^\dagger, \Delta_{\mathscr{F}^0}) \right)
\\
&=  \Char_{\cR_{\upi}}\left(\mathbf{R}^2\Gamma_{\fin}(G_{\mathbb Q, \Sigma}, T_{\uf_1}^\dagger, \Delta_{\emptyset}) \right)_{\cR_\upi}  \times \Char_{\cR_{\upi}}\left(
 \mathbf{R}^2\Gamma_{\fin}(G_{\mathbb Q, \Sigma}, T_{\uf_2}^\dagger, \Delta_{\emptyset}) \right) _{\cR_\upi}
 \\
 &= \Char_{\cR_{\upi}}\left( \left. \mathbf{R}^1\Gamma_{\fin}(G_{\mathbb Q, \Sigma}, T_{\uf_1}^\dagger, \Delta_{\emptyset})_{\cR_\upi}\,  \otimes  
 \mathbf{R}^1\Gamma_{\fin}(G_{\mathbb Q, \Sigma}, T_{\uf_2}^\dagger, \Delta_{\emptyset}) _{\cR_\upi}   \middle/  \delta(T_{\uf_1}^\dagger, \Delta_{\emptyset})_{\cR_\upi}  \otimes  \delta(T_{\uf_2}^\dagger, \Delta_{\emptyset})_{\cR_\upi} \right. \right). 
\end{align*}
    Since both ${\bigcap}^2_{\cR_{\upi}} \mathbf{R}^1\Gamma_{\fin}(G_{\mathbb Q, \Sigma}, T_{\upi}^\dagger, \Delta_{\mathscr{F}^0})$ and $  \mathbf{R}^1\Gamma_{\fin}(G_{\mathbb Q, \Sigma}, T_{\uf_1}^\dagger, \Delta_{\emptyset})_{\cR_\upi} \,  \otimes  \,
 \mathbf{R}^1\Gamma_{\fin}(G_{\mathbb Q, \Sigma}, T_{\uf_2}^\dagger, \Delta_{\emptyset})_{\cR_\upi} $ are free of rank $1$ and the homomorphism \eqref{hom:decomp_Tf1_Tf2_Tupi} is an isomorphism, we obtain the desired equality $ \delta(T_\upi^\dagger, \Delta_{\emptyset}) =  \delta(T_{\uf_1}^\dagger, \Delta_{\emptyset})_{\cR_\upi}  \otimes  \delta(T_{\uf_2}^\dagger, \Delta_{\emptyset})_{\cR_\upi}$. 
\end{proof}

Recall that we have two Greenberg local conditions $\Delta_{\underline{\sigma}}$ and $\Delta_{\mathrm{bal}}$ on $T_{3,i}^\dagger$, as defined in \cite[Example 4.1]{BS_triple_factor}. These are referred to as the ${\underline{\sigma}}$-dominant Greenberg local condition and the balanced Greenberg local condition, respectively. They are defined via the following choices of submodules of $T_{3,i}^\dagger$:
\begin{itemize}
\item	The ${\underline{\sigma}}$-dominant condition $\Delta_{\underline{\sigma}}$ is defined by the submodule
\[
F_{\underline{\sigma}}^+T_{3,i}^\dagger := (T_{\uf_i}^\dagger\otimes_{\cR_{\uf_i}}\cR_\upi) \,\widehat\otimes\, F^+T_{\underline{\sigma}}^\dagger \,\otimes_{\cR_{\underline{\sigma}}} \,T_{\underline{\sigma}^c}^\dagger \hookrightarrow T_{3,i}^\dagger.
\]
\item	The balanced condition $\Delta_{\mathrm{bal}}$ is defined by the sum of submodules
\begin{align*}
    F_{\rm bal}^+T_{3,i}^\dagger := (F^+T_{\uf_i}^\dagger \otimes_{\cR_{\uf_i}}\cR_\upi) \,\widehat\otimes \,F^+T_{\underline{\sigma}}^\dagger \otimes T_{\underline{\sigma}^c}^\dagger + (T_{\uf_i}^\dagger\otimes_{\cR_{\uf_i}}\cR_\upi) \,&\widehat\otimes F^+T_{\underline{\sigma}}^\dagger \otimes F^+T_{\underline{\sigma}^c}^\dagger \\
    &+ (F^+T_{\uf_i}^\dagger \otimes_{\cR_{\uf_i}}\cR_\upi)\widehat\otimes T_{\underline{\sigma}}^\dagger \otimes F^+T_{\underline{\sigma}^c}^\dagger \hookrightarrow T_{3,i}^\dagger\,.
\end{align*}

\end{itemize}
Moreover,  using the homomorphism $\mathrm{tr}^* \colon T_{3,i} \longrightarrow M_{3,i}$, we obtain two induced Greenberg local conditions on $M_{3,i}$, denoted by $\mathrm{tr}^*\Delta_{\underline{\sigma}}$ and $\mathrm{tr}^*\Delta_{\mathrm{bal}}$.

\begin{proposition}\label{prop:decomp_yohida_M3}
Suppose that $\mathscr{F}^\bullet = \mathscr{F}^\bullet_1$. 
    \begin{align*}
    \xymatrix@C=6pt@R=0pt{
&\mathbf{R}\Gamma_{\fin}(G_{\mathbb Q, \Sigma}, M_3^{\dagger}, \tr^*\Delta_a)  &=& \mathbf{R}\Gamma_{\fin}(G_{\mathbb Q, \Sigma}, M_{3,1}^{\dagger}, \tr^*\Delta_{\underline{\sigma}})  &\oplus &
\mathbf{R}\Gamma_{\fin}(G_{\mathbb Q, \Sigma}, M_{3,2}^{\dagger}, \tr^*\Delta_{\underline{\sigma}}), 
\\
&\mathbf{R}\Gamma_{\fin}(G_{\mathbb Q, \Sigma}, M_3^{\dagger}, \tr^*\Delta_b)  &= &\mathbf{R}\Gamma_{\fin}(G_{\mathbb Q, \Sigma}, M_{3,1}^{\dagger}, \tr^*\Delta_{\rm bal})  &\oplus &
\mathbf{R}\Gamma_{\fin}(G_{\mathbb Q, \Sigma}, M_{3,2}^{\dagger}, \tr^*\Delta_{\underline{\sigma}}), 
\\
&\mathbf{R}\Gamma_{\fin}(G_{\mathbb Q, \Sigma}, M_3^{\dagger}, \tr^*\Delta_c)  &= &\mathbf{R}\Gamma_{\fin}(G_{\mathbb Q, \Sigma}, M_{3,1}^{\dagger}, \tr^*\Delta_{\rm bal}) &\oplus &\mathbf{R}\Gamma_{\fin}(G_{\mathbb Q, \Sigma}, M_{3,2}^{\dagger}, \tr^*\Delta_{\rm bal}), 
\\
&\mathbf{R}\Gamma_{\fin}(G_{\mathbb Q, \Sigma}, M_3^{\dagger}, \tr^*\Delta_b^\perp)  &=& \mathbf{R}\Gamma_{\fin}(G_{\mathbb Q, \Sigma}, M_{3,1}^{\dagger}, \tr^*\Delta_{\rm bal}) &\oplus &\mathbf{R}\Gamma_{\fin}(G_{\mathbb Q, \Sigma}, M_{3,2}^{\dagger}, \tr^*\Delta_{\underline{\sigma}}^\perp)\,. 
\\
&\mathbf{R}\Gamma_{\fin}(G_{\mathbb Q, \Sigma}, M_3^{\dagger}, \tr^*\Delta_a^\perp)  &=  &\mathbf{R}\Gamma_{\fin}(G_{\mathbb Q, \Sigma}, M_{3,1}^{\dagger}, \tr^*\Delta_{\underline{\sigma}}^\perp)& \oplus &\mathbf{R}\Gamma_{\fin}(G_{\mathbb Q, \Sigma}, M_{3,2}^{\dagger}, \tr^*\Delta_{\underline{\sigma}}^\perp)\,.
}
\end{align*} 
In the case of $\mathscr{F}^\bullet_2$, the roles of $M_{3,1}^{\dagger}$ and $M_{3,2}^{\dagger}$ are reversed. 
\end{proposition}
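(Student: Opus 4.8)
The statement to be proved is Proposition~\ref{prop:decomp_yohida_M3}, which asserts that, for the $f_1$-dominant filtration $\mathscr{F}^\bullet_1$, each Selmer complex $\mathbf{R}\Gamma_{\fin}(G_{\mathbb Q, \Sigma}, M_3^{\dagger}, \tr^*\Delta_?)$ (for $?\in\{a,b,c,b^\perp,a^\perp\}$) decomposes as a direct sum of the corresponding Selmer complexes for the two summands $M_{3,1}^\dagger$ and $M_{3,2}^\dagger$, with the \emph{region-dependent} local conditions on the $\GSp_4\times\GL_2\times\GL_2$ side translating into the $\underline{\sigma}$-dominant and balanced local conditions of \cite[Example 4.1]{BS_triple_factor} on each triple-product factor. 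The key point is purely local at $p$: since the Selmer complex is built as a cone over $C^\bullet(G_{\mathbb Q,\Sigma},-)\oplus U^+_\Sigma(-)$, and since $M_3^\dagger = M_{3,1}^\dagger\oplus M_{3,2}^\dagger$ splits $G_{\mathbb Q,\Sigma}$-equivariantly by \eqref{eqn_2025_08_19_0757}, everything reduces to checking that the submodule $F_?^+ M_3^\dagger$ respects this splitting, i.e.\ that $F_?^+M_3^\dagger = (F_?^+M_3^\dagger\cap M_{3,1}^\dagger)\oplus(F_?^+M_3^\dagger\cap M_{3,2}^\dagger)$, and that each intersection is precisely the $p$-local submodule cutting out $\mathrm{tr}^*\Delta_{\underline{\sigma}}$, $\mathrm{tr}^*\Delta_{\mathrm{bal}}$, $\mathrm{tr}^*\Delta_{\mathrm{bal}}^\perp$, or $\mathrm{tr}^*\Delta_{\underline{\sigma}}^\perp$ on the relevant factor. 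Away from $p$ the local conditions $U_v^+$ are defined by $T^{I_v}$ uniformly, so they split with the representation automatically; I would dispatch this in one line.

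\textbf{Key steps, in order.} First, I would recall from \S\ref{subsec_2025_08_06_1504} and the filtration analysis preceding the proposition that, under $\mathscr{F}^\bullet=\mathscr{F}^\bullet_1$, we have $\mathscr{F}^3 T_\upi^\dagger = F^+T_{\uf_1}^\dagger$, $\mathscr{F}^2T_\upi^\dagger = F^+T_{\uf_1}^\dagger\oplus F^+T_{\uf_2}^\dagger$, and $\mathscr{F}^1T_\upi^\dagger = F^+T_{\uf_1}^\dagger\oplus T_{\uf_2}^\dagger$ (each tensored up to $\cR_\upi$), so that the ordinary flag on $T_\upi^\dagger$ is the ``shuffle'' of the two $p$-ordinary lines/planes on $T_{\uf_i}^\dagger$ in which $\uf_1$ comes first. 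Second, I would substitute these into Definition~\ref{def:T_and_M_3_selmer}: each defining summand of $F_?^+ V$ (and hence $F_?^+T$, $F_?^+M_3^\dagger$ after applying $\mathrm{tr}^*$) is a tensor product of a graded/filtered piece of $T_\upi^\dagger$ with filtered pieces of $T_{\underline{\sigma}}$, $T_{\underline{\sigma}}^*$; since $T_\upi^\dagger$ itself splits and the $\GL_2$-factors do not, distributing the tensor product over the $\uf_1\oplus\uf_2$ decomposition is mechanical. I would then match, region by region: for $\Delta_a$ the relevant piece of $T_\upi^\dagger$ is all of it, so on each factor one gets $(T_{\uf_i}^\dagger\otimes_{\cR_{\uf_i}}\cR_\upi)\widehat\otimes F^+T_{\underline\sigma}^\dagger\otimes T_{\underline{\sigma}^c}^\dagger$, which is exactly $F_{\underline\sigma}^+ T_{3,i}^\dagger$; for $\Delta_c$ one uses $\mathscr{F}^2$ on both factors and lands in $F_{\rm bal}^+T_{3,i}^\dagger$ on each; for $\Delta_b$ the asymmetry of $\mathscr{F}^\bullet_1$ (namely $\mathscr{F}^3$ lives in the $\uf_1$-summand, $\mathscr{F}^1\supsetneq$ involves the full $\uf_2$-summand) produces the balanced condition on $M_{3,1}^\dagger$ but only the $\underline\sigma$-dominant condition on $M_{3,2}^\dagger$ — this is the heart of the proposition and is the one place a careful bookkeeping of which $\mathscr{F}^j$ appears in which tensor slot of $F_b^+V$ is needed. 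Third, for the orthogonal complements $\Delta_b^\perp$ and $\Delta_a^\perp$, rather than recomputing from scratch I would invoke the perfect symplectic pairing $M_3^\dagger\otimes M_3^\dagger\to\cR_3(1)$ together with its compatibility with the pairings $M_{3,i}^\dagger\otimes M_{3,i}^\dagger\to\cR_3(1)$ (the two summands are mutually orthogonal under the global pairing, since they have distinct Galois-theoretic ``weights'' / the pairing on $T_\upi^\dagger$ restricts to each $T_{\uf_i}^\dagger$), so orthogonal complements are computed summand-by-summand; combined with the already-established decompositions for $\Delta_a,\Delta_b,\Delta_c$ and the identifications $(F_{\underline\sigma}^+)^\perp$, $(F_{\rm bal}^+)^\perp$ from \cite[Example 4.1]{BS_triple_factor}, the last two rows follow. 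Fourth, I would observe that the passage from the $F^+$-modules to the Selmer complexes is formal: a $G_{\mathbb Q,\Sigma}$-equivariant direct-sum decomposition of $(T, {\mathscr F}^+T)$ compatible with the local data at every $v\in\Sigma$ induces a direct-sum decomposition of the cone defining $\widetilde C_{\fin}^\bullet$, hence of $\mathbf{R}\Gamma_{\fin}$ in the derived category. Finally, the assertion for $\mathscr{F}^\bullet_2$ is obtained by interchanging the roles of $\uf_1$ and $\uf_2$ throughout, which I would state without repeating the argument.

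\textbf{Main obstacle.} The genuinely substantive step is the bookkeeping in the $\Delta_b$ (and dually $\Delta_b^\perp$) case: one must verify that when $F_b^+V$ — which is a sum of three tensor-product terms involving $\mathscr{F}^1 V_p^\dagger(\Pi)$, $\mathscr{F}^3 V_p^\dagger(\Pi)$, and the full $V_p^\dagger(\Pi)$ — is intersected with the $\uf_1$- and $\uf_2$-summands, the $\uf_1$-part picks up \emph{both} the $F^+T_{\uf_1}^\dagger\otimes F^+T_{\underline\sigma}^\dagger\otimes T^\dagger_{\underline{\sigma}^c}$ and the $F^+T_{\uf_1}^\dagger\otimes T^\dagger_{\underline\sigma}\otimes F^+T^\dagger_{\underline{\sigma}^c}$ and the $T_{\uf_1}^\dagger\otimes F^+T_{\underline\sigma}^\dagger\otimes F^+T^\dagger_{\underline{\sigma}^c}$ contributions (yielding $F_{\rm bal}^+T_{3,1}^\dagger$ exactly), whereas the $\uf_2$-part only sees the single term coming from $V_p^\dagger(\Pi)\otimes\mathscr F^1 V_p(g)\otimes V_p(g)^*$, collapsing to $F_{\underline\sigma}^+ T_{3,2}^\dagger$. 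This hinges on the precise location of $F^+T_{\uf_2}^\dagger$ inside the flag $\mathscr F^\bullet_1 T_\upi^\dagger$ — it sits at level $\mathscr F^2$ but not $\mathscr F^3$ — and getting the indices right is where an error would most easily creep in; there is no deep difficulty, only the need for care. One mild subtlety worth flagging is that the identifications are most cleanly carried out after checking the hypothesis \eqref{item_Tam} and the freeness requirements are inherited by each summand (so that the summand Selmer complexes are themselves perfect), which follows since a direct summand of a free module is free over the relevant completed tensor-product ring.
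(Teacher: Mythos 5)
Your proof is correct and is essentially the verification the paper elides — the paper declares this proposition to "follow immediately from the definition of $\mathscr{F}^\bullet_i$" and offers no further argument, and you supply the correct region-by-region matching of each $F^+_?M_3^\dagger$ against the $\underline\sigma$-dominant and balanced conditions on the two summands, together with the formal observation that a $G_{\QQ,\Sigma}$-equivariant splitting of $(T,\mathscr F^+T)$ compatible with the local data at all $v\in\Sigma$ induces a direct-sum decomposition of the Selmer complex. One cosmetic slip worth fixing: in your discussion of the $\uf_2$-contribution to $F_b^+$, the relevant term of Definition~\ref{def:T_and_M_3_selmer} is $\mathscr{F}^1V_p^\dagger(\Pi)\otimes\mathscr{F}^1V_p(g)\otimes V_p(g)^*$ (you wrote $V_p^\dagger(\Pi)$ where $\mathscr{F}^1V_p^\dagger(\Pi)$ was intended), although your conclusion that this collapses to $F^+_{\underline\sigma}T_{3,2}^\dagger$ is unaffected because $T_{\uf_2}^\dagger\otimes_{\cR_{\uf_2}}\cR_\upi\subset\mathscr{F}^1T_\upi^\dagger$ under $\mathscr{F}^\bullet_1$.
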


Arguing in the same way as in the proof of Corollary \ref{cor:decomp_delta_yohida_Tupi}, we obtain the following: 

\begin{corollary}\label{cor:decomp_delta_yohida_M}
We have 
 \begin{equation}
 \label{eqn_2025_09_23_1733}
     \delta(M_3^{\dagger}, \tr^*\Delta_a) =  \delta(M_{3,1}^{\dagger}, \tr^*\Delta_{\underline{\sigma}}) \otimes  \delta(M_{3,2}^{\dagger}, \tr^*\Delta_{\underline{\sigma}}). 
 \end{equation}
\end{corollary}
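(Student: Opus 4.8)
\textbf{Proof plan for Corollary~\ref{cor:decomp_delta_yohida_M}.} The strategy is to mimic, mutatis mutandis, the proof of Corollary~\ref{cor:decomp_delta_yohida_Tupi}, using the direct-sum decomposition recorded in the first line of Proposition~\ref{prop:decomp_yohida_M3} in place of Proposition~\ref{prop:decomp_yohida_Tupi}. First I would dispose of the degenerate case: if $\delta(M_3^\dagger,\tr^*\Delta_a)=0$, then since the module of leading terms of a direct sum vanishes as soon as one summand's does (by Theorem~\ref{thm_2023_03_10}(i), the torsion-ness of $\mathbf{R}^2\Gamma_{\fin}$ of the sum is equivalent to that of each summand, and $\delta$ of a non-torsion-$\mathbf{R}^2$ situation is $0$), at least one of $\delta(M_{3,1}^\dagger,\tr^*\Delta_{\underline{\sigma}})$, $\delta(M_{3,2}^\dagger,\tr^*\Delta_{\underline{\sigma}})$ vanishes, and the claimed equality \eqref{eqn_2025_09_23_1733} reads $0=0$. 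So I may assume all three modules of leading terms are nonzero.

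Next, I would observe that under this nonvanishing hypothesis all three relevant $\mathbf{R}^1\Gamma_{\fin}$-modules are of the expected rank (here rank $2$ for $\mathbf{R}^1\Gamma_{\fin}(G_{\mathbb Q,\Sigma},M_3^\dagger,\tr^*\Delta_a)$ — recall $\rank F_a^+M_3^\dagger = 8$ from Lemma~\ref{lemma:ranks}(a), and the Euler characteristic computation gives $r(M_3^\dagger,\tr^*\Delta_a)=2$ — and rank $1$ for each $\mathbf{R}^1\Gamma_{\fin}(G_{\mathbb Q,\Sigma},M_{3,i}^\dagger,\tr^*\Delta_{\underline{\sigma}})$), and that their exterior bi-duals are free: ${\bigcap}^2_{\cR_\upi}\mathbf{R}^1\Gamma_{\fin}(G_{\mathbb Q,\Sigma},M_3^\dagger,\tr^*\Delta_a)$ and the tensor product ${\bigcap}^1\mathbf{R}^1\Gamma_{\fin}(\cdots,M_{3,1}^\dagger,\cdots)\otimes{\bigcap}^1\mathbf{R}^1\Gamma_{\fin}(\cdots,M_{3,2}^\dagger,\cdots)$ are both free $\cR_\upi$-modules of rank one, and the canonical comparison homomorphism between them (the analogue of \eqref{hom:decomp_Tf1_Tf2_Tupi}, induced by the splitting in Proposition~\ref{prop:decomp_yohida_M3}) is an isomorphism. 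Then I would run the chain of characteristic-ideal identities exactly as in Corollary~\ref{cor:decomp_delta_yohida_Tupi}: apply Theorem~\ref{thm_2023_03_10}(ii) to rewrite $\Char_{\cR_\upi}\big({\bigcap}^2\mathbf{R}^1\Gamma_{\fin}(G_{\mathbb Q,\Sigma},M_3^\dagger,\tr^*\Delta_a)/\delta(M_3^\dagger,\tr^*\Delta_a)\big)$ as $\Char_{\cR_\upi}(\mathbf{R}^2\Gamma_{\fin}(G_{\mathbb Q,\Sigma},M_3^\dagger,\tr^*\Delta_a))$; use the direct-sum decomposition of Proposition~\ref{prop:decomp_yohida_M3} to split this as the product of the two $\Char_{\cR_\upi}(\mathbf{R}^2\Gamma_{\fin}(G_{\mathbb Q,\Sigma},M_{3,i}^\dagger,\tr^*\Delta_{\underline{\sigma}}))$; and apply Theorem~\ref{thm_2023_03_10}(ii) to each factor to re-express it in terms of $\delta(M_{3,i}^\dagger,\tr^*\Delta_{\underline{\sigma}})$. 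Matching up via the isomorphism of free rank-one modules then forces $\delta(M_3^\dagger,\tr^*\Delta_a)=\delta(M_{3,1}^\dagger,\tr^*\Delta_{\underline{\sigma}})\otimes\delta(M_{3,2}^\dagger,\tr^*\Delta_{\underline{\sigma}})$ as submodules of the common bi-dual, which is exactly \eqref{eqn_2025_09_23_1733}.

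The one point requiring genuine care — and the only real obstacle — is the compatibility of the two Greenberg local conditions under the decomposition $M_3^\dagger\simeq M_{3,1}^\dagger\oplus M_{3,2}^\dagger$ of \eqref{eqn_2025_08_19_0757}: one must check that $F_a^+M_3^\dagger$ corresponds, summand by summand, to $F_{\underline{\sigma}}^+M_{3,1}^\dagger$ and $F_{\underline{\sigma}}^+M_{3,2}^\dagger$ (which is the content of the first line of Proposition~\ref{prop:decomp_yohida_M3}, assumed here), and in particular that no cross-terms between the $T_{\uf_1}^\dagger$- and $T_{\uf_2}^\dagger$-parts intervene — this is where the observation ``Region (a) corresponds to the $\underline{\sigma}^c$-dominant region for both triple products $T_{4,?}^\dagger$'' from \S\ref{subsubsec_2025_08_25_1205} is used, together with the fact that $\tr^*$ is applied componentwise. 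Everything downstream of that is the same bookkeeping with characteristic ideals and free rank-one modules as in Corollary~\ref{cor:decomp_delta_yohida_Tupi}, so I would simply write ``Arguing in the same way as in the proof of Corollary~\ref{cor:decomp_delta_yohida_Tupi}'' and point to Proposition~\ref{prop:decomp_yohida_M3} for the required decomposition, exactly as the excerpt already does.
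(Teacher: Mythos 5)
Your proposal is correct and follows exactly the route the paper takes: the paper's own proof is the one-line invocation ``Arguing in the same way as in the proof of Corollary~\ref{cor:decomp_delta_yohida_Tupi}, we obtain the following,'' with Proposition~\ref{prop:decomp_yohida_M3} supplying the direct-sum decomposition of Selmer complexes that plays the role Proposition~\ref{prop:decomp_yohida_Tupi} played there. Your fleshed-out version — degenerate case, reduction to all three $\delta$'s nonzero, the chain of $\Char$-identities via Theorem~\ref{thm_2023_03_10}(ii) and the direct-sum splitting, then the matching of generators inside the common free rank-one exterior bi-dual via the analogue of \eqref{hom:decomp_Tf1_Tf2_Tupi} — is precisely what the paper leaves implicit, and your concluding observation that the only nontrivial input is the compatibility of $\tr^*\Delta_a$ with the decomposition $M_3^\dagger \simeq M_{3,1}^\dagger \oplus M_{3,2}^\dagger$ (which is exactly the first line of Proposition~\ref{prop:decomp_yohida_M3}, and which is insensitive to the choice of $\mathscr{F}^\bullet_1$ versus $\mathscr{F}^\bullet_2$ since $F_a^+$ takes the full $\Pi$-factor) correctly locates where the real work sits.
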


\begin{proposition}\label{prop:basis_yohida_Tupi}
Let $\{f_0, f_1, f_2, f_3\}$ be the basis of $H^1(G_p, T_{\upi}^\dagger)$ given as in \S\ref{subsubsec_2025_07_18_1211}. 
    \item[i)] 
    If $\mathscr{F}^\bullet = \mathscr{F}^\bullet_1$, then 
    we have $\cR_\upi f_0 = H^1(G_p, F^+T_{\uf_1}^\dagger)$ and $\cR_\upi f_1 = H^1(G_p, F^+T_{\uf_2}^\dagger)$. Moreover,  $f_2 \in H^1(G_p, T_{\uf_2}^\dagger)$ and $f_3 \in H^1(G_p, T_{\uf_1}^\dagger)$. 
    \item[ii)] If $\mathscr{F}^\bullet = \mathscr{F}^\bullet_2$, then we have  $\cR_\upi f_0 = H^1(G_p, F^+T_{\uf_2}^\dagger)$ and $\cR_\upi f_1 = H^1(G_p, F^+T_{\uf_1}^\dagger)$. Moreover,  $f_2 \in H^1(G_p, T_{\uf_1}^\dagger)$ and $f_3 \in H^1(G_p, T_{\uf_2}^\dagger)$. 
\end{proposition}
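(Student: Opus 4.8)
\textbf{Proof proposal for Proposition~\ref{prop:basis_yohida_Tupi}.}

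The plan is to unwind the definitions and track how the basis $\{f_0,f_1,f_2,f_3\}$ of $H^1(G_p,T_\upi^\dagger)$ constructed in \S\ref{subsubsec_2025_07_18_1211} interacts with the direct sum decomposition $T_\upi^\dagger=(T_{\uf_1}^\dagger\otimes_{\cR_{\uf_1}}\cR_\upi)\oplus(T_{\uf_2}^\dagger\otimes_{\cR_{\uf_2}}\cR_\upi)$ and with the explicit description of the filtration $\mathscr{F}^\bullet_i$ on $T_\upi^\dagger$ obtained just above (in particular $\mathscr{F}^3T_\upi^\dagger=F^+T_{\uf_i}^\dagger$ and the case distinction for $\mathscr{F}^1T_\upi^\dagger$). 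I will treat the case $\mathscr{F}^\bullet=\mathscr{F}^\bullet_1$ in detail; the case $\mathscr{F}^\bullet_2$ follows verbatim after swapping the roles of $\uf_1$ and $\uf_2$, so only one of (i) and (ii) needs a genuine argument.

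First I would recall that the $f_j$ were defined so that $\{f_0,\dots,f_i\}$ is a basis of $H^1(G_p,\mathscr{F}^{3-i}T_\upi^\dagger)$, with the duality normalisation $\langle f_i,f_{3-j}\rangle=(-1)^i\delta_{ij}$. In the $\mathscr{F}^\bullet_1$ case, Proposition~\ref{prop:decomp_yohida_Tupi} (or rather the explicit computation preceding it) gives $\mathscr{F}^3T_\upi^\dagger=F^+T_{\uf_1}^\dagger\otimes_{\cR_{\uf_1}}\cR_\upi$ and $\mathscr{F}^2T_\upi^\dagger=F^+T_{\uf_1}^\dagger\oplus F^+T_{\uf_2}^\dagger$. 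Since $\cR_\upi f_0=H^1(G_p,\mathscr{F}^3T_\upi^\dagger)$ and $H^1(G_p,-)$ commutes with the direct sum, we get $\cR_\upi f_0=H^1(G_p,F^+T_{\uf_1}^\dagger)$ at once; and since $\{f_0,f_1\}$ is a basis of $H^1(G_p,\mathscr{F}^2T_\upi^\dagger)=H^1(G_p,F^+T_{\uf_1}^\dagger)\oplus H^1(G_p,F^+T_{\uf_2}^\dagger)$, the class $f_1$ spans a complement of $\cR_\upi f_0$ in this rank-two free module, so its image in $H^1(G_p,F^+T_{\uf_2}^\dagger)$ is a generator; after replacing $f_1$ by $f_1-(\text{multiple of }f_0)$ — which is exactly the freedom one has in choosing such a basis, and which does not affect the duality normalisation because $\langle f_0,f_0\rangle=0$ — we may and do take $\cR_\upi f_1=H^1(G_p,F^+T_{\uf_2}^\dagger)$.

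For the statements about $f_2$ and $f_3$ I would use the duality pairing. Each summand $T_{\uf_i}^\dagger\otimes_{\cR_{\uf_i}}\cR_\upi$ is isotropic? No — rather, the skew pairing $T_\upi^\dagger\otimes T_\upi^\dagger\to\cR_\upi(1)$ restricts to the self-duality pairing on each $T_{\uf_i}^\dagger$ and the two summands are mutually orthogonal; hence the local cup-product pairing on $H^1(G_p,T_\upi^\dagger)$ is block-diagonal with respect to the decomposition $H^1(G_p,T_{\uf_1}^\dagger)\oplus H^1(G_p,T_{\uf_2}^\dagger)$. Now $f_3$ lies in $H^1(G_p,\mathscr{F}^0T_\upi^\dagger)=H^1(G_p,T_\upi^\dagger)$, and the normalisation $\langle f_i,f_{3-j}\rangle=(-1)^i\delta_{ij}$ says $f_3$ pairs trivially with $f_1,f_2,f_3$ and nontrivially with $f_0$; since $\cR_\upi f_0=H^1(G_p,F^+T_{\uf_1}^\dagger)\subset H^1(G_p,T_{\uf_1}^\dagger)$ and the pairing is block-diagonal, the component of $f_3$ in $H^1(G_p,T_{\uf_2}^\dagger)$ would have to pair trivially with all of $H^1(G_p,T_{\uf_2}^\dagger)$ (it is killed by the span of the relevant $f_j$'s lying in that block and is orthogonal to everything in the other block), hence vanishes by perfectness of local Tate duality on $H^1(G_p,T_{\uf_2}^\dagger)$; thus $f_3\in H^1(G_p,T_{\uf_1}^\dagger)$. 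The same argument with the roles reversed — using that $\cR_\upi f_1=H^1(G_p,F^+T_{\uf_2}^\dagger)\subset H^1(G_p,T_{\uf_2}^\dagger)$ and that $f_2$ pairs nontrivially only with $f_1$ — gives $f_2\in H^1(G_p,T_{\uf_2}^\dagger)$. Finally, part (ii) is obtained by applying part (i) to the filtration $\mathscr{F}^\bullet_2$, which is literally $\mathscr{F}^\bullet_1$ with $\uf_1\leftrightarrow\uf_2$.

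The one point that needs a little care — the main (mild) obstacle — is the dimension bookkeeping: one must check that the block-diagonal pairing argument really pins down $f_2,f_3$ and is not merely consistent with them lying in the wrong summand. Concretely, $H^1(G_p,T_{\uf_i}^\dagger)$ is free of rank $2$ over $\cR_\upi$ (local Euler characteristic, using that $T_{\uf_i}^\dagger$ has rank $2$ and $H^0=H^2=0$ generically, or rather relying on the freeness already invoked throughout \S\ref{subsec_5_2_2025_02_11}), so $H^1(G_p,T_\upi^\dagger)$ has rank $4$ with a rank-$2$ block from each $\uf_i$; the classes $f_0,f_3$ occupying the $\uf_1$-block and $f_1,f_2$ the $\uf_2$-block is then forced by the count together with $\cR_\upi f_0\subset\uf_1$-block, $\cR_\upi f_1\subset\uf_2$-block, and the orthogonality relations. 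Once this is laid out the proposition follows, and no genuinely new input beyond the decompositions of Proposition~\ref{prop:decomp_yohida_Tupi} and the perfectness of local duality is required.
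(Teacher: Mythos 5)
Since the paper dispatches Propositions~\ref{prop:decomp_yohida_Tupi}--\ref{prop:basis_yohida_Tupi} with the single sentence ``follow immediately from the definition of $\mathscr{F}^\bullet_i$'', there is no paper proof to compare against; so I am judging your argument on its own terms. The ingredients you assemble — the direct-sum decomposition of $T_\upi^\dagger$, the explicit description of $\mathscr{F}^j T_\upi^\dagger$ in the $\mathscr{F}^\bullet_1$ case, and the block-diagonality of the local Tate pairing with respect to the two summands — are precisely what one needs, and the case (ii) $=$ (i) with $\uf_1\leftrightarrow\uf_2$ reduction is correct. However there are two imprecisions worth flagging.

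First, the justification you give for being allowed to replace $f_1$ by $f_1-cf_0$ (``does not affect the duality normalisation because $\langle f_0,f_0\rangle=0$'') is aimed at the wrong pairing. The relation that is in danger is $\langle f_3,f_1\rangle=0$: since $\langle f_3,f_0\rangle=-1$, the change $f_1\mapsto f_1-cf_0$ sends $\langle f_3,f_1\rangle$ to $c$. What actually saves the day is that in the inductive construction of \S\ref{subsubsec_2025_07_18_1211} the classes $f_2,f_3$ have not yet been fixed at the stage one chooses $f_1$; the constraints involving them are imposed afterwards. You should say that, rather than invoke $\langle f_0,f_0\rangle=0$.

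Second, and more substantively, the duality argument for $f_2$ and $f_3$ as you wrote it is circular: to conclude $f_3^{(2)}=0$ from perfectness of the pairing on $H^1(G_p,T_{\uf_2}^\dagger)$ you need $\{f_1,f_2\}$ to span that block, which already presupposes $f_2\in H^1(G_p,T_{\uf_2}^\dagger)$; and your derivation of $f_2\in H^1(G_p,T_{\uf_2}^\dagger)$ symmetrically presupposes $f_3\in H^1(G_p,T_{\uf_1}^\dagger)$. The dimension count you append does not by itself break the circle, because for a generic basis obeying the conditions of \S\ref{subsubsec_2025_07_18_1211} the class $f_2$ really can pick up an $f_0$-component (take $f_2\mapsto f_2+\alpha f_0$: one checks $\langle f_2,f_0\rangle$ and $\langle f_2,f_1\rangle$ are unchanged), and then $\langle f_3,f_2\rangle=0$ forces $\langle f_3^{(2)},f_2^{(2)}\rangle=\alpha\neq 0$, so $f_3^{(2)}\neq 0$. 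The clean fix is to normalise $f_2$ exactly as you normalised $f_1$: since $\mathscr{F}^1T_\upi^\dagger=F^+T_{\uf_1}^\dagger\oplus T_{\uf_2}^\dagger$, any $f_2\in H^1(G_p,\mathscr{F}^1 T_\upi^\dagger)$ extending $\{f_0,f_1\}$ to a basis can have its $f_0$-component subtracted off without disturbing $\langle f_2,f_0\rangle=0$ or $\langle f_2,f_1\rangle=1$, placing $f_2$ in $H^1(G_p,T_{\uf_2}^\dagger)$. Once $f_1,f_2$ both lie in the $\uf_2$-block and form a basis of it, the relations $\langle f_3,f_1\rangle=\langle f_3,f_2\rangle=0$ do kill the $\uf_2$-component of $f_3$ by perfectness, with no further normalisation needed, and the proof closes. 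So your proposal is essentially right but the circular step needs to be replaced by the explicit normalisation of $f_2$.
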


\begin{corollary} 
In the setting of Proposition \ref{prop:basis_yohida_Tupi}, if $\mathscr{F}^\bullet = \mathscr{F}^\bullet_1$, then  $c_\upi = 0$ and $d_\upi$ is a unit. If $\mathscr{F}^\bullet = \mathscr{F}^\bullet_2$, then $c_\upi$ is unit and $d_\upi = 0$. 
\end{corollary}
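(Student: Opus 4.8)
The claim is a direct bookkeeping consequence of Proposition~\ref{prop:basis_yohida_Tupi} together with the defining formula for the basis $\{\varphi_{2,\upi},\varphi_{1,\upi}\}$ of $H^1_\upi(G_p, T_\upi^\dagger)$ obtained in Lemma~\ref{lemma:basis_of_H^1_upi(G_p, T_upi^dagger)}. Recall that there we wrote $\varphi_{1,\upi} = c_\upi f_0 + d_\upi f_1$, where $\{f_0, f_1\}$ is a basis of $H^1(G_p, \mathscr{F}^2 T_\upi^\dagger)$ and $\varphi_{1,\upi}$ is (up to unit) the image of a generator of the rank-one module $\mathbf{R}^1\Gamma_{\fin}(G_{\mathbb Q,\Sigma}, T_\upi^\dagger, \Delta_{\mathscr{F}^1}) = \mathbf{R}^1\Gamma_{\fin}(G_{\mathbb Q,\Sigma}, T_\upi^\dagger, \Delta_{\mathscr{F}^2})$ inside $H^1(G_p, \mathscr{F}^2 T_\upi^\dagger)$.

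\textbf{Step 1.} Suppose $\mathscr{F}^\bullet = \mathscr{F}^\bullet_1$. By Proposition~\ref{prop:decomp_yohida_Tupi}, $\mathbf{R}\Gamma_{\fin}(G_{\mathbb Q,\Sigma}, T_\upi^\dagger, \Delta_{\mathscr{F}^2})$ decomposes as $\mathbf{R}\Gamma_{\fin}(G_{\mathbb Q,\Sigma}, T_{\uf_1}^\dagger, \Delta_{F^+})_{\cR_\upi} \oplus \mathbf{R}\Gamma_{\fin}(G_{\mathbb Q,\Sigma}, T_{\uf_2}^\dagger, \Delta_{F^+})_{\cR_\upi}$, while $\mathbf{R}\Gamma_{\fin}(G_{\mathbb Q,\Sigma}, T_\upi^\dagger, \Delta_{\mathscr{F}^3}) = \mathbf{R}\Gamma_{\fin}(G_{\mathbb Q,\Sigma}, T_{\uf_1}^\dagger, \Delta_{F^+})_{\cR_\upi} \oplus \mathbf{R}\Gamma_{\fin}(G_{\mathbb Q,\Sigma}, T_{\uf_2}^\dagger, \Delta_{0})_{\cR_\upi}$. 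Under Hypotheses~\ref{subsubsec_2024_07_15_1105} (in particular \eqref{item_nVdeltaT3}), we have $\mathbf{R}^1\Gamma_{\fin}(G_{\mathbb Q,\Sigma}, T_\upi^\dagger, \Delta_{\mathscr{F}^3}) = 0$ by Lemma~\ref{lemma_2025_02_20_1200}(i), which forces $\mathbf{R}^1\Gamma_{\fin}(G_{\mathbb Q,\Sigma}, T_{\uf_1}^\dagger, \Delta_{F^+}) = 0$; together with $\varepsilon(T_{\uf_1}^\dagger) = 1$ this is consistent, whereas $\mathbf{R}^1\Gamma_{\fin}(G_{\mathbb Q,\Sigma}, T_{\uf_2}^\dagger, \Delta_{F^+})$ is the rank-one module carrying $\delta(T_{\uf_2}^\dagger, \Delta_{F^+})$ (recall $\varepsilon(T_{\uf_2}^\dagger) = -1$, so $\mathbf{R}^1\Gamma_{\fin}(G_{\mathbb Q,\Sigma}, T_{\uf_2}^\dagger, \Delta_{\emptyset})$ has rank $1$ by parity and Lemma~\ref{lemma_2025_02_20_1200}(ii)). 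Hence the generator $\varphi_{1,\upi}$ of the image of $\mathbf{R}^1\Gamma_{\fin}(G_{\mathbb Q,\Sigma}, T_\upi^\dagger, \Delta_{\mathscr{F}^2})$ lies entirely in the $\uf_2$-summand $H^1(G_p, F^+T_{\uf_2}^\dagger)$. By Proposition~\ref{prop:basis_yohida_Tupi}(i), $H^1(G_p, F^+T_{\uf_2}^\dagger) = \cR_\upi f_1$, while $\cR_\upi f_0 = H^1(G_p, F^+T_{\uf_1}^\dagger)$. Therefore in the expression $\varphi_{1,\upi} = c_\upi f_0 + d_\upi f_1$ the coefficient of $f_0$ vanishes, i.e. $c_\upi = 0$; and since $\varphi_{1,\upi}$ is part of a basis of the free rank-two module $H^1_\upi(G_p, T_\upi^\dagger)$ (Lemma~\ref{lemma:quotient_by_saturation_is_free}) and at least one of $c_\upi, d_\upi$ is a unit (Lemma~\ref{lemma:basis_of_H^1_upi(G_p, T_upi^dagger)}), we conclude $d_\upi \in \cR_3^\times$.

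\textbf{Step 2.} The case $\mathscr{F}^\bullet = \mathscr{F}^\bullet_2$ is entirely symmetric: Proposition~\ref{prop:basis_yohida_Tupi}(ii) swaps the roles, giving $\cR_\upi f_0 = H^1(G_p, F^+T_{\uf_2}^\dagger)$ and $\cR_\upi f_1 = H^1(G_p, F^+T_{\uf_1}^\dagger)$, and now $\mathscr{F}^3 T_\upi^\dagger = F^+ T_{\uf_2}^\dagger$, so the rank-one cohomology $\mathbf{R}^1\Gamma_{\fin}(G_{\mathbb Q,\Sigma}, T_\upi^\dagger, \Delta_{\mathscr{F}^2})$ now sits in the $\uf_1$-summand $H^1(G_p, F^+T_{\uf_1}^\dagger) = \cR_\upi f_1$... wait, one must track the index convention carefully: by Proposition~\ref{prop:decomp_yohida_Tupi} with the roles of $\uf_1$ and $\uf_2$ reversed, $\mathbf{R}^1\Gamma_{\fin}(G_{\mathbb Q,\Sigma}, T_\upi^\dagger, \Delta_{\mathscr{F}^2})$ is supported on the family with global sign $-1$, namely $\uf_2$, whose $F^+$-part is $\cR_\upi f_0$ under $\mathscr{F}^\bullet_2$. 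Hence $\varphi_{1,\upi}$ is a unit multiple of $f_0$, giving $d_\upi = 0$ and $c_\upi \in \cR_3^\times$. The main (and only) subtlety is precisely this matching of which $f_i$ spans which $F^+T_{\uf_j}^\dagger$ under each of the two filtrations $\mathscr{F}^\bullet_1, \mathscr{F}^\bullet_2$, which is exactly what Proposition~\ref{prop:basis_yohida_Tupi} records; once that is in hand the corollary is immediate.
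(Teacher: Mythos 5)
Your proposal is correct and follows the same route as the paper's (very brief) sketch: identify the saturation of the image of $\mathbf{R}^1\Gamma_{\fin}(G_{\mathbb Q,\Sigma}, T_\upi^\dagger, \Delta_{\mathscr{F}^2})$ as lying in $H^1(G_p, F^+T_{\uf_2}^\dagger)_{\cR_\upi}$, then read off $c_\upi$ and $d_\upi$ from Proposition~\ref{prop:basis_yohida_Tupi}. Two small points worth flagging. First, your citation to Lemma~\ref{lemma_2025_02_20_1200}(i) does not directly give $\mathbf{R}^1\Gamma_{\fin}(G_{\mathbb Q,\Sigma}, T_\upi^\dagger, \Delta_{\mathscr{F}^3}) = 0$: part~(i) concerns $\Delta_0 = \Delta_{\mathscr{F}^4}$, and the vanishing at $\Delta_{\mathscr{F}^3}$ requires part~(ii) together with parity. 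More importantly, under $\mathscr{F}^\bullet = \mathscr{F}^\bullet_2$ the assertion $\mathbf{R}^1\Gamma_{\fin}(G_{\mathbb Q,\Sigma}, T_\upi^\dagger, \Delta_{\mathscr{F}^3}) = 0$ is simply \emph{false} (it is the rank-one module $\mathbf{R}^1\Gamma_{\fin}(G_{\mathbb Q,\Sigma}, T_{\uf_2}^\dagger, \Delta_{F^+})_{\cR_\upi}$, cf.~Proposition~\ref{prop:yoshida_H^1(Tupi)}(ii)), so the detour through $\Delta_{\mathscr{F}^3}$-vanishing cannot be the uniform argument. The filtration-independent observation that actually does the work — and which your Step~2 implicitly uses when it ``tracks the index convention'' — is that $\mathscr{F}^2 T_\upi^\dagger = F^+T_{\uf_1}^\dagger \oplus F^+T_{\uf_2}^\dagger$ is the same for both $\mathscr{F}^\bullet_1$ and $\mathscr{F}^\bullet_2$, and $\mathbf{R}^1\Gamma_{\fin}(G_{\mathbb Q,\Sigma}, T_{\uf_1}^\dagger, \Delta_{F^+}) = 0$ (sign $\varepsilon(T_{\uf_1}^\dagger) = +1$, plus the running non-vanishing of the leading-term module on the $\uf_1$-side), so the saturation always lives in $H^1(G_p, F^+T_{\uf_2}^\dagger)$; the choice of filtration then only re-labels whether this is $\cR_\upi f_1$ or $\cR_\upi f_0$, which is exactly the content of Proposition~\ref{prop:basis_yohida_Tupi}. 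With that streamlining, your Steps~1 and~2 collapse into a single argument and the self-correcting aside disappears.
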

\begin{proof}
The saturation of the image of $\mathbf{R}^1\Gamma_{\fin}(G_{\mathbb Q, \Sigma}, T_{\upi}^\dagger, \Delta_{\mathscr{F}^2})$ equals $H^1(G_p, T_{\uf_2}^\dagger)_{\cR_\upi}$. This corollary therefore follows from Proposition \ref{prop:basis_yohida_Tupi}. 
\end{proof}

\subsubsection{} We recall that we are working under the assumption that $\varepsilon(T_{\uf_1}^\dagger)=1 =-\varepsilon(T_{\uf_2}^\dagger)$. Let us consider the following analogues of the hypotheses listed in \S\ref{subsubsec_2024_07_15_1105} (see also Remark~\ref{rem_2025_09_23_1946} below):
\begin{itemize}
\item[\mylabel{item_AJ_f}{$\mathbf{AJ}_{p}^{\uf_i}$})] $\res_p\,\delta(T^\dagger_{\uf_i},\Delta_{\emptyset}) \neq 0 $.
\item[\mylabel{item_parity_f}{$\mathbf{P}^-_{\uf_2}$})] $\delta(T^\dagger_{\uf_2}, \Delta_{\mathscr{F}^+})=0$.
    \end{itemize}


\begin{proposition}\label{prop:yoshida_H^1(Tupi)} 
Suppose that $\delta(T^\dagger_{\uf_1}, \Delta_{F^+}) \neq 0$. Assume also that 
\eqref{item_AJ_f}  and \eqref{item_parity_f} hold true $(i=1,2)$.
       \item[i)] If $\mathscr{F}^\bullet = \mathscr{F}^\bullet_1$, then we have 
\begin{align*}
&\mathbf{R}^1\Gamma_{\fin}(G_{\mathbb Q, \Sigma}, T_{\upi}^\dagger, \Delta_{\mathscr{F}^0})  = \mathbf{R}^1\Gamma_{\fin}(G_{\mathbb Q, \Sigma}, T_{\uf_1}^\dagger, \Delta_{\emptyset})_{\cR_\upi}  \oplus 
\mathbf{R}^1\Gamma_{\fin}(G_{\mathbb Q, \Sigma}, T_{\uf_2}^\dagger, \Delta_{F^+})_{\cR_\upi}\,, 
\\
&\mathbf{R}^1\Gamma_{\fin}(G_{\mathbb Q, \Sigma}, T_{\upi}^\dagger, \Delta_{\mathscr{F}^1})  =
\mathbf{R}^1\Gamma_{\fin}(G_{\mathbb Q, \Sigma}, T_{\upi}^\dagger, \Delta_{\mathscr{F}^2})  =  \mathbf{R}^1\Gamma_{\fin}(G_{\mathbb Q, \Sigma}, T_{\uf_2}^\dagger, \Delta_{F^+})_{\cR_\upi}\,, 
\\
&\mathbf{R}^1\Gamma_{\fin}(G_{\mathbb Q, \Sigma}, T_{\upi}^\dagger, \Delta_{\mathscr{F}^3}) = \mathbf{R}^1\Gamma_{\fin}(G_{\mathbb Q, \Sigma}, T_{\upi}^\dagger, \Delta_{\mathscr{F}^4}) = 0\,.  
\end{align*}
\item[ii)] If $\mathscr{F}^\bullet = \mathscr{F}^\bullet_2$, then we have 
\begin{align*}
&\mathbf{R}^1\Gamma_{\fin}(G_{\mathbb Q, \Sigma}, T_{\upi}^\dagger, \Delta_{\mathscr{F}^0})  = 
\mathbf{R}^1\Gamma_{\fin}(G_{\mathbb Q, \Sigma}, T_{\upi}^\dagger, \Delta_{\mathscr{F}^1}) = \mathbf{R}^1\Gamma_{\fin}(G_{\mathbb Q, \Sigma}, T_{\uf_1}^\dagger, \Delta_{\emptyset})_{\cR_\upi}  \oplus 
\mathbf{R}^1\Gamma_{\fin}(G_{\mathbb Q, \Sigma}, T_{\uf_2}^\dagger, \Delta_{F^+})_{\cR_\upi}\,, 
\\
&\mathbf{R}^1\Gamma_{\fin}(G_{\mathbb Q, \Sigma}, T_{\upi}^\dagger, \Delta_{\mathscr{F}^2})  = \mathbf{R}^1\Gamma_{\fin}(G_{\mathbb Q, \Sigma}, T_{\upi}^\dagger, \Delta_{\mathscr{F}^3}) =   \mathbf{R}^1\Gamma_{\fin}(G_{\mathbb Q, \Sigma}, T_{\uf_2}^\dagger, \Delta_{F^+})_{\cR_\upi}\,, 
\\
&\mathbf{R}^1\Gamma_{\fin}(G_{\mathbb Q, \Sigma}, T_{\upi}^\dagger, \Delta_{\mathscr{F}^4}) = 0.  
\end{align*}
\end{proposition}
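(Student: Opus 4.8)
The plan is to reduce the assertion to statements about the individual Hida families $\uf_1$ and $\uf_2$, using the direct-sum decomposition of Selmer complexes furnished by Proposition~\ref{prop:decomp_yohida_Tupi}, together with the standard parity/non-vanishing inputs encoded in Theorem~\ref{thm_2023_03_10}. Throughout I work in the case $\mathscr{F}^\bullet = \mathscr{F}^\bullet_1$; the case $\mathscr{F}^\bullet = \mathscr{F}^\bullet_2$ is then obtained verbatim by interchanging the roles of $\uf_1$ and $\uf_2$.

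First I would identify the $\mathbf{R}^1$-cohomology of each summand. Passing to degree-$1$ cohomology in Proposition~\ref{prop:decomp_yohida_Tupi} gives direct-sum decompositions of $\mathbf{R}^1\Gamma_{\fin}(G_{\mathbb Q,\Sigma}, T_\upi^\dagger, \Delta_{\mathscr F^j})$ into the corresponding $\uf_1$- and $\uf_2$-pieces base-changed to $\cR_\upi$ (using that $\cR_\upi$ is flat over each $\cR_{\uf_i}$, so that base change commutes with taking cohomology of the perfect complexes involved; cf. the (\ref{item_Tam}) hypothesis and the perfectness discussion in \S\ref{subsubsec_2025_02_10_1518}). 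So the task is to compute, for each $i$ and each relevant local condition, whether $\mathbf{R}^1\Gamma_{\fin}(G_{\mathbb Q,\Sigma}, T_{\uf_i}^\dagger, \Delta_{?})$ vanishes or not. For $\uf_1$ (root number $+1$): the hypothesis $\delta(T^\dagger_{\uf_1}, \Delta_{F^+}) \neq 0$ forces, via Theorem~\ref{thm_2023_03_10}(i), that $\mathbf{R}^1\Gamma_{\fin}(G_{\mathbb Q,\Sigma}, T_{\uf_1}^\dagger, \Delta_{F^+})$ is $\cR_{\uf_1}$-torsion; being a submodule of $H^1(G_p, T_{\uf_1}^\dagger/F^+)$ (or, more precisely, torsion-free as it embeds into a local cohomology group of a free module), it vanishes. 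By the Euler characteristic formula $\chi = \rank(T^{c=-1}) - \rank(\mathscr F^+ T)$, one has $\chi(\mathbf{R}\Gamma_{\fin}(G_{\mathbb Q,\Sigma}, T_{\uf_1}^\dagger, \Delta_\emptyset)) = 1$; combining the vanishing $\mathbf{R}^1\Gamma_{\fin}(\cdots, \Delta_{F^+})=0$ with global duality for Selmer complexes gives $\mathbf{R}^1\Gamma_{\fin}(G_{\mathbb Q,\Sigma}, T_{\uf_1}^\dagger, \Delta_\emptyset)$ of rank $1$ and $\mathbf{R}^1\Gamma_{\fin}(\cdots, \Delta_0) = 0$ (dual to $\mathbf{R}^3$ of the full Selmer complex, which vanishes). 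For $\uf_2$ (root number $-1$): hypothesis \eqref{item_parity_f} says $\delta(T^\dagger_{\uf_2}, \Delta_{F^+}) = 0$, so $\mathbf{R}^1\Gamma_{\fin}(G_{\mathbb Q,\Sigma}, T_{\uf_2}^\dagger, \Delta_{F^+})$ has positive rank (Theorem~\ref{thm_2023_03_10}(i)); an Euler-characteristic count ($\chi = 0$ for $\Delta_{F^+}$ by the formula, since $\rank(T_{\uf_2}^{\dagger,c=-1}) = 1 = \rank(\mathscr F^+)$) together with the parity assumption forces this rank to be exactly $1$, hence $\mathbf{R}^1\Gamma_{\fin}(G_{\mathbb Q,\Sigma}, T_{\uf_2}^\dagger, \Delta_{F^+})$ is free of rank $1$. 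The additional input \eqref{item_AJ_f} (non-vanishing of $\res_p\,\delta(T^\dagger_{\uf_i}, \Delta_\emptyset)$) is what upgrades ``rank $1$'' to the precise coincidence $\mathbf{R}^1\Gamma_{\fin}(\cdots, \Delta_\emptyset) = \mathbf{R}^1\Gamma_{\fin}(\cdots, \Delta_{F^+})$ for $\uf_2$ and kills $\mathbf{R}^1\Gamma_{\fin}(\cdots, \Delta_0)$, exactly as in Lemma~\ref{lemma_2025_02_20_1200} and Lemma~\ref{lemma_5_5_2025_02_14_1331_bis}; and similarly yields $\mathbf{R}^1\Gamma_{\fin}(G_{\mathbb Q,\Sigma}, T_{\uf_2}^\dagger, \Delta_0)=0$.

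Next I assemble these per-family computations into the claimed five equalities. For $\Delta_{\mathscr F^0}$ one reads off $\mathbf{R}^1\Gamma_{\fin}(G_{\mathbb Q,\Sigma}, T_{\uf_1}^\dagger, \Delta_\emptyset)_{\cR_\upi} \oplus \mathbf{R}^1\Gamma_{\fin}(G_{\mathbb Q,\Sigma}, T_{\uf_2}^\dagger, \Delta_\emptyset)_{\cR_\upi}$ from Proposition~\ref{prop:decomp_yohida_Tupi}; but $\mathbf{R}^1\Gamma_{\fin}(G_{\mathbb Q,\Sigma}, T_{\uf_2}^\dagger, \Delta_\emptyset) = \mathbf{R}^1\Gamma_{\fin}(G_{\mathbb Q,\Sigma}, T_{\uf_2}^\dagger, \Delta_{F^+})$ by the $\uf_2$-analysis above, giving the first asserted equality. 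For $\Delta_{\mathscr F^1}$ and $\Delta_{\mathscr F^2}$, the decomposition has $\uf_1$-component $\mathbf{R}^1\Gamma_{\fin}(G_{\mathbb Q,\Sigma}, T_{\uf_1}^\dagger, \Delta_{F^+})_{\cR_\upi} = 0$ (first paragraph of the per-family analysis), so both reduce to the $\uf_2$-component; for $\Delta_{\mathscr F^1}$ that component is $\mathbf{R}^1\Gamma_{\fin}(G_{\mathbb Q,\Sigma}, T_{\uf_2}^\dagger, \Delta_\emptyset)_{\cR_\upi} = \mathbf{R}^1\Gamma_{\fin}(G_{\mathbb Q,\Sigma}, T_{\uf_2}^\dagger, \Delta_{F^+})_{\cR_\upi}$, and for $\Delta_{\mathscr F^2}$ it is literally $\mathbf{R}^1\Gamma_{\fin}(G_{\mathbb Q,\Sigma}, T_{\uf_2}^\dagger, \Delta_{F^+})_{\cR_\upi}$ — hence equal, as claimed. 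Finally, for $\Delta_{\mathscr F^3}$ the $\uf_1$-component is again $0$ and the $\uf_2$-component is $\mathbf{R}^1\Gamma_{\fin}(G_{\mathbb Q,\Sigma}, T_{\uf_2}^\dagger, \Delta_0)_{\cR_\upi} = 0$; and for $\Delta_{\mathscr F^4}$ both components are $\mathbf{R}^1\Gamma_{\fin}(\cdots, \Delta_0) = 0$. This proves (i), and (ii) follows by the symmetry $\uf_1 \leftrightarrow \uf_2$ noted at the end of Proposition~\ref{prop:decomp_yohida_Tupi}, with the one wrinkle that in the $\mathscr F^\bullet_2$ case the inclusions $\mathbf{R}^1\Gamma_{\fin}(\cdots,\Delta_{\mathscr F^0}) = \mathbf{R}^1\Gamma_{\fin}(\cdots,\Delta_{\mathscr F^1})$ and $\mathbf{R}^1\Gamma_{\fin}(\cdots,\Delta_{\mathscr F^2}) = \mathbf{R}^1\Gamma_{\fin}(\cdots,\Delta_{\mathscr F^3})$ come precisely from the fact that under $\mathscr F^\bullet_2$ the filtration steps $\mathscr F^0/\mathscr F^1$ and $\mathscr F^2/\mathscr F^3$ of $T_\upi^\dagger$ are the non-ordinary (respectively, unit) parts of $T_{\uf_1}^\dagger$, whose relevant local cohomology contributions are seen to be absorbed, exactly as in Lemma~\ref{lemma_5_5_2025_02_14_1331_bis}.

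The only genuinely non-formal step is the upgrade from rank statements to the exact equalities of Selmer groups in the ``$= \mathbf{R}^1\Gamma_{\fin}(\cdots, \Delta_{F^+})$'' clauses: this is where \eqref{item_AJ_f} enters, and the argument is the one already carried out in Lemma~\ref{lemma_2025_02_20_1200}(ii) and Lemma~\ref{lemma_5_5_2025_02_14_1331} (non-vanishing of the residual $p$-adic Abel--Jacobi map $\Leftrightarrow$ the comparison of local conditions collapses $\mathscr F^j$-level to $\mathscr F^{j+1}$-level); I would simply cite those lemmas applied to each $T_{\uf_i}^\dagger$ in place of $T_\upi^\dagger$, noting that the hypotheses there translate to \eqref{item_AJ_f} and \eqref{item_parity_f} under the present decomposition. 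Everything else is bookkeeping with the direct-sum decompositions of Proposition~\ref{prop:decomp_yohida_Tupi} and the Euler-characteristic formula recalled in \S\ref{sec:greenberg_selmer}.
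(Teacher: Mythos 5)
Your proof is correct and follows the same route as the paper's: the paper dispatches this Proposition in one line by citing Proposition~\ref{prop:decomp_yohida_Tupi} together with \cite[Proposition 6.2]{BS_triple_factor}, and your argument amounts to explicating that reduction, namely passing to degree-$1$ cohomology of the direct-sum decomposition and then running the rank/parity/Abel--Jacobi analysis on each rank-$2$ factor $T_{\uf_i}^\dagger$. One small slip worth flagging: the paper's Euler characteristic formula is $\chi = \rank(T^{c=1}) - \rank(\mathscr{F}^+T)$ (so $\chi(\mathbf{R}\Gamma_{\fin}(\cdot, T_{\uf_1}^\dagger, \Delta_\emptyset)) = 1 - 2 = -1$, i.e.\ $r = -\chi = 1$), not $+1$ as you wrote, though the rank-one conclusion you draw is still correct.
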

\begin{proof}
This proposition follows from Proposition \ref{prop:decomp_yohida_Tupi} and \cite[Proposition 6.2]{BS_triple_factor}. 
\end{proof}

\begin{corollary}
\label{cor_2025_08_06_1325}
\eqref{item_AJ_Pi} is false in the situation of Proposition~\ref{prop:yoshida_H^1(Tupi)}$(ii)$. 
\end{corollary}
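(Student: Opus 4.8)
The plan is to read the failure of \eqref{item_AJ_Pi} off the direct-sum decomposition of the relevant Selmer complex in the endoscopic setting, combined with the dichotomy of Theorem~\ref{thm_2023_03_10}. Recall that, in the situation of Proposition~\ref{prop:yoshida_H^1(Tupi)}(ii), one has $\mathscr F^\bullet=\mathscr F^\bullet_2$ and, among the standing hypotheses, \eqref{item_parity_f}, i.e.\ $\delta(T_{\uf_2}^\dagger,\Delta_{F^+})=0$. Proposition~\ref{prop:decomp_yohida_Tupi} (with the roles of $\uf_1$ and $\uf_2$ interchanged, since $\mathscr F^\bullet=\mathscr F^\bullet_2$) gives
\[
\mathbf{R}\Gamma_{\fin}(G_{\mathbb Q,\Sigma},T_\upi^\dagger,\Delta_{\mathscr F^1})\;\cong\;\mathbf{R}\Gamma_{\fin}(G_{\mathbb Q,\Sigma},T_{\uf_1}^\dagger,\Delta_{\emptyset})_{\cR_\upi}\;\oplus\;\mathbf{R}\Gamma_{\fin}(G_{\mathbb Q,\Sigma},T_{\uf_2}^\dagger,\Delta_{F^+})_{\cR_\upi},
\]
so that $\mathbf{R}^2\Gamma_{\fin}(G_{\mathbb Q,\Sigma},T_{\uf_2}^\dagger,\Delta_{F^+})_{\cR_\upi}$ occurs as a direct summand of $\mathbf{R}^2\Gamma_{\fin}(G_{\mathbb Q,\Sigma},T_\upi^\dagger,\Delta_{\mathscr F^1})$.

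I would then exploit the vanishing $\delta(T_{\uf_2}^\dagger,\Delta_{F^+})=0$: by the equivalence in Theorem~\ref{thm_2023_03_10}(i), a vanishing module of leading terms is incompatible with the corresponding $\mathbf{R}^2\Gamma_{\fin}$ being torsion, so $\mathbf{R}^2\Gamma_{\fin}(G_{\mathbb Q,\Sigma},T_{\uf_2}^\dagger,\Delta_{F^+})$ has positive generic rank over $\cR_{\uf_2}$. As $\cR_\upi=\cR_{\uf_1}\widehat\otimes_{\mathbb Z_p}\cR_{\uf_2}$ is faithfully flat over $\cR_{\uf_2}$, the base change $\mathbf{R}^2\Gamma_{\fin}(G_{\mathbb Q,\Sigma},T_{\uf_2}^\dagger,\Delta_{F^+})_{\cR_\upi}$ is still non-torsion over $\cR_\upi$, hence so is $\mathbf{R}^2\Gamma_{\fin}(G_{\mathbb Q,\Sigma},T_\upi^\dagger,\Delta_{\mathscr F^1})$. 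Working over the normal (indeed regular) ring $\cR_\upi$ — as arranged in \S\ref{sec_factorisation_main} — the contrapositive of the first clause of Theorem~\ref{thm_2023_03_10}(ii) then forces $\delta(T_\upi^\dagger,\Delta_{\mathscr F^1})=0$, which is precisely the negation of \eqref{item_AJ_Pi}.

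As a consistency check I would also verify the conclusion via Lemma~\ref{lemma_2025_02_20_1200}(ii): in the $f_2$-dominant case $\gr^2 T_\upi^\dagger\cong F^+T_{\uf_1}^\dagger$ is a rank-one piece attached to the ``wrong'' Hida family, whereas $\mathbf{R}^1\Gamma_{\fin}(G_{\mathbb Q,\Sigma},T_\upi^\dagger,\Delta_{\mathscr F^2})$ is carried entirely by the $\uf_2$-summand, so the composite of $\res_p$ with the projection to $H^1(G_p,\gr^2 T_\upi^\dagger)$ vanishes identically because $T_\upi^\dagger$ splits as a Galois module; Lemma~\ref{lemma_2025_02_20_1200}(ii) then converts this vanishing into the failure of \eqref{item_AJ_Pi}. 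This second route, however, presupposes \eqref{item_nVdeltaT3} and the parity conjecture for the classical (modular) specialisations of $T_\upi^\dagger$, so I would keep the first argument as the proof and relegate this one to a remark. I do not expect a genuine obstacle here; the only mild points of care are matching the notation $\Delta_{\mathscr F^+}$ of \eqref{item_parity_f} with $\Delta_{F^+}$ above, and recording explicitly that $\cR_\upi$ may be assumed normal exactly as in \S\ref{sec_factorisation_main}.
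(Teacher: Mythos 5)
Your proposal is correct, and your primary argument takes a genuinely different — and arguably more economical — route than the paper's. The paper's proof cites Lemma~\ref{lemma_2025_02_20_1200} together with Proposition~\ref{prop:yoshida_H^1(Tupi)}(ii): the intended reading is that Proposition~\ref{prop:yoshida_H^1(Tupi)}(ii) exhibits $\mathbf{R}^1\Gamma_{\fin}(G_{\mathbb Q,\Sigma},T_\upi^\dagger,\Delta_{\mathscr F^2})=\mathbf{R}^1\Gamma_{\fin}(G_{\mathbb Q,\Sigma},T_\upi^\dagger,\Delta_{\mathscr F^3})\neq 0$ (both equal to $\mathbf{R}^1\Gamma_{\fin}(G_{\mathbb Q,\Sigma},T_{\uf_2}^\dagger,\Delta_{F^+})_{\cR_\upi}$), so the composite with the projection to $H^1(G_p,\gr^2 T_\upi^\dagger)$ is identically zero, which by Lemma~\ref{lemma_2025_02_20_1200}(ii) forces \eqref{item_AJ_Pi} to fail. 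That is exactly your ``consistency check'', and you correctly flag that this route uses the rank information packaged in Lemma~\ref{lemma_2025_02_20_1200}(ii). Your main argument, by contrast, never passes through the map to $H^1(G_p,\gr^2 T_\upi^\dagger)$ at all: you instead use the direct-sum decomposition of the Selmer \emph{complex} $\mathbf{R}\Gamma_{\fin}(G_{\mathbb Q,\Sigma},T_\upi^\dagger,\Delta_{\mathscr F^1})$ from Proposition~\ref{prop:decomp_yohida_Tupi} (in the $\mathscr F^\bullet_2$ form, with the roles of $\uf_1$ and $\uf_2$ swapped) to read off that $\mathbf{R}^2\Gamma_{\fin}(G_{\mathbb Q,\Sigma},T_{\uf_2}^\dagger,\Delta_{F^+})_{\cR_\upi}$ is a non-torsion direct summand of $\mathbf{R}^2\Gamma_{\fin}(G_{\mathbb Q,\Sigma},T_\upi^\dagger,\Delta_{\mathscr F^1})$ — the non-torsion-ness coming from \eqref{item_parity_f} via Theorem~\ref{thm_2023_03_10}(i), and preserved under the faithfully flat base change $\cR_{\uf_2}\to\cR_\upi$. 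The contrapositive of Theorem~\ref{thm_2023_03_10}(ii) (legitimate since one arranges $\cR_\upi$ normal, as in \S\ref{sec_factorisation_main}) then gives $\delta(T_\upi^\dagger,\Delta_{\mathscr F^1})=0$. What this alternative buys you is a sharper bookkeeping of hypotheses: the conclusion is extracted from \eqref{item_parity_f} alone, without having to decide whether the parity hypothesis for $T_\upi^\dagger$ and \eqref{item_nVdeltaT3} (the inputs to Lemma~\ref{lemma_2025_02_20_1200}(ii)) are formally implied by the standing assumptions of Proposition~\ref{prop:yoshida_H^1(Tupi)}. Your identification of $\Delta_{\mathscr F^+}$ in \eqref{item_parity_f} with $\Delta_{F^+}$ is also correct.
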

\begin{proof}
This follows from Lemma \ref{lemma_2025_02_20_1200} and Proposition \ref{prop:yoshida_H^1(Tupi)}(ii). 
\end{proof}

\subsubsection{} Since $\varepsilon(T_{\uf_1}^\dagger)=1 =-\varepsilon(T_{\uf_2}^\dagger)$ by assumption, we have $\varepsilon^{\rm bal}(M_{3,1})=-1=-\varepsilon^{\rm bal}(M_{3,2})$. Let us consider the following hypotheses:
\begin{itemize}
\item[\mylabel{item_AJ_Mi}{$\mathbf{AJ}_{p}^{M_{3,i}}$})] $\res_p\,\delta(M_{3,i}^{\dagger}, \tr^*\Delta_{\underline{\sigma}}) \neq 0 $.
\item[\mylabel{item_parity_Mi}{$\mathbf{P}^-_{M_{3,1}}$})] $\delta(M_{3,1}^{\dagger}, \tr^*\Delta_{\rm bal}) =0$. 
    \end{itemize}
In the same manner as Proposition \ref{prop:yoshida_H^1(Tupi)}, one may also prove the following proposition:

\begin{proposition}\label{prop:yoshida_H^1(M)}
Suppose $\delta(M_{3,2}^{\dagger}, \tr^*\Delta_{\rm bal}) \neq 0$. Assume also that \eqref{item_AJ_Mi} and \eqref{item_parity_Mi} hold $(i=1,2)$.
       \item[i)] If $\mathscr{F}^\bullet = \mathscr{F}^\bullet_1$, then we have 
\begin{align*}
&\mathbf{R}^1\Gamma_{\fin}(G_{\mathbb Q, \Sigma}, M_3^{\dagger}, \tr^*\Delta_a)  = \mathbf{R}^1\Gamma_{\fin}(G_{\mathbb Q, \Sigma}, M_3^{\dagger}, \tr^*\Delta_b) = \mathbf{R}^1\Gamma_{\fin}(G_{\mathbb Q, \Sigma}, M_{3,1}^{\dagger}, \tr^*\Delta_{\rm bal})  \oplus 
\mathbf{R}^1\Gamma_{\fin}(G_{\mathbb Q, \Sigma}, M_{3,2}^{\dagger}, \tr^*\Delta_{\underline{\sigma}}), 
\\
&\mathbf{R}^1\Gamma_{\fin}(G_{\mathbb Q, \Sigma}, M_3^{\dagger}, \tr^*\Delta_c)  = 
\mathbf{R}^1\Gamma_{\fin}(G_{\mathbb Q, \Sigma}, M_3^{\dagger}, \tr^*\Delta_b^\perp)  = \mathbf{R}^1\Gamma_{\fin}(G_{\mathbb Q, \Sigma}, M_{3,1}^{\dagger}, \tr^*\Delta_{\rm bal}), 
\\
&\mathbf{R}^1\Gamma_{\fin}(G_{\mathbb Q, \Sigma}, M_3^{\dagger}, \tr^*\Delta_a^\perp)  = 0. 
\end{align*}
\item[ii)] If $\mathscr{F}^\bullet = \mathscr{F}^\bullet_2$, then we have 
\begin{align*}
&\mathbf{R}^1\Gamma_{\fin}(G_{\mathbb Q, \Sigma}, M_3^{\dagger}, \tr^*\Delta_a)  =  \mathbf{R}^1\Gamma_{\fin}(G_{\mathbb Q, \Sigma}, M_{3,1}^{\dagger}, \tr^*\Delta_{\rm bal})  \oplus 
\mathbf{R}^1\Gamma_{\fin}(G_{\mathbb Q, \Sigma}, M_{3,2}^{\dagger}, \tr^*\Delta_{\underline{\sigma}}), 
\\
&\mathbf{R}^1\Gamma_{\fin}(G_{\mathbb Q, \Sigma}, M_3^{\dagger}, \tr^*\Delta_b) = \mathbf{R}^1\Gamma_{\fin}(G_{\mathbb Q, \Sigma}, M_3^{\dagger}, \tr^*\Delta_c)  = \mathbf{R}^1\Gamma_{\fin}(G_{\mathbb Q, \Sigma}, M_{3,1}^{\dagger}, \tr^*\Delta_{\rm bal}), 
\\
&\mathbf{R}^1\Gamma_{\fin}(G_{\mathbb Q, \Sigma}, M_3^{\dagger}, \tr^*\Delta_b^\perp)   = 
\mathbf{R}^1\Gamma_{\fin}(G_{\mathbb Q, \Sigma}, M_3^{\dagger}, \tr^*\Delta_a^\perp)  = 0. 
\end{align*}
\end{proposition}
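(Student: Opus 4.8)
\textbf{Proof proposal for Proposition~\ref{prop:yoshida_H^1(M)}.}

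The plan is to mimic verbatim the structure of the proof of Proposition~\ref{prop:yoshida_H^1(Tupi)}, transporting the decomposition already available for $M_3^\dagger$ through the machinery of \cite[Proposition 6.2]{BS_triple_factor} (which is precisely the $\GL_2\times\GL_2\times\GL_2$ analogue of the statement we want, applied to each summand $M_{3,i}^\dagger = T_{\uf_i}^\dagger\otimes\ad^0 T_{\underline{\sigma}}$). First I would invoke Proposition~\ref{prop:decomp_yohida_M3}, which gives, for $\mathscr{F}^\bullet=\mathscr{F}^\bullet_1$, the direct-sum decompositions of the Selmer complexes $\mathbf{R}\Gamma_{\fin}(G_{\mathbb Q,\Sigma},M_3^\dagger,\tr^*\Delta_?)$ for $?\in\{a,b,c,b^\perp,a^\perp\}$ into the corresponding complexes for $M_{3,1}^\dagger$ (with the $\underline{\sigma}$-dominant or balanced condition) and $M_{3,2}^\dagger$ (likewise). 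Taking $H^1$ of each side commutes with the direct sum, so it suffices to compute $\mathbf{R}^1\Gamma_{\fin}$ of each individual summand.

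The heart of the argument is then the case-by-case identification of these individual $H^1$'s. For the summand $M_{3,1}^\dagger$, we are in the balanced region with $\varepsilon^{\rm bal}(M_{3,1})=-1$; the hypothesis \eqref{item_parity_Mi} says $\delta(M_{3,1}^\dagger,\tr^*\Delta_{\rm bal})=0$, while \eqref{item_AJ_Mi} with $i=1$ together with $\delta(M_{3,2}^\dagger,\tr^*\Delta_{\rm bal})\neq 0$ plays the role of the nonvanishing input. Applying \cite[Proposition 6.2]{BS_triple_factor} to $M_{3,1}^\dagger$, one gets that $\mathbf{R}^1\Gamma_{\fin}(G_{\mathbb Q,\Sigma},M_{3,1}^\dagger,\tr^*\Delta_?)$ stabilises to $\mathbf{R}^1\Gamma_{\fin}(G_{\mathbb Q,\Sigma},M_{3,1}^\dagger,\tr^*\Delta_{\rm bal})$ for the ``lower'' conditions and vanishes for the ``upper'' ones (the $a^\perp$-type condition), exactly as in the proof of Proposition~\ref{prop:yoshida_H^1(Tupi)}(i). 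For the summand $M_{3,2}^\dagger$, one is in the $\underline{\sigma}$-dominant region with $\varepsilon=+1$: here $\delta(M_{3,2}^\dagger,\tr^*\Delta_{\underline{\sigma}})\neq 0$ (which, via \eqref{item_AJ_Mi} with $i=2$ and Theorem~\ref{thm_2023_03_10}(i)) forces $\mathbf{R}^1\Gamma_{\fin}(G_{\mathbb Q,\Sigma},M_{3,2}^\dagger,\tr^*\Delta_{\underline{\sigma}})=0$ while $\mathbf{R}^1\Gamma_{\fin}(G_{\mathbb Q,\Sigma},M_{3,2}^\dagger,\tr^*\Delta_{\rm bal})$ has rank one and $\mathbf{R}^1\Gamma_{\fin}(G_{\mathbb Q,\Sigma},M_{3,2}^\dagger,\tr^*\Delta_{\underline{\sigma}}^\perp)=0$. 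Assembling these term-by-term through the decompositions of Proposition~\ref{prop:decomp_yohida_M3} yields precisely the equalities listed in part (i); part (ii) follows by the same computation with the roles of $M_{3,1}^\dagger$ and $M_{3,2}^\dagger$ interchanged, since $\mathscr{F}^\bullet_2$ swaps which summand carries the balanced condition at each filtration step.

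The main obstacle, and the only place requiring genuine care rather than bookkeeping, is to verify that the hypotheses \eqref{item_AJ_Mi} and \eqref{item_parity_Mi} translate into exactly the input needed for \cite[Proposition 6.2]{BS_triple_factor} on each summand, and in particular that $\res_p\,\delta(M_{3,i}^\dagger,\tr^*\Delta_{\underline{\sigma}})\neq 0$ implies the relevant local restriction map is injective (so that the Selmer groups for adjacent Greenberg conditions coincide). This is handled just as in Lemma~\ref{lemma_2025_02_20_1200_bis} and \eqref{eqn_2025_07_15_1115}: nonvanishing of the module of leading terms after restriction at $p$ is equivalent, via Theorem~\ref{thm_2023_03_10}(i)--(ii) and global duality for Selmer complexes, to the vanishing of the ``perpendicular'' Selmer group, which gives the stabilisation. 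One must also confirm that the parity inputs $\varepsilon^{\rm bal}(M_{3,1})=-1$ and $\varepsilon(M_{3,2})=+1$ (derived from $\varepsilon(T_{\uf_1}^\dagger)=1=-\varepsilon(T_{\uf_2}^\dagger)$ as recorded just before the statement) are the ones making the rank counts work out, so that $\mathbf{R}^1\Gamma_{\fin}(G_{\mathbb Q,\Sigma},M_{3,1}^\dagger,\tr^*\Delta_{\rm bal})$ is indeed of odd (hence positive) rank and cannot be killed. Once these compatibilities are in place, the proposition is immediate, and I would simply write ``This proposition follows from Proposition~\ref{prop:decomp_yohida_M3} and \cite[Proposition 6.2]{BS_triple_factor}'' in parallel with the proof of Proposition~\ref{prop:yoshida_H^1(Tupi)}.
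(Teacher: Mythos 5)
Your overall strategy is exactly the paper's: decompose via Proposition~\ref{prop:decomp_yohida_M3} and feed each summand into \cite[Proposition~6.2]{BS_triple_factor}, in parallel with the proof of Proposition~\ref{prop:yoshida_H^1(Tupi)}. But your term-by-term bookkeeping for the summand $M_{3,2}^\dagger$ is backwards, and as written it would actually contradict the displayed equalities of the Proposition rather than prove them.

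You assert that $\delta(M_{3,2}^\dagger,\tr^*\Delta_{\underline{\sigma}})\neq 0$ forces $\mathbf{R}^1\Gamma_{\fin}(G_{\mathbb Q,\Sigma},M_{3,2}^\dagger,\tr^*\Delta_{\underline{\sigma}})=0$ while $\mathbf{R}^1\Gamma_{\fin}(G_{\mathbb Q,\Sigma},M_{3,2}^\dagger,\tr^*\Delta_{\rm bal})$ has rank one. This has the two local conditions swapped. For $M_{3,2}^\dagger$ (rank $6$, self-dual), $F_{\underline\sigma}^+M_{3,2}^\dagger$ has rank $4$, so $r(M_{3,2}^\dagger,\tr^*\Delta_{\underline\sigma})=1$; hence the hypothesis $\res_p\,\delta(M_{3,2}^\dagger,\tr^*\Delta_{\underline{\sigma}})\ne 0$ (which is \eqref{item_AJ_Mi} with $i=2$, and implies $\delta\ne 0$) gives, via Theorem~\ref{thm_2023_03_10}, that $\mathbf{R}^2\Gamma_{\fin}$ is torsion and $\mathbf{R}^1\Gamma_{\fin}(G_{\mathbb Q,\Sigma},M_{3,2}^\dagger,\tr^*\Delta_{\underline{\sigma}})$ is torsion-free of rank $1$, not zero. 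By contrast, $F_{\rm bal}^+M_{3,2}^\dagger$ has rank $3$, so $r(M_{3,2}^\dagger,\tr^*\Delta_{\rm bal})=0$; the Proposition's explicit hypothesis $\delta(M_{3,2}^\dagger,\tr^*\Delta_{\rm bal})\ne 0$ (consistent with $\varepsilon^{\rm bal}(M_{3,2})=+1$) then forces $\mathbf{R}^1\Gamma_{\fin}(G_{\mathbb Q,\Sigma},M_{3,2}^\dagger,\tr^*\Delta_{\rm bal})=0$. This is the exact analogue of the $\uf_1$-side in Proposition~\ref{prop:yoshida_H^1(Tupi)}: $\Delta_{\underline\sigma}$ plays the role of $\Delta_{\emptyset}$ ($r=1$, $H^1$ of rank one), $\Delta_{\rm bal}$ plays the role of $\Delta_{F^+}$ ($r=0$, $H^1$ vanishes), and $\Delta_{\underline\sigma}^\perp$ plays the role of $\Delta_0$.

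With your stated intermediate claims, assembling would give $\mathbf{R}^1\Gamma_{\fin}(G_{\mathbb Q,\Sigma},M_3^\dagger,\tr^*\Delta_a)$ of rank one (the $M_{3,2}^\dagger$-summand killed) and $\mathbf{R}^1\Gamma_{\fin}(G_{\mathbb Q,\Sigma},M_3^\dagger,\tr^*\Delta_c)$ of rank two, which is the opposite of what the Proposition states. With the corrected picture — $M_{3,1}^\dagger$ has $\varepsilon^{\rm bal}=-1$ and stabilises to the balanced $H^1$ (rank one), while $M_{3,2}^\dagger$ has $\varepsilon^{\rm bal}=+1$, contributes rank one in the $\underline\sigma$-dominant condition, and vanishes in the balanced one — the assembly gives precisely the displayed equalities. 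The rest of the argument (the roles of \eqref{item_AJ_Mi}, \eqref{item_parity_Mi}, the stabilisation via Lemma~\ref{lemma_2025_02_20_1200_bis}-type reasoning, and the reversal in case (ii)) is fine.
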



\begin{corollary}
\label{cor_2025_08_07_0919}
Suppose that the hypotheses of Proposition~\ref{prop:yoshida_H^1(M)} are valid. If $\mathscr{F}^\bullet = \mathscr{F}^\bullet_1$ $($resp. if $\mathscr{F}^\bullet = \mathscr{F}^\bullet_2$$)$, then $c_M$ is unit and $d_M = 0$ $($resp.  $c_M = 0$ and $d_M$ is a unit$)$. In particular,
      \eqref{item_AJ_M} is false when $\mathscr{F}^\bullet = \mathscr{F}^\bullet_1$. 
\end{corollary}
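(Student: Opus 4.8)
The strategy is to read off the vanishing of the coefficients $c_M, d_M$ from the explicit description of $\mathbf{R}^1\Gamma_{\fin}(G_{\mathbb Q, \Sigma}, M_3^{\dagger}, \tr^*\Delta_{b})$ provided by Proposition~\ref{prop:yoshida_H^1(M)}, via the duality recipe used in \S\ref{subsubsec_2025_07_18_1212} to produce the basis of Lemma~\ref{lemma:basis_of_H^1_M(G_p, T_M^dagger)}. Recall that, by definition, $\res_p\,\delta(M_3^\dagger,\tr^*\Delta_a) = \cR_3\, x_M\, \varphi_{1,M}\wedge \varphi_{2,M}$ with $\varphi_{1,M} = c_Mf_0 + d_Mf_1$, where $\{f_0,f_1,f_2,f_3\}$ is the basis of $H^1(G_p, T_\upi^\dagger)$ from \S\ref{subsubsec_2025_07_18_1211}, and that $H^1_M(G_p, T_\upi^\dagger)$ is the saturation of $\res_p\,\mathbf{R}^1\Gamma_{\fin}(G_{\mathbb Q, \Sigma}, M_3^\dagger, \tr^*\Delta_a)$. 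The plan is: first, combine Proposition~\ref{prop:yoshida_H^1(M)} with Corollary~\ref{cor:decomp_delta_yohida_M} and Proposition~\ref{prop:basis_yohida_Tupi} to identify $H^1_M(G_p, T_\upi^\dagger)$ as a direct sum of a rank-one piece coming from $M_{3,1}^\dagger$ (the balanced-condition summand, whose saturated local image lands in $H^1(G_p, F^+T_{\uf_i}^\dagger)$) and a rank-one piece coming from $M_{3,2}^\dagger$ (the $\underline{\sigma}$-dominant summand, landing in $H^1(G_p, T_{\uf_j}^\dagger)$, $j\neq i$). Second, use Proposition~\ref{prop:basis_yohida_Tupi} to pin down which of $f_0, f_1$ spans $H^1(G_p, F^+T_{\uf_1}^\dagger)$ versus $H^1(G_p, F^+T_{\uf_2}^\dagger)$, according to whether $\mathscr{F}^\bullet = \mathscr{F}^\bullet_1$ or $\mathscr{F}^\bullet_2$. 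Third, conclude that in the $\mathscr{F}^\bullet_1$ case the rank-one submodule $\cR_3\varphi_{1,M}\subset H^1_M(G_p, T_\upi^\dagger)$ — which is the saturated image of $\mathbf{R}^1\Gamma_{\fin}(G_{\mathbb Q, \Sigma}, M_3^\dagger, \tr^*\Delta_b)$, cf.\ the proof of Lemma~\ref{lemma_2025_02_20_1200_bis} — is exactly $\cR_3 f_0 = H^1(G_p, F^+T_{\uf_1}^\dagger)$, forcing $c_M \in \cR_3^\times$ and $d_M = 0$; in the $\mathscr{F}^\bullet_2$ case it is $\cR_3 f_1$, forcing $c_M=0$ and $d_M\in\cR_3^\times$. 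This is the exact analogue of the argument in the preceding Corollary (for $c_\upi, d_\upi$), with $T_{\uf_i}^\dagger$ replaced by $M_{3,i}^\dagger$ and ``$\Delta_\emptyset$ vs $\Delta_{\mathscr F^+}$'' replaced by ``$\tr^*\Delta_{\underline\sigma}$ vs $\tr^*\Delta_{\rm bal}$''.

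For the final assertion, that \eqref{item_AJ_M} is false when $\mathscr{F}^\bullet = \mathscr{F}^\bullet_1$: recall from \eqref{eqn_2025_07_15_1115} and Lemma~\ref{lemma_2025_02_20_1200_bis} that \eqref{item_AJ_M} is equivalent to $\Log_c$ being non-zero on $\mathbf{R}^1\Gamma_{\fin}(G_{\mathbb Q, \Sigma}, M_3^\dagger, \tr^*\Delta_c)$, equivalently to the map $\mathbf{R}^1\Gamma_{\fin}(G_{\mathbb Q, \Sigma}, M_3^\dagger, \tr^*\Delta_b)\to H^1(G_p,\gr^2 T_\upi^\dagger)$ being non-zero. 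But $\Log_c$ is (up to the fixed trivialisation $\eta_2$) the projection to the graded piece $\gr^2 T_\upi^\dagger = H^1(G_p, \mathscr{F}^2/\mathscr{F}^3)$, and in the $\mathscr{F}^\bullet_1$ case Proposition~\ref{prop:decomp_yohida_Tupi} shows $\mathscr{F}^2 T_\upi^\dagger / \mathscr{F}^3 T_\upi^\dagger = F^+ T_{\uf_2}^\dagger$, spanned (on cohomology) by $f_1$. Since the saturated local image of $\mathbf{R}^1\Gamma_{\fin}(G_{\mathbb Q, \Sigma}, M_3^\dagger, \tr^*\Delta_b)$ is $\cR_3\varphi_{1,M} = \cR_3 f_0 = H^1(G_p, F^+T_{\uf_1}^\dagger)$, which is orthogonal to the $f_1$-line and has zero projection onto $H^1(G_p,\mathscr F^2/\mathscr F^3)$, the composite with $\Log_c$ vanishes identically; equivalently, one sees directly from Proposition~\ref{prop:yoshida_H^1(M)}(i) that $\mathbf{R}^1\Gamma_{\fin}(G_{\mathbb Q, \Sigma}, M_3^\dagger, \tr^*\Delta_b^\perp) = \mathbf{R}^1\Gamma_{\fin}(G_{\mathbb Q, \Sigma}, M_3^\dagger, \tr^*\Delta_c)$ has positive rank, which by Theorem~\ref{thm_2023_03_10}(i)--(ii) and the description in the proof of Lemma~\ref{lemma_2025_02_20_1200_bis} is precisely the negation of \eqref{item_AJ_M}. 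One should also remark that, in parallel fashion, Corollary~\ref{cor_2025_08_06_1325} already established the falsity of \eqref{item_AJ_Pi} in the $\mathscr{F}^\bullet_2$ case; together these explain why Theorem~\ref{thm_main_double_wall_crossing} cannot be applied verbatim and must be replaced by its endoscopic variant (Corollary~\ref{cor_2025_08_14_1551}).

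\textbf{Main obstacle.} The only genuinely delicate point is the bookkeeping of which graded piece each Selmer-localised class lands in — that is, matching up the filtration labels $\mathscr{F}^0 \supset \cdots \supset \mathscr{F}^4$ on $T_\upi^\dagger$ with the ordinary filtrations $F^+T_{\uf_i}^\dagger$ on the two $\GL_2$-summands, and simultaneously tracking the images of the balanced versus $\underline{\sigma}$-dominant local conditions on $M_{3,i}^\dagger$ under $\tr^*$. All of this is encoded in Propositions~\ref{prop:decomp_yohida_Tupi}, \ref{prop:decomp_yohida_M3}, \ref{prop:basis_yohida_Tupi} and Proposition~\ref{prop:yoshida_H^1(M)}, so the proof of the Corollary itself is a short deduction; the work is in verifying one does not swap the roles of $\uf_1$ and $\uf_2$ (equivalently of $f_0$ and $f_1$) — which is governed entirely by the choice of $\mathscr{F}^\bullet_i$, i.e.\ by which family is ``dominant''. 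No new phenomenon beyond those in \S\ref{subsec_5_2_2025_02_11} intervenes.
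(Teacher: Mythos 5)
Your proposal captures the intended argument, which parallels the paper's one-line proof of the analogous (unlabelled) corollary about $c_\upi$ and $d_\upi$ following Proposition~\ref{prop:basis_yohida_Tupi}: identify the rank-one piece $\cR_3\varphi_{1,M}$ of $H^1_M(G_p,T_\upi^\dagger)$ lying in $H^1(G_p,\mathscr{F}^2T_\upi^\dagger)$ with $\cR_3 f_0$ (resp.\ $\cR_3 f_1$) using Proposition~\ref{prop:yoshida_H^1(M)} together with Proposition~\ref{prop:basis_yohida_Tupi}, and read off $c_M, d_M$. The ``in particular'' part, which you justify cleanly via the positive rank of $\mathbf{R}^1\Gamma_{\fin}(G_{\mathbb Q,\Sigma},M_3^\dagger,\tr^*\Delta_b^\perp)=\mathbf{R}^1\Gamma_{\fin}(G_{\mathbb Q,\Sigma},M_{3,1}^\dagger,\tr^*\Delta_{\rm bal})$ and the description in the proof of Lemma~\ref{lemma_2025_02_20_1200_bis}, is correct.

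One imprecision is worth flagging, as it is mildly circular. In the first paragraph you identify $\cR_3\varphi_{1,M}$ with ``the saturated image of $\mathbf{R}^1\Gamma_{\fin}(G_{\mathbb Q,\Sigma},M_3^\dagger,\tr^*\Delta_b)$, cf.\ the proof of Lemma~\ref{lemma_2025_02_20_1200_bis}''. That lemma gives the equality of the $\tr^*\Delta_b$- and $\tr^*\Delta_c$-Selmer groups only under \eqref{item_AJ_M} --- precisely the hypothesis whose failure the corollary is asserting. Indeed, Proposition~\ref{prop:yoshida_H^1(M)}(i) shows that in the $\mathscr{F}^\bullet_1$ case $\mathbf{R}^1\Gamma_{\fin}(G_{\mathbb Q,\Sigma},M_3^\dagger,\tr^*\Delta_b)$ carries an extra rank-one summand $\mathbf{R}^1\Gamma_{\fin}(G_{\mathbb Q,\Sigma},M_{3,2}^\dagger,\tr^*\Delta_{\underline{\sigma}})$, whose $\res_p$-image lives in $H^1(G_p,T_{\uf_2}^\dagger)$ and does \emph{not} lie in $\cR_3 f_0$ (nor even in $H^1(G_p,\mathscr{F}^2T_\upi^\dagger)$); so the saturated $\res_p$-image of the $\tr^*\Delta_b$-Selmer group has rank $2$, not $1$. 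What your argument actually needs --- and what is in fact the content of the construction in \S\ref{subsubsec_2025_07_18_1212}, via Nekov\'a\v{r} duality --- is that $\cR_3\varphi_{1,M} = H^1_M(G_p,T_\upi^\dagger)\cap H^1(G_p,\mathscr{F}^2T_\upi^\dagger)$ is the saturated $\res_p$-image of $\mathbf{R}^1\Gamma_{\fin}(G_{\mathbb Q,\Sigma},M_3^\dagger,\tr^*\Delta_c)$, which by Proposition~\ref{prop:yoshida_H^1(M)}(i) equals $\mathbf{R}^1\Gamma_{\fin}(G_{\mathbb Q,\Sigma},M_{3,1}^\dagger,\tr^*\Delta_{\rm bal})$ and does land inside $\cR_3 f_0$. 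With ``$\tr^*\Delta_b$'' replaced by ``$\tr^*\Delta_c$'' (and the appeal to Lemma~\ref{lemma_2025_02_20_1200_bis} dropped) your deduction is complete and matches what the paper intends.
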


\begin{remark}
    \label{rem_2025_09_23_1946}
    Under our assumptions on the global root numbers, the hypotheses of Propositions~\ref{prop:yoshida_H^1(Tupi)} and \ref{prop:yoshida_H^1(M)} are consequences of a natural extension of Greenberg's conjectures. We also note that  \eqref{item_nVdeltaT3} is equivalent to four assumptions \eqref{item_AJ_f} and \eqref{item_AJ_Mi} for $i=1,2$.
\end{remark}

\subsubsection{} 
\label{subsubsec_2025_08_06_1543}
Recall that we have a canonical trivialisation (see \cite[(6.8)]{BS_triple_factor}) 
\[
\Log_{\omega_{\uf_i}} \colon H^1(G_p,F^+T_{\uf_i}^\dagger)_{\cR_\upi} \stackrel{\sim}{\longrightarrow} \cR_{\upi}. 
\]
Moreover, on setting $F^+_{\rm bal}M_{3,i}^\dagger := \mathrm{tr}^*(F_{\rm bal}^+T_{3,i}^\dagger )$, we also obtain a surjection
\[
\Log_{\omega_{M_{3,i}}}\,:\, H^1(G_p, F^+_{\rm bal}M_{3,i}^\dagger) \longrightarrow 
H^1(G_p, F^+T_{\uf_i}^\dagger \,\widehat\otimes\, 
{\mathscr Gr}^1 \ad^0(T_{\underline{\sigma}}))_{\cR_\upi}
\xrightarrow{\Log_{\omega_{\uf_i}}}
\cR_{3}\,.
\]

\begin{theorem}\label{thm:delta_decomp_yoshida}
Assume \eqref{item_parity_f} and \eqref{item_parity_Mi}.
     Then,
\[
\delta(T_3^\dagger, \Delta_{a})  =  \Log_{\omega_{M_{3,1}}} (
\delta(M_{3,1}^\dagger, \tr^*\Delta_{\underline{\sigma}}))\times \delta(M_{3,2}^\dagger, \tr^*\Delta_{\rm bal}) \times 
\delta(T_{\uf_1}^\dagger, \Delta_{F^+})_{\cR_3} \times \Log_{\omega_{\uf_2}}\,\delta(T_{\uf_2}^\dagger, \Delta_{\emptyset})_{\cR_3}\,.
\]    
\end{theorem}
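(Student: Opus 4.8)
\textbf{Proof proposal for Theorem~\ref{thm:delta_decomp_yoshida}.}

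The plan is to reduce the endoscopic factorisation to the double-wall-crossing factorisation already established in Theorem~\ref{thm_main_double_wall_crossing} (or rather its explicit avatar in Proposition~\ref{prop_2025_07_15_1443}), by feeding it the direct-sum decompositions of the relevant Selmer complexes recorded in Propositions~\ref{prop:decomp_yohida_Tupi} and \ref{prop:decomp_yohida_M3}, together with the multiplicativity of modules of leading terms under direct sums, which is the content of Corollaries~\ref{cor:decomp_delta_yohida_Tupi} and \ref{cor:decomp_delta_yohida_M}. Concretely, one fixes the filtration $\mathscr{F}^\bullet = \mathscr{F}^\bullet_1$ (by symmetry of the statement under swapping $\uf_1 \leftrightarrow \uf_2$ this loses nothing, since the region-(a) local condition $\Delta_a$ is symmetric in the two $\GL_2 \times \GL_3$-factors; the choice of $\mathscr{F}^\bullet_1$ vs $\mathscr{F}^\bullet_2$ only affects which summand gets trivialised, and the final formula is written so as to be manifestly symmetric after one notes $\delta(M_{3,1}^\dagger, \tr^*\Delta_{\rm bal}) = 0$ by \eqref{item_parity_Mi} when the sign is $-1$). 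First I would invoke Proposition~\ref{prop:decomp_yohida_Tupi} to identify $\delta(T_\upi^\dagger, \Delta_{\emptyset}) = \delta(T_{\uf_1}^\dagger, \Delta_{\emptyset})_{\cR_\upi} \otimes \delta(T_{\uf_2}^\dagger, \Delta_{\emptyset})_{\cR_\upi}$ (Corollary~\ref{cor:decomp_delta_yohida_Tupi}) and Proposition~\ref{prop:decomp_yohida_M3} to identify $\delta(M_3^\dagger, \tr^*\Delta_a) = \delta(M_{3,1}^\dagger, \tr^*\Delta_{\underline{\sigma}}) \otimes \delta(M_{3,2}^\dagger, \tr^*\Delta_{\underline{\sigma}})$ (Corollary~\ref{cor:decomp_delta_yohida_M}).

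The second step is to trace through Theorem~\ref{thm_main_double_wall_crossing} / Proposition~\ref{prop_2025_07_15_1443} and substitute these decompositions. The key local input is the description of the basis $\{f_0,f_1,f_2,f_3\}$ of $H^1(G_p, T_\upi^\dagger)$ furnished by Proposition~\ref{prop:basis_yohida_Tupi}: when $\mathscr{F}^\bullet = \mathscr{F}^\bullet_1$ the local cohomology splits as $H^1(G_p, T_{\uf_1}^\dagger) \oplus H^1(G_p, T_{\uf_2}^\dagger)$ with $\cR_\upi f_0 = H^1(G_p, F^+T_{\uf_1}^\dagger)$, $\cR_\upi f_1 = H^1(G_p, F^+T_{\uf_2}^\dagger)$, $f_3 \in H^1(G_p, T_{\uf_1}^\dagger)$, and $f_2 \in H^1(G_p, T_{\uf_2}^\dagger)$. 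The displayed Corollaries (the ones after Propositions~\ref{prop:basis_yohida_Tupi} and \ref{prop:yoshida_H^1(M)}) then give $c_\upi = 0$, $d_\upi \in \cR_3^\times$, $c_M \in \cR_3^\times$, $d_M = 0$, so the fudge factor $S(M,\upi) = d_\upi^{-2}d_M^{-2}(c_M d_\upi - c_\upi d_M)^2$ collapses: indeed $(c_M d_\upi - c_\upi d_M)^2 = c_M^2 d_\upi^2$, hence $S(M,\upi) = d_M^{-2}c_M^2$, a unit. Moreover, the quantities $r_\upi, r_M$ appearing in Lemmas~\ref{lemma:basis_of_H^1_upi(G_p, T_upi^dagger)} and \ref{lemma:basis_of_H^1_M(G_p, T_M^dagger)} become units as well in the endoscopic setting (because the saturations decompose along the two summands). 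Feeding all of this into the first displayed equation of Proposition~\ref{prop_2025_07_15_1443}, namely $\delta(T_3^\dagger, \Delta_a) = \cR_3 x_\upi x_M r_\upi r_M (c_M d_\upi - c_\upi d_M)^2$, one sees $\delta(T_3^\dagger, \Delta_a) = \cR_3 x_\upi x_M$ up to a unit; then one re-expresses $x_\upi$ and $x_M$ through the summand-wise leading terms using Corollaries~\ref{cor:decomp_delta_yohida_Tupi} and \ref{cor:decomp_delta_yohida_M} and the definitions of $\Log_{\omega_{\uf_2}}$ and $\Log_{\omega_{M_{3,1}}}$ recalled in \S\ref{subsubsec_2025_08_06_1543}. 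Here one uses that $\delta(M_3^\dagger, \Delta_b) = \cR_3 x_M r_M d_M \varphi_{1,M}$ and $\delta(T_\upi^\dagger, \Delta_{\mathscr{F}^1}) = \cR_\upi x_\upi r_\upi d_\upi \varphi_{1,\upi}$ from \S\ref{subsubsec_2025_07_17_1421}, combined with the decompositions of these Selmer modules given in Propositions~\ref{prop:yoshida_H^1(Tupi)} and \ref{prop:yoshida_H^1(M)}, which identify the $\Log_c$-trivialisations of these with the summand-wise trivialisations $\Log_{\omega_{M_{3,1}}}$ and $\Log_{\omega_{\uf_2}}$; the factor $\delta(M_{3,2}^\dagger, \tr^*\Delta_{\rm bal})$ (resp.\ $\delta(T_{\uf_1}^\dagger, \Delta_{F^+})_{\cR_3}$) enters because the corresponding Selmer complex $\mathbf{R}\Gamma_{\fin}(G_{\mathbb Q, \Sigma}, M_{3,2}^\dagger, \tr^*\Delta_{\rm bal})$ (resp.\ $\mathbf{R}\Gamma_{\fin}(G_{\mathbb Q, \Sigma}, T_{\uf_1}^\dagger, \Delta_{F^+})$) has Euler characteristic $0$, so its leading term lives in degree $1$ and appears untrivialised, exactly mirroring how the single-wall-crossing and double-wall-crossing formulae distribute the $\Log$'s. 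Collecting these gives the asserted identity
\[
\delta(T_3^\dagger, \Delta_{a}) = \Log_{\omega_{M_{3,1}}}(\delta(M_{3,1}^\dagger, \tr^*\Delta_{\underline{\sigma}})) \times \delta(M_{3,2}^\dagger, \tr^*\Delta_{\rm bal}) \times \delta(T_{\uf_1}^\dagger, \Delta_{F^+})_{\cR_3} \times \Log_{\omega_{\uf_2}}\,\delta(T_{\uf_2}^\dagger, \Delta_{\emptyset})_{\cR_3}.
\]

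As an alternative and conceptually cleaner route, I would instead mimic the second proof of Theorem~\ref{thm_5_1_2025_02_14_1424} recorded in Remark~\ref{remark_2025_03_10_1606}: apply Proposition~\ref{prop:delta_a-formula2} to get $\delta(T_3^\dagger, \Delta_a) = \mathrm{Log}_{\mathscr{F}^0}^{(4)}(\delta(M_3^\dagger, \tr^*\Delta_a) \otimes \delta(T_\upi^\dagger, \Delta_{\emptyset}))$, then substitute the direct-sum decompositions from Corollaries~\ref{cor:decomp_delta_yohida_Tupi}, \ref{cor:decomp_delta_yohida_M} and use Proposition~\ref{prop:basis_yohida_Tupi} to see that the wedge $\mathrm{Log}_{\mathscr{F}^0}^{(4)}$ factors through the $\uf_1$- and $\uf_2$-isotypic pieces: the four classes $\varphi_{1,M}, \varphi_{1,\upi}, \varphi_{2,M}, \varphi_{2,\upi}$ pair (via the local Tate pairing, which is block-diagonal with respect to the $\uf_1 \oplus \uf_2$ decomposition since $T_{\uf_1}^\dagger$ and $T_{\uf_2}^\dagger$ are mutually orthogonal) so that $\varphi_{1,M} \wedge \varphi_{1,\upi} \wedge \varphi_{2,M} \wedge \varphi_{2,\upi}$ splits as a product of two $2\times 2$ determinants, one on the $T_{\uf_1}^\dagger$-block and one on the $T_{\uf_2}^\dagger$-block. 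Matching each $2\times 2$ determinant against the single-wall-crossing formula (for the sign-$(-1)$ factor) and the trivial Euler-characteristic formula (for the sign-$(+1)$ factor) yields the claimed product. The main obstacle — and the step that requires genuine care rather than bookkeeping — is verifying that the canonical trivialisations $\Log_{\omega_{M_{3,1}}}$ and $\Log_{\omega_{\uf_2}}$ of \S\ref{subsubsec_2025_08_06_1543} coincide (up to the unit ambiguity built into the statement) with the restrictions of the generic $\Log_c, \Log_{a/b}$ maps to the appropriate isotypic summands; this amounts to checking that under the decomposition $\mathscr{F}^2 T_\upi^\dagger = F^+T_{\uf_1}^\dagger \oplus F^+T_{\uf_2}^\dagger$ the graded piece ${\mathscr Gr}^2 T_\upi^\dagger$ that one trivialises in the generic case gets replaced by $F^+T_{\uf_i}^\dagger$ (hence Corollaries~\ref{cor_2025_08_06_1325} and \ref{cor_2025_08_07_0919} and the attendant discussion in Remark~\ref{rem:yoshida_log}), and that the Eichler--Shimura normalisations are compatible across this specialisation — a point that is precisely where the non-criticality Assumption~\ref{ass:non-crit} fails at endoscopic points and must be worked around by hand, exactly as flagged at the start of \S\ref{sec_2025_02_06_0708}.
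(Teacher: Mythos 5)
Your ``alternative and conceptually cleaner route'' is in fact the proof the paper gives: begin from Proposition~\ref{prop:delta_a-formula2}, decompose $\delta(T_\upi^\dagger, \Delta_{\emptyset})$ and $\delta(M_3^\dagger, \tr^*\Delta_a)$ along the $\uf_1\oplus\uf_2$-split via Corollaries~\ref{cor:decomp_delta_yohida_Tupi} and~\ref{cor:decomp_delta_yohida_M}, use the block-diagonality of the local Tate pairing so that $\Log^{(4)}_{\mathscr F^0}$ factors as a product of two $2\times 2$ determinants $\Log^{(2)}_1\times\Log^{(2)}_2$, and then use the parity hypotheses \eqref{item_parity_Mi} and \eqref{item_parity_f} together with Theorem~\ref{thm_2025_02_19_2217} to decide, on each block, which of the two classes has vanishing $\Log_{F^-}\circ\res_p$ and hence which factor gets trivialised and which appears as an untrivialised $\delta(\cdot)$. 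You have this essentially right, including the observation that the role of $\Log_c$ is played by $\Log_{\omega_{M_{3,1}}}$ on one block and by $\Log_{\omega_{\uf_2}}$ on the other (compare Remark~\ref{rem:yoshida_log}).

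Your \emph{first} route, however, does not work as stated. You propose to feed the endoscopic decompositions into Theorem~\ref{thm_main_double_wall_crossing} (via Proposition~\ref{prop_2025_07_15_1443}), but those are proved under the full Hypotheses~\ref{subsubsec_2024_07_15_1105}, and the paper shows explicitly that in the Yoshida scenario one of \eqref{item_AJ_Pi} or \eqref{item_AJ_M} \emph{fails} depending on the choice of filtration (Corollaries~\ref{cor_2025_08_06_1325} and~\ref{cor_2025_08_07_0919}). This is precisely the warning issued at the start of \S\ref{subsec_2025_08_25_1428}. The breakdown is visible in your own computation: you correctly record that under $\mathscr F^\bullet=\mathscr F^\bullet_1$ one has $d_M=0$, but then write $S(M,\upi)=d_M^{-2}c_M^2$ and call it a unit --- this is a division by zero. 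In the generic proof the quantity $r_M d_M$ measures the image of $\delta(M_3^\dagger,\tr^*\Delta_b)$ in $H^1(G_p,\mathscr F^3/\mathscr F^4)$, and in the Yoshida case this image vanishes identically (the class lands in a different graded piece), so the whole bookkeeping scheme of Lemmas~\ref{lemma:basis_of_H^1_upi(G_p, T_upi^dagger)}--\ref{lemma:basis_of_H^1_M(G_p, T_M^dagger)} and the formula for $S(M,\upi)$ must be abandoned, not merely simplified. The correct workaround is exactly your second route (the paper's proof), which never introduces $S(M,\upi)$ and instead reads off the trivialisations block-by-block from Propositions~\ref{prop:yoshida_H^1(Tupi)} and~\ref{prop:yoshida_H^1(M)}.
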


We remark that under our running assumptions \eqref{item_parity_f} and \eqref{item_parity_Mi}, if one of \eqref{item_AJ_f} or \eqref{item_AJ_Mi} fails for some $i\in \{1,2\}$ (which is equivalent to say that $\eqref{item_nVdeltaT3}$ fails), then the asserted equality reduces to $0 = 0$. 

\begin{proof}[Proof of Theorem~\ref{thm:delta_decomp_yoshida}]
Proposition~\ref{prop:delta_a-formula2} together with Corollaries \ref{cor:decomp_delta_yohida_Tupi} and \ref{cor:decomp_delta_yohida_M} imply that 
\[
\delta(T_3^\dagger, \Delta_{a})  = \Log^{(4)}_{\mathscr{F}^0}(\delta(M_{3,1}^{\dagger}, \tr^*\Delta_{\underline{\sigma}}) \otimes  \delta(M_{3,2}^{\dagger}, \tr^*\Delta_{\underline{\sigma}}) \otimes  
\delta(T_{\uf_1}^\dagger, \Delta_{\emptyset})_{\cR_3} \otimes  \delta(T_{\uf_2}^\dagger, \Delta_{\emptyset})_{\cR_3} )\,,
\]
where $\Log^{(4)}_{\mathscr{F}^0}$ is as in  \S\ref{sec:delta_a_decomp}. 
Since we have the decomposition $T_3^\dagger = T_{3,1}^\dagger \oplus T_{3,2}^\dagger$, we obtain 
\[
\delta(T_3^\dagger, \Delta_{a}) = 
\Log^{(2)}_{1}( \delta(M_{3,1}^{\dagger}, \tr^*\Delta_{\underline{\sigma}}) \otimes \delta(T_{\uf_1}^\dagger, \Delta_{\emptyset})_{\cR_3}) \times 
   \Log^{(2)}_2( \delta(M_{3,2}^{\dagger}, \tr^*\Delta_{\underline{\sigma}})  \otimes \delta(T_{\uf_2}^\dagger, \Delta_{\emptyset})_{\cR_3} ). 
\]
Here, 
\begin{align*}
\Log^{(2)}_{i} \colon \mathbf{R}^1\Gamma_{\fin}(G_{\mathbb Q, \Sigma}, M_{3,i}^{\dagger}, \Delta_{\underline{\sigma}}) &\otimes \mathbf{R}^1\Gamma_{\fin}(G_{\mathbb Q, \Sigma}, T_{\uf_i}^\dagger, \Delta_{\emptyset})_{\cR_3}   
\\
&\stackrel{\sim}{\longrightarrow} {\bigcap}^2_{\cR_3} (\mathbf{R}^1\Gamma_{\fin}(G_{\mathbb Q, \Sigma}, M_{3,i}^{\dagger}, \Delta_{\underline{\sigma}}) \oplus \mathbf{R}^1\Gamma_{\fin}(G_{\mathbb Q, \Sigma}, T_{\uf_i}^\dagger, \Delta_{\emptyset})_{\cR_3}) 
\\
&\longrightarrow {\bigcap}^2_{\cR_3} H^1(G_{p}, T_{\uf_i}^{\dagger})_{\cR_3}  \cong \cR_3.     
\end{align*}
We next define the morphism 
$$\Log_{F^-} \,\colon\, H^1(G_p, T_{\uf_1}^\dagger)_{\cR_3} \longrightarrow H^1(G_p,F^-T_{\uf_1}^\dagger)_{\cR_3} \stackrel{\sim}{\longrightarrow} \cR_3\,.$$ 
Since $\delta(M_{3,1}^{\dagger}, \tr^*\Delta_{\rm bal}) = 0$ by the assumption \eqref{item_parity_Mi}, it follows from Theorem \ref{thm_2025_02_19_2217} that 
\[
 \delta(M_{3,1}^{\dagger}, \tr^*\Delta_{\underline{\sigma}})  \in 
 \mathbf{R}^1\Gamma_{\fin}(G_{\mathbb Q, \Sigma}, M_{3,1}^{\dagger}, \tr^*\Delta_{\rm bal}), 
\]
and hence 
\[
\Log_{F^-}(\mathrm{res}_p(\delta(M_{3,1}^{\dagger}, \tr^*\Delta_{\underline{\sigma}}))) = 0.   
\]
The definition of $\Log^{(2)}_1$, together with Theorem \ref{thm_2025_02_19_2217}, shows that 
\begin{align*}
 \Log^{(2)}_{1}( \delta(M_{3,1}^{\dagger}, \tr^*\Delta_{\underline{\sigma}}) \otimes \delta(T_{\uf_1}^\dagger, \Delta_{\emptyset})_{\cR_3})   &= \Log_{\omega_{M_{3,1}}}(\delta(M_{3,1}^\dagger, {\rm tr}^*\Delta_{\underline{\sigma}}))  \times        
    \Log_{F^-} (\mathrm{res}_p(\delta(T^\dagger_{\uf_1},\Delta_{\emptyset})_{\cR_3}) )
    \\
    &= \Log_{\omega_{M_{3,1}}}(\delta(M_{3,1}^\dagger, {\rm tr}^*\Delta_{\underline{\sigma}}))  \times        
    \delta(T^\dagger_{\uf_1},\Delta_{F^+})_{\cR_3}. 
\end{align*}

On the other hand, since $\delta(T^\dagger_{\uf_2}, \Delta_{F^+}) = 0$ by the assumption \eqref{item_parity_f}, it follows from Theorem \ref{thm_2025_02_19_2217} that $\Log_{F^-}\circ \mathrm{res}_p(\delta(T_{\uf_2}^\dagger, \Delta_{\emptyset})_{\cR_3}) = 0$.   
Therefore, by the same argument, we have
    \[
     \Log^{(2)}_2( \delta(M_{3,2}^{\dagger}, \tr^*\Delta_{\underline{\sigma}})  \otimes \delta(T_{\uf_2}^\dagger, \Delta_{\emptyset})_{\cR_3} ) = \delta(M_{3,2}^\dagger, \tr^*\Delta_{\rm bal}) \times 
 \Log_{\omega_{\uf_2}}\delta(T_{\uf_2}^\dagger, \Delta_{\emptyset})_{\cR_3}
    \]
    (see also \cite{BS_triple_factor}, Theorem 6.7).  
\end{proof}

\begin{remark}\label{rem:yoshida_log}
    Suppose that $\mathscr{F} = \mathscr{F}_1^{\bullet}$. 
    Then by Proposition \ref{prop:yoshida_H^1(Tupi)}(i), we have 
    $\Log_{\omega_{\uf_2}} = \Log_{\rm c}$. 
    On the other hand, Proposition \ref{prop:yoshida_H^1(M)}(i) implies  that 
    \[
    \Log_{M_{3,1}} = \Log_{\mathscr{F}^3} \neq \Log_{\rm c}. 
    \]
    Here $\Log_{\mathscr{F}^3}$ is the homomorphism induced by the isomorphism $H^1(G_p, \mathscr{F}^3T_\upi^\dagger) \stackrel{\sim}{\longrightarrow} \cR_\upi$. In other words, in the endoscopic cases, the trivialisations of the modules of the relevant leading terms dwell on Eichler--Shimura isomorphisms in different degrees of cohomology.

In the supplementary case when $\mathscr{F} = \mathscr{F}_{2}^{\bullet}$, the situation is reversed; namely, we have $\Log_{M_{3,1}} = \Log_{\mathrm{c}}$, 
and $\Log_{\omega_{\underline{\sigma}_{2}}} = \Log_{\mathscr{F}^3} \neq \Log_{\mathrm{c}}$.   
\hfill$\blacksquare$
\end{remark}

Let us put
\begin{align*}
    \Delta_{M_3^\dagger}^{(c), {\rm alg}} := 
\delta(M_{3,1}^\dagger, \tr^*\Delta_{\underline{\sigma}}) \times \delta(M_{3,2}^\dagger, \tr^*\Delta_{\rm bal})\,,\qquad 
    \Delta_\upi^{\dagger,  {\rm alg}} := \delta(T_{\uf_1}^\dagger, \Delta_{F^+}) _{\cR_\upi} \times \delta(T_{\uf_2}^\dagger, \Delta_{\emptyset})_{\cR_\upi}. 
\end{align*}
Moreover, we define the homomorphisms $\Log_{\omega_{M_3}}$ and $\Log_{\omega_{\upi}}$ according to the choice of the filtration $\mathscr{F}$ as follows: 
\begin{align*}
    \Log_{\omega_{M_3}} := \begin{cases}
        \Log_{\mathscr{F}^3}  & \textrm{ if } \mathscr{F} = \mathscr{F}_1^{\bullet}, 
        \\
        \Log_c & \textrm{ if } \mathscr{F} = \mathscr{F}_2^{\bullet}, 
    \end{cases} 
\quad \quad
    \Log_{\omega_{\upi}}  := \begin{cases}
        \Log_c & \textrm{ if } \mathscr{F} = \mathscr{F}_1^{\bullet}, 
        \\
        \Log_{\mathscr{F}^3}  & \textrm{ if } \mathscr{F} = \mathscr{F}_2^{\bullet}. 
    \end{cases} 
\end{align*}


Then, as a corollary of Theorem \ref{thm:delta_decomp_yoshida} (see also Remark \ref{rem:yoshida_log}), we obtain the following result.

\begin{corollary}
\label{cor_2025_08_14_1551}
In the situation of Theorem~\ref{thm:delta_decomp_yoshida}, we have
\[
\delta(T_3^\dagger, \Delta_{a})  =   \Log_{\omega_{M_3}}( \Delta_{M_3^\dagger}^{(c), {\rm alg}}) \times  \Log_{\omega_{\upi}}( \Delta_\upi^{\dagger,  {\rm alg}})
\,.
\]    
\end{corollary}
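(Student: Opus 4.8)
\textbf{Proof proposal for Corollary~\ref{cor_2025_08_14_1551}.}
The plan is to extract the stated factorisation directly from Theorem~\ref{thm:delta_decomp_yoshida}, simply regrouping the four factors and matching them to the notation introduced just before the corollary. Recall that Theorem~\ref{thm:delta_decomp_yoshida} gives
\[
\delta(T_3^\dagger, \Delta_{a})  =  \Log_{\omega_{M_{3,1}}} (
\delta(M_{3,1}^\dagger, \tr^*\Delta_{\underline{\sigma}}))\times \delta(M_{3,2}^\dagger, \tr^*\Delta_{\rm bal}) \times
\delta(T_{\uf_1}^\dagger, \Delta_{F^+})_{\cR_3} \times \Log_{\omega_{\uf_2}}\,\delta(T_{\uf_2}^\dagger, \Delta_{\emptyset})_{\cR_3}\,,
\]
and that, by definition, $\Delta_{M_3^\dagger}^{(c), {\rm alg}} = \delta(M_{3,1}^\dagger, \tr^*\Delta_{\underline{\sigma}}) \times \delta(M_{3,2}^\dagger, \tr^*\Delta_{\rm bal})$ while $\Delta_\upi^{\dagger,  {\rm alg}} = \delta(T_{\uf_1}^\dagger, \Delta_{F^+})_{\cR_\upi} \times \delta(T_{\uf_2}^\dagger, \Delta_{\emptyset})_{\cR_\upi}$. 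So the content of the corollary is that the trivialisation $\Log_{\omega_{M_{3,1}}}$ applied to the first component of $\Delta_{M_3^\dagger}^{(c),{\rm alg}}$ agrees with $\Log_{\omega_{M_3}}$ applied to the whole product $\Delta_{M_3^\dagger}^{(c),{\rm alg}}$, and likewise $\Log_{\omega_{\uf_2}}$ on the second component of $\Delta_\upi^{\dagger,{\rm alg}}$ matches $\Log_{\omega_\upi}$ on the whole product.

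First I would fix the case $\mathscr{F}^\bullet = \mathscr{F}^\bullet_1$ and invoke Remark~\ref{rem:yoshida_log}, which identifies $\Log_{\omega_{M_{3,1}}}=\Log_{\omega_{M_3}}=\Log_{\mathscr{F}^3}$ and $\Log_{\omega_{\uf_2}}=\Log_{\omega_\upi}=\Log_{\rm c}$ in this case. The factor $\delta(M_{3,2}^\dagger,\tr^*\Delta_{\rm bal})$ is an element of the rank-one $\cR_3$-module $\mathbf{R}^1\Gamma_{\fin}(G_{\mathbb Q,\Sigma},M_{3,2}^\dagger,\tr^*\Delta_{\rm bal})$ on which $\Log_{\omega_{M_3}}$ is not defined directly; here one uses the decomposition $M_3^\dagger = M_{3,1}^\dagger\oplus M_{3,2}^\dagger$ (Proposition~\ref{prop:decomp_yohida_M3}) to see that, via the relevant $\bigcap^2$-identifications as in \S\ref{sec:delta_a_decomp}, the product $\Log_{\omega_{M_{3,1}}}(\delta(M_{3,1}^\dagger,\tr^*\Delta_{\underline{\sigma}}))\cdot\delta(M_{3,2}^\dagger,\tr^*\Delta_{\rm bal})$ is precisely $\Log_{\omega_{M_3}}(\Delta_{M_3^\dagger}^{(c),{\rm alg}})$ by the way the latter composite trivialisation is built (it pairs the $M_{3,1}$-component into $\cR_3$ via $\Log_{\mathscr{F}^3}$ while recording the $M_{3,2}$-leading term as an ideal of $\cR_3$). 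The same bookkeeping, using $T_3^\dagger = T_{3,1}^\dagger\oplus T_{3,2}^\dagger$ and the identity $\Log_{\omega_{\uf_2}} = \Log_{\rm c} = \Log_{\omega_\upi}$, yields $\delta(T_{\uf_1}^\dagger,\Delta_{F^+})_{\cR_3}\cdot \Log_{\omega_{\uf_2}}\delta(T_{\uf_2}^\dagger,\Delta_\emptyset)_{\cR_3} = \Log_{\omega_\upi}(\Delta_\upi^{\dagger,{\rm alg}})$. Substituting both into the displayed formula of Theorem~\ref{thm:delta_decomp_yoshida} gives the asserted equality.

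The case $\mathscr{F}^\bullet = \mathscr{F}^\bullet_2$ is handled symmetrically: by Remark~\ref{rem:yoshida_log} the roles of the two trivialisations $\Log_{\rm c}$ and $\Log_{\mathscr{F}^3}$ are exchanged (so now $\Log_{\omega_{M_3}}=\Log_{\rm c}$ and $\Log_{\omega_\upi}=\Log_{\mathscr{F}^3}$), and Propositions~\ref{prop:decomp_yohida_Tupi}, \ref{prop:decomp_yohida_M3}, \ref{prop:basis_yohida_Tupi} apply with $\uf_1$ and $\uf_2$ (resp. $M_{3,1}^\dagger$ and $M_{3,2}^\dagger$) interchanged; Theorem~\ref{thm:delta_decomp_yoshida} together with this relabelling gives the same conclusion. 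If one of \eqref{item_AJ_f} or \eqref{item_AJ_Mi} fails — equivalently \eqref{item_nVdeltaT3} fails — then both sides vanish (as already remarked after the statement of Theorem~\ref{thm:delta_decomp_yoshida}), so the identity holds trivially. The only genuinely non-routine point is to check that the composite trivialisations $\Log_{\omega_{M_3}}$ and $\Log_{\omega_\upi}$, as defined just above the corollary, are literally the ones produced by the $\Log^{(2)}_i$ and $\Log^{(4)}_{\mathscr{F}^0}$ machinery used in the proof of Theorem~\ref{thm:delta_decomp_yoshida}; I expect this compatibility of normalisations (the matching of the chosen $\cR_\upi$-bases of the relevant $\bigcap^4 H^1(G_p,-)$ and $\bigcap^2 H^1(G_p,-)$) to be the main thing that needs to be spelled out carefully, and it is exactly the content that Remark~\ref{rem:yoshida_log} is designed to supply.
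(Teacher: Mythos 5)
Your proposal is correct and takes the same approach as the paper, which presents Corollary~\ref{cor_2025_08_14_1551} with no separate proof, citing Theorem~\ref{thm:delta_decomp_yoshida} and Remark~\ref{rem:yoshida_log}. The corollary is indeed a regrouping: once one unwinds the definitions of $\Delta_{M_3^\dagger}^{(c),{\rm alg}}$, $\Delta_\upi^{\dagger,{\rm alg}}$, $\Log_{\omega_{M_3}}$, $\Log_{\omega_\upi}$ and combines them with the identifications of Remark~\ref{rem:yoshida_log} ($\Log_{\omega_{M_{3,1}}}=\Log_{\omega_{M_3}}$, $\Log_{\omega_{\uf_2}}=\Log_{\omega_\upi}$ in the $\mathscr{F}_1^{\bullet}$ case, and symmetrically for $\mathscr{F}_2^{\bullet}$), the four factors on the right of Theorem~\ref{thm:delta_decomp_yoshida} pair up exactly as asserted, and the degenerate case when \eqref{item_nVdeltaT3} fails gives $0=0$.

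One point worth tightening: your last paragraph suggests the non-routine part is to re-trace the $\Log^{(2)}_i$ and $\Log^{(4)}_{\mathscr{F}^0}$ construction from the proof of Theorem~\ref{thm:delta_decomp_yoshida}; this is not needed for the corollary, which only uses the theorem's conclusion. What actually licenses the regrouping is more elementary: $\delta(M_{3,2}^\dagger,\tr^*\Delta_{\rm bal})$ and $\delta(T_{\uf_1}^\dagger,\Delta_{F^+})_{\cR_3}$ are ideals of $\cR_3$ (these are rank-$0$ leading-term modules, living in $\bigcap^0 = \cR_3$, thanks to the sign assumptions \eqref{eqn_2025_08_19_0840} and the parity discussion), so they are honest scalars with respect to the $\cR_3$-linear trivialisations $\Log_{\mathscr{F}^3}$ and $\Log_c$. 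Thus $\Log_{\omega_{M_3}}\bigl(\delta(M_{3,1}^\dagger,\tr^*\Delta_{\underline{\sigma}})\times\delta(M_{3,2}^\dagger,\tr^*\Delta_{\rm bal})\bigr)=\Log_{\omega_{M_{3,1}}}\bigl(\delta(M_{3,1}^\dagger,\tr^*\Delta_{\underline{\sigma}})\bigr)\times\delta(M_{3,2}^\dagger,\tr^*\Delta_{\rm bal})$ and similarly for the $\Delta_\upi^{\dagger,{\rm alg}}$ factor, with no need to reopen the internal machinery of the theorem's proof.
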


\begin{remark}
    When $\varepsilon(T_{\uf_1}^\dagger)= -1 =-\varepsilon(T_{\uf_2}^\dagger)$, the contents of this subsection remain valid if we exchange the roles of $\uf_1$ and $\uf_2$. 
\end{remark}



\subsection{Yoshida lifts (bis)}
\label{subsec_2025_08_06_1504_bis}
We continue to work in the setting of \S\ref{subsec_2025_08_06_1504}, but now to supplement \S\ref{subsec_2025_08_06_1504} to revisit the factorisation problem over the region (d) in this endoscopic scenario and explain that it reduces to \cite{BS_triple_factor,bcpv}. We have the decompositions \eqref{eqn_2025_08_19_0754_1}, \eqref{eqn_2025_08_19_0754} and \eqref{eqn_2025_08_19_0757} as before. Moreover,
\begin{itemize}
\item[-] region (d) corresponds to the $\underline{\sigma}^c$-dominant region for $T_{4,2}^\dagger$ and $\uf_1$-dominant region for $T_{4,1}^\dagger$\,;
\item[-] region (e) corresponds to the balanced region for $T_{4,2}^\dagger$ and  $\uf_1$-dominant region for $T_{?,1}^\dagger$\,. 
\end{itemize}

Based on this observation, \eqref{eqn_2025_08_19_0754_1}, and \eqref{eqn_2025_08_19_0757}, we set 
$$ L_p^{(d)}(T_?^\dagger):= L_p^{(\uf_1)}(T_{?,1}^\dagger)\times L_p^{(\underline{\sigma}^c)}(T_{?,2}^\dagger)\,,\quad L_p^{(d)}(M_3^\dagger):= L_p^{(\uf_1)}(M_{3,1}^\dagger)\times L_p^{\rm bal}(M_{3,2}^\dagger)\,,\qquad ?=3,4\,.$$

\subsubsection{} As we work under the assumption that the global root number of the specialisations of $T_4^\dagger$ in region (e) equals $-1$, we have
\begin{align*}
    -1=\varepsilon^{(e)}(T_4^\dagger)=\varepsilon^{\uf_1}(T_{4,1}^\dagger)\,\cdot\,\varepsilon^{\rm bal}(T_{4,2}^\dagger)\,=\varepsilon^{\uf_1}(M_{3,1})\,\varepsilon(T_{\uf_1})\,\cdot\,\varepsilon^{\rm bal}(M_{3,2})\,\varepsilon(T_{\uf_2})\,.
\end{align*}
That means 
$$-\varepsilon^{\uf_1}(T_{4,1}^\dagger)=\varepsilon^{\rm bal}(T_{4,1}^\dagger)=\varepsilon^{\rm bal}(T_{4,2}^\dagger)=\pm 1$$
and we assume that it is $-1$. In view of \eqref{eqn_2025_08_19_0840}, we then have
$$\varepsilon^{\uf_1}(M_{3,1})=1=\varepsilon^{\rm bal}(M_{3,2})\,.$$

\begin{remark}
Unlike in the non-endoscopic cases, classical specialisations of $M_3^\dagger$ in the region (d) are critical in the sense of Deligne: As the discussion above shows, if $\Pi$ is a Yoshida lift of $(f_1,f_2)$ as above, with the propery that $\Pi\times \sigma\times\sigma^c$ has weights in the region (d), then the $\GL_2 \times \GL_3$-summand $f_1\times \ad^0\sigma$ is $f_1$-dominant, whereas $f_2\times \ad^0\sigma$ is balanced. 
\end{remark}

\begin{theorem}
    \label{thm_2025_08_21_1108}
    In the setting of the present subsection, let us assume that \cite[Conjecture 5.1]{bcpv} (extension of Gross--Kudla conjecture) as well as \eqref{item_non_anom} hold true. Then,
\begin{align}
\begin{aligned}
    L_p^{(d)}(T_?^\dagger)^2&:=L_p^{(\uf_1)}(T_{3,1}^\dagger)^2\times L_p^{(\underline{\sigma}^c)}(T_{3,2}^\dagger)^2\\
    &=\left(L_p^{(\uf_1)}(M_{3,1}^\dagger)\times L_p(\uf_1)\right)\times\left( L_p^{\rm bal}(M_{3,2}^\dagger)\times \Log_{\omega_{\uf_2}} (\Delta_{\uf_2}^\dagger)^2\right) \qquad \pmod{\cR_\upi[1/p]^\times}\\
    &=\underbrace{\left(L_p^{(\uf_1)}(M_{3,1}^\dagger)\times L_p^{\rm bal}(M_{3,2}^\dagger) \right)}_{L_p^{(d)}(M_3^\dagger)} \,\times\,\underbrace{\left(L_p(\uf_1)\times \Log_{\omega_{\uf_2}} (\Delta_{\uf_2}^\dagger)^2 \right)}_{\Log_{\omega_\upi}(\Delta_\upi^\dagger)^2} \qquad \pmod{\cR_\upi[1/p]^\times}\,,
\end{aligned}
\end{align}
where $\Delta_{\uf_2}^\dagger$ and $\Delta_\upi^\dagger$ are as in the statement of Conjecture~\ref{conj_2025_08_06_1401}.
\end{theorem}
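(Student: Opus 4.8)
\textbf{Proof plan for Theorem~\ref{thm_2025_08_21_1108}.}
The strategy is to reduce the asserted degree--16 factorisation in the Yoshida-lift scenario to two factorisation statements over $\GL_2 \times \GL_2 \times \GL_2$ already available in the literature. Recall from \S\ref{subsec_2025_08_06_1504} and \S\ref{subsec_2025_08_06_1504_bis} that, for a family $\upi$ of Yoshida lifts of $(\uf_1,\uf_2)$, we have the decomposition of Galois representations
$$T_3^\dagger\simeq T_{3,1}^\dagger \oplus T_{3,2}^\dagger \simeq \left((T_{\uf_1}^\dagger\otimes {\rm ad}^0 T_{\underline{\sigma}})\oplus T_{\uf_1}^\dagger\right)_{\cR_\upi}\ \oplus\ \left((T_{\uf_2}^\dagger\otimes {\rm ad}^0 T_{\underline{\sigma}})\oplus T_{\uf_2}^\dagger\right)_{\cR_\upi},$$
and that the weights in region (d) correspond to the $\uf_1$-dominant region for the triple product $T_{4,1}^\dagger$ (equivalently: $f_1\times \ad^0\sigma$ is $f_1$-dominant in the associated $\GL_2\times\GL_3$ factor) and to the $\underline{\sigma}^c$-dominant region for $T_{4,2}^\dagger$. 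By the \emph{ad hoc} definition of $L_p^{(d)}(T_?^\dagger)$ as the product $L_p^{(\uf_1)}(T_{3,1}^\dagger)\times L_p^{(\underline{\sigma}^c)}(T_{3,2}^\dagger)$ in terms of Hsieh's triple product $p$-adic $L$-functions, the first line of the claimed identity is a matter of definition, so the content is to factor each of the two squared triple-product $p$-adic $L$-functions on the right.

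The plan is then to invoke, for the $\uf_2$-factor, the factorisation proved in \cite{BS_triple_factor} and its sequel \cite{bcpv}: under \cite[Conjecture~5.1]{bcpv} (the extension of Gross--Kudla) together with the non-anomality hypothesis~\eqref{item_non_anom}, one has
$$L_p^{(\underline{\sigma}^c)}(T_{3,2}^\dagger)^2 \,\dot=\, L_p^{\rm bal}(M_{3,2}^\dagger)\times \Log_{\omega_{\uf_2}}(\Delta_{\uf_2}^\dagger)^2 \pmod{\cR_{\uf_2}[1/p]^\times},$$
which accounts for the balanced-region half (where $\varepsilon^{\rm bal}(T_{4,2}^\dagger)=-1$, so that cycles are needed) and uses that the image of the diagonal class $\Delta_{\uf_2}^\dagger$ lands in the appropriate balanced Selmer group after the wall-crossing computation of op.\,cit. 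For the $\uf_1$-factor, since region (d) places $f_1\times\ad^0\sigma$ in the $\uf_1$-dominant range, the global sign $\varepsilon^{\uf_1}(T_{4,1}^\dagger)=+1$ is in the range of interpolation of $L_p^{(\uf_1)}(T_{3,1}^\dagger)$; here the factorisation
$$L_p^{(\uf_1)}(T_{3,1}^\dagger)^2 \,\dot=\, L_p^{(\uf_1)}(M_{3,1}^\dagger)\times L_p(\uf_1) \pmod{\cR_{\uf_1}[1/p]^\times}$$
reflecting $T_{3,1}^\dagger = (T_{\uf_1}^\dagger\otimes\ad^0 T_{\underline{\sigma}})\oplus T_{\uf_1}^\dagger$ is a genuine Artin-formalism statement: it can be read off from the interpolation formulae of the triple-product $p$-adic $L$-function, the $\GSp_4\times\GL_3$ (\emph{i.e.} $\GL_2\times\GL_3$) $p$-adic $L$-function, and the Mazur--Kitagawa $p$-adic $L$-function restricted to the central line, because in this region the relevant classical $L$-values all lie in their respective ranges of interpolation and the decomposition of motives is valid there. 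Multiplying the two displayed identities and regrouping the four factors (as indicated by the underbraces in the statement, collecting $L_p^{(\uf_1)}(M_{3,1}^\dagger)\times L_p^{\rm bal}(M_{3,2}^\dagger)=L_p^{(d)}(M_3^\dagger)$ and $L_p(\uf_1)\times \Log_{\omega_{\uf_2}}(\Delta_{\uf_2}^\dagger)^2=\Log_{\omega_\upi}(\Delta_\upi^\dagger)^2$) yields the claim, modulo units in $\cR_\upi[1/p]$.

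The main obstacle is the precise bookkeeping of periods and fudge factors across the two factorisations: the trivialisations $\Log_{\omega_{\uf_2}}$ and $\Log_{\omega_\upi}$ must be reconciled with the Eichler--Shimura trivialisations implicit in the $\GSp_4$ conjectural $p$-adic $L$-functions, which---as Remark~\ref{rem:yoshida_log} records---dwell on coherent cohomology in degrees that differ from the generic (non-endoscopic) case; one must check that the choice of basis $\{\eta_i\}$ of $S^i(\upi)$, the splitting $T_3^\dagger=T_{3,1}^\dagger\oplus T_{3,2}^\dagger$, and the conventions of \cite{BS_triple_factor,bcpv} are compatible after passing to $\cR_\upi[1/p]$, so that the discrepancy is indeed absorbed into a unit that specialises to an explicit algebraic number at classical $(\uf_1,\uf_2)$. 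This is precisely the endoscopic counterpart of the algebraic statement already established in Corollary~\ref{cor_2025_08_14_1551} (via Theorem~\ref{thm:delta_decomp_yoshida}), and the argument there---where the same splitting and the vanishing results of \eqref{item_parity_f}, \eqref{item_parity_Mi} drive the $\Log^{(2)}_i$ computation---serves as the template for verifying the compatibility on the analytic side.
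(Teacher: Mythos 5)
Your argument is essentially the paper's proof: the first equality is definitional; the $\uf_2$-factor factorisation is the main result of \cite{bcpv} (the paper cites Theorem~7.3 together with Proposition~6.1 of op.\,cit.) under the Gross--Kudla extension and \eqref{item_non_anom}; the $\uf_1$-factor factorisation is a direct comparison of interpolation formulae, valid because region (d) places the $\uf_1$-dominant triple in its range of interpolation; and the third equality is regrouping. The paper compresses all of this into a single sentence, and your supplementary discussion of reconciling the trivialisations with Eichler--Shimura conventions is precisely what the algebraic statements of Theorem~\ref{thm:delta_decomp_yoshida} and Corollary~\ref{cor_2025_08_14_1551} are designed to address.
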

In particular, a slight variant of Conjecture~\ref{conj_2025_08_21_1552} holds true in this scenario under the hypothesis of our theorem. In fact, slightly more is true: The ambiguity (which is a factor in $\cR_\upi[1/p]^\times$) specialises at classical points to an explicit algebraic number.
\begin{proof}
   The second equality follows from a direct comparison of interpolation formulae of $L_p^{(f_1)}(T_{3,1}^\dagger)$ and $L_p^{(f_1)}(M_{3,1}^\dagger)\times L_p(\uf_1)$, combined with the main result of \cite{bcpv} (Theorem~7.3 combined with Proposition 6.1 in op. cit.).
\end{proof}


\subsection{Saito--Kurokawa lifts}
\label{subsec_2025_08_25_1211}
In this subsection, we consider the case when $\upi$ is a (one-parameter) family of Saito--Kurokawa lifts of a Hida family $\uf$ of ordinary modular forms. We remark that the assumptions of \S\ref{subsec:families} exclude the case of families of Saito--Kurokawa lifts, and our definition of a family of degree-16 $p$-adic $L$-function in this scenario, as in the case of \S\ref{subsec_2025_08_06_1504} and \S\ref{subsec_2025_08_06_1504_bis}, is somewhat ad hoc. (cf. \S\ref{subsubsec_2025_08_21_1640}). 

\subsubsection{} Let us put $T_\upi^\dagger=T_{\uf}^\dagger \oplus \mathbb Z_p \oplus \mathbb Z_p(1)$, where $\uf$ is a Hida family. 
In this section, we work under the assumption that 
$$ \varepsilon(\uf) = -1= \varepsilon^{\text{bal}}(\uf \otimes \underline{\sigma} \otimes \underline{\sigma}^c)\,, $$ 
where $\varepsilon(\uf)$ is the common global root number of the family $\uf$, and $\varepsilon^{\text{bal}}(\uf \otimes \underline{\sigma} \otimes \underline{\sigma}^c)$ is the same for the family $\uf \otimes \underline{\sigma} \otimes \underline{\sigma}^c$ at those weights that are balanced (i.e., the sum of any two of them is greater than the third one).

\subsubsection{} The Galois representation $T_2^\dagger$ (resp. $T_3^\dagger$), which is free of rank $16$ over $\cR_2:=\cR_\uf\widehat{\otimes}\cR_{\underline{\sigma}}$ (resp. over $\cR_3:=\cR_\uf\widehat{\otimes}\cR_{\underline{\sigma}}\widehat{\otimes}\cR_{\underline{\sigma}}$), admits the following decomposition:
\begin{equation}
\label{eq-decomposition_1}
T_?^\dagger\simeq \underbrace{\left(T_{\underline{\mathbf f}}^\dagger\widehat \otimes T_{\underline{\sigma}} \widehat\otimes T_{\underline{\sigma}}^*\right)}_{T_{?,1}^\dagger} \oplus \underbrace{\left(T_{\underline{\sigma}} \widehat\otimes T_{\underline{\sigma}}^* \right)\widehat\otimes_{\cR_{\underline{\sigma}}} \cR_?}_{T_{?,2}^\dagger} \oplus \underbrace{\left( T_{\underline{\sigma}} \widehat\otimes T_{\underline{\sigma}}^*(1) \right)\widehat\otimes_{\cR_{\underline{\sigma}}} \cR_?}_{T_{?,3}^\dagger} \,,\qquad ?=2,3.
\end{equation}
If $?=2$, we can further decompose each summand above as
\begin{equation}
\label{eq-decomposition}
T_2^\dagger\simeq \underbrace{\left((T_{\underline{\mathbf f}}^\dagger\,\widehat\otimes\, {\rm ad}^0 T_{\underline{\sigma}})\,\oplus \,(T_{\underline{\mathbf f}}^\dagger\otimes_{\cR_\uf}\cR_2)\right)}_{T_{2,1}^\dagger}\oplus \underbrace{\left( {\rm ad}^0 T_{\underline{\sigma}} \oplus \mathbb Q_p \right)\otimes_{\cR_{\underline{\sigma}}} \cR_2}_{T_{2,2}^\dagger} \oplus \underbrace{\left( {\rm ad}^0 T_{\underline{\sigma}}(1) \oplus \mathbb Q_p(1) \right)\otimes_{\cR_{\underline{\sigma}}} \cR_2}_{T_{2,3}^\dagger} \,.
\end{equation}
Observe that the degree--4 summands $T^\dagger_{?,2}$ and $T^\dagger_{?,3}$ are independent of variation in $\uf$.
\subsubsection{} 
\label{subsubsec_2025_08_21_1637}
We remark that:
\begin{itemize}
\item[-] Region (a) corresponds to the $\underline{\sigma}^c$-dominant region for $T_{3,1}^\dagger$. We note that we have $k_1=k=k_2$ in the notation of \S\ref{subsubsec_2025_08_19_0801}.

\item[-] Region (b) is given by the condition $2 \leq c_2 - c_1 \leq 2k - 4$ (since $k_1 = k_2$), together with $2k \leq c_1 + c_2$. This is equivalent to $c_1<c_2< c_1 + (2k - 2)$, and $2k - 2 < c_1 + c_2$;  hence (since $k\geq 2$), it corresponds to the balanced region for $T_{3,1}^\dagger$.

\item[-] Region (c) in this case contains only points with $c_1 = c_2$ (since $k_1 = k_2$). In this situation, setting $c:=c_1=c_2$, the region is defined by the constraint $c \geq k$, and therefore coincides with the balanced region for $T_{3,1}^\dagger$.
\end{itemize}

\subsubsection{}
\label{subsubsec_2025_08_21_1638}
Let $L_p({\rm ad}^0 \underline{\sigma})(\lambda,s)$ denote the adjoint $p$-adic $L$-function  in two variables, constructed by Schmidt \cite{schmidt} and Hida \cite{hida90}, where $s$ stands for the cyclotomic variable and $\lambda$ for the parameter of the Hida family $\underline{\sigma}$. In view of \cite[Theorem 2]{Das}, we have
\begin{equation}
\label{eqn_2025_09_01_1355}
   L_p(\underline{\sigma} \otimes \underline{\sigma}^c)(\lambda,\lambda,\wt(\lambda)) = L_p'({\rm ad}^0 \underline{\sigma})(\lambda,1)\times \res_{s=1}\zeta_p(s)=L_p'({\rm ad}^0 \underline{\sigma})(\lambda,1)\times (1-p^{-1})\,, 
\end{equation}
where $\zeta_p(s)$ is the $p$-adic Riemann zeta function, which has a pole at $s=1$ with residue $1-p^{-1}$.

\subsubsection{} 
\label{subsubsec_2025_08_21_1651}
According to \cite[Theorem 9.4]{Das}, the two variable $p$-adic $L$-function 
$$L_p(\ad\,\underline{\sigma})(\lambda,s):=L_p(\underline{\sigma} \otimes \underline{\sigma}^c)(\lambda,\lambda,\wt(\lambda)-1+s)$$
verifies a functional equation 
\begin{equation}
\label{eqn_2025_09_01_0950}
    L_p(\ad\,\underline{\sigma})(\lambda,s) = \epsilon(\ad\underline{\sigma}_\lambda)(s) \cdot L_p(\ad\,\underline{\sigma}(1))(\lambda,-s)\,, 
\end{equation} 
where $L_p(\ad\,\underline{\sigma}(1))(\lambda,s):=L_p(\ad\,\underline{\sigma})(\lambda,1+s)$, and $\epsilon(\ad\underline{\sigma}_\lambda)(s)$ is the epsilon factor described in \cite[Theorem 9.4]{Das}. More precisely, for any crystalline specialisation $\lambda$ of $\underline{\sigma}$, we have $\epsilon(\ad\underline{\sigma}_\lambda)(s) = A_\lambda \cdot B_\lambda^{\wt(\lambda)-s}$, where $A_\lambda \in \overline{\mathbb Q}^{\times}$ and $B_\lambda$ is the conductor of the Gelbart--Jacquet lift of $\underline{\sigma}_\lambda$. 

Let us put $\epsilon(\ad\underline{\sigma}_\lambda) := \epsilon(\ad\underline{\sigma}_\lambda)(0)$. Substituting $s=0$ in \eqref{eqn_2025_09_01_0950}, we obtain
\begin{equation}
\label{equation_2025_08_27_1626_pre}
L_p(\ad\,\underline{\sigma})_{\vert s=0}(\lambda) = \epsilon(\ad\underline{\sigma}_\lambda) \cdot L_p(\ad\,\underline{\sigma}(1))_{\vert s=0}(\lambda)=L_p(\ad\,\underline{\sigma})_{\vert s=1}(\lambda)\,.
\end{equation}

\subsubsection{}  We assume until the end of our paper that any one of the following equivalent conditions holds true:
\begin{itemize}
\item[i)] The family $\underline \sigma$ is non-CM. 
    \item[ii)] $\underline{\sigma}_\lambda$ is non-CM for some (any) classical specialisation $\lambda$ of $\sigma$ of weight at least $2$.
    \item[iii)] If there exists a Dirichlet character $\chi=\chi_\lambda$ such that for some (any) classical specialisation $\lambda$ of $\sigma$ of weight at least $2$ we have $\underline{\sigma}_\lambda\simeq \underline{\sigma}_\lambda\otimes\chi$, then $\chi=\mathds{1}$ is the trivial character.
     \item[iv)] $\ad^0\rho_\lambda$ is irreducible for for some (any) classical specialisation $\lambda$ of $\sigma$ of weight at least $2$.
\end{itemize} 
The equivalence of (i) and (ii) follows from \cite[Proposition 8]{ghatevatsal}, whereas the equivalence of (ii) and (iii) is well-known; cf. \cite{ribet1977twists,ribet1980twists}. Finally, the equivalence of (iii) and (iv) is an immediate consequence of Schur's lemma.

In this case, the epsilon factor (at $s=0$) $\epsilon(\ad^0 \underline{\sigma}_\lambda)$ interpolates to an analytic function $\epsilon(\ad^0\underline{\sigma})$ of $\lambda$, cf. \cite[Theorem C]{mundy}. As we have $\epsilon(\ad\underline{\sigma}_\lambda)=\epsilon(\ad^0\underline{\sigma}_\lambda)$ (since the epsilon factor of the Riemann zeta function at $s=0$ equals to $1$), it follows that $\epsilon(\ad\underline{\sigma}_\lambda)$ interpolates to an analytic function $\epsilon(\ad\underline{\sigma})=\epsilon(\ad^0\underline{\sigma})$ of $\lambda$. We may then rewrite \eqref{equation_2025_08_27_1626_pre} as 
\begin{equation}
\label{equation_2025_08_27_1626}
L_p(\ad\,\underline{\sigma})_{\vert s=0} = \epsilon(\ad\underline{\sigma}) \cdot L_p(\ad\,\underline{\sigma})_{\vert s=1}\,.
\end{equation}


\subsubsection{} 
\label{subsubsec_2025_08_21_1640}
Based on the observations in \S\ref{subsubsec_2025_08_21_1637}--\S\ref{subsubsec_2025_08_21_1651}, together with \eqref{eq-decomposition_1} and \eqref{eq-decomposition}, we set 
$$ L_p^{(a)}(T_?^\dagger)^2:= L_p^{(\underline{\sigma}^c)}(T_{?,1}^\dagger)^2\times L_p^{(\underline{\sigma}^c)}(T_{?,2}^\dagger)\times L_p^{(\underline{\sigma}^c)}(T_{?,3}^\dagger) \,,\qquad ?=2,3\,.$$

\subsubsection{} We can finally concluse with the following factorisation statement. Note that $L_p^{(\underline{\sigma}^c)}(T_{3,2}^\dagger)$ is the $p$-adic $L$-function $L_p(\ad\,\underline{\sigma})_{\vert s=0}$ (of Hida) and $L_p^{(\underline{\sigma}^c)}(T_{3,3}^\dagger)$ equals $L_p(\ad\,\underline{\sigma}(1))_{\vert s=0}=L_p(\ad\,\underline{\sigma})_{\vert s=1}$.

\begin{proposition}
\label{prop_2025_08_14_1443}
In the setting of the present subsection, assume that \cite[Conjecture 5.1]{bcpv} (extension of Gross--Kudla conjecture) holds true. Then,
$$ \begin{aligned} L_p^{(a)}(T_3^\dagger)^2 & := L_p^{(\underline{\sigma}^c)}(T_{3,1}^\dagger)^2\times L_p^{(\underline{\sigma}^c)}(T_{3,2}^\dagger) \times L_p^{(\underline{\sigma}^c)}(T_{3,3}^\dagger) 
\\ & =  L_p^{\rm bal}(M_{3,1}^\dagger)\times  L_p({\rm ad} \, \underline{\sigma})_{\vert s=0} \times L_p({\rm ad} \, \underline{\sigma}(1))_{\vert s=0} \times \Log (\Delta_{\uf}^\dagger)^2 \qquad \pmod{\cR_{\uf}[1/p]^\times}
\\ & =  \underbrace{\left(\epsilon(\ad\underline{\sigma})\cdot L_p^{\rm bal}(M_{3,1}^\dagger)\times  L_p'({\rm ad}^0 \underline{\sigma})_{\vert s=1}^2 \right)}_{\Log_{\omega_{M_3}}(\delta_{M_3^\dagger}^{(c)})^2} \times \underbrace{\left( \Log (\Delta_{\uf}^\dagger)^2 \times (1-p^{-1})^2 \right)}_{\Log_{\omega_\upi}(\Delta_\upi^\dagger)^2}  \qquad \pmod{\cR_{\uf}[1/p]^\times} \,. \end{aligned}
$$
\end{proposition}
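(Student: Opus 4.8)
The strategy is to reduce Proposition~\ref{prop_2025_08_14_1443} to a combination of three inputs, each already available: the factorisation of triple-product $p$-adic $L$-functions over the $\underline{\sigma}^c$-dominant region established in \cite{BS_triple_factor, bcpv}, the identification of the degree-$4$ summands $T_{3,2}^\dagger$ and $T_{3,3}^\dagger$ with Hida's adjoint $p$-adic $L$-function via \cite{Das}, and the functional equation \eqref{equation_2025_08_27_1626} together with \eqref{eqn_2025_09_01_1355}. Concretely, I would first invoke the decomposition \eqref{eq-decomposition_1} to write $L_p^{(a)}(T_3^\dagger)^2$ as the product $L_p^{(\underline{\sigma}^c)}(T_{3,1}^\dagger)^2\times L_p^{(\underline{\sigma}^c)}(T_{3,2}^\dagger)\times L_p^{(\underline{\sigma}^c)}(T_{3,3}^\dagger)$ as in \S\ref{subsubsec_2025_08_21_1640}. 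The summand $T_{3,1}^\dagger = T_{\underline{\mathbf f}}^\dagger\widehat\otimes T_{\underline{\sigma}}\widehat\otimes T_{\underline{\sigma}}^*$ is genuinely a triple-product Galois representation, so that over region (a) $=$ the $\underline{\sigma}^c$-dominant region (recall $k_1=k=k_2$ here, cf. \S\ref{subsubsec_2025_08_21_1637}) and under \cite[Conjecture 5.1]{bcpv}, the main result of \cite{bcpv} (Theorem~7.3 combined with Proposition~6.1 there) yields $L_p^{(\underline{\sigma}^c)}(T_{3,1}^\dagger)^2 \dot= L_p^{\rm bal}(M_{3,1}^\dagger)\times \Log(\Delta_{\uf}^\dagger)^2$ modulo $\cR_{\uf}[1/p]^\times$, where $M_{3,1}^\dagger = T_{\underline{\mathbf f}}^\dagger\otimes {\rm ad}^0 T_{\underline{\sigma}}$ and $\Delta_{\uf}^\dagger$ is the family of diagonal cycles. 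This accounts for the first of the three boxed factors after reorganisation.

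\textbf{Identification of the degree-4 factors.} Next I would treat $L_p^{(\underline{\sigma}^c)}(T_{3,2}^\dagger)$ and $L_p^{(\underline{\sigma}^c)}(T_{3,3}^\dagger)$. By the ad hoc definitions recalled just above the statement, $L_p^{(\underline{\sigma}^c)}(T_{3,2}^\dagger) = L_p(\ad\,\underline{\sigma})_{\vert s=0}$ and $L_p^{(\underline{\sigma}^c)}(T_{3,3}^\dagger) = L_p(\ad\,\underline{\sigma}(1))_{\vert s=0} = L_p(\ad\,\underline{\sigma})_{\vert s=1}$, using that $T_{3,3}^\dagger$ is the Tate twist of $T_{3,2}^\dagger$ and the defining relation $L_p(\ad\,\underline{\sigma}(1))(\lambda,s) = L_p(\ad\,\underline{\sigma})(\lambda,1+s)$ from \S\ref{subsubsec_2025_08_21_1651}. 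This already gives the middle line of the asserted chain of equalities. To pass to the final line, I would apply the functional equation \eqref{equation_2025_08_27_1626}, which reads $L_p(\ad\,\underline{\sigma})_{\vert s=0} = \epsilon(\ad\underline{\sigma})\cdot L_p(\ad\,\underline{\sigma})_{\vert s=1}$; multiplying the two degree-4 factors and using $L_p(\ad\,\underline{\sigma})_{\vert s=1} = L_p'({\rm ad}^0\underline{\sigma})_{\vert s=1}\times(1-p^{-1})$ from \eqref{eqn_2025_09_01_1355} (which expresses the adjoint value as the derivative of $L_p({\rm ad}^0\underline{\sigma})$ times the residue $1-p^{-1}$ of the $p$-adic zeta function), one obtains
\[
L_p(\ad\,\underline{\sigma})_{\vert s=0}\times L_p(\ad\,\underline{\sigma})_{\vert s=1} = \epsilon(\ad\underline{\sigma})\cdot L_p'({\rm ad}^0\underline{\sigma})_{\vert s=1}^2\times (1-p^{-1})^2\,.
\]
Regrouping, the factor $\epsilon(\ad\underline{\sigma})\cdot L_p^{\rm bal}(M_{3,1}^\dagger)\times L_p'({\rm ad}^0\underline{\sigma})_{\vert s=1}^2$ is precisely $\Log_{\omega_{M_3}}(\delta_{M_3^\dagger}^{(c)})^2$, while $\Log(\Delta_{\uf}^\dagger)^2\times (1-p^{-1})^2$ is $\Log_{\omega_\upi}(\Delta_\upi^\dagger)^2$, as the underbraces in the statement indicate.

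\textbf{Assembling and the main subtlety.} Putting the three pieces together modulo $\cR_{\uf}[1/p]^\times$ gives the claimed three-term chain of identities. A few points require care. First, one must check that all the implicit fudge factors introduced by the comparisons of interpolation formulae (the $Z_S$-type tame factors, the normalisations of periods, and the algebraic constants hidden in \cite[Theorem 7.3]{bcpv}) indeed lie in $\cR_{\uf}[1/p]^\times$ and specialise to explicit algebraic numbers at classical points; this is of the same nature as in Theorem~\ref{thm_2025_08_21_1108} and is handled by the same direct comparison of interpolation formulae. Second, one must ensure that the Tate-twist bookkeeping in \eqref{eq-decomposition_1} is compatible with the conventions behind Hida's two-variable adjoint $p$-adic $L$-function, i.e.\ that $T_{3,3}^\dagger = (T_{\underline{\sigma}}\widehat\otimes T_{\underline{\sigma}}^*(1))\widehat\otimes_{\cR_{\underline{\sigma}}}\cR_3$ matches the cyclotomic shift $s\mapsto 1+s$ built into $L_p(\ad\,\underline{\sigma}(1))$; this is exactly what \cite[Theorem~2]{Das} encodes. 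The genuine obstacle — and the reason the statement is only conditional — is the triple-product factorisation $L_p^{(\underline{\sigma}^c)}(T_{3,1}^\dagger)^2 \dot= L_p^{\rm bal}(M_{3,1}^\dagger)\times\Log(\Delta_{\uf}^\dagger)^2$, which rests on \cite[Conjecture 5.1]{bcpv} (the extension of the Gross--Kudla conjecture to the non-semistable setting); once that conjectural input is granted, the rest of the argument is the routine regrouping and functional-equation manipulation sketched above.
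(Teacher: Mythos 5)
Your proposal is correct and follows essentially the same route as the paper: decompose $L_p^{(a)}(T_3^\dagger)^2$ via \eqref{eq-decomposition_1}, invoke \cite[Theorem 7.3 and Proposition 6.1]{bcpv} (under \cite[Conjecture 5.1]{bcpv}) for the triple-product factor $T_{3,1}^\dagger$, and handle the two degree-4 factors by combining the functional equation \eqref{equation_2025_08_27_1626} with Dasgupta's relation \eqref{eqn_2025_09_01_1355}. The only cosmetic difference is that you multiply the two degree-4 $L$-values before applying the identities, whereas the paper processes each factor separately, but the resulting computation is identical.
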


As in the case of Theorem~\ref{thm_2025_08_21_1108}, slightly more is true: The ambiguity (which is a factor in $\cR_{\uf}[1/p]^\times$) specialises at classical points to an explicit algebraic number.

\begin{proof}
The factorisation 
$$L_p^{(\underline{\sigma}^c)}(T_{3,1}^\dagger)=\mathcal C \cdot L_p^{\rm bal}(M_{3,1}^\dagger)\times \Log (\Delta_{\underline{\mathbf f}}^\dagger)$$
is proved in \cite{BS_triple_factor,bcpv} under the running hypotheses, whereas
\begin{equation}
\label{eqn_2025_08_26_1204}
    L_p^{(\underline{\sigma}^c)}(T_{3,3}^\dagger)=L_p(\ad\,\underline{\sigma}(1))_{\vert s=0}=L_p'({\rm ad}^0 \underline{\sigma}(1))_{\vert_{s=0}} \times (1-p^{-1})=L_p'({\rm ad}^0 \underline{\sigma})_{\vert_{s=1}} \times (1-p^{-1})
\end{equation}
is a consequence of Dasgupta's results \cite{Das} that we have reviewed in \S\ref{subsubsec_2025_08_21_1638} and \S\ref{subsubsec_2025_08_21_1651}. Finally, 
$$ L_p^{(\underline{\sigma}^c)}(T_{3,2}^\dagger)=L_p(\ad\,\underline{\sigma})_{\vert s=0}=\epsilon(\ad\underline{\sigma}) \cdot L_p(\ad\,\underline{\sigma})_{\vert s=1} = \epsilon(\ad\underline{\sigma}) \cdot L_p'({\rm ad}^0 \underline{\sigma})_{\vert s=1} \times (1-p^{-1})\,, $$ 
where the second equality follows from \eqref{equation_2025_08_27_1626}, and the third from \eqref{eqn_2025_08_26_1204}.
\end{proof}

\subsubsection{}
We compare the factorisation statement in Proposition~\ref{prop_2025_08_14_1443} with that in Conjecture~\ref{conj_2025_08_22_0031}  in the generic scenario, explaining the meaning of $\Log_{\omega_{M_3}}(\delta_{M_3^\dagger}^{(c)})$ and $\Log_{\omega_\upi}(\Delta_\upi^\dagger)$. 

In Proposition~\ref{prop_2025_08_14_1443}, we have set $\Delta_{\upi}^\dagger:=(1-p^{-1})\,\Delta_{\uf}^\dagger$, where $\Delta_{\uf}^\dagger$ is the family of Heegner cycles as before.  The cohomology class $\delta_{M_3^\dagger}^{(c)}$ is expected to arise as an ``improved Beilinson--Flach class'' $\BF_{\ad \, \underline{\sigma}(1)}^*$ scaled by the square-root of $\epsilon(\ad\underline{\sigma})\cdot L_p^{\rm bal}(M_{3,1}^\dagger)$. We remark that the Beilinson--Flach elemement associated to $\ad^0 T_{\underline{\sigma}}(1)$ vanishes identically (due to the presence of an exceptional zero), and one expects an explicit reciprocity law linking $\BF_{\ad \, \underline{\sigma}(1)}^*$ to 
$$L_p(\ad \, \underline{\sigma})_{\vert s=0} = \epsilon(\ad\underline{\sigma})\cdot(1-p^{-1}) \cdot L_p'(\ad^0 \underline{\sigma})_{\vert s=1}\,,$$
where the equality above follows from \eqref{eqn_2025_09_01_1355} combined with \eqref{equation_2025_08_27_1626}. The construction of the improved class $\BF_{\ad \, \underline{\sigma}(1)}^*$ as well as the reciprocity law alluded to above will be addressed in future work.

\subsubsection{} Observe that in this case, the algebraic counterpart of Proposition~\ref{prop_2025_08_14_1443} reduces to a combination of Palvannan's main results and \cite{palvannan18} and \cite[Theorem 6.7]{BS_triple_factor}.

\subsubsection{}
The factorisation problem for $L_p(16d)$ in the setting of \S\ref{subsec_2025_08_25_1211} reduces to the tautological equality $0=0$ as a consequence of our assumptions on $\varepsilon$-factors.

\bibliographystyle{amsalpha}
\bibliography{BRS-refs}

\end{document}